\documentclass[11pt]{amsart}
\usepackage[utf8]{inputenc}
\usepackage{amsmath,amssymb,amsthm, graphics}
\usepackage{amsthm}
\usepackage{diagmac2}
\usepackage{textcomp}
\usepackage[alphabetic]{amsrefs}
\usepackage[all,cmtip]{xy}
\usepackage{yfonts}
\usepackage{tikz}
\usepackage{fullpage}
\usepackage{stmaryrd}
\usepackage{mathrsfs}
\usepackage{caption}
\usepackage[mathscr]{euscript}

\newcommand{\cC}{\mathcal{C}}
\newcommand{\cF}{\mathcal{F}}
\newcommand{\cE}{\mathcal{W}}
\newcommand{\sD}{\mathscr{D}}

\newcommand{\sC}{\mathscr{C}}
\newcommand{\sF}{\mathscr{F}}

\newcommand{\kC}{\mathfrak{C}}
\newcommand{\kM}{\mathfrak{M}}
\newcommand{\cM}{\mathcal{M}}
\newcommand{\sA}{\mathscr{A}}
\newcommand{\sB}{\mathscr{B}}
\newcommand{\sE}{\mathscr{E}}
\newcommand{\sG}{\mathscr{G}}
\newcommand{\sH}{\mathscr{H}}

\newcommand{\sL}{\mathscr{L}}
\newcommand{\sM}{\mathscr{M}}
\newcommand{\sS}{\mathscr{S}}
\newcommand{\sX}{\mathscr{X}}
\newcommand{\sZ}{\mathscr{Z}}
\newcommand{\cL}{\mathcal{L}}

\newcommand{\cX}{\mathcal{X}}

\newcommand{\cO}{\mathcal{O}}
\newcommand{\cS}{\mathcal{S}}

\newcommand{\spec}{\operatorname{Spec}}
\newcommand{\Kg}{\mathcal{K}}

\newcommand{\sY}{\mathscr{Y}}
\newcommand{\supp}{\operatorname{Supp}}

\newcommand{\Aut}{\operatorname{Aut}}

\newcommand{\Id}{\operatorname{Id}}

\newcommand{\I}{\operatorname{I}}
\newtheorem{theorem}{Theorem}[section]
\newtheorem{Def}[theorem]{Definition}
\newtheorem{Teo}[theorem]{Theorem}

\newtheorem{Lemma}[theorem]{Lemma}
\newtheorem{Oss}[theorem]{Observation}
\newtheorem{Cor}[theorem]{Corollary}
\newtheorem{Prop}[theorem]{Proposition}
\newtheorem{Notation}[theorem]{Notation}

\newtheorem{Remark}[theorem]{Remark}

\newtheorem{Step_1}{Blow-up}

\theoremstyle{definition}
\newtheorem{EG}[theorem]{Example}
\usepackage{filecontents}

\begin{document}
\title{Moduli of Weierstrass fibrations with marked section}
\author{Giovanni Inchiostro}
\begin{abstract} We study the the moduli space of KSBA stable pairs $(X,sS+\sum a_i F_i)$, consisting of a Weierstrass fibration $X$, its section $S$, and some fibers $F_i$. We find a compactification which is a DM stack, and we describe the objects on the boundary. We show that the fibration in the definition of Weierstrass fibration extends to the boundary, and it is equidimensional when $s \ll 1$. We prove that there are wall-crossing morphisms when the weights $s$ and $a_i$ change. When $s=1$, this recovers the work of La Nave \cite{LaNave}; and a special case of the work of Ascher-Bejleri \cite{AB3}. 
\end{abstract}
\maketitle 

\section{Introduction}
Fibered surfaces have been intensively studied,
since the 1800 and the Italian school of algebraic geometry.
In this project, we will focus our attention on a particular class of fibered surfaces, namely Weierstrass fibrations.
These can be understood as surfaces $X$ with a morphism $f:X \to C$ to a smooth curve, such that $f$ admits a section $S$, and such that the fibers are genus one curves (see Definition \ref{Def:minimal:W:fib} for a precise definition).

To better understand Weierstrass fibrations, it is natural to ask for a parameter space.
The problem of constructing a moduli space of elliptic surfaces, and in particular Weierstrass fibrations, has been approached using several techniques. In \cite{miranda-moduli}, Miranda constructs a coarse moduli space using GIT, in the case where the base curve has genus 0. Seiler tackles the case where the base curve has higher genus in \cite{moulielliptic}. Finally recall that an elliptic fibration comes with the $j$-invariant map to the coarse moduli space $\overline{M}_{1,1}$. One can try to lift it to $\overline{\mathcal{M}}_{1,1}$ and use the general machinery of twisted stable maps of Abramovich and Vistoli \cite{AV1} to construct a moduli space of elliptic surfaces (see also \cite{AB2}).

The approach we follow in this project is through the minimal model program. By definition, an elliptic surface $X$ comes with a choice of a divisor, namely the section $S$. Therefore, coupling this classical
theory with the modern tools of the MMP, one can understand
the pair $(X,S)$ as a stable pair in the sense of Koll\'ar, Shepherd-Barron and Alexeev; and produce a moduli space using the MMP. Our first result in this direction is the following (see Subsection \ref{Subsection:construction:parameter:space:EI} and Proposition \ref{Prop:we:have:the:universal:curve}): 

\begin{Teo}\label{Teo:intro:there:is:a:mod:space}
There is a proper Deligne-Mumford stack $\cE_I$ which parametrizes the following stable pairs. On the interior, it parametrizes pairs $(X,sS+\sum a_i F_i)$ consisting of a Weierstrass fibration $X$ and a $\mathbb{Q}$-divisor $sS+\sum a_i F_i$ where $S$ is the section and $F_i$ are some fibers. On the boundary, it parametrizes the stable surface pairs described in Corollary \ref{Cor:description:lc:degeneration}. The subscript $I$ is an admissible weight vector (see Definition \ref{Def:weighted:W:fib}) and keeps track of $s$ and $a_i$.

Moreover, if $\sX_I \to \cE_I$ is the universal surface, there is a family of curves $\sC_I \to \cE_I$ and a morphism $\sX_I \to \sC_I$ satisfying the following condition. For every point $p$ in the interior of $\cE_I$, the morphism $(\sX_I)_p \to (\sC_I)_p$ is the fibration to a curve in the definition of Weierstrass fibration. 
\end{Teo}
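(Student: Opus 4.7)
The plan is to build $\cE_I$ following the by-now-standard program for moduli of KSBA stable pairs (developed by Koll\'ar--Shepherd-Barron--Alexeev, Hacon--McKernan--Xu, and others). First I would fix the numerical invariants implicit in $I$ --- the weights $s$ and $a_i$, the base-curve genus, and the degree of the fundamental line bundle of the Weierstrass fibration --- and verify that these determine the volume $(K_X+sS+\sum a_iF_i)^2$ and $\chi(\cO_X)$ uniquely, so that the interior already forms a bounded family. Boundedness then extends to the full class (interior plus the degenerations described in Corollary \ref{Cor:description:lc:degeneration}) by the general boundedness results for stable pairs with fixed volume and coefficient set.

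Next I would embed a bounded family into a fixed Hilbert scheme via a uniformly very ample multiple of $K+\Delta$, and cut out the locally closed subscheme parametrizing triples $(X, sS, \sum a_i F_i)$ that are KSBA-stable of the prescribed type; local closedness of the stable locus in a flat family follows from Koll\'ar's results on divisorial sheaves and the behavior of log canonical thresholds in families. Taking the quotient by $\PGL$ gives an algebraic stack; finiteness of automorphism groups (since $K_X+sS+\sum a_iF_i$ is ample on each fiber) makes it Deligne--Mumford, and properness comes from the valuative criterion through stable reduction for pairs in dimension two. This produces $\cE_I$ together with its universal family $\sX_I \to \cE_I$.

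For the relative fibration, I would construct $\sC_I \to \cE_I$ as the relative Stein factorization of the universal family, or equivalently as the relative $\operatorname{Proj}$ of a suitable graded algebra built from the section. On the interior a Weierstrass fibration is recovered from its total space by $C = \operatorname{Proj}\bigl(\bigoplus_{n\ge 0} f_*\cO_X(nS)\bigr)$, and the section identifies this curve with $S$ itself. The plan is to extend this construction in families by showing that the relevant pushforwards commute with base change and define a flat family of proper curves over $\cE_I$, using the explicit description of the boundary surfaces from Corollary \ref{Cor:description:lc:degeneration} to verify flatness fiber by fiber.

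The main obstacles are twofold. On the moduli side, one needs to check that the boundary objects from Corollary \ref{Cor:description:lc:degeneration} are \emph{exactly} the KSBA limits, so that the valuative criterion produces no extraneous surfaces and the stack $\cE_I$ parametrizes only what is advertised. On the fibration side, the hard step is extending the Stein-factorization construction across the boundary of $\cE_I$: the limit stable surfaces may be reducible or have non-normal components, and one must argue that $\bigoplus_n \pi_*\cO_{\sX_I}(nS)$ still yields a flat family of proper curves whose formation commutes with arbitrary base change. This will typically require a vanishing statement for $R^i\pi_*\cO(nS)$ along fibers, verified component-by-component from the explicit description of the degenerations.
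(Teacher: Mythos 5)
Your outline for the moduli space itself is the generic KSBA program, but it is not the route the paper takes, and the difference matters. The paper does not embed a bounded family into a Hilbert scheme and cut out a locally closed ``stable locus'': defining that locus amounts to defining the moduli functor of stable \emph{pairs} over arbitrary (non-reduced) bases, which is precisely the delicate point in dimension two. Instead, the paper starts from the Abramovich--Vistoli space of twisted stable maps to $\overline{\mathcal{M}}_{1,1}$, builds over (the normalization of an atlas of) it an explicit family of stable Weierstrass fibrations by simultaneous blow-ups and contractions replacing multiple twisted fibers with minimal cusps, and then defines $\cE_I$ as the \emph{normalization of the closure of the image} of the induced map to the Kov\'acs--Patakfalvi moduli stack of stable surface pairs (Definition \ref{Def:EI:and:EIsn}); properness, DM-ness and boundedness are inherited from that stack and from the finiteness of the twisted-stable-maps space, with no PGL quotient and no boundedness theorem needed. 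Your remaining ``obstacle'' --- that the boundary consists exactly of the surfaces of Corollary \ref{Cor:description:lc:degeneration} --- is not a checkable side condition but the main content: it requires the stable reduction analysis via tsm limits, the MMP and the flips of La Nave (Theorem \ref{Teo:descripton:bir:trans:to:get:lcdeg}), for which you give no strategy.

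The genuine gap is the universal curve. The formula $C=\operatorname{Proj}\bigl(\bigoplus_n f_*\cO_X(nS)\bigr)$ is circular (it presupposes $f$), and the absolute section ring $\bigoplus_n H^0(X,\cO_X(nS))$ is useless here: since $S^2=-\deg\sL\le 0$, one has $h^0(nS)=1$ whenever $S^2<0$, so its Proj is a point. Likewise there is no morphism to Stein-factorize before the curve has been produced. Worse, on the boundary the pseudoelliptic components admit \emph{no} nonconstant map to a curve and must be contracted to points of $\sC_I$, so the fibration is not even equidimensional and no vanishing of $R^i\pi_*\cO(nS)$ will make $\bigoplus_n\pi_*\cO(nS)$ into a flat family of curves. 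The paper instead builds an auxiliary stack $\widetilde{\cE_I}$ of tuples (stable pair, prestable curve $C$, morphism $X\to C$, section $C\to S$) using Hom-stacks over $\mathfrak{M}_g$, and proves $\widetilde{\cE_I}\cong\cE_I$ via the Zariski-main-theorem-type Proposition \ref{proposition:iso:dm:stacks}; the essential inputs are that the fibration is uniquely determined by the pair (Lemma \ref{Lemma:W:fibration:remembers:the:fibration} on the interior, Lemma \ref{Lemma:extend:iso:last:section} on the boundary), so that every isomorphism of pairs automatically respects the fibration and the extra data descends. Some such rigidity statement is unavoidable in any construction of $\sC_I$, and it is absent from your proposal.
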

The problem of understanding Weierstrass fibrations and their moduli spaces through the MMP, has been approached by several authors. La Nave in \cite{LaNave} first finds the stable limits of Weierstrass fibrations, using the twisted stable maps of Abramovich and Vistoli. Brunyate in  \cite{brunyate2015modular} addresses
the case in which the Weierstrass fibrations are elliptic K3 surfaces. In loc. cit. the author produces a proper moduli space which on the interior parametrizes elliptic K3 surfaces, with weighted section and some weighted fibers. 

Recently Ascher-Bejleri pushed the results in \cite{LaNave} even further. In \cite{AB3} they consider pairs $(X \to C,S+F_{\mathcal{A}})$
consisting of an elliptic surface $X$ with its morphism $f:X \to C$ to a smooth curve;
and a $\mathbb{Q}$-divisor $S+\sum a_i F_i \subseteq X$ consisting of the marked section
$S$ and some marked fibers $F_i$.
Given a weight vector $\mathcal{A}:=(a_1,...,a_n)$, they
construct a proper moduli space $\mathcal{E}_\mathcal{A}$ which on the interior parametrizes such objects, and they prove a theorem analogous to Theorem \ref{Teo:intro:there:is:a:mod:space} for $\mathcal{E}_\mathcal{A}$ (\cite{AB3}*{Theorems 1.1 and 1.2}). The main goal of this project is to understand how the results in loc. cit. change, when the section $S$ comes with a weight $s \le 1$, and $a_i$ are small.

In the work of La Nave, the one of Ascher-Bejleri and in this project, the main technical difficulty boils down to the existence of \emph{pseudoelliptic} components (see Definition \ref{Def:pseudoelliptic}). These are surface pairs which appear as irreducible components of stable limits of a Weierstrass fibration. One can understand them as a birational model of a Weierstrass fibration $(X,S+\sum a_i F_i)$, obtained from $(X,S+\sum a_i F_i)$ by performing some birational transformations that contract the section $S$. These surface pairs may \emph{not} admit a nonconstant morphism to a curve, causing the fibration $\sX_I \to \sC$ in Theorem \ref{Teo:intro:there:is:a:mod:space} to be \emph{not} even pure dimensional.

The main advantage of working with a section marked with $s \ll 1$ lies in the following:
\begin{Teo}[Theorem \ref{Teo:no:psudo}]\label{Teo:intro:have:flat:fibration} If $s$ is small enough compared to the weights $a_i$, the morphism $\sX_I \to \sC_I$ of Theorem \ref{Teo:intro:there:is:a:mod:space} is equidimensional (a priori, \emph{not} flat), with irreducible fibers.
\end{Teo}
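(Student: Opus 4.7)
The strategy is to argue pointwise on $\cE_I$. Fix a closed point corresponding to a stable pair $(X, sS + \sum a_i F_i)$ with induced morphism $\pi \colon X \to C$, the corresponding fiber of $\sX_I \to \sC_I$. By the classification in Corollary \ref{Cor:description:lc:degeneration}, $X$ decomposes into irreducible components, each of which is either an \emph{elliptic} component mapping onto an irreducible component of $C$ as a Weierstrass fibration, or a \emph{pseudoelliptic} component whose image in $C$ is a single point. Failure of equidimensionality of $\pi$ occurs precisely over the images of pseudoelliptic components; failure of irreducibility of a fiber reduces either to the same cause, or to two elliptic components meeting over a non-nodal point of $C$. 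The proof therefore rests on excluding pseudoelliptic components when $s$ is sufficiently small relative to the $a_i$, together with checking that the admissible gluings of elliptic components described in Corollary \ref{Cor:description:lc:degeneration} yield irreducible fibers of $\pi$.

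\textbf{Excluding pseudoelliptics.} Pseudoelliptic components appear in the KSBA limit through contractions of the image of the section during the relative log MMP used in the construction of $\cE_I$ (following La Nave \cite{LaNave} and Ascher--Bejleri \cite{AB3}). On any Weierstrass component $Y$ produced in some semistable reduction, the section $S_Y$ lies in the MMP contraction locus precisely when $(K_Y + s S_Y + \sum a_i F_{i,Y}) \cdot S_Y < 0$; adjunction together with $S_Y^2 = -\deg \cL_Y$ (where $\cL_Y$ is the fundamental line bundle of $Y$) and $F_{i,Y} \cdot S_Y = 1$ rewrites this as
\[
(1-s)\deg \cL_Y + (2g(C_Y) - 2) + \sum_{F_i \text{ on } Y} a_i < 0.
\]
The plan is to show that there exists a threshold $s_0 = s_0(a_1, \dots, a_n) > 0$ such that the above quantity is non-negative on every component $Y$ that can appear in an admissible semistable reduction whenever $s < s_0$; then no section is contracted and no pseudoelliptic is produced. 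The cases $g(C_Y) \geq 1$ or $\sum_{F_i \text{ on } Y} a_i \geq 2$ are immediate, since the left hand side is then non-negative for every $s \le 1$. The hard case is $g(C_Y) = 0$ with small $\sum_{F_i \text{ on } Y} a_i$, where $s_0$ must be calibrated against the minimum possible value of $\deg \cL_Y$. I expect to combine stability of the pair with the boundedness of $\cE_I$ (Theorem \ref{Teo:intro:there:is:a:mod:space}) to either bound $\deg \cL_Y$ below, or rule out such components in stable limits by showing that they would force some other adjacent divisor to violate log canonicity; this case analysis is the principal obstacle.

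\textbf{Irreducibility and conclusion.} Once pseudoelliptic components are excluded, every irreducible component of $X$ is a Weierstrass fibration in the sense of Definition \ref{Def:minimal:W:fib}, whose fibers are irreducible by definition. Corollary \ref{Cor:description:lc:degeneration} further constrains two elliptic components of $X$ to meet only over nodes of $C$, along a single fiber on each side, and these fibers are identified to produce a single irreducible curve in $X$ over the node. Combining these observations, every fiber of $\pi$ is one-dimensional and irreducible, yielding the desired equidimensionality at every closed point of $\cE_I$. The main work is extracting the numerical threshold $s_0$ in the second step; once that is in hand, the remainder follows directly from the boundary classification.
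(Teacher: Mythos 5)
Your overall reduction (exclude pseudoelliptic components, then read off equidimensionality and irreducibility from Corollary \ref{Cor:description:lc:degeneration}) matches the paper's, but the step you yourself flag as "the principal obstacle" is exactly where the argument lives, and your plan for it would fail. You propose to choose $s_0$ so that $(K_Y+sS_Y+\vec{a}F_Y+E_Y)\cdot S_Y\ge 0$ on \emph{every} component that can occur, so that no section is ever contracted. This is impossible: by Lemma \ref{Lemma:conditions:when:to:contract:section} (note your displayed inequality also omits the contribution of the double locus $E_Y$), a rational component with one double-locus fiber, no marked fibers and $\deg\cL_Y=1$ has $(K_Y+sS_Y+E_Y)\cdot S_Y=(1-s)\deg\cL_Y-2+1<0$ for every $s>0$, so its section component is contracted no matter how small $s$ is. The paper's Proposition \ref{Prop:no:psdo:fixed:tsm:limit} resolves this with a dichotomy rather than a uniform sign condition: when a section component $C_1$ must be contracted, either the whole elliptic component is contracted (divisorially, or by a flip followed by contraction of the resulting pseudoelliptic) — which is harmless — or the pseudoelliptic $W$ survives in the stable model. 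In the latter case one takes a pseudomultisection $M\subseteq W$ avoiding the contracted point; stability of the final model gives $(K_Z+D)\cdot M>0$, and since all fibers of the tsm limit are irreducible, Observation \ref{Oss:all:fib:irred:implies:pulback} converts this into $(L^{(1)}-sS^{(1)})\cdot C_1>0$ while $L^{(1)}\cdot C_1<0$; lowering the coefficient of the section then prevents the contraction. That "surviving pseudoelliptics certify their own removability" argument is the missing idea, not a lower bound on $\deg\cL_Y$.

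There is a second gap in your setup. Arguing "pointwise on $\cE_I$" is circular: which surface pairs lie on the boundary of $\cE_I$ depends on $s$, so you cannot quantify over "components that can appear" without first fixing an $s$-independent object. The paper works with tsm limits (which do not depend on $s$), proves the threshold exists for each fixed tsm limit, and then gets a \emph{uniform} $\widetilde{s}$ by reducing to finitely many cases via the stratification of Proposition \ref{Prop:stratification:exists} together with Theorem \ref{Teo:stable:model:depends:only:on:num:data} (the stable reduction steps depend only on the numerical data). Your appeal to "boundedness of $\cE_I$" does not substitute for this, since a priori the threshold produced for one degeneration could degrade along a family. Finally, on irreducibility: two elliptic components do not get their fibers "identified into a single irreducible curve" over a node of $C$; rather the fiber over the node is the common (twisted) fiber in the double locus, and one must also rule out intermediate fibers surviving on the elliptic components, which again comes out of the analysis of the stable reduction steps rather than from the boundary classification alone.
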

In particular Theorem \ref{Teo:intro:there:is:a:mod:space} and Theorem \ref{Teo:intro:have:flat:fibration} give a compactification of the moduli space of stable Weierstrass fibrations by a proper DM stack, such that:
\begin{itemize}
\item The boundary parametrizes simpler objects: there are no pseudoelliptic components;
\item The fibers of the morphism $\sX_I \to \sC$ are \emph{irreducible} (possibly non-reduced) curves.
\end{itemize}

Now, the moduli spaces constructed in Theorem \ref{Teo:intro:there:is:a:mod:space}, depend on the weights $s$ and $a_1,...,a_n$. It is natural to ask how these moduli change when we vary $s$ and $a_i$. In fact, Ascher and Bejleri investigate a similar question for their moduli spaces $\mathcal{E}_\mathcal{A}$.
They produce wall crossing morphisms, when the weights $\mathcal{A}$ change (\cite{AB3}*{Theorem 1.5}), which generalize the ones of the Hassett spaces \cite{Hassett} to the case of elliptic surfaces. Therefore, it is natural to ask whether our moduli spaces also preserve this wall-crossing behaviour. The answer is yes.

In particular, assume that there are $0 < t\le s$ and $0 \le b_i \le a_i$ such that, for every stable Weierstrass fibration $(X,sS+\sum a_i F_i)$, the surface pair $(X,tS+\sum b_i F_i)$ is still stable.
Then the assignment $(X,sS+\sum a_i F_i) \mapsto (X,tS+\sum b_i F_i)$ induces a morphism of moduli
$$\xymatrix{
{\left\{\begin{matrix} \text{Stable Weierstrass fibrations} \\ \text{with weights }(s,a_i)
\end{matrix} \right \}} \ar[r]^-{r} & {\left\{\begin{matrix} \text{Stable Weierstrass fibrations } \\ \text{with weights }(t,b_i)
\end{matrix} \right\}} 
}
$$
But Theorem \ref{Teo:intro:there:is:a:mod:space} produces a compactification of the moduli of stable Weierstrass fibrations. Therefore it is natural to ask whether there is a morphism $R$ extending $r$ as below: 
$$\xymatrix{
{\left\{\begin{matrix} \text{Stable Weierstrass fibrations} \\ \text{with weights }(s,a_i)
\end{matrix} \right \}} \ar[r]^-{r} \ar@{_{(}->}[d] & {\left\{\begin{matrix} \text{Stable Weierstrass fibrations } \\ \text{with weights }(t,b_i)
\end{matrix} \right\}} \ar@{^{(}->}[d]\\
\overline{{\left\{\begin{matrix} \text{Stable Weierstrass fibrations} \\ \text{with weights }(s,a_i)
\end{matrix} \right \}}} \ar@{.>}[r]^-{R} & \overline{{\left\{\begin{matrix} \text{Stable Weierstrass fibrations } \\ \text{with weights }(t,b_i)
\end{matrix} \right\}}} 
}
$$
In Theorems \ref{Teo:wall:and:chamb:dec:stable:models} and \ref{Teo:wall:crossings:morphisms} we show that in fact such an $R$ exists:
\begin{Teo}\label{Teo:intro:extension:morphism}The morphism $r$ defined on $k$-points as above is algebraic, and does extend to a morphism $R_{I,I'}:\cE_I \to \cE_{I'}$. 
Moreover, these reduction morphisms induce a finite wall and chamber decomposition on the space of all admissible weights, such that if $I$ and $I'$ are in the same open chamber, then $\cE_I \cong \cE_{I'}$.
\end{Teo}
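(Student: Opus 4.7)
The plan is to extend the set-theoretic reduction $r$ to a morphism of stacks $R_{I,I'}: \cE_I \to \cE_{I'}$ and then to exhibit a finite wall-and-chamber structure on the space of admissible weights.

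First, I would define $R_{I,I'}$ pointwise. Given a closed point $(X, sS + \sum a_i F_i) \in \cE_I$, since $t \le s$ and $b_i \le a_i$, the pair $(X, tS + \sum b_i F_i)$ still has semi log canonical singularities (this property is preserved under decreasing rational coefficients). However $K_X + tS + \sum b_i F_i$ need not be ample. I would take the log canonical model of $(X, tS + \sum b_i F_i)$ via
\[
X' := \operatorname{Proj} \bigoplus_{m \ge 0} H^0\bigl(X, \omega_X^{[m]}(\lfloor m(tS + \textstyle\sum b_i F_i)\rfloor)\bigr),
\]
assuming the log canonical ring is finitely generated; finite generation in this setting follows from the fact that stable pairs in $\cE_I$ are bounded and one can reduce to the normal case via Koll\'ar's gluing, where results of Birkar-Cascini-Hacon-McKernan apply. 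The resulting pair is stable with weights $(t, b_i)$ and so belongs to $\cE_{I'}$.

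To promote this to a morphism of stacks, I would work in families. For a family $(\sX, s\sS + \sum a_i \sF_i) \to T$ of stable pairs, I would set $\sX' := \operatorname{Proj}_T \bigoplus_{m} f_*\omega_{\sX/T}^{[m]}(\lfloor m(t\sS + \sum b_i \sF_i)\rfloor)$. The delicate point is that this operation be compatible with arbitrary base change on $T$; this follows from invariance of log plurigenera for stable families, which is in turn reducible to cohomology and base change once one knows that the fibers all have the expected log canonical model. Alternatively, and more robustly, I would argue via the valuative criterion: properness of $\cE_{I'}$ established in Theorem \ref{Teo:intro:there:is:a:mod:space} guarantees a unique extension of the reduced family over any DVR, so uniqueness and formal smoothness (Artin's criteria) upgrade the set-theoretic assignment to an algebraic morphism. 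Agreement with $r$ on the interior is automatic because for a genuine Weierstrass fibration, reducing the weights does not alter the fibration or the section, it only changes the polarization.

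Finally, for the wall-and-chamber decomposition of the space of admissible weights, I would use boundedness. The set of pairs parametrized by $\cE_I$ as $I$ ranges over admissible weights is bounded, so the possible numerical classes of extremal rays $R$ that can appear in a reduction MMP lie in a finite set. Each inequality $(K_X + sS + \sum a_i F_i) \cdot R \ge 0$ cuts out a rational affine hyperplane in the weight space, and finitely many such conditions partition the space of admissible weights into finitely many chambers. Within an open chamber $C$, the stability condition is locally constant and the canonical model is unchanged under weight variation, so the identity on families induces $\cE_I \cong \cE_{I'}$ for $I, I' \in C$. I expect the main obstacle to be establishing the uniform boundedness of the stable pairs parametrized by $\cE_I$ as $I$ varies, since this is what guarantees the finiteness (rather than mere local finiteness) of the wall set; this is the standard heavy technical input in KSBA-type wall-crossing theorems, and in the present setting one reduces to boundedness of Weierstrass fibrations with fixed discrete invariants coming from the description of the universal family in Theorem \ref{Teo:intro:there:is:a:mod:space}.
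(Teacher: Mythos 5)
Your outline reproduces the broad shape of the argument (fiberwise log canonical models, base-change via a vanishing statement, finiteness of walls from boundedness), but two of the points you flag as "delicate" or defer are exactly the technical core of the proof, and as written they are gaps rather than reductions. First, the compatibility with base change: saying that invariance of log plurigenera "is reducible to cohomology and base change once one knows that the fibers all have the expected log canonical model" is circular, since knowing the fibers have the expected model is what base-change compatibility is supposed to deliver. The paper's actual input is Theorem \ref{Teo:cohom:vanishing}: for a degeneration over a DVR with lc total space and relatively nef, generically log-big log canonical divisor, one has $R^if_*\cO_X(m(K_X+D))=0$ by Fujino's vanishing, because the lc centers of the total space dominate the base; this is what makes $\operatorname{Proj}$ of the relative log canonical algebra commute with base change when one pushes the weights from an open chamber down to a wall. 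Your alternative route via "the valuative criterion plus Artin's criteria" does not construct a morphism of stacks from a map on $k$-points; there is no formal smoothness available here.

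Second, you never address the $\mathbb{Q}$-Cartier problem, which is where most of the work in Sections \ref{Section:QCartier:threshold} and \ref{Section:cohom:vanishing:and:w:crossing} goes. After a flip of La Nave the section (or an intermediate fiber component) passes through points of the limit surface where it fails to be $\mathbb{Q}$-Cartier, so the sheaf $\omega_{\sX/T}^{[m]}(\lfloor m(t\sS+\sum b_i\sF_i)\rfloor)$ in your display is not a priori a well-behaved family, and one cannot simply "change the polarization" on the universal family. The paper handles this in two ways: the $\mathbb{Q}$-Cartier threshold $w(I)>0$ (Corollary \ref{Corollary:Qcartier:threshold:guarantees:Qcartier}, which is where normality of $\cE_I$ is used) guarantees the relevant universal divisors are $\mathbb{Q}$-Cartier inside an open chamber, and at a wall one does \emph{not} reduce weights directly but instead shows the induced map is an isomorphism via a stacky Zariski-main-theorem statement (Propositions \ref{proposition:iso:dm:stacks} and \ref{Prop:iso:se:riduco:s}). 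Finally, for finiteness of the walls your appeal to boundedness of the surfaces is not quite enough: the walls are cut out by intersection numbers $(L_{Y_i})^2=0$ and $(L_{Y_i}\cdot C_i)=0$ computed on components of special fibers of one-parameter degenerations, so one needs to control the threefold degenerations, not just the surfaces. The paper does this by stratifying an atlas of the space of twisted stable maps into finitely many strata with constant numerical data (Proposition \ref{Prop:stratification:exists}) and proving that the entire stable reduction process depends only on that numerical data (Theorem \ref{Teo:stable:model:depends:only:on:num:data}), reducing to finitely many fixed tsm limits. Your chamber-interior isomorphism claim is correct in spirit but again silently assumes the $\mathbb{Q}$-Cartier property of the difference divisor on the universal family.
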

The main example we keep in mind for understanding such a wall and chamber decomposition, is the work of Hassett in \cite{Hassett}. There are at least two generalizations of \cite{Hassett}, in the case of higher dimensional varieties. One is the work of Ascher-Bejleri we discussed above (\cite{AB3}). A second one is the paper of Alexeev, on weighted hyperplane
arrangements (\cite{Alexeevhyper}).

The main difference between Theorem \ref{Teo:intro:extension:morphism} and the analogous \cite{AB3}*{Theorem 1.5} lies in its proof. In fact, in loc. cit. the authors prove their result through a vanishing theorem (\cite{AB3}*{Theorem 1.4}), to prove that the log-plurigenera commutes with base change. Using that our objects admit a degeneration to a log-canonical pair, in the case where $a_i$ are small, we provide a simplified version of \cite{AB3}*{Theorem 1.4} in Theorem \ref{Teo:cohom:vanishing}. The main advantage of Theorem \ref{Teo:cohom:vanishing} is that it does \emph{not} rely on an explicit description of the stable limits of a Weierstrass fibration, and it holds in higher dimensions. Similar results are proved by Koll\'ar in \cite{kollar2018log} and \cite{kollar2018logs}.

Finally, we provide a more explicit description of the reduction morphisms of Theorem \ref{Teo:intro:extension:morphism}.
To achieve that, we attach a combinatorial object to every surface pair parametrized by $\cE_I$, namely the \emph{refined numerical data} (Definition \ref{Def:refined:num:data}).
The main feature of such a combinatorial gadget lies in the following theorem (see Corollary \ref{Cor:image:determided:by:rnd}):
\begin{Teo}
Given a point $p: \spec(k) \to\cE_I$, its image through $R_{I,I'}$ is uniquely determined by the refined numerical data of $p$. 
\end{Teo}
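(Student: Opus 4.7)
The plan is to show that $R_{I,I'}(p)$ admits a purely combinatorial description in terms of the refined numerical data of $p$, so that two points with the same refined numerical data must have the same image. By construction, $R_{I,I'}$ sends $p = [(X, sS + \sum a_i F_i)]$ to the $I'$-stable model of $(X, tS + \sum b_i F_i)$, which one obtains by running the MMP on the latter pair. The key observation is that every step of this MMP — which components get contracted, which flips occur, and what the resulting pair looks like — is dictated by intersection numbers of the new log canonical divisor with curves on $X$, and these numbers are recorded (or can be recovered) from the refined numerical data.

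The first step is to use the classification of the surfaces parametrized by $\cE_I$ given in Corollary \ref{Cor:description:lc:degeneration}: each irreducible component is of one of finitely many explicit types (minimal Weierstrass model, ruled or pseudoelliptic component, etc.), and on each such component the intersection pairing between the section, the fibers, and the double locus is fully determined by local numerical invariants. Precisely these invariants — the types of components, their adjacency along the double locus, the fiber multiplicities and weights, and the discrepancies of the section and fiber divisors — are what the refined numerical data of Definition \ref{Def:refined:num:data} records. Consequently, given only the refined numerical data of $p$, one can compute $(K_X + tS + \sum b_i F_i) \cdot C$ for every curve $C$ appearing on any component.

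The second step is to run the MMP combinatorially. At each stage, one identifies the extremal rays on which the new log canonical divisor is negative, contracts or flips them, and produces a new pair. I would check, case by case along the list of component types, that the refined numerical data of the output can be read off from the refined numerical data of the input together with the choice of the contracted/flipped curve — and that choice is itself determined by the numerical data. Iterating, the final log canonical model, which is $R_{I,I'}(p)$, is determined by the refined numerical data of $p$ alone.

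The main obstacle will be Step two: verifying, for each type of birational modification occurring in the MMP for our surfaces (divisorial contractions of $(-1)$- and $(-2)$-curves on ruled components, contractions of sections on pseudoelliptic components, and the flips that appear when crossing walls in weight space), that the resulting component types and numerical invariants stay within the combinatorial framework of the refined numerical data. Once this closure under MMP steps is established, the theorem follows, since the entire reduction becomes an iterative procedure on a finite combinatorial object.
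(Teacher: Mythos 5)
Your plan is in the right spirit --- the paper's own argument also reduces the statement to the fact that the steps of stable reduction are dictated by, and propagate, the refined numerical data (Theorem \ref{Teo:stable:model:depends:only:on:num:data}) --- but there is a genuine gap in where you run the MMP. You propose to take ``the $I'$-stable model of $(X,tS+\sum b_iF_i)$'' and run a surface MMP on $X$ itself. For a boundary point this is not well defined and is not what $R_{I,I'}$ does: the divisor $tS+\sum b_iF_i$ on a boundary surface need not be $\mathbb{Q}$-Cartier, the log canonical divisor need not be nef, and --- most importantly --- the birational transformations governing the wall-crossing include flips of La Nave, which are flips of a \emph{threefold} $(\cX,\cD)\to\spec(R)$ and cannot be seen as extremal contractions on the surface $X_p$ alone. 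When a section component of the special fiber is contracted, the adjacent component acquires a \emph{new} intermediate fiber whose intermediate component is the flipped curve; that curve does not exist on $X_p$ before the flip, and its self-intersection (hence, by Proposition \ref{Prop:recognize:the:intermediate:from:int:pairing}, the new germ) is computed from the threefold. The correct first move, which your proposal omits, is to lift $p$ to a stable degeneration $(\cX,s\cS+\vec{a}\cF)\to\spec(R)$ with special fiber $p$ (possible since $\cE_I^\circ$ is dense), and to run the relative MMP there; the ``closure under MMP steps'' you defer to your Step two is exactly Lemma \ref{Lemma:recovering:refined:numerical:data} and Observation \ref{Oss:we:can:recover:selfintersection:lc:on:psudo}, whose proofs use intersection theory on the threefold, not case analysis of surface component types.

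Two further points. First, you then need the compatibility that the special fiber of the stable model of the degeneration is $R_{I,I'}(p)$ --- the commutative square at the end of the paper's proof; without it, determining the threefold stable model says nothing about the image of the point under the moduli map. Second, your description of the refined numerical data is inaccurate: Definition \ref{Def:refined:num:data} records the dual weighted graph of $(S_p,\vec{c}F|_{S_p})$, the numbers $K_Y.C_i$ and $S.C_i$ for section components, and $((K_Y+sS+\vec{c}F)|_Z)^2$ for components $Z$ --- not discrepancies, and not intersection numbers with arbitrary curves $C$ on $X$. The argument only needs the candidates for the exceptional locus supplied by Theorem \ref{Teo:generalization:of:appendix} (section components and whole components of the special fiber), which is why these particular invariants suffice.
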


One can understand the refined numerical data as a generalization, to the case of elliptic surfaces, of the dual weighted graph of the Hassett stable curves. Indeed, the reduction morphisms of the Hassett spaces (\cite{Hassett}), on $k$-points, can be explicitly understood using the dual weighted graph of a weighted stable curve.
In particular, one can use the refined numerical data to understand what birational transformations we need to perform to go from the surface pairs parametrized by $p$, to the one parametrized by $R_{I,I'}(p)$.

The paper proceeds as follows. In Section
\ref{Section:background:tsm:and:stable:pairs}, we recall the properties that we will use about twisted stable maps, and the minimal model program. In Section \ref{Section:definitions:of:the:objects:we:parametrize} we recall the background definitions about elliptic surfaces that we need for the remaining part of the paper, and we define the objects parametrized by $\cE_{I}$.
In Section \ref{Section:intermediate:fibers} 
we further study the numerical properties and the singularities for the objects parametrized by $\cE_I$.
Section \ref{Section:construction:of:the:moduli:surface:pairs} is devoted to the construction of the moduli space $\cE_I$, using the results in \cite{KP}.
In Section \ref{section:singularities:threefold}, we study the surfaces parametrized on the boundary of $\cE_I$, using the MMP and the results of the author in Appendix B to \cite{AB3}. Section
\ref{Section:QCartier:threshold} is the most technical section. First, we study the steps of the MMP one has to perform to obtain the stable limits in $\cE_I$. Then we show that there is a finite wall and chamber decomposition on the set of all possible weights, such that for $I$ and $I'$ in the same open chamber, $\cE_I$ and $\cE_{I'}$ parametrize the same objects. We begin Section \ref{Section:cohom:vanishing:and:w:crossing} by outlining the strategy we follow to produce $R_{I,I'}$, and we apply such a strategy to show that there are wall-crossing morphisms for the moduli spaces $\cE_I$.
In Section \ref{Section:universal:curve} we show that there is a universal curve $\sC_I \to \cE_I$ as in Theorem \ref{Teo:intro:there:is:a:mod:space}, and we prove Theorem \ref{Teo:intro:have:flat:fibration}.

We work over an algebraically closed field of characteristic 0. 

\begin{bf}Acknowledgements. \end{bf} I thank my advisors Dan Abramovich for his constant support and many helpful discussions. I am also grateful for insightful
conversations with Shamil Asgarli, Dori Bejleri, Justin Lacini and Luca Schaffler. I am thankful to Kenneth Ascher who carefully read a preliminary draft of this project.  Research supported in part by funds from
NSF grant DMS-1500525.
\section{Background on Twisted stable maps and stable pairs}\label{Section:background:tsm:and:stable:pairs}
This section is divided into three subsections.
In the first one, we recall the relevant definitions from \cite{AV1}, \cite{AOV} and \cite{AV2}. In the second one, we briefly discuss
the results about the
MMP that are needed in the remaining part of the paper. In the last one we focus on stable pairs.

\subsection{Twisted stable maps} In this first subsection we recall the results in \cite{AV2} that we need in the remaining part of the paper.
We begin with the definition of twisted stable maps. Recall also that for us $\operatorname{char}(k)=0$, so DM stacks are tame.
\begin{Def}
 Fix a base scheme $S$ and a DM stack $\sM$ with projective coarse moduli space $M$, and fix an ample line bundle on $M$.
 A \underline{twisted stable $n$-pointed map of genus $g$ and degree $d$ to $\sM$}, is the data of a triple $(\cC \to S ,\{\Sigma\}_{i=1}^n \to \cC, \cC \to \sM)$
 consisting of:
 \begin{itemize}
  \item A DM stack $\cC$ and a proper morphism $\cC \to S$ of relative dimension 1, such that étale locally $\cC \to S$ is a nodal curve;
   \item $n$ closed substacks $\Sigma_i \to \cC$, with coarse spaces $\sigma_i$ such that $\Sigma_i \to S$ is an étale gerbe;
  \item If $\pi:\cC \to C$ is the coarse space of $\cC$, then $\pi$ is an isomorphism over the smooth points of
  $\cC \to S$ away from $\Sigma_i$;
  \item A representable morphism $\cC \to \sM$, such that the induced morphism on coarse spaces $(C, \{\sigma_i\}_{i=1}^n) \to M$
  is a Kontsevich stable map of degree $d$, from a family of $n$-pointed genus $g$ curves;
  \end{itemize}
\end{Def}

When $S=\spec(k)$, one can understand $\cC$ as an orbifold nodal curve, with stacky structure along some smooth points (corresponding to $\Sigma_i$) and on some of the nodes.
The second bullet point ensures that the stacky structure along the smooth points of $\cC$ varies regularly.

Now, one can define a category fibered over $\cS ch/\spec(k)$, having as objects twisted stable $n$-pointed maps of genus $g$ and degree $d$ to $\sM$.
The morphisms from $(\cC \to S ,\{\Sigma\}_{i=1}^n \to \cC, \cC \to \sM)$ to
$(\cC' \to S' ,\{\Sigma'\}_{i=1}^n \to \cC', \cC' \to \sM)$ are a morphism $S \to S'$, and morphisms $\cC \to \cC'$ which induces an isomorphism
$\cC \to S \times_{S'} \cC'$, and such that the obvious
diagrams commute.
Following the notation in \cite{AV2}, we will denote this fibered category with $\mathcal{K}_{g,n}(\sM,d)$.

In \cite{AV2}*{Theorem 1.4.1} the authors, among other things, prove the following:
\begin{Teo}
 $\mathcal{K}_{g,n}(\sM,d)$ is a proper DM stack. 
\end{Teo}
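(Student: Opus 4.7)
The plan is to build $\mathcal{K}_{g,n}(\sM,d)$ in layers on top of the classical Kontsevich stack $\overline{\mathcal{M}}_{g,n}(M,d)$ of stable maps to the projective scheme $M$, which is already known to be a proper DM stack. Over it I would first construct the stack of twisted curve structures, and then the stack of representable morphisms to $\sM$.

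First, given a stable map $(C,\sigma_1,\ldots,\sigma_n)\to M$, a compatible twisted curve $\cC\to C$ is trivial away from the nodes and marked points and, at each such point, is essentially a root-stack/gerbe construction specified by the order of the local cyclic stabilizer together with an \'etale gerbe class. Relative to $\overline{\mathcal{M}}_{g,n}(M,d)$, the stack parametrizing such twisted structures is therefore a disjoint union of smooth algebraic substacks indexed by the discrete combinatorial choice of local orders, with the gerbe data contributing \'etale gerbe structure rather than obstructions to algebraicity.

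Next, over the universal twisted curve I would form the Hom stack of representable morphisms to $\sM$. Algebraicity of such a Hom stack is standard for a proper twisted curve into a DM target, representability is an open condition, and imposing Kontsevich stability of the map on coarse spaces cuts out a further open substack which one identifies with $\mathcal{K}_{g,n}(\sM,d)$. The Deligne-Mumford property follows because representability of $\cC\to\sM$ forces infinitesimal automorphisms of a twisted stable map to inject into those of the underlying Kontsevich map, which are trivial by stability.

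The substantive step is properness, proved via the valuative criterion over a complete DVR $R$ with fraction field $K$. Separatedness is straightforward: two extensions must agree on the underlying Kontsevich map by separatedness of $\overline{\mathcal{M}}_{g,n}(M,d)$, and the twisted structure and representable morphism are then determined on the dense smooth locus and extend uniquely by the uniqueness of root stacks. For existence, one begins with a twisted stable map over $\spec(K)$, extends its underlying Kontsevich stable map to $\spec(R)$ using properness of the Kontsevich stack, and after a ramified base change $R\to R'$ arranges that the local stacky orders are constant in the family. One then imposes root-stack structures at the nodes and along the marked sections to produce a candidate twisted curve $\cC_R$. The key tool, and the main technical obstacle, is the purity lemma of Abramovich-Vistoli: a representable morphism from the complement of a codimension-two closed substack of a smooth DM stack into a tame DM stack extends uniquely across that substack. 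Applied to the complement of the stacky nodes and marked points in $\cC_R$, purity produces the extension of $\cC_K\to\sM$ to all of $\cC_R$, once the orders have been chosen to match the ramification of the generic map around each node and marked section. This completes the valuative criterion and hence properness.
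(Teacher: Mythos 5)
This theorem is not proved in the paper at all: it is quoted verbatim as background from Abramovich--Vistoli (\cite{AV2}, Theorem 1.4.1), so there is no in-paper argument to compare against. Your outline is, in substance, a compressed version of the strategy of the original Abramovich--Vistoli proof: build the stack in layers over the Kontsevich stack of stable maps to the coarse space $M$, and establish properness via the valuative criterion with the purity lemma as the engine that extends the representable morphism across the codimension-two locus of the total space of the family of twisted curves. So the approach is the right one, and the separatedness and existence steps are correctly identified.

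Two points are genuine gaps rather than mere compression. First, quasi-compactness: your construction exhibits $\mathcal{K}_{g,n}(\sM,d)$ as an open substack of a Hom-stack lying over a disjoint union indexed by local stacky orders, which is a priori only \emph{locally} of finite type, and the valuative criterion does not yield properness without finite type. You must bound the orders of the cyclic stabilizers at the nodes and markings (this uses representability of $\cC\to\sM$ together with the fact that $\sM$ is a fixed finite-type DM stack, so its automorphism groups have bounded order, plus the bound on the number of nodes coming from $g$ and $d$); this boundedness is a named step in \cite{AV2} and cannot be omitted. Second, your justification of the DM property is not correct as stated: automorphisms of a twisted stable map do \emph{not} inject into automorphisms of the underlying Kontsevich map, because a twisted curve carries ghost automorphisms (products of $\mu_r$ supported at the stacky nodes) acting trivially on the coarse curve, and some of these survive as automorphisms of the twisted stable map. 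The correct statement is that the automorphism group is an extension of a subgroup of the (finite, by stability) automorphisms of the coarse map by a subgroup of the ghost automorphisms, which in characteristic $0$ (or in the tame setting) form a finite \'etale group scheme; this is what gives an unramified diagonal.
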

\subsection{Minimal model program}
In this subsection we recall the definitions and constructions of the MMP and the moduli of stable pairs we need.
For a reference on the definitions of the singularities we will deal with, one can consult \cite{KM} and \cite{Kollarsing}.
\begin{Def}\label{Def:lc:sing}
 Let $X$ be a normal variety, let $D_i \subseteq X$ be some prime divisors and let $\Delta:=\sum a_i D_i $ be a linear combination
 with $a_i \in \mathbb{Q}_{\le 1}$. The pair $(X,\Delta)$ is \underline{log-canonical}, or lc, (resp. \underline{Kawamata-} \underline{log-terminal}, or klt)
 if $K_X+\Delta$ is $\mathbb{Q}$-Cartier (resp. $\mathbb{Q}$-Cartier and $a_i<1$) and,
 given a log-resolution $f:Y \to X$ of $(X,\sum D_i)$, with exceptional divisors $\{E_j\}_{j=1}^n$,
 for $m$ divisible enough, we can write
 $$\cO_Y(m(K_Y+\sum a_i f_*^{-1}(D_i))) \cong f^*(\cO_{X}(m(K_X+\sum a_i D_i)))\otimes \cO_Y(\sum m b_j E_j)$$
 with $b_j \ge -1$ (resp. $b_j >-1$).
\end{Def}
In what follows, we will always assume that $a_i \ge 0$.
For example, if $\Delta =0$ and $X$ is a surface,
Du Val singularities are klt, and elliptic singularities are lc but not klt.
For an example of a normal surface singularity which is not lc one can take $x^4+y^4+z^4=0$.

The standard generalization of Definition \ref{Def:lc:sing} to schemes which are not normal is the following:
\begin{Def}
 Let $X$ be a reduced $S_2$ scheme, which in codimension 1 has only nodal singularities. Let $D_i \subseteq X$ be
 some irreducible divisors, which intersect the smooth locus of $X$, and let $\Delta:=\sum a_i D_i $ be a linear combination
 with $a_i \in \mathbb{Q}_{\le 1}$. Consider $n:X^n \to X$ the normalization of $X$, let $D \subseteq X^n$ be the preimage of the double locus of $X$,
 and let $\Delta^n:=n_*^{-1}(\Delta)$.
 The pair $(X,\Delta)$ is \underline{semi-log canonical} (or slc) if:
 \begin{enumerate}
  \item $K_X+\Delta$ is $\mathbb{Q}$-Cartier, and
  \item The pair $(X^n,D+\Delta^n)$ is lc.
 \end{enumerate}
 Moreover, a slc pair $(X,\Delta)$ is stable if $K_X+\Delta$ is ample and $\Delta$ is effective.
\end{Def}
Now, assume we are given a lc pair $(X,D)$, with $\dim(X) \le 3$ and $K_X+D$ big. It is proven in \cite{KMMcK} that
$\bigoplus_m H^0(\cO_X(\lfloor m K_X+mD \rfloor))$ is a finitely generated algebra. If we define
$X^s:=\operatorname{Proj}(\bigoplus_m H^0(\cO_X(n m (K_X+D))))$ for $n$ divisible enough, there is a birational morphism
$\pi:X \dashrightarrow X^s$. Moreover, if $D^s:=\pi_*(D)$, then $(X^s,D^s)$ is a stable pair. We define $(X^s,D^s)$ to be the \underline{stable} \underline{model} of $(X,D)$.
One can understand $(X^s,D^s)$ as \emph{the} birational model of $(X,D)$ which is stable.
\subsection{Stable pairs}\label{Subsection:stable:pairs} In the previous subsection,
we recalled the definition of stable pairs, as a canonical birational model of a lc pair. 
Since such a canonical model is unique, one could try to construct a moduli space of stable pairs: we review the results in that direction that we will need.

In dimension 1, the stable pairs $(X,\Delta)$ are the Hassett stable curves (\cite{Hassett}).
In loc. cit. the author introduces a
smooth
DM stack, $\overline{\mathcal{M}}_{g,\mathcal{A}}$, which is a moduli space for stable pairs $(C,\Delta)$ where $C$ is a curve of genus $g$ and the coefficients of $\Delta$ are in $\mathcal{A}$.

For higher dimensional stable pairs, the definition of the moduli functor presents some difficulties.
Indeed, for each slc curve $(C,\sum_{i=1}^n a_i p_i)$,
the divisor $\supp(\sum a_i p_i)$ is a Cartier divisor. This may not hold in higher dimensions, and one needs to find a suitable definition for a family of divisors. 

It turns out that if the base scheme $S$ is normal, then defining a family of divisors is a more approachable problem (see \cite{kollarbook}*{Chapter 4}, in particular Theorem 4.21). In particular, there is a well defined notion of stable varieties over $S$, which is the following:
\begin{Def}[see \cite{KP}*{Definition 2.11 and 5.2}]\label{Def:stable:pairs:normal:base}
 Let $S$ be a \emph{normal} scheme, and let $\mathcal{A} \subseteq [0,1]$ a finite subset closed
 under addition. A \underline{stable variety $(X,D) \to S$} consists of a proper flat morphism $f:X \to S$ of relative dimension $n$,
 with a $\mathbb{Q}$-divisor $D \subseteq X$. Moreover, we require that:
 \begin{itemize}
  \item For every
 $s \in S$ we have that $D_s \subseteq X_s$ is a divisor with coefficients in $\mathcal{A}$;
  \item For every $s \in S$, the restriction $D_s \subseteq X_s$ is a divisor and the pair $(X_s,D_s)$ is stable, and
  \item $K_X+D$ is $\mathbb{Q}$-Cariter.
 \end{itemize}

\end{Def}

In \cite{KP} the two authors, among other things, present a particular moduli pseudo-functor of stable surface pairs (\cite{KP}*{Definition 5.6}),
and construct a proper DM stack
which represents it. We summarize the results we need about their construction (see \cite{KP}*{Section 5}).
\begin{Def}\label{Def:KP:stable:pairs}
 Let $I \subseteq [0,1]$ be a finite subset closed under addition, let $v,n,m > 0$ be three integers, and let $S$ be a scheme.
 A \underline{family of stable pairs} with coefficient set in $I$, volume $v$ and dimension $n$ over $S$,
 is the data of a flat proper morphism $f:X \to S$ of relative dimension $n$, a line bundle $\sL$ on $X$, and a map
 $\phi: \omega_{X/S}^{\otimes m} \to \sL$. This data must satisfy the following requirements:
 \begin{itemize}
  \item $\sL$ is a relatively very ample line bundle with $R^if_*(\sL)=0$ for $i>0$, and $(\sL)^n =vm^n$;
  \item For every $s \in S$, the morphism $\phi_s$ is an isomorphism at the generic points and the codimension one singular points of $X_s$;
  \item For every $s \in S$, the morphism $\phi_s$ it determines
  a divisor $D_s$ with coefficients in $I$, such that $(X_s,D_s)$ is slc and $\sL_s \cong \cO_{X_s}(m(K_{X_s}+D_s))$.
 \end{itemize}
\end{Def}
We remark that for $m$ big enough, Definition \ref{Def:KP:stable:pairs} and Definition \ref{Def:stable:pairs:normal:base} agree over a normal base. Therefore, when we have a normal base $B$, we will use Definition \ref{Def:stable:pairs:normal:base} and we will write $(X,D) \to B$ to denote a family of stable pairs over $B$. 

Now, fix a number $v >0$. Then for $m$ divisible enough, in
\cite{KP}*{Notation 5.13 and Proposition 5.14} the two authors construct a proper DM stack $\sM_{n,v,I}$
which has as objects over a reduced base $S$, families of stable pairs of dimension $n$, coefficient set in $I$, and volume $v$ over $S$.
For a description of the morphisms, see Definition 5.6 in loc. cit.
To fix the notation, we will use $\sM_{n,v,I}$ for a moduli space of stable pairs. This choice is not essential, since we will work only over seminormal
(in fact, most of the time normal) bases (see also Proposition \ref{proposition:iso:dm:stacks}).
\begin{Notation}
We will denote $\sM_{v,I}:=\sM_{2,v,I}$.
\end{Notation}

\section{Background on elliptic surfaces}\label{Section:definitions:of:the:objects:we:parametrize}
In the first subsection we recall the definitions due to La Nave \cite{LaNave} and Ascher-Bejleri \cite{AB3} that we need in the rest
of the paper. In the second one we recall some of the results in \cite{LaNave}.
\subsection{Weierstrass fibrations and elliptic surfaces} 
We start by recalling the definition of minimal Weierstrass fibration (see \cite{Miranda}*{Definition II.3.2 and Proposition III.3.2}).
\begin{Def}\label{Def:minimal:W:fib}
 A \underline{minimal Weierstrass fibration} is a normal, projective and irreducible surface $X$ with a surjective morphism $f:X \to C$ to a smooth curve $C$, and a section $\sigma:C \to X$,
 satisfying the following conditions:
 \begin{itemize} 
\item Every fiber of $f$ is irreducible, and is either a smooth genus 1 curve, or a rational curve with either a node or a cusp, and
  \item $\sigma(C)$ is contained in the smooth locus of $f$, and the singularities of $X$ are Du Val.
 \end{itemize}
\end{Def}
We remark that, if we drop the hypothesis on the singularities being Du Val, we obtain a \emph{Weierstrass fibration} (see Definition \cite{Miranda}*{Definition II.3.2}).
The singular fibers of a minimal resolution of a Weierstrass fibration were classified by Kodaira and Neron, and one can consult \cite{Miranda} and \cite{Schutt:Shioda} for a modern account (see Table \ref{tab:table3} and Remark \ref{Remark:table:sing:fibers}). 
 \begin{table}[!ht]
  \begin{center}\caption{Singular fibers.}
  \label{tab:table3}
    \begin{tabular}{|c|c|c|} 
      \hline Kodaira's fiber type & Dual graph of the fiber &Picture\\
       \hline  \raisebox{1.5ex}{$\operatorname{I}_n^*$, $\text{ }n \ge 0$} &
      \begin{tikzpicture}[scale=.4]
    \draw (-3,0) node[anchor=east]  {$D_n$};
    \foreach \x in {0}
    \draw[xshift=\x cm,thick] (\x cm,0) circle (.3cm);
    \foreach \x in {1}
    \draw[xshift=\x cm,thick] (\x cm,0) circle (.3cm);
    \foreach \x in {2}
    \draw[xshift=\x cm,thick] (\x cm,0) circle (.3cm);
    \foreach \x in {3}
    \draw[xshift=\x cm,thick] (\x cm,0) circle (.3cm);
    \draw[xshift=6 cm,thick] (30: 17 mm) circle (.3cm);
    \draw[xshift=6 cm,thick] (-30: 17 mm) circle (.3cm);
    \draw[thick] (0.3 cm,0) -- +(1.4 cm,0);
    \draw[dotted,thick] (2.3 cm,0) -- +(1.35 cm,0);
    \foreach \y in {2.15}
    \draw[xshift=\y cm,thick] (\y cm,0) -- +(1.4 cm,0);
    \draw[xshift=6 cm,thick] (30: 3 mm) -- (30: 14 mm);
    \draw[xshift=6 cm,thick] (-30: 3 mm) -- (-30: 14 mm);
    \draw[xshift=-0.4 cm,thick] (1.3 mm, 0.1cm) -- +(-30: -14 mm);
    \draw[xshift=-0.4 cm,thick] (1.3 mm, -0.1cm) -- +(30: -14 mm);
    \draw[xshift=-3.3 cm,thick,fill=black] (1.57cm, -0.85cm ) circle [radius=.3];
    \draw[xshift=-3.3 cm,thick] (1.57cm, 0.85cm )circle [radius=.3] ;
  \end{tikzpicture}&\includegraphics[scale=0.25]{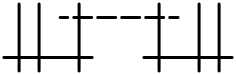}\\
      \hline  \raisebox{1.7ex}{$\operatorname{I}_n$, $\text{ }n \ge 1$} &
      \begin{tikzpicture}[scale=.4]
    \draw (-1,0) node[anchor=east]{ Cycle};
    \foreach \x in {0,...,4}
    \draw[xshift=\x cm,thick] (\x cm,0) circle (.3cm);
    \draw[dotted,thick] (0.3 cm,0) -- +(1.4 cm,0);
    \foreach \y in {1.15,...,3.15}
     \draw[xshift=\y cm,thick] (\y cm,0) -- +(1.4 cm,0);
    \draw[thick,fill=black] (4 cm,2 cm) circle [radius=.3] ;
    \draw[thick] (0.1 cm, 3mm) -- +(3.6, 1.5 cm);
    \draw[thick] (8 cm, 3mm) -- +(-3.7, 1.5 cm);
  \end{tikzpicture}& \includegraphics[scale=0.25]{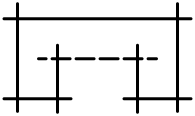}\\
      \hline  \raisebox{1ex}{$\operatorname{II}$} &
       \begin{tikzpicture}[scale=.4]
    \draw (-1,0) node[anchor=east]{$C_3$};
    \foreach \x in {0,1}
    \draw[xshift=\x cm,thick] (\x cm,0) circle (.3cm);
    \draw[thick] (0.3 cm,0) -- +(1.4 cm,0);
    \draw[thick,fill=black] (1 cm,1 cm) circle [radius=.3];
    \draw[thick] (0.1 cm, 3mm) -- +(0.6, 0.5 cm);
    \draw[thick] (2 cm, 3mm) -- +(-0.7, 0.5 cm);
  \end{tikzpicture} &\includegraphics[scale=0.25]{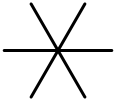}\\
      \hline  \raisebox{0.5ex}{$\operatorname{III}$} &
     \begin{tikzpicture}[scale=.4]
    \draw (-1,0) node[anchor=east]  {$C_2$};
    \draw[thick] (0 ,0) circle (.3 cm);
    \draw[thick,fill=black] (2 cm,0) circle (.3 cm);
    \draw[thick] (30: 3mm) -- +(1.5 cm, 0);
    \draw[thick] (-30: 3 mm) -- +(1.5 cm, 0);
  \end{tikzpicture} &\includegraphics[scale=0.25]{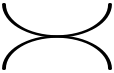}\\
      \hline  \raisebox{1.5ex}{$\operatorname{IV}$} &
      \begin{tikzpicture}[scale=.4]
    \draw (-1,0) node[anchor=east]  {Cusp};
    \foreach \x in {0}
    \draw[thick,xshift=\x cm, fill=black] (\x cm,0) circle [radius=.3] node [below] {};
  \end{tikzpicture} &\includegraphics[scale=0.25]{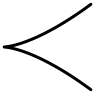}\\
      \hline  \raisebox{1.5ex}{$\operatorname{II}^*$} &
      \begin{tikzpicture}[scale=.4]
    \draw (-1,0) node[anchor=east]  {$E_8$};
    \foreach \x in {0,...,3}
    \draw[thick,xshift=\x cm] (\x cm,0) circle [radius=.3] node [below] {};
    \foreach \x in {4,5}
    \draw[thick,xshift=\x cm] (\x cm,0) circle [radius=.3];
     \foreach \x in {6}
    \draw[thick,xshift=\x cm,fill=black] (\x cm,0) circle [radius=.3];
    \foreach \y in {0,...,5}
    \draw[thick,xshift=\y cm] (\y cm,0) ++(.3 cm, 0) -- +(14 mm,0);
    \draw[thick] (4 cm,1.5 cm) circle (3 mm) ;
    \draw[thick] (4 cm, 3mm) -- +(0, 0.9 cm);
  \end{tikzpicture}&\includegraphics[scale=0.25]{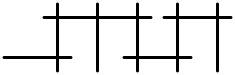}\\
  
      \hline  \raisebox{1.5ex}{$\operatorname{III}^*$} &
       \begin{tikzpicture}[scale=.4]
    \draw (-2.5,0) node[anchor=east]  {$E_7$};
    \foreach \x in {-1,...,5}
    \draw[thick,xshift=\x cm] (\x cm,0) circle (3 mm);
    \foreach \x in {5}
    \draw[thick,xshift=\x cm,fill=black] (\x cm,0) circle (3 mm);
    \foreach \y in {-1,...,4}
    \draw[thick,xshift=\y cm] (\y cm,0) ++(.3 cm, 0) -- +(14 mm,0);
    \draw[thick] (4 cm,1.5 cm) circle (3 mm);
    \draw[thick] (4 cm, 3mm) -- +(0, 0.9 cm);
  \end{tikzpicture} &\includegraphics[scale=0.25]{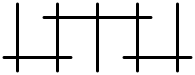}\\
  
      \hline  \raisebox{1.5ex}{$\operatorname{IV}^*$} &
       \begin{tikzpicture}[scale=.4]
    \draw (-1,0) node[anchor=east]  {$E_6$};
    \foreach \x in {0,...,4}
    \draw[thick,xshift=\x cm] (\x cm,0) circle (3 mm);
     \foreach \x in {4}
    \draw[thick,xshift=\x cm,fill=black] (\x cm,0) circle (3 mm);
    \foreach \y in {0,...,3}
    \draw[thick,xshift=\y cm] (\y cm,0) ++(.3 cm, 0) -- +(14 mm,0);
    \draw[thick] (4 cm, 1.5 cm) circle (3 mm);
    \draw[thick] (4 cm, 3mm) -- +(0, 0.9 cm);
  \end{tikzpicture} &\includegraphics[scale=0.25]{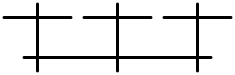}\\
      \hline 
    \end{tabular}
  \end{center}
\end{table}
\begin{Remark}\label{Remark:table:sing:fibers} The black dots in the second column of Table \ref{tab:table3} denote the components that intersect $S$. The number of irreducible components in an $\I_n$ fiber is $n$, whereas for an $\I_n^*$ fiber it is $n+5$.
\end{Remark}
We now define the Weierstrass fibrations parametrized by the interior of our moduli spaces:
\begin{Def}[see also \cite{AB3}*{Definition 4.1}]\label{Def:weighted:W:fib}
 Let $I:=(s,\vec{a},g,d)$ be a vector consisting of a rational number $0<s\le 1$,
 a vector $\vec{a} \in \mathbb{Q}^n$ with entries $0< a_i < 1$, and two natural numbers: $g$ and $d$.
 We say that $I$ is an \underline{admissible weight vector} if there is a minimal Weierstrass fibration
 $f:X \to C$ with section $S \subseteq X$ and $n$ fibers $F_1,...,F_n$ such that:
 \begin{itemize}
  \item $(X,sS+\sum a_iF_i)$ is a stable pair;
  \item The genus of $C$ is $g$, and
  \item The degree of the $j$-invariant $C \to \mathbb{P}^1$ is $d$.
 \end{itemize}
 We will call $(X,sS+\sum a_iF_i)$ a \underline{stable Weierstrass fibration with weight vector $I$}.
\end{Def}
\begin{Remark}A definition similar to Definition \ref{Def:weighted:W:fib} is given in \cite{AB3}*{Definition 4.1}. We keep $g$ and $d$ as part of the data because it is easier to argue why our moduli space is of finite type.
\end{Remark}
For every admissible weight $I$, we will construct in Section \ref{Section:construction:of:the:moduli:surface:pairs} a parameter space $\cE_I$ which on the interior (i.e. $\cE_I^\circ$) parametrizes stable Weierstrass fibrations with weight vector $I$.
\begin{Notation}
 We denote $\vec{a}F:=\sum a_i F_i$. When we write $I=(s, \vec{a},\beta)$, the entry $\beta$ represents the pair $(g,d)$. Given $I_1:=(s_1,\vec{a}_1,\beta)$ and $I_2:=(s_2,\vec{a}_2,\beta)$, we say that $I_1 \le I_2$ if $s_1 \le s_2$ and, if $a_i^{(j)}$ is the $j$-th entry of $\vec{a}_i$, then $a_1^{(j)} \le a_2^{(j)}$ for every $j$.
\end{Notation}
\begin{Lemma}\label{Lemma:W:fibration:remembers:the:fibration}Let $(X,sS+\vec{a}F)$ be a stable Weierstrass fibration. Then the morphism $X \to C$ in the definition of Weierstrass fibration is uniquely determined, unless $X$ is isomorphic to the product of two elliptic curves, and $sS+\vec{a}F$ has two irreducible components.  
\end{Lemma}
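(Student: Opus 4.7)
Suppose two distinct Weierstrass fibrations $f:X\to C$ and $f':X\to C'$ both make $(X,sS+\vec{a}F)$ into a stable Weierstrass fibration, with sections $S,S'$ and decompositions $D = sS+\sum a_iF_i = sS'+\sum a'_jF'_j$ of the same divisor. I would form the combined map $\Phi := (f,f'):X\to C\times C'$. If the image of $\Phi$ were a curve, then Stein factorisations of $f$ and $f'$---using that both have irreducible (hence connected) fibers---would identify the two maps and force $f=f'$; so $\Phi$ is surjective and generically finite. Restricting $f'$ to a generic fiber of $f$ (a smooth elliptic curve) yields a non-constant morphism to $C'$, so $g(C'),g(C)\le 1$, and for generic fibers $F,F'$ one has $\deg\Phi = F\cdot F' \ge 1$.

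Next I would compare the irreducible components of $D$ in the two decompositions: $S'$ must coincide with either $S$ or with one of the $F_k$. In the principal case $S' = F_k$, the component $F_k$ is simultaneously a Weierstrass fiber of $f$ and a section of $f'$, so $F_k\cong C'$; since $C'$ is smooth and the only smooth Weierstrass fibers are smooth elliptic curves, $F_k$ is smooth elliptic and $g(C')=1$. Moreover $F_k$ is numerically equivalent to the generic $f$-fiber $F$ and satisfies $F_k\cdot F' = 1$ as a section, so $F\cdot F' = 1$ and $\Phi$ is birational. By symmetry $g(C)=1$ and $S$ is a fiber of $f'$. The target $C\times C'$ is then an abelian surface with no rational curves; passing to the minimal resolution, the induced birational morphism $\tilde\Phi:\tilde X\to C\times C'$ cannot contract any $(-1)$-curve---such a curve would be rational and contained in a fiber of the relatively minimal elliptic fibration $\tilde X\to C$, contradicting its relative minimality---so $\tilde\Phi$ is an isomorphism. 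Since the target contains no rational curves, the resolution $\tilde X\to X$ contracts nothing either, and $X\cong C\times C' = E_1\times E_2$.

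To count components, place coordinates so $S = E_1\times\{0\}$ and $F_k = \{0\}\times E_2$; any additional marked fiber $F_i$ with $i\ne k$ would need to be a fiber of both $f$ and $f'$, and therefore contracted by the isomorphism $\Phi$, which is impossible. Hence $D = sS + a_kF_k$ has exactly two components. In the remaining case $S' = S$, each marked $F_i$ would be a fiber of both fibrations, forcing $F\equiv F'$ numerically and $F\cdot F' = F^2 = 0$, contradicting $\deg\Phi\ge 1$; so $\vec{a}F = 0$ and $D=sS$. This subcase is excluded by combining stability (which via adjunction on $S$ forces $S^2 < 0$) with the Hodge index theorem applied to the classes $S,F,F'$: no second fibration can share the section $S$ while $K_X+sS$ remains ample.

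The hardest part will be ensuring that $\tilde\Phi$ is an isomorphism---which requires both the absence of rational curves on abelian surfaces and the relative minimality of the resolution of a Weierstrass fibration---together with a clean treatment of the shared-section residual subcase.
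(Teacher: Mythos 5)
Your argument via the product map $\Phi=(f,f'):X\to C\times C'$ is a genuinely different route from the paper's, which instead shows that a fiber of $f$ can be recognized intrinsically from the pair $(X,sS+\vec{a}F)$ (by the combinatorics of the components, by self-intersection numbers, or from $|mK_X|$) and then concludes that any other fibration must contract it. Your main case $S'=F_k$ is handled correctly: the computation $\deg\Phi=F\cdot F'=1$, the relative minimality of the resolution of a \emph{minimal} Weierstrass fibration, and the absence of rational curves on $C\times C'$ do combine to give $X\cong E_1\times E_2$ with exactly two marked components, recovering the stated exception.

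There is, however, a gap in the residual subcase $S'=S$, $\vec{a}F=0$. First, the claim that stability ``via adjunction on $S$ forces $S^2<0$'' is false in general: $(K_X+sS)\cdot S=2g(C)-2-(1-s)S^2$, which is positive with $S^2=0$ whenever $g(C)\ge 2$ (e.g.\ $X=C\times E$ with $g(C)=2$); it holds in your situation only because you have already reduced to $g(C)\le 1$. More seriously, the Hodge index theorem applied to the classes $S,F,F'$ does not yield a contradiction: writing $S^2=-e<0$, $F^2=F'^2=0$, $S\cdot F=S\cdot F'=1$, $F\cdot F'=d\ge 1$, the Gram matrix of $(S,F,F')$ has determinant $d(ed+2)>0$, which is exactly what signature $(1,2)$ predicts, so this configuration is not excluded by Hodge index alone. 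What actually kills the subcase is the canonical bundle formula used in the paper: $K_X=f^*(\sL\otimes\omega_C)$, and since $sS^2\le 0$, stability gives $K_X\cdot S=\deg(\sL\otimes\omega_C)>0$, so $K_X$ is numerically a \emph{positive} multiple of the fiber class $F$; applying the same to $f'$ gives $K_X\equiv cF\equiv c'F'$ with $c,c'>0$, hence $F\cdot F'=0$, contradicting $\deg\Phi\ge 1$. With that substitution your proof is complete.
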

\begin{proof} Choose a fibration 
$f:X \to C$ in the definition of Weierstrass fibration. It is enough to prove that if $g:X \to C$ is another fibration, a fiber of $f$ is contracted by $g$. Because if this is the case, all the fibers will be contracted by $g$ since they are all numerically equivalent. But then the morphism $g$ factors through $f$, and both $f$ and $g$ have connected fibers. Therefore it suffices to show that one can identify a fiber of $f$ using only the surface pair $(X,sS+\vec{a}F)$.

If $\operatorname{Supp}(sS+\vec{a}F)$ has more than 2 irreducible components,
we can recognize a fiber from the combinatorics of the intersections.
If $\operatorname{Supp}(sS+\vec{a}F)$ has 2 irreducible components, from the definition of $\sL$ and from \cite{Miranda}*{Lemma II.5.6}, $S^2=-\deg(\sL) \le 0$. If the inequality is strict, then the irreducible component of $\operatorname{Supp}(sS+\vec{a}F)$ with self intersection 0 will be a fiber. If $\deg(\sL)=0$, then $X$ is isomorphic to a product from \cite{Miranda}*{Lemma III.1.4}. If the section has genus which is not 1, it is uniquely determined in $\operatorname{Supp}(sS+\vec{a}F)$, and we can identify a fiber. 
Otherwise, $X$ is a product of two elliptic curves.

Finally assume $\vec{a}F=0$, i.e. $\operatorname{Supp}(sS+\vec{a}F)$ has a single irreducible component. Then since $S^2 \le 0$, we need to have $K_X.S >0$ in order for $(X,sS)$ to be stable. But from \cite{Miranda}*{Proposition III.1.1}, $K_X \cong f^*(\sL\otimes \omega_C)$ where $\mathscr{L}$ is the fundamental line bundle. Therefore $\deg(\sL\otimes \omega_C)>0$, and a section of $H^0(m K_X)$ is supported on some fibers, for $m$ big enough. 
\end{proof}
We recall now the possible elliptic surfaces and fiber types of \cite{LaNave}, \cite{AB1} and \cite{AB3}.
Let then $\ell$ be an algebraically closed field and consider a twisted stable map $\mathcal{C} \to \overline{\mathcal{M}}_{1,1}$ over $\spec(\ell)$.
Let $(\mathcal{X},\mathcal{S}) \to \mathcal{C}$ be the
corresponding family of elliptic curves. Let $g:(X',S') \to C$ be the induced morphism between coarse moduli spaces. 
\begin{Def}[see \cite{AB3}*{Definition 3.3}]
 With the notations above, a \underline{twisted fiber} is a fiber of $g$, with its reduced structure. We call the twisted fibers which are supported on a non-reduced scheme theoretic fiber, the \underline{multiple twisted fibers}. \end{Def}
 These fibers are either DM stable genus 1 and 1-pointed curves, or a quotient of those. The ones which are not DM stable, give rise to scheme-theoretic fibers which are not reduced.

 Consider now the surface $X$ obtained from $X'$ performing the following two birational transformations.
The first one is a blow-up $\pi:Y \to X'$ of an ideal sheaf supported
 at some points $\{p_1,...,p_r\} \subseteq S'$, such that $g(p_1)$ are smooth points of $C$ (however, we allow $r=0$, i.e. $Y=X'$). We require that:
 \begin{enumerate}
  \item For every $i$, the exceptional $F_i:=\pi^{-1}(p_i)$ is irreducible, and $F_i$ is contained in the normal locus of $Y$;
  \item For every $i$, the proper transforms of the twisted fiber $g^{-1}(p_i)$ does not intersect $S$, the proper transform $S'$, and it intersects $F_i$ in a single point, and
  \item The only singular point of $Y$ along $F_i$ can be on the intersection point with the proper transforms of the twisted fiber of $g(p_i)$.
\end{enumerate}
 
 The second birational transformation is the contraction $Y \to X$ of the proper transforms $\{\pi_*^{-1}(g^{-1}(p_i))\}_{i=0}^m$ for some $0 \le m \le r$.
 Since these two birational transformations are performed along some fibers, the morphism $g:X' \to C$ induces a morphism $f:X \to C$.
\begin{Def}
 An \underline{elliptic fibration} is a pair $(X,f)$ as above.
\end{Def}
From \cite{AB1}, a minimal Weierstrass fibration $X \to C$ is an elliptic fibration.
Often we abuse notation, and we do not specify the morphism $f$. This should cause no confusion.

Therefore fibers of an elliptic fibrations have at most two irreducible components: 
\begin{Def}[see \cite{AB3}*{Definition 3.3}]Let $(X,f)$ be an elliptic fibration.
 An \underline{intermediate fiber} is a fiber $f^{-1}(p)$ which is not irreducible. The \underline{twisted component} of an intermediate fiber
 is the proper transform of a twisted fiber through the blow-up used to define $X$. We call the other component of an intermediate fiber an
 \underline{intermediate component}.
\end{Def}

\begin{center}\label{Picture:fibers}
\includegraphics[scale=0.60]{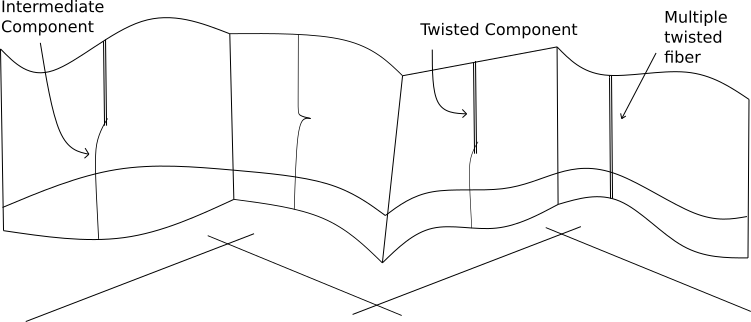}
\end{center}
\begin{Remark}
Our definition of intermediate fibers is a little bit more restrictive than the one in \cite{AB3}. However, from \cite{LaNave} and the results in Section \ref{section:singularities:threefold}, all the intermediate fibers we will find on the boundary of our moduli space satisfy our definition.
\end{Remark}
Since we deal just with slc surface pairs, we introduce the following
\begin{Def}[see also \cite{AB3}*{Definition 4.1}]\label{Def:slc:elliptic:surface}
 Let $\vec{a} \in \mathbb{Q}^n$ and $s \in \mathbb{Q}$ be such that
 $0< a_i< 1$ and let $0<s \le 1$. A \underline{slc (resp. lc, klt) elliptic surface} is a slc (resp. lc, klt) surface pair
 $(X,sS+\vec{a}F+E)$ such that there is an $f:X \to C$ which makes $(X,f)$
 an elliptic fibration. Moreover, we assume that each irreducible component of $\operatorname{Supp}(E)$ has coefficient 1 in $E$, $\supp(E)$
 is a union of some twisted fibers, all the multiple twisted fibers, and all the twisted components of the intermediate fibers. Finally, we assume that
 $\operatorname{Supp}(S)$ is the section, and $\operatorname{Supp}(\vec{a}F)$ is a union of intermediate components and irreducible fibers.
\end{Def}
Irreducible slc elliptic surfaces appear as irreducible components of surface pairs parametrized by $\cE_I$ (on the boundary). The components of $E$ come with marking 1 because the double locus will be supported on $E$. 

Now, even if we can show that a stable Weierstrass fibrations always degenerates to a slc elliptic surface (see Definition \ref{Def:tsm:limit}),
this degeneration might not (and in general will not) be stable. Namely, it is not a degeneration which is parametrized by our moduli space.
In fact, on the boundary of our moduli space, some other surfaces may appear:
\begin{Def}[see \cite{AB3}*{Definition 3.14} and \cite{LaNave}*{Definition 7.1.8}] \label{Def:pseudoelliptic}
 Let $\vec{a} \in \mathbb{Q}^n$ be such that
 $0< a_i< 1$. A slc (resp. lc) \underline{pseudoelliptic surface} is an irreducible slc (resp. lc) surface pair
 $(X,\vec{a}F+E)$ obtained from an irreducible slc elliptic surface $(Y,sS+\vec{a}F_Y+E_Y)$, contracting $S$. If $\pi:Y \to X$ is the contraction morphism, then $\pi_*(\vec{a}F_Y)=\vec{a}F$ and $\pi_*(E_Y)=E$. A \underline{pseudofiber} will be the proper transform of a fiber of $Y$.
\end{Def}
One can ask if a pseudoelliptic surface determines uniquely the elliptic surfaces it came from:
\begin{Oss}\label{Oss:section:uniquely:determined:on:psudo}
 Assume that $X$ is a pseudoelliptic surface. Once we know that some curves $F_1,...,F_n \subseteq X$, with $n$ big enough,
 are pseudofibers, then the surface pair $(Y,sS+\vec{a}F_Y+E_Y)$ in Definition \ref{Def:pseudoelliptic} is uniquely determined. Indeed, the surface $Y$ is obtained taking the stable model of a log-resolution of $(X,F_1+...+F_n)$ (see \cite{AB3}).
\end{Oss}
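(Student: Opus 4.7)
The plan is to reconstruct $(Y, sS+\vec{a}F_Y+E_Y)$ from the data $(X, \vec{a}F+E)$ together with the chosen pseudofibers $F_1,\dots,F_n$ via the canonical procedure indicated in the statement: log-resolve the pair $(X,\sum F_i + E)$, then take the stable model.

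First I would construct a log-resolution $\varphi: \tilde{X} \to X$ of $(X, F_1+\dots+F_n+E)$. Because $X$ is obtained from $Y$ by contracting the section $S$ to a (possibly singular) point through which all pseudofibers pass, the birational map $Y \dashrightarrow \tilde{X}$ becomes a morphism after passing to a sufficiently fine log-resolution; so I may assume there is a morphism $\psi: \tilde{X} \to Y$ with $\pi \circ \psi = \varphi$, where $\pi: Y \to X$ is the contraction from Definition \ref{Def:pseudoelliptic}.

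Second, among the $\varphi$-exceptional divisors I would identify the one, call it $\tilde{S}$, that $\psi$ maps birationally onto $S \subset Y$. The crucial point is that $\tilde{S}$ is intrinsically determined by the pair $(X, \sum F_i)$ once $n$ is sufficiently large: it is characterized as the unique exceptional component lying over $\pi(S)$ whose proper transform meets each $\tilde{F}_i$ transversally in a single point, which is the combinatorics forced by the section of an elliptic fibration. Equipping $\tilde{X}$ with the boundary $s\tilde{S}+\vec{a}\tilde{F}+\tilde{E}$, where the tildes denote proper transforms, produces a pair that depends only on $(X, \vec{a}F+E, F_1,\dots,F_n)$ together with the fixed weight $s$ from the ambient moduli problem.

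Third, I would take the stable model of $(\tilde{X}, s\tilde{S}+\vec{a}\tilde{F}+\tilde{E})$, using the uniqueness and existence recalled in Subsection \ref{Subsection:stable:pairs}. Since $(Y, sS+\vec{a}F_Y+E_Y)$ is a stable pair birational to this one via $\psi$ with matching boundary coefficients, uniqueness of the stable model forces the output of the procedure to equal $(Y, sS+\vec{a}F_Y+E_Y)$; in particular the latter is determined by the data in the statement. The main obstacle I expect is the middle step, namely making precise both how large $n$ must be and which intrinsic intersection-theoretic property on $\tilde{X}$ singles out $\tilde{S}$ from the other components of $\varphi^{-1}(\pi(S))$. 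Once that identification is pinned down, the rest is a direct application of the uniqueness of stable models.
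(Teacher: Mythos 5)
Your third step contains the essential error. You assert that $(Y,sS+\vec{a}F_Y+E_Y)$ is a stable pair and invoke uniqueness of stable models to conclude that the stable model of $(\tilde{X},s\tilde{S}+\vec{a}\tilde{F}+\tilde{E})$ is $(Y,sS+\vec{a}F_Y+E_Y)$. But the pair $(Y,sS+\vec{a}F_Y+E_Y)$ of Definition \ref{Def:pseudoelliptic} is only required to be an slc elliptic surface, not a stable one, and in every situation where a pseudoelliptic actually occurs the divisor $K_Y+sS+\vec{a}F_Y+E_Y$ is non-positive on $S$ --- that is precisely why $S$ is contracted to produce $X$. Consequently the stable model of the pair you build on $\tilde{X}$ contracts $\tilde{S}$ right back down and returns (the stable model of) $X$, not $Y$. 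The statement's recipe is different in exactly the point you changed: one marks the chosen pseudofibers $F_1,\dots,F_n$ with coefficient $1$, not with the small weights $\vec{a}$. The role of ``$n$ big enough'' is to force $\sum a_i>2$ in the notation of Lemma \ref{Lemma:conditions:when:to:contract:section}, so that $(K+S+\sum F_i)\cdot S>0$ and the section survives in the stable model; with your weights this positivity fails and the construction collapses.

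A secondary remark: your middle step, identifying $\tilde{S}$ among the exceptional divisors before running the MMP, is both the step you flag as unresolved and an unnecessary one. In the correct version you simply take the stable model of a log-resolution of $(X,F_1+\cdots+F_n)$ with all boundary components at coefficient $1$; the MMP automatically extracts $Y$ together with its section (the unique divisor over $\pi(S)$ that is not recontracted), and then $S$, $\vec{a}F_Y$ and $E_Y$ are read off as the $\pi$-exceptional divisor and the proper transforms of $\vec{a}F$ and $E$. Your first step (choosing a log-resolution dominating $Y$, and noting that the output is independent of this choice by uniqueness of stable models) is fine and matches the intended argument.
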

\subsection{The flip of La Nave}
In this subsection we recall a construction due to La Nave \cite{LaNave}.

Assume it is given a DVR $R$, with generic (resp. closed) point $\eta$ (resp. $p$).
Assume moreover that it is given a stable Weierstrass fibration $\phi:(\cX,\cS)_\eta \to \eta$ with weight vector $(1,0,\beta)$.
Since the moduli of stable pairs is proper, up to replacing $\spec(R)$ with a ramified cover, we can find a family of stable surface pairs
$(\cX^s,\cS^s) \to \spec(R)$ extending $\phi$ (the superscript $s$ stands for stable).
In \cite{LaNave} the author provides a description of $(\cX^s,\cS^s)_p$.
In particular, if $Y$ is an irreducible component of $\cX_p^s$ with double locus $E$,
it is proven that either $\cS^s \cap Y$ is a divisor, in which case $(Y,\cS^s_{|Y}+E)$ is a slc elliptic surface; or $\cS^s_{|Y}$ is not a divisor,
and $(Y,E)$ is a slc pseudoelliptic surface. Moreover, if $D \subseteq Y$ is an intermediate component of an intermediate fiber, then $D \subseteq E$.

\begin{center}
\includegraphics[scale=0.22]{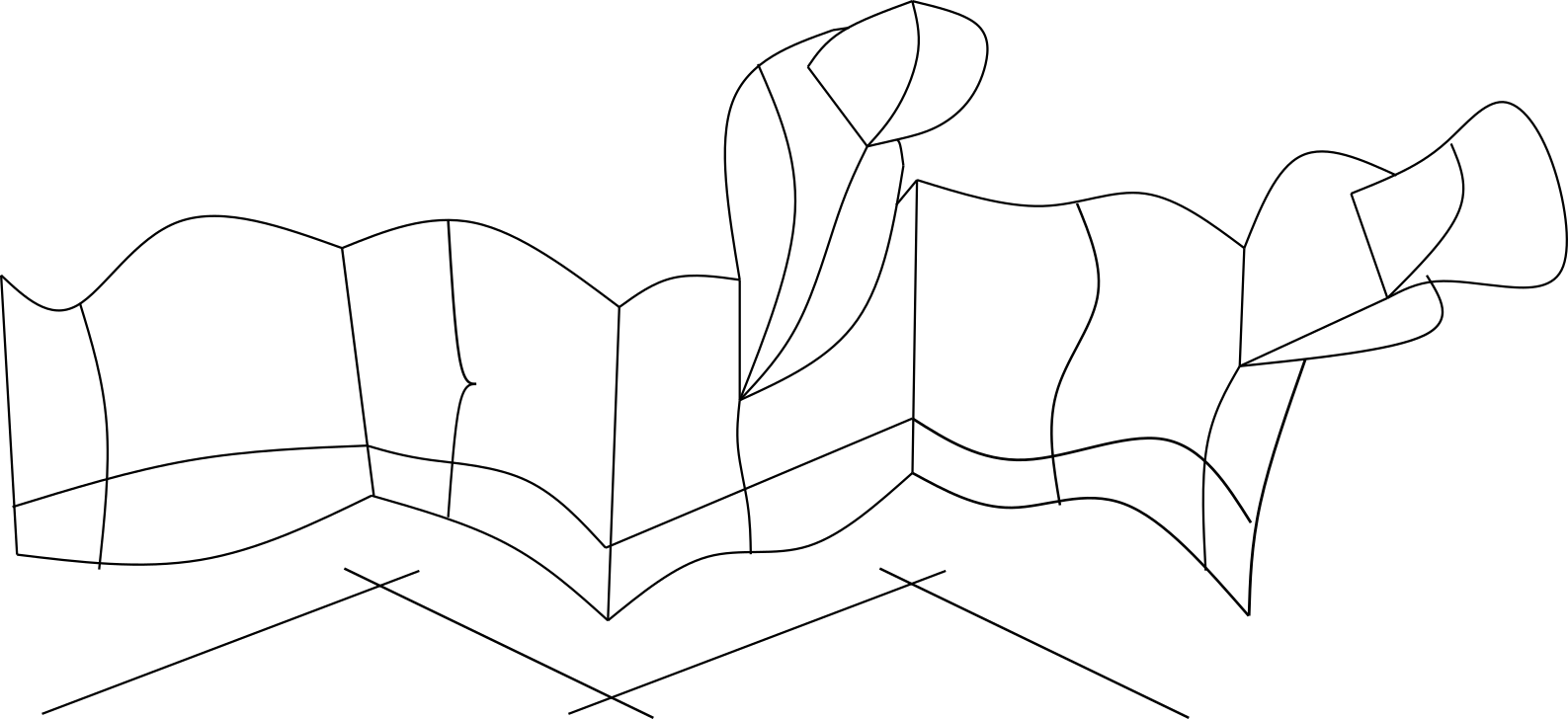}
\end{center}

The strategy used in \cite{LaNave} is the following. First, La Nave finds an auxiliary threefold pair $(\cX',\cS') \to \spec(R)$,
using \cite{AV1}. Every irreducible component $Y$ of the closed fiber $\cX_p'$ intersects $\cS'$,
comes with a map to a curve
$f_Y:Y \to C$, and is a slc elliptic fibration. Moreover, the scheme-theoretic fibers of $f_Y$, not on the double locus, are reduced.

\begin{center}
\includegraphics[scale=0.24]{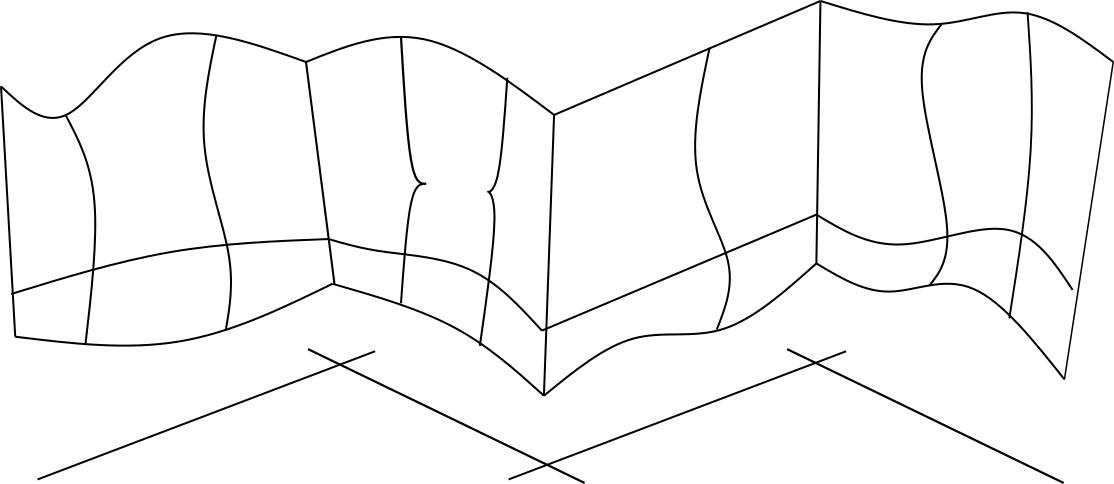}
\end{center}
Then La Nave finds the stable limit of $(\cX',\cS') \to \spec(R)$, running the MMP and through log-abundance. In particular, in \cite{LaNave} it is explicitly described a flip
that is needed to run the MMP.
\begin{Notation} We will refer to such a flip as the \underline{flip of La Nave}.\end{Notation} Since this birational
transformation plays an essential role both in what follows and in \cite{AB3}, we describe it below.
Consider $(\cX,\cS) \to \spec(R)$ a flat proper family of slc surface pairs. Assume that the generic fiber is a stable Weierstrass fibration,
and the closed fiber can be described as the closed fiber $\cX_p^s$ above. Let $C$ be an irreducible component of
$\cS_p$, assume that $(K_{\cX}+\cS).C <0$ and assume that the MMP contracts $C$ through an extremal contraction: let $f^-:\cX \to \cX^0$
be such a contraction. Let finally $X_1 \subseteq \cX_p$ be the irreducible
component of $\cX_p$ containing $C$.
Then
there is a new threefold pair $(\cX^+,\cS^+)$ with a contraction morphism $f^+:\cX^+ \to \cX^0$ such that the corresponding birational morphism
$(\cX,\cS) \dashrightarrow (\cX^+,\cS^+)$ is a flip. In this situation, La Nave shows that the flip
can be performed on a toric chart, and in \cite{LaNave}*{Theorem 7.1.2} such a flip is described explicitly.
It is shown that $X_1$ has a single fiber in the double locus of $\cX_p$, and let $X_2$ be the irreducible component of $\cX$ sharing a fiber with
$X_1$. Let $X_1^+$ (resp. $X_2^+$) be the proper transform of $X_1$ (resp. $X_2$) through $\cX \dashrightarrow \cX^+$.
It is proved that $X_1^+$ is a pseudoelliptic component, attached to $X_2^+$ along a twisted component of
an intermediate fiber, and the intermediate component
of such an intermediate fiber is the flipped curve. 

The picture below represents the behavior of the flip along $\cX_p$: 
\begin{center}\includegraphics[scale=0.32]{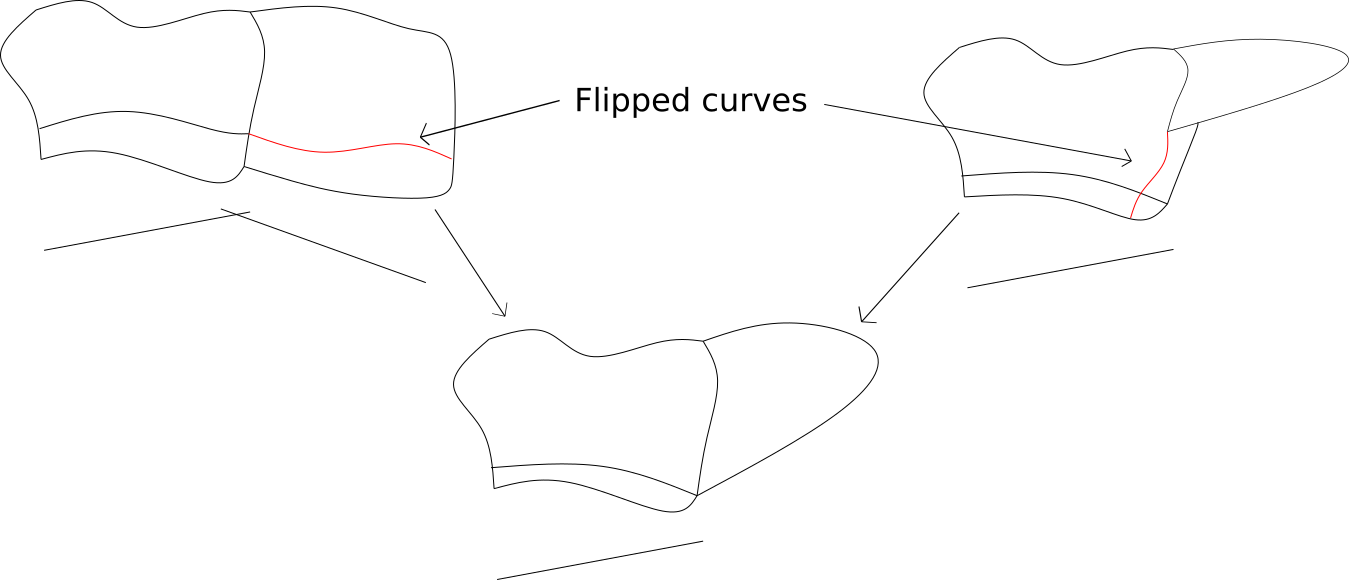}
\end{center}

\section{Stability conditions for Weierstrass fibrations and intermediate fibers}\label{Section:intermediate:fibers}
In Section \ref{Section:definitions:of:the:objects:we:parametrize}, we introduced two definitions, the one of stable Weierstrass fibration with weight data $I$,
and the one of intermediate fibers. We now study these two objects.
In the first subsection we recall the results of \cite{AB3}, to understand when a minimal Weierstrass fibration $(X,S)\to C$ is such that
$(X,sS+\vec{a}F)$ is log-canonical, for some marked fibers.
This means first understanding the singularities of $(X,sS+\vec{a}F)$, to ensure that the pair is lc. Then, the intersection pairings,
to ensure that $K_X+sS+\vec{a}F$ is ample.
In the second subsection we focus on intermediate fibers.
\subsection{Stability conditions for Weierstrass fibrations}
We start with an observation we will use several times throughout the paper:
\begin{Oss}\label{Oss:all:fib:irred:implies:pulback}
 Assume that $(X,sS+\vec{a}F+E)$ is an irreducible slc elliptic fibration, with all the fibers irreducible. Let $f:X \to C$ be the morphism to a curve.
 Then there is a $\mathbb{Q}$-divisor $D \subseteq C$ such that $K_X+\vec{a}F+E=f^*(D)$. 
\end{Oss}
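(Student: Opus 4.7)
The plan is to restrict $K_X+\vec{a}F+E$ to the generic fiber of $f$, show it is trivial there, deduce it is $\mathbb{Q}$-linearly equivalent to a divisor supported on fibers, and finally invoke the irreducibility of all fibers to rewrite it as a $\mathbb{Q}$-pullback from $C$.

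First, I would compute $K_X|_{X_\eta}$, where $X_\eta$ denotes the generic fiber. Since $f:X\to C$ is an elliptic fibration with section $S$, the generic fiber $X_\eta$ is a smooth genus-one curve over $K(C)$ carrying a rational point, so the relative dualizing sheaf $\omega_{X_\eta/K(C)}$ is trivial. Combined with $\omega_X\cong\omega_{X/C}\otimes f^*\omega_C$, this shows $K_X|_{X_\eta}\sim 0$. By Definition \ref{Def:slc:elliptic:surface} the support of $\vec{a}F+E$ is contained in fibers of $f$, so $(\vec{a}F+E)|_{X_\eta}=0$, and altogether $(K_X+\vec{a}F+E)|_{X_\eta}\sim 0$.

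Next, I would use $f_*\cO_X=\cO_C$ (elliptic fibrations with a section have geometrically connected fibers) to promote this to a global statement. After passing to a sufficiently divisible Cartier multiple, a line bundle on $X$ whose restriction to the generic fiber is trivial admits a rational section nonvanishing on $X_\eta$, and the divisor of such a section is necessarily supported on fibers of $f$. Hence $K_X+\vec{a}F+E\sim_\mathbb{Q} V$ for some $\mathbb{Q}$-divisor $V$ supported on fibers.

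Finally, since every fiber is irreducible, each prime divisor contained in $f^{-1}(p)$ is the reduction of that fiber, equal to $\tfrac{1}{m}f^*(p)$ for its multiplicity $m\ge 1$. So $V$ can be written as $f^*(D)$ for some $\mathbb{Q}$-divisor $D$ on $C$, giving $K_X+\vec{a}F+E=f^*(D)$. The trickiest step is the second one, since $X$ is only slc; but this is a codimension-one statement and goes through by taking a large enough Cartier multiple before running the rational-section argument, so the slc (rather than smooth) hypothesis presents no obstruction.
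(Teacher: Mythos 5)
Your proof is correct and follows essentially the same route as the paper's, which is a one-line version of exactly this argument: the generic fiber has trivial canonical divisor, hence $K_X$ is supported on fiber components, and the irreducibility of all fibers turns a fiber-supported divisor into a pullback from $C$. The only slight imprecision is your claim that the generic fiber is a \emph{smooth} genus-one curve — for a non-normal slc elliptic fibration it may be nodal — but its dualizing sheaf is still trivial, so the argument is unaffected.
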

In particular, if $f:X \to C$ is the morphism to a curve in the definition of slc elliptic fibration, and $M$ is an irreducible
multisection of $f$, we have
 $$(K_X+\vec{a}F+E).M=\deg(M \xrightarrow{f_{|M}}C)((K_X+\vec{a}F+E).S).$$
\begin{proof}[Proof of Observation \ref{Oss:all:fib:irred:implies:pulback}]
All the fibers are irreducible, so it is enough to show that $K_X$ is supported on some fiber components. This holds since the generic fiber has trivial canonical divisor.
\end{proof}
We now describe the conditions on the singularities and on $I$ that one has to impose on a minimal
Weierstrass fibration $X$, for the pair $(X,sS+\vec{a}F)$ to be stable.
We begin with the following lemma, the proof follows from Observation \ref{Oss:all:fib:irred:implies:pulback} and \cite{AB3}*{Lemma B.1} (see also \cite{AB1}*{Corollary 6.8}).
\begin{Lemma}\label{Lemma:if:all:fibers:irred:if:we:dont:contract:the:section:it:is:stable}
 Let $(X,sS+\vec{a}F+E) \to C$ be a slc irreducible elliptic fibration, with all the fibers irreducible (i.e. with no intermediate fibers).
 Then $K_X+sS+\vec{a}F+E$ is ample if and only if $(K_X+sS+\vec{a}F+E).S>0$.
\end{Lemma}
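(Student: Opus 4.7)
The forward direction is immediate: an ample divisor intersects every irreducible curve strictly positively. For the converse, my plan is to apply the Nakai--Moishezon criterion on the surface $X$. The starting point is Observation \ref{Oss:all:fib:irred:implies:pulback}, which (since all fibers are irreducible) produces a $\mathbb{Q}$-divisor $D$ on $C$ such that
\[
\Delta \;:=\; K_X + sS + \vec{a}F + E \;=\; sS + f^*D.
\]
This rewriting reduces the ampleness question to manipulating intersection numbers in terms of the single ``horizontal'' divisor $S$ and the pullback $f^*D$.

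Next I would classify irreducible curves on $X$: since every fiber of $f$ is irreducible, any irreducible curve $\Gamma \subseteq X$ is either (a) the reduction of some fiber $f^{-1}(p)$, (b) the section $S$ itself, or (c) a multisection $M \neq S$. In case (a), $\Delta \cdot \Gamma = s\, S \cdot \Gamma > 0$ since $S$ meets every fiber. In case (b), $\Delta \cdot S > 0$ is the standing hypothesis. In case (c), the ``in particular'' consequence of Observation \ref{Oss:all:fib:irred:implies:pulback} gives $f^*D \cdot M = \deg(M \to C)\cdot\bigl((K_X+\vec{a}F+E)\cdot S\bigr)$, so
\[
\Delta \cdot M \;=\; s\, S \cdot M \;+\; \deg(M \to C) \cdot \bigl((K_X+\vec{a}F+E)\cdot S\bigr),
\]
with $S\cdot M \ge 0$. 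Combined with the identity $\Delta \cdot S = sS^2 + (K_X+\vec{a}F+E)\cdot S$, which lets me rewrite $(K_X+\vec{a}F+E)\cdot S$ in terms of $\Delta \cdot S$ and $S^2$, this produces the required strict positivity on multisections. For the self-intersection, $(f^*D)^2 = 0$ yields $\Delta^2 = s^2 S^2 + 2s \deg D = 2s\,\Delta\cdot S - s^2 S^2$.

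The main obstacle is the case $S^2 > 0$: here neither the sign of $(K_X+\vec{a}F+E)\cdot S$ in case (c) nor the sign of $\Delta^2$ follows formally from $\Delta \cdot S > 0$ alone. In a genuinely Weierstrass setting one has $S^2 = -\deg \sL \le 0$ and the argument above concludes directly, but for the more general irreducible slc elliptic fibrations allowed by the hypothesis additional input is needed. The plan is to invoke \cite{AB3}*{Lemma B.1} (compare \cite{AB1}*{Corollary 6.8}), which is tailored precisely to control $(K_X+\vec{a}F+E)\cdot S$ in a slc elliptic fibration with only irreducible fibers; together with the computations above it collapses the ampleness question to the single inequality $\Delta \cdot S > 0$.
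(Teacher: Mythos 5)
Your proposal is correct and follows essentially the same route as the paper, which proves this lemma precisely by combining Observation \ref{Oss:all:fib:irred:implies:pulback} with \cite{AB3}*{Lemma B.1}; you have simply made explicit the Nakai--Moishezon bookkeeping (fibers, the section, multisections, and $\Delta^2$) and correctly located the one non-formal input, namely $S^2\le 0$, as the content of the cited Lemma B.1.
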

The main consequence of Lemma \ref{Lemma:if:all:fibers:irred:if:we:dont:contract:the:section:it:is:stable}, is Corollary \ref{Cor:stability:condition:for:minimal:W:fibration}.
The first point of the following corollary follows from \cite{AB3}*{Theorem 3.10}, whereas (2) from Lemma \ref{Lemma:if:all:fibers:irred:if:we:dont:contract:the:section:it:is:stable}.
\begin{Cor}\label{Cor:stability:condition:for:minimal:W:fibration}
A minimal Weierstrass fibration $f:X \to C$ is such that $(X,sS+\vec{a}F)$ is a stable pair if and only if it satisfies the following conditions:
\begin{enumerate}
 \item Each singular fiber of type $\operatorname{I}_n^*$ (resp. $\I \I$, $\I\I\I$, $\I\operatorname{V}$, $\I\I^*$, $\I\I\I^*$ and
 $\I\operatorname{V}^*$) is marked with weight $a \le \frac{1}{2}$ (resp. $a \le \frac{5}{6}$, $\frac{2}{3}$, $\frac{1}{2}$, $\frac{1}{6}$,
 $\frac{1}{4}$, $\frac{1}{3}$), and
 \item $(K_X+sS+\vec{a}F).S>0$.
\end{enumerate}
\end{Cor}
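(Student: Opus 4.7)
The plan is to separate the assertion ``$(X, sS+\vec{a}F)$ is a stable pair'' into its two defining conditions: the pair is log canonical, and $K_X+sS+\vec{a}F$ is ample. Since a minimal Weierstrass fibration has, by Definition \ref{Def:minimal:W:fib}, only Du Val singularities on $X$, the surface $X$ is already klt without the boundary. Thus the lc-ness of the pair reduces to a purely local calculation at the support of $sS+\vec{a}F$. Because $S$ lies in the smooth locus of $f$ by the same definition, and $s \le 1$, nothing can fail along $S$ away from the singular fibers. The only possible obstructions live at the intersections $S \cap F_i$ where $F_i$ is one of the Kodaira-type singular fibers.

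For condition (1), the plan is to invoke \cite{AB3}*{Theorem 3.10}, which for each Kodaira type computes the log canonical threshold of the pair $(X, S+F)$ at the point $S \cap F$ via a minimal log resolution and discrepancy calculation. Reading off the thresholds from that theorem case-by-case gives exactly the bounds $a \le \tfrac{1}{2}$ for $\I_n^*$, $a \le \tfrac{5}{6}$ for $\I\I$, $a \le \tfrac{2}{3}$ for $\I\I\I$, $a \le \tfrac{1}{2}$ for $\I\operatorname{V}$, $a \le \tfrac{1}{6}$ for $\I\I^*$, $a \le \tfrac{1}{4}$ for $\I\I\I^*$, and $a \le \tfrac{1}{3}$ for $\I\operatorname{V}^*$. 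Conversely, if any one of these bounds is violated, the pair ceases to be lc at $S\cap F_i$, so stability fails.

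For condition (2), the plan is to recognize that a minimal Weierstrass fibration is in particular an irreducible slc elliptic fibration with all fibers irreducible (Definition \ref{Def:minimal:W:fib}), with empty double locus $E=0$, and $\vec{a}F$ supported on a union of irreducible fibers. This puts us exactly in the setup of Lemma \ref{Lemma:if:all:fibers:irred:if:we:dont:contract:the:section:it:is:stable}, which then reduces the ampleness of $K_X + sS + \vec{a}F$ to the single numerical test $(K_X+sS+\vec{a}F).S > 0$. Effectiveness of the boundary is automatic from the hypotheses on $s$ and $\vec{a}$.

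The main obstacle is packaged entirely into \cite{AB3}*{Theorem 3.10}: the rest is a direct application of Lemma \ref{Lemma:if:all:fibers:irred:if:we:dont:contract:the:section:it:is:stable}. Once the lc thresholds of (1) are granted and once (2) is combined with Lemma \ref{Lemma:if:all:fibers:irred:if:we:dont:contract:the:section:it:is:stable}, the equivalence follows: (1) and (2) together imply lc and ample, hence stability; conversely, failure of either (1) or (2) obstructs one of these properties.
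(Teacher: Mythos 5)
Your proposal is correct and follows essentially the same route as the paper: the paper likewise derives condition (1) from \cite{AB3}*{Theorem 3.10} and condition (2) from Lemma \ref{Lemma:if:all:fibers:irred:if:we:dont:contract:the:section:it:is:stable}. The extra detail you supply (localizing the lc condition to $S \cap F_i$ and noting $E=0$ so the lemma applies verbatim) is consistent with, and merely expands on, the paper's one-line argument.
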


Finally, coupling \cite{AB3}*{Corollary 4.14} with \cite{AB3}*{Lemma B.1}, we get the following:
\begin{Lemma}\label{Lemma:conditions:when:to:contract:section}
 Let $(X,sS+\vec{a}F+E)$ be an irreducible slc elliptic fibration, with $f:X \to C$ its associated morphism.
 If $(K_X+sS+\vec{a}F+E).S<0$, then either the arithmetic genus of $C$ is 0, or it is 1. If it is 0, then the number of fibers marked with coefficient 1 is at most two and $\sum a_i \le 2$. If it is 1, then $\vec{a}F+E=0$.
\end{Lemma}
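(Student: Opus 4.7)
The plan is to reduce the stated trichotomy to an adjunction computation on $S$ and then invoke the two cited results \cite{AB3}*{Lemma B.1} and \cite{AB3}*{Corollary 4.14}. Since $S$ is a section of $f:X\to C$ contained in the smooth locus of the fibration, the restriction $f|_S$ is an isomorphism onto $C$ and adjunction yields $(K_X+S).S = 2p_a(C)-2$, hence
\[
(K_X+sS+\vec{a}F+E).S \;=\; 2p_a(C)-2 \;+\; (s-1)S^2 \;+\; (\vec{a}F+E).S.
\]

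The two correction terms are non-negative: \cite{AB3}*{Lemma B.1} gives $S^2\leq 0$, which combined with $s\leq 1$ yields $(s-1)S^2\geq 0$, and the effectivity of $\vec{a}F+E$ (whose support avoids $S$) yields $(\vec{a}F+E).S\geq 0$. The hypothesis $(K_X+sS+\vec{a}F+E).S<0$ therefore forces $2p_a(C)-2<0$, i.e.\ $p_a(C)\in\{0,1\}$. When $p_a(C)=1$, both summands $(s-1)S^2$ and $(\vec{a}F+E).S$ must actually vanish; to deduce $\vec{a}F+E=0$ from $(\vec{a}F+E).S=0$, I would check case by case that every class of component allowed in $\vec{a}F+E$ by Definition \ref{Def:slc:elliptic:surface} intersects $S$ with positive multiplicity. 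Intermediate components and marked irreducible fibers meet $S$ transversely at the section point, so $\vec{a}F=0$; reduced twisted fibers and reduced supports of multiple twisted fibers also meet $S$, so they cannot appear in $E$; and a twisted component of an intermediate fiber can only occur in $E$ when the companion intermediate component occurs in $\vec{a}F$, which has already been excluded.

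When $p_a(C)=0$, the bounds that the number of fibers marked with coefficient $1$ is at most two and that $\sum a_i\leq 2$ are precisely the content of \cite{AB3}*{Corollary 4.14} applied in this setting, where the ampleness failure along $S$ translates into a numerical constraint on the fibered divisor. The step I expect to be the main obstacle is the component-by-component verification in the $p_a(C)=1$ case: one has to be sure that \emph{every} type of divisor allowed in $\vec{a}F+E$ genuinely contributes positively to the intersection with $S$, so that the vanishing of this single intersection number is strong enough to rule out all such components at once, giving the clean conclusion $\vec{a}F+E=0$.
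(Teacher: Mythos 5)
The paper itself gives no argument for this lemma beyond the sentence ``coupling \cite{AB3}*{Corollary 4.14} with \cite{AB3}*{Lemma B.1}'': the intended proof is simply that $S^2\le 0$ and $s\le 1$ give $(K_X+sS+\vec aF+E).S=(K_X+S+\vec aF+E).S+(s-1)S^2\ge (K_X+S+\vec aF+E).S$, so negativity for the weight $s$ forces negativity for the weight $1$, and then the $s=1$ statement of \cite{AB3}*{Corollary 4.14} is quoted. Your reduction via $(s-1)S^2\ge 0$ and your appeal to Corollary 4.14 for the genus-$0$ bounds therefore coincide with what the paper has in mind; the extra content in your write-up is the adjunction derivation of the genus dichotomy, which is essentially a re-proof of part of the cited corollary.

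Two points need attention. First, the identity $(K_X+S).S=2p_a(C)-2$ is not correct as stated: $X$ is in general singular along $S$ at the points where $S$ meets a multiple twisted fiber (the $A_n^*$ quotient points of Lemma \ref{lemma:classif:sing:tsm}), so adjunction reads $(K_X+S).S=\deg\bigl(K_S+\mathrm{Diff}_S(0)\bigr)=2p_a(C)-2+\deg\mathrm{Diff}_S(0)$ with $\mathrm{Diff}_S(0)$ effective. This only strengthens your inequality, so the genus bound survives, but the formula should be corrected. Second, and more substantively: your chain of inequalities gives $(K_X+sS+\vec aF+E).S\ge 2p_a(C)-2$, so the strict hypothesis forces $2p_a(C)-2<0$, i.e.\ $p_a(C)=0$ outright. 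Your entire genus-$1$ discussion is therefore never reached under the stated hypothesis, and you should either say explicitly that you are proving the (formally stronger) statement that the genus-$1$ alternative is vacuous, or explain why it is not. As written, the genus-$1$ paragraph also contains an unjustified step: Definition \ref{Def:slc:elliptic:surface} forces \emph{every} twisted component of an intermediate fiber into $\supp(E)$ but does not force the companion intermediate component into $\supp(\vec aF)$ (after a flip of La Nave the intermediate component sits in the double locus, not among the marked fibers), so $(\vec aF+E).S=0$ does not by itself exclude $E\ne 0$; since twisted components are disjoint from $S$, that implication needs a separate argument.
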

\subsection{Intermediate fibers}\label{Subsection:intermediate:fibers}
In this subsection we study intermediate fibers. We first understand the singularities of a twisted fiber, and then we focus on the intersection pairing on the intermediate fibers. These fibers can also be studied considering the stable model of a minimal log-resolution of the pair $(X,S+aF)$, consisting of a possibly non-minimal Weierstrass fibration, with a fiber $F$ with coefficient $a$. See \cite{AB3}*{Section 3} for such a point of view. 

Since the questions we will address are local over $C$, we give the following definition:
\begin{Def}\label{Def:germ}
Let $X \xrightarrow{f} C$ be an elliptic (resp. minimal Weierstrass) fibration. Given a point $x$ of $C$, let $R:=\cO_{C,x}$ and consider the morphism $\spec(R) \to C$. We call the pair
$(X\times_C \spec(R),X\times_C \spec(R) \to \spec(R))$, a \underline{germ of an elliptic} (resp. \underline{minimal Weierstrass}) \underline{fibration}. 
\end{Def}
 
From the definition of intermediate fibers, these are obtained from a twisted fiber performing a blow-up at
$p$, the intersection point between the section and the closed fiber. We begin then by focusing on the twisted fibers.
Consider $X \to \spec(R)$ the germ of an elliptic fibration, and assume that the closed fiber is twisted and \emph{singular} at $p$.
In \cite{AB1} the two authors, among other things, study the possible singularities of
$X$ at $p$. The following Lemma is implicit in \cite{AB1}:
\begin{Lemma}\label{lemma:classif:sing:tsm}
 With the notation of \cite{Kollarsing}*{3.19}, the possible singularities of $X$ at $p$ are the following:
$$\mathbb{A}^2/\frac{1}{2}(1,1); \text{ }\mathbb{A}^2/\frac{1}{3}(1,1);\text{ }\mathbb{A}^2/\frac{1}{3}(1,-1);\text{ }\mathbb{A}^2/\frac{1}{4}(1,1);
\text{ }\mathbb{A}^2/\frac{1}{4}(1,-1);\text{ }\mathbb{A}^2/\frac{1}{6}(1,1);\text{ }\mathbb{A}^2/\frac{1}{6}(1,-1)$$
\end{Lemma}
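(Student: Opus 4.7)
The plan is to realize $X$ near $p$ as a cyclic quotient singularity on a smooth ambient space and to classify the characters of the action.

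First, since the closed fiber is a twisted fiber, the germ $X \to \spec(R)$ arises (up to \'etale cover) by pulling back the universal elliptic curve from a twisted stable map $\cC\to\Mone$ and taking the coarse moduli space. The hypothesis that the closed fiber is singular at $p$ forces $p$ to lie on the section and forces $\cC$ to carry a nontrivial stacky structure at the image point $q$: away from stacky points of $\cC$ the universal family is smooth along its zero section, so no singularity of $X$ could occur there. I would then describe $\cC$ \'etale-locally as $[\spec \hat R[[t]]/\mu_n]$ with the standard action $t\mapsto\zeta t$, and observe that $\mu_n$ must embed in the automorphism group of the corresponding $1$-pointed genus one curve; since the possible such automorphism groups in characteristic zero are $\bZ/2$ (generically and at $j=\infty$), $\bZ/4$ (at $j=1728$), and $\bZ/6$ (at $j=0$), I conclude that $n\in\{2,3,4,6\}$.

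Next, choosing a local uniformizer $u$ for the section at $p$, I would write the $\mu_n$-action on the total space of the family as $(t,u)\mapsto(\zeta t,\zeta^a u)$, where $a$ is the character by which the chosen generator of $\mu_n$ acts on the tangent space at the origin of the elliptic curve fiber. The crucial input is that this character is \emph{primitive} of order $n$: the action of $\Aut(E,O)$ on $T_OE$ is faithful, hence $\gcd(a,n)=1$. This forces $a\equiv 1\pmod 2$ when $n=2$ and $a\equiv \pm 1\pmod n$ when $n\in\{3,4,6\}$. Taking coarse space, $\spec(\hat R[[t,u]]^{\mu_n})$ is then exactly the cyclic quotient singularity $\mathbb A^2/\tfrac{1}{n}(1,a)$, producing the seven singularities listed in the statement.

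The hard part will be justifying the faithfulness of the character on $T_OE$ together with the explicit identification of the orders of the automorphism groups at each point of $\Mone$; both are standard facts about elliptic curves in characteristic zero and constitute the analysis carried out in \cite{AB1}, to which the lemma attributes the classification.
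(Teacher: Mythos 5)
Your argument is correct and is essentially the approach the paper itself indicates: the lemma is stated without a standalone proof (it is attributed to \cite{AB1}, and the remark immediately following it sketches precisely your strategy — present $X$ as the coarse space of a stack, use that the stabilizers are cyclic of order $1,2,3,4,6$, and apply \cite{Olsson}*{Theorem 11.3.1} to get a cyclic quotient presentation). The two inputs you isolate — representability of $\cC\to\Mone$ forcing the base character to be primitive, and faithfulness of $\Aut(E,O)$ on $T_OE$ forcing the fiber character to be a unit mod $n$ — are the same ones the paper uses when it carries out this local computation explicitly in the proof of Lemma \ref{Lemma:sing:duval:fibers}.
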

\begin{Remark}The previous Lemma can also be recovered using these three ingredients. First, that $X$ is the coarse space of a stack $\cX$. Second, that for every $p \in \cX(\spec(k))$, the group $\Aut_\cX(p)$ is cyclic of order either 1, 2, 3, 4 or 6. Third, \cite{Olsson}*{Theorem 11.3.1}.
\end{Remark}
\begin{Remark}\label{Rmk:remame:sing:tsm}
Recall that 
$\mathbb{A}^2/\frac{1}{n}(1,-1) \cong \spec(k[u,v,w]/(uv-w^n)),$
and $\mathbb{A}^2/\frac{1}{n}(1,-1)$ is an $A_{n-1}$-singularity. We will follow \cite{AB1} and call $A_{n-1}^*$ the singularity
$\mathbb{A}^2/\frac{1}{n}(1,1)$.
\end{Remark}
Consider now $X$ as above, assume that $X$ is normal (i.e. the generic fiber is not nodal) and let $Y \to X$ be a minimal resolution.
Using Kodaira's classification of the possible singular fibers of $Y$, and the minimal resolutions for each of these singularities, in \cite{AB1} the authors match the possible singularities at $p$ with the fiber of $Y$. We report the result in
Table \ref{tab:table2}. Finally one can treat also the case in which the generic fiber is nodal. In fact, using that the automorphism group of a nodal genus 1 one-pointed curve is $\mathbb{Z}/2\mathbb{Z}$, the singularity at $p$ is an $A_1$ singularity.
 \begin{table}[!ht]
  \begin{center}\caption{}
  \label{tab:table2}
    \begin{tabular}{|c|l|c|c|c|c|c|c|}
      \hline
       \text{Singular fiber of }Y &$\operatorname{I}_n^*$  & $\operatorname{II}$ & $\operatorname{III}$ & $\operatorname{IV}$ &$\operatorname{II}^*$ & $\operatorname{III}^*$ & $\operatorname{IV}^*$\\
      \hline
      \text{Singularity at }$p$&$A_1$ & $A_5^*$ & $A_3^*$ & $A_2^*$ & $A_5$ & $A_3$ & $A_2$ \\
      \hline
      
    \end{tabular}
  \end{center}
\end{table}

Now we focus on intermediate fibers. To obtain such a fiber, we proceed in two steps. First we resolve the singularity at $p$, to get $Z \to X$.
The exceptional divisor will be a chain
of $\mathbb{P}^1$. Then we can contract all the exceptionals that do not meet the proper transform of $S$. The type of intermediate fiber we get
depends on the resolution $Z$ we choose.
\begin{Def}\label{Def:minimal:intermediate}
 With the above notation, a \underline{minimal intermediate fiber} is an intermediate fiber obtained as above, from a minimal resolution $Z \to X$
 of $p$ (by hypothesis $X$ is singular at $p$). 
\end{Def}
We remark that our definition agrees with the one in \cite{AB3}*{Definition 3.3}.
In the case of minimal intermediate fibers, the contraction of the chain of $\mathbb{P}^1$ mentioned above is algebraic by \cite{AB1}. 
In loc. cit. the two authors also prove that,
if $Z$ is a germ of an elliptic fibration with a minimal intermediate fiber, there is a morphism $Z \to Y$ that contracts the twisted component.
\begin{Def}
 A germ of an elliptic fibration $Y \to \spec(R)$ has a \underline{minimal cusp} if there is $Z \to \spec(R)$ a germ of an elliptic
 fibration, with a minimal intermediate fiber, and a contraction morphism $Z \to Y$ that contracts the twisted component. 
\end{Def} From \cite{AB1}, if $Y \to \spec(R)$ has a minimal cusp, the closed fiber of $Y$ is a cusp.
Therefore, if $Y$ is normal and $T \to Y$ is a minimal resolution, the closed fiber of $T \to \spec(R)$ is one of the
following fiber types: $\operatorname{I}_n^*$, $\I \I$, $\I\I\I$, $\I\operatorname{V}$, $\I\I^*$, $\I\I\I^*$ or
 $\I\operatorname{V}^*$ (see \cite{Miranda}). The germ of a minimal Weierstrass fibration which has a cusp as closed fiber, has a minimal cusp (see \cite{Miranda}*{Proposition III.3.2}). On the other hand, the germ of a normal elliptic fibration which as closed fiber has either a minimal cusp or a DM stable 1-pointed curve, is the germ of a minimal Weierstrass fibration.
  
\begin{bf}Non-minimal intermediate fibers:\end{bf} We now focus on intermediate fibers which are not minimal.
We start by giving an example:
\theoremstyle{definition}
\begin{EG}\label{Example:smooth:int:fiber}
Consider $X \to \spec(R)$ the germ of an elliptic fibration, and assume that
the closed fiber is a DM stable genus 1 curve (observe in particular that it is a twisted fiber, and $X$ is smooth along $S$).
Then we can blow-up the intersection point of the section and the closed fiber. The resulting fiber will
be intermediate, but not a minimal intermediate fiber.\end{EG}

Let $A+E$ an intermediate fiber of $X \to \spec(R)$, where $E$ is the twisted component and let $A$ the intermediate one. Let $q:=E \cap A$.
By definition, $X$ is smooth along $A \smallsetminus E$, and there is a morphism $X \to Z$ that contracts $A$.

Consider $Z' \to Z$ the minimal resolution of a neighbourhood of $S \cap Z$, where $S$ is the section. Consider
$X' \to X$ a minimal resolution of $X$ around
$q$. By the minimality of $Z' \to Z$, there is a morphism $X' \to Z'$, which is a composition of
blow-ups of smooth points:
$$X':=X^{(r)} \to X^{(r-1)} \to ... \to X^{(1)}=:Z'$$
By the minimality of $X'$ and since $X' \cong X$ in a neighbourhood of the section, the morphism
$X^{(m)} \to X^{(m-1)}$ is the blow-up of the point of intersection of the closed fiber of $X^{(m-1)} \to \spec(R)$,
and the proper transform of the section. Therefore:
\begin{Lemma}\label{Lemma:dual:graph:intermediate}
Let $E_X$ (resp. $E_Z$) be the proper transform of the twisted component in $X'$ (resp. $Z'$), let $G_X$ (resp. $G_Z$)
exceptional locus of $X' \to X$ (resp. $Z' \to Z$), and let $A_X$ the proper transform of the
intermediate component in $X'$. Let $\Gamma_{X'/X}$ (resp. $\Gamma_{Z'/Z}$) be the dual graph of the closed fiber $E_X + G_X+A_X$
(resp. $E_Z+ G_Z$).
Then $\Gamma_{X'/X}$ is obtained
 from $\Gamma_{Z'/Z}$ adding a chain to the edge of $\Gamma_{Z'/Z}$ that corresponds to the component intersecting $S$.
\end{Lemma}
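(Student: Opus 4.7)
The plan is to use the factorization $X' = X^{(r)} \to X^{(r-1)} \to \dots \to X^{(1)} = Z'$ already produced in the setup and to track what each elementary blow-up does to the dual graph of the closed fiber, by induction on $m$. The base case is $Z'$ itself, whose closed fiber is $E_Z + G_Z$ and whose dual graph is $\Gamma_{Z'/Z}$; the section meets exactly one component, which I will call $g_0$ (this is the vertex around which the chain will be grown).

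For the inductive step, suppose that at stage $X^{(m-1)}$ the closed fiber is $E_Z + G_Z + F^{(2)} + \dots + F^{(m-1)}$, the dual graph is $\Gamma_{Z'/Z}$ extended by the chain $g_0 \text{--} F^{(2)} \text{--} \dots \text{--} F^{(m-1)}$, and the proper transform of the section meets exactly one vertex (the last one in the chain, or $g_0$ if $m=2$). Because $X^{(m)} \to X^{(m-1)}$ is, by hypothesis, the blow-up of the point where the proper transform of the section meets the fiber, and because this is the blow-up of a smooth point of a smooth surface lying on a single smooth fiber component, the new exceptional $F^{(m)}$ meets the proper transform of that component transversally at one point, no longer meets the section, and the proper transform of the section meets only $F^{(m)}$. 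Thus one new vertex is appended to the end of the existing chain, and the inductive hypothesis is preserved.

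Applying this $r-1$ times gives $X'$, with closed fiber whose dual graph is $\Gamma_{Z'/Z}$ together with a chain $g_0 \text{--} F^{(2)} \text{--} \dots \text{--} F^{(r)}$, and with the section meeting only $F^{(r)}$. It remains to match this description to the decomposition $E_X + G_X + A_X$. Trivially $E_X = E_Z$. Since $X' \to X$ is an isomorphism in a neighbourhood of the section (the point $q$ we resolved is not on the section, by definition of intermediate fiber), the proper transform of the section in $X'$ meets the proper transform of whatever component meets it in $X$, namely the intermediate component $A_X$; uniqueness forces $A_X = F^{(r)}$. The remaining components $G_Z \cup \{F^{(2)}, \dots, F^{(r-1)}\}$ are then exactly the exceptional locus $G_X$ of $X' \to X$, and the claim about $\Gamma_{X'/X}$ follows. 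The only slightly delicate point—and thus the main obstacle—is this last identification: one must argue that the terminal vertex of the chain belongs to $A_X$ rather than $G_X$, so that $G_X$ really is obtained by inserting a (possibly empty) chain of intermediate vertices into $\Gamma_{Z'/Z}$ at $g_0$, exactly as stated.
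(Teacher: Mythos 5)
Your proposal is correct and follows essentially the same route as the paper: the paper's "proof" is precisely the paragraph preceding the Lemma, which observes that by minimality each map $X^{(m)}\to X^{(m-1)}$ is the blow-up of the point where the proper transform of the section meets the closed fiber, so each step appends one vertex to a chain growing out of the component meeting $S$. Your version merely makes the induction explicit and records the identification $A_X=F^{(r)}$ (the terminal vertex), which is forced because $X'\cong X$ near the section and the section in $X$ meets only the intermediate component.
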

An example is illustrated below, where if $p:X' \to Z'$ is the map described above,
$F_Z':=p_*^{-1}(F_Z)$; and the black edge denotes the component that intersects the section.
\begin{center}$\Gamma_{Z'/Z}:$
\begin{tikzpicture}[scale=.4]
    \draw (-1,0) node[anchor=east]{ };
    \foreach \x in {-1,...,1}
    \draw[xshift=\x cm,thick] (\x cm,0) circle (.3cm);
    \foreach \x in {-1}
    \draw[xshift=\x cm,thick,] (\x cm,0) circle (.3cm) node [above] {$E_{Z}$};
    \foreach \x in {2}
    \draw[xshift=\x cm,thick, fill=black] (\x cm,0) circle (.3cm) node [above] {$F_{Z}$};
    \draw[dotted,thick] (-1.6 cm,0) -- +(1.4 cm,0);
    \foreach \y in {0.15,...,1.15}
    \draw[xshift=\y cm,thick, fill=black] (\y cm,0) -- +(1.4 cm,0);
  \end{tikzpicture} $\text{ }$ $\text{ }$ $\text{ }$ $\text{ }$ $\text{ }$ $\Gamma_{X'/X}:$
  \begin{tikzpicture}[scale=.4]
 \draw (-1,0) node[anchor=east]{ };
    \foreach \x in {-1,...,1}
    \draw[xshift=\x cm,thick] (\x cm,0) circle (.3cm);
    \foreach \x in {-1}
    \draw[xshift=\x cm,thick] (\x cm,0) circle (.3cm) node [above] {$E_X$};
    \foreach \x in {2}
    \draw[xshift=\x cm,thick] (\x cm,0) circle (.3cm) node [above] {$F_Z'$};
    \draw[dotted,thick] (-1.6 cm,0) -- +(1.4 cm,0);
    \foreach \y in {0.15,...,3.15}
    \draw[xshift=\y cm,thick, fill=black] (\y cm,0) -- +(1.4 cm,0);
    \foreach \y in {4.15,...,4.15}
    \draw[xshift=\y cm,dotted, thick, fill=black] (\y cm,0) -- +(1.4 cm,0);
    \foreach \x in {3,...,4}
    \draw[xshift=\x cm,thick] (\x cm,0) circle (.3cm);
      \foreach \x in {5}
    \draw[xshift=\x cm,thick, fill=black] (\x cm,0) circle (.3cm) node [above] {$A_X$};
  \end{tikzpicture}
    \end{center}
    Observe that if $Z$ is already smooth (as in Example \ref{Example:smooth:int:fiber}), then $F_Z=E_Z$.
\begin{Oss} From Lemma \ref{lemma:classif:sing:tsm}, the exceptional $G_Z$ is a chain of $\mathbb{P}^1$. We can understand the
 minimal intermediate fibers as those such that $G_Z \neq \emptyset$, but $\Gamma_{X'/X} = \Gamma_{Z'/Z}$. 
\end{Oss}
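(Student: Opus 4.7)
The plan is to handle the two assertions of the Observation separately. For the chain structure of $G_Z$, when $G_Z$ is nonempty the germ $Z$ is singular at $S\cap Z$; since $Z$ is the contraction of the intermediate component $A$ from $X$ its closed fiber is a twisted fiber, so Lemma \ref{lemma:classif:sing:tsm} identifies the singularity at $S\cap Z$ as one of the cyclic quotients $\mathbb{A}^2/\tfrac{1}{n}(1,\pm 1)$. The minimal resolution of any such cyclic quotient is the classical Hirzebruch--Jung chain of $\mathbb{P}^1$'s, giving the claim.

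For the characterization of minimal intermediate fibers, the strategy is to re-read Lemma \ref{Lemma:dual:graph:intermediate} as saying that $\Gamma_{X'/X}=\Gamma_{Z'/Z}$ is equivalent to no additional chain being attached, which in the factorization $X'=X^{(r)}\to\cdots\to X^{(1)}=Z'$ means $r=1$ and $X'=Z'$. For the ``only if'' direction, if the intermediate fiber of $X$ is minimal then by Definition \ref{Def:minimal:intermediate} $X$ is obtained from $Z'$ by contracting the components of $G_Z$ not meeting the proper transform of $S$ (which forces $G_Z\ne\emptyset$), so the minimal resolution of $X$ at $q=E\cap A$ simply blows those components back up and yields $X'=Z'$. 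For the converse, assume $G_Z\ne\emptyset$ and $X'=Z'$; comparing closed fibers on $X'=Z'$ gives $E_X+G_X+A_X=E_Z+G_Z$, and since the proper transform of $S$ meets $A_X$ on the $X$-side but must also meet the end component $F_Z$ of $G_Z$ on the $Z$-side, one forces $A_X=F_Z$ and $G_X=G_Z\setminus\{F_Z\}$. Contracting $G_X$ from $X'=Z'$ to obtain $X$ is then precisely the construction of a minimal intermediate fiber.

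The main subtle point is the Hirzebruch--Jung fact invoked in the ``only if'' direction: contracting the complement of a sub-chain of a minimal Hirzebruch--Jung resolution produces a cyclic quotient singularity whose minimal resolution is exactly that sub-chain, so no smaller resolution of $X$ at $q$ can exist. Beyond this, the only real care needed is to keep straight the notational clash with Definition \ref{Def:minimal:intermediate} (where the surface with the twisted fiber and its minimal resolution are called $X$ and $Z$, whereas here they are called $Z$ and $Z'$); once this bookkeeping is done the Observation is just the combination of Lemmas \ref{lemma:classif:sing:tsm} and \ref{Lemma:dual:graph:intermediate}.
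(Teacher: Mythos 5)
Your argument is correct and matches what the paper intends: the Observation is stated there without proof, and your verification is exactly the unwinding of Lemma \ref{lemma:classif:sing:tsm}, Lemma \ref{Lemma:dual:graph:intermediate} and Definition \ref{Def:minimal:intermediate}, with the notational clash between the two passages handled correctly. The key fact you isolate is the right one: the components of $G_Z$ not meeting the proper transform of the section all have self-intersection at most $-2$ (by minimality of $Z' \to Z$), so contracting them produces a singularity at $q$ whose minimal resolution has exactly that contracted configuration as exceptional, which forces $X' = Z'$ in the ``only if'' direction.
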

\begin{Lemma}\label{Lemma:can:obtain:every:graph:from:intermediate}
With the notation of Lemma \ref{Lemma:dual:graph:intermediate}, every graph obtained from
$\Gamma_{Z'/Z}$ adding a chain to $F_Z$, can be obtained as above from an intermediate fiber.
\end{Lemma}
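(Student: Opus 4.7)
The plan is to realise every prescribed extension of $\Gamma_{Z'/Z}$ by a chain of length $k\ge 1$ at $F_Z$ as the dual graph $\Gamma_{X'/X}$ of a suitable germ of an elliptic fibration $X\to\spec(R)$. The construction is explicit: one performs a sequence of $k$ blow-ups on $Z'$ along the section, and then a single contraction of the curves that should not survive in $X$.

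Fix the desired number $k\ge 1$ of new vertices to attach at $F_Z$. Starting from $Z'$, set $X^{(1)}:=Z'$ and, for $1\le i\le k$, inductively let $X^{(i+1)}\to X^{(i)}$ be the blow-up of the unique intersection of the proper transform of the section with the closed fiber of $X^{(i)}\to\spec(R)$. Since the section lies in the smooth locus, each blow-up is centered at a smooth point of a smooth surface, and produces a new $(-1)$-curve $C_i$ attached to the previous chain at the point where the section used to meet it. Writing $X':=X^{(k+1)}$, the dual graph of its closed fiber is exactly $\Gamma_{Z'/Z}$ with the chain $F_Z - C_1 - C_2 - \cdots - C_k$ attached at $F_Z$, and the proper transform of the section meets $C_k$ transversely.

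To produce $X$, contract on $X'$ the divisor $D:=G_Z + F_Z + C_1 + \cdots + C_{k-1}$, so that only $E_Z$ and $C_k$ survive as components of the new closed fiber. Because $D$ is a connected proper subset of the closed-fiber components of the relative surface $X'\to\spec(R)$, and the intersection form on the whole fiber is negative semidefinite with one-dimensional kernel spanned by the fiber class, the intersection matrix restricted to $D$ is strictly negative definite. By Artin's contractibility theorem this yields an algebraic contraction $X'\to X$ collapsing $D$ to a single normal point $q$, together with an induced proper morphism $f:X\to\spec(R)$. Since $X'\to X$ is an isomorphism away from $D$, the generic fiber of $f$ is unchanged, and the closed fiber of $f$ consists of exactly two components, the images of $E_Z$ and $C_k$, meeting only at $q$.

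It remains to check that $f:X\to\spec(R)$ is a germ of an elliptic fibration in the sense of Section~\ref{Section:definitions:of:the:objects:we:parametrize}, with an intermediate closed fiber and $\Gamma_{X'/X}$ equal to the prescribed graph. The latter is immediate: by construction $X'\to X$ is the minimal resolution of $q$, its exceptional locus $G_X$ is $D$, the proper transform of the twisted component is $E_Z = E_X$, and the proper transform of the intermediate component is $C_k = A_X$. The fact that $X$ arises from the two-step procedure (blow-up of an ideal sheaf along the section, followed by contraction of proper transforms of twisted fibers) which defines an elliptic fibration is likewise built into the construction: $X$ is obtained from $Z$ by blowing up the ideal cut out by the successive point blow-ups above, supported at the section point of the closed fiber of $Z$, with no further contraction needed. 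The main technical point is the algebraicity of the contraction $X'\to X$, which is the negative-definiteness computation just recalled; once this is in hand, all remaining verifications are bookkeeping on the dual graph.
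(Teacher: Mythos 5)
Your construction is the paper's argument reorganized: the paper builds the chain inductively, at each step blowing up the point where the section meets the current intermediate component and then contracting that component, so that each contraction involves a single curve sitting over (a blow-up of) the minimal resolution of a rational singularity of $Z$; you instead perform all $k$ blow-ups on $Z'$ at once and contract the whole configuration in one step. The two procedures produce the same germ, and the one-shot version is fine in principle, but it places the entire weight on the algebraicity of the single large contraction, and that is exactly where your justification is incomplete. Negative definiteness of the intersection matrix is \emph{not} the hypothesis of Artin's contractibility theorem as it is used here: \cite{Artinsing2}*{Theorem 2.3} also requires $p_a(D')\le 0$ for every effective cycle $D'$ supported on the curves being contracted (equivalently, the point produced must be a rational singularity); without this condition, negative definiteness only yields a contraction in the analytic or algebraic-space category. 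This is precisely why the paper's proof says ``since $Z$ has rational singularities'' and cites \cite{Artinsing1}*{Proposition 1} alongside Theorem 2.3. In your situation the hypothesis does hold --- after the blow-ups every component of $D$ has self-intersection $\le -2$ and $D$ is a connected chain of smooth rational curves, i.e.\ a Hirzebruch--Jung string, which contracts to a cyclic quotient (hence rational) singularity --- but this verification must be made; as you state it, the theorem you invoke does not apply.

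A smaller point: your formula $D=G_Z+F_Z+C_1+\cdots+C_{k-1}$ is wrong when $Z$ is smooth at $S\cap Z$ (the situation of Example \ref{Example:smooth:int:fiber}), since then $G_Z=\emptyset$ and $F_Z=E_Z$ is the twisted component, which must survive in $X$. What you mean is that $D$ consists of every component of the closed fiber of $X'$ except $E_Z$ and $C_k$. With these two repairs, the rest of your argument --- the minimality of $X'\to X$, the identification of $\Gamma_{X'/X}$ with the prescribed graph, and the check that $X\to Z$ exhibits $X$ as a germ of an elliptic fibration --- is correct bookkeeping.
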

 \begin{proof} The main ingredient are the results in \cite{Artinsing1} and
\cite{Artinsing2}.

We can work inductively adding one edge after the other.
Assume then that $f:X \to \spec(R)$ has an intermediate fiber $E+A$, and let $Z$ be the surface obtained contracting the intermediate component $A$.
Assume that, with the notation above, $\Gamma_{X'/X}$ has $m$ edges.
On $X$, we can perform a blow-up at
the intersection point $x$
of $A$ and $S$, to get $\phi:B_xX \to X$. The resulting surface $B_xX \to \spec(R)$ will be such that the closed fiber of $B_xX \to \spec(R)$ has three irreducible components:
$E':=\phi_*^{-1}(E)$, $A':=\phi^{-1}(A)$ and $F$ the exceptional. Since $Z$ has rational singularities, using \cite{Artinsing1}*{Proposition 1}
and \cite{Artinsing2}*{Theorem 2.3} to a minimal resolution of $B_xX$, we see that we can contract $A'$ on $B_xX$, to
get $\psi:B_xX \to Y$. Then
$Y \to \spec(R)$ has an intermediate fiber, and the dual graph of the exceptional divisor of a minimal resolution $Y' \to Y$ has $m+1$ edges. \end{proof}
Consider now two birational germs of elliptic surfaces $X$ and $Y$, assume they are obtained as in the proof of Lemma \ref{Lemma:can:obtain:every:graph:from:intermediate}. Namely, let $A$ (resp. $E$) be the intermediate (resp. twisted) component of the intermediate fiber of $X \to \spec(R)$. Then $Y$ is obtained from $X$ performing a blow-up $\phi$ at the intersection point of $A$ and the section, and $\psi$ is the contraction of $\phi^{-1}_*(A)$.
$$\xymatrix{ & B_xX \ar[ld]_{\phi} \ar[rd]^{\psi} & \\ X & &Y}$$

We can then compare the intersection form on $X$ and on $Y$: let $A_Y$ (resp. $E_Y$) be the intermediate (resp. twisted) component of
the closed fiber of $g:Y \to \spec(R)$. Let $q$ be the closed point of $\spec(R)$. First observe that if $f^{-1}(q)=A+mE$, then
$g^{-1}(q)=A_Y+mE_Y$ for the same $m$. Moreover, $A_Y(A_Y+mE_Y)=A(A+mE)=0$ since $A_Y+mE_Y$ and $A+mE$ are fibers.
Then to understand the intersection form on $Y$ it suffices to compute $A_Y^2$.

Since $\phi$ is a blow-up
we have $\phi^*(A)=A'+F$, moreover $F^2=-1$ and $A'.F=1$. Therefore $$(A')^2=\phi^*(A).A' - F.A'=A.\phi_*(A') -1=A^2-1$$
On the other hand, there is a constant $\alpha$ such that $\psi^*(A_Y)=\alpha A' + F$, since $F$ is the proper transform of
of $A_Y$ through $\psi$. Then we have
$$0=\psi_*(A').A_Y=A'.\psi^*(A_Y)=\alpha (A')^2+1 \text{ }\text{ }\text{ therefore }\text{ }\text{ }\alpha=\frac{1}{-(A')^2}$$
So we have $$A_Y^2=\psi_*(F).A_Y=F.\psi^*(A_Y)=\alpha -1=\frac{1}{-(A')^2}-1=\frac{A^2}{1-A^2}$$
Recall now that $A^2<0$ and $A_Y^2<0$ since these are exceptional curves. Therefore
$$A_Y^2=\frac{A^2}{1-A^2}>A^2$$
The main consequence of these computations is the following proposition:
\begin{Prop}\label{Prop:recognize:the:intermediate:from:int:pairing}
 Assume that are given $X_1 \to \spec(R)$ and $X_2 \to \spec(R)$, two germs of elliptic fibrations, with closed fibers that are intermediate. Let
 $A_i$ be the intermediate component of $X_i$, and let
 $p_i:X_i \to Z_i$ the contraction of $A_i$. If $Z_1 \cong Z_2$ and $A_1^2=A_2^2$, then $X_1 \cong X_2$.
\end{Prop}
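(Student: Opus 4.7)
The plan is to show that the data $(Z_i, A_i^2)$ determines $X_i$ uniquely, by exploiting the iterative construction of non-minimal intermediate fibers developed immediately before the proposition.

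First I would appeal to Lemma \ref{Lemma:can:obtain:every:graph:from:intermediate} (and the explicit discussion that precedes it) to factor each $X_i$ as arising from a germ $Y_i$ carrying a \emph{minimal} intermediate fiber, followed by $n_i$ iterations of the canonical operation ``blow up the intersection of the current intermediate component with the section, then contract the proper transform of the old intermediate component''. Crucially, $Y_i$ is determined by $Z_i$ up to isomorphism: starting from the minimal resolution $Z_i' \to Z_i$ of a neighbourhood of $S \cap Z_i$, the minimal intermediate fiber is recovered by contracting exactly those exceptional components that do not meet the proper transform of the section (Definition \ref{Def:minimal:intermediate}). Hence the hypothesis $Z_1 \cong Z_2$ yields $Y_1 \cong Y_2$, and in particular equal self-intersections $(A_1^{(0)})^2 = (A_2^{(0)})^2$ of the initial intermediate components.

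Next I would track $A^2$ through the iteration. The computation carried out just above the proposition gives the recursion $A^2 \mapsto A^2/(1-A^2)$. Since $A^2 < 0$, the map $t \mapsto t/(1-t)$ is strictly increasing on $(-\infty, 0)$ with range in $(-\infty, 0)$, so after $n_i$ iterations $A_i^2$ is a strictly monotone function of $n_i$ starting from the common value $(A^{(0)})^2$. The hypothesis $A_1^2 = A_2^2$ therefore forces $n_1 = n_2$.

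To conclude, since at each stage the blow-up centre is uniquely specified (the intersection of the section with the current intermediate component), the iterative operation is canonical. Applying the common number of steps to the isomorphism $Y_1 \cong Y_2$ then produces $X_1 \cong X_2$. The step I expect to need the most care is the uniqueness of $Y_i$ from $Z_i$: one must check that the minimal resolution followed by the prescribed contraction is the only way to reach a germ of elliptic fibration with minimal intermediate fiber over $Z_i$, which should follow from the classification in Lemma \ref{lemma:classif:sing:tsm} together with the fact that the exceptional locus of $Z_i' \to Z_i$ is a chain of $\mathbb{P}^1$'s so that the choice of components to contract is forced by the requirement to meet the section in a single point.
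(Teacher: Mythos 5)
Your proposal is correct and follows essentially the same argument the paper intends: the proposition is stated there as ``the main consequence of these computations,'' namely that $X_i$ is built from $Z_i$ by a canonical iteration whose only free parameter is the number of steps, and the strict monotonicity of the recursion $A^2 \mapsto A^2/(1-A^2)$ pins that number down from $A_i^2$. The only cosmetic caveat is that when $Z_i$ is smooth along the section the base case is Example \ref{Example:smooth:int:fiber} rather than a minimal intermediate fiber in the sense of Definition \ref{Def:minimal:intermediate}, but this does not affect the argument.
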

Finally, we remark that one can compute the intersection form on the germ of an elliptic fibration with a \emph{minimal} intermediate fiber, see \cite{AB3}*{Table 2}. 
\section{Construction of the moduli space}\label{Section:construction:of:the:moduli:surface:pairs}
The goal of this section is to construct a parameter space for surface pairs that are degenerations of
stable Weierstrass fibrations with weight vector $I:=(s,\vec{a},\beta)$.
To explain our strategy more precisely, we need to introduce some notation.

Consider $\Kg$, the \emph{normalization} of an atlas of $\bigcup_{n \le m \le d}\mathcal{K}_{g,m}(\overline{\mathcal{M}}_{1,1},d)$.
Over $\Kg$, we have the universal curve $\sC' \to \Kg$, and the universal morphism $\sC'\to \overline{\mathcal{M}}_{1,1}$ that induces a
family of elliptic curves
$(\sX',\sS') \to \sC' \to \Kg$.
Let $\sX$ (resp. $\sS$) be the coarse moduli space of $\sX'$ (resp. $\sS'$).
 
This section is divided into three subsections. In the first one we study the singularities of $\sX$ along $\sS$.
In the second subsection
we construct $(\sY,s\sS+\vec{a}\sF) \to \Kg^\circ$ (see Notation \ref{Notation:def:SI}),
a family of lc stable elliptic surfaces, with weight data $I$.
This family will be obtained replacing the multiple twisted fibers of $\sX \to \Kg$ with minimal cusps, restricting the resulting family to the locus $\Kg^\circ \hookrightarrow \Kg$ parameterizing normal surfaces, and marking $n$ fibers with the entries of $\vec{a}$. This family is such that every stable Weierstrass fibration with weight vector $(s,(a_1,...,a_n),\beta)$ will appear as a fiber of $(\sY,s\sS+\vec{a}\sF) \to \Kg^\circ$, and every member of this family will be a stable Weierstrass fibration with weight vector $(s,(a_1,...,a_n),\beta)$. Observe that the number of minimal cusps in such a Weierstrass fibration is less than $d$ (see \cite{Miranda}): this is the reason for taking the union $\bigcup_{n \le m \le d}$. 

Once $(\sY,s\sS+\vec{a}\sF) \to \Kg^\circ$ is constructed, it induces a morphism
$\Psi:\Kg^\circ \to \mathcal{M}$, where $\mathcal{M}$ is an appropriate moduli space of stable surface
pairs.
In the last subsection we introduce the morphism $\Psi$ and we define $\mathcal{W}_{I}$ (resp. $\cE_I^{sn}$) to be the normalization (resp. seminormalization) of the closure of the image of $\Psi$. 
\subsection{Singularities of $\sX$ along $\sS$}\label{subsection:singularities:and:simult:blowup}

Let $(\sC,\{\sigma_i\}_{i})$ the coarse moduli space of $(\sC',\{\Sigma_i\}_{i})$ and similarly $\sS$ the one of $\sS'$.
Since we are over an algebraically closed field of characteristic 0, taking the coarse moduli space of a DM stack commutes with base
change (\cite{AV2}*{Lemma 2.3.3}). Therefore $h:\sC \to \Kg$ is a family of nodal genus $g$ curves, with distinct points,
whereas $(\sX,\sS) \to \Kg$ is a family of elliptic fibrations with twisted fibers:
$$(\sX, \sS) \xrightarrow{g} (\sC,\{\sigma_i\}_i) \xrightarrow{h} \Kg.$$
\begin{Notation} Since $\sigma_i$ are sections of $h$ and $\sS$ is a section of $g$, we can take the composition to get a section of $h \circ g$. We will denote with $s_i$
such a section.\end{Notation}

Observe that the singularities of $\sX_q$ along $p:=s_i(q)$ are classified in Lemma \ref{lemma:classif:sing:tsm}. We want to replace
some twisted fibers with minimal cusps, so first
we explicitly produce a blow-up of $p$:

\begin{Lemma}\label{Lemma:sing:duval:fibers}
With the notations above, assume that the singularities along $p$ are either
$A_2$, $A_3$, $A_4$ or $A_6$.
Then if $\pi:B_p(\sX_q) \to \sX_q$ is the blow-up of $p$, the proper transform of the section intersects the exceptional divisor in a smooth point of
$B_p(\sX_q)$. Moreover, $\pi^*(K_{\sX_q})=K_{B_p(\sX_q)}$; a minimal resolution $Z \to \sX_q$ of $p$ factors through $ B_p(\sX_q) \to \sX_q$, and the singularities of $B_p(\sX_q)$ along the exceptional of $B_p(\sX_q) \to \sX_q$ are Du Val. \end{Lemma}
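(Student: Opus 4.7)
The statement is local on $\sX_q$ around $p$, so I would first reduce to analyzing the germ of the singularity. By Lemma \ref{lemma:classif:sing:tsm} and Remark \ref{Rmk:remame:sing:tsm}, the Du Val cases in the hypothesis are precisely the hypersurface germs $\spec(k[u,v,w]/(uv - w^n))$ with $n \in \{2, 3, 4, 6\}$, arising as $\mathbb{A}^2/\frac{1}{n}(1,-1)$. A smooth curve on $\sX_q$ through $p$ must come from a smooth $\mu_n$-invariant curve through the origin of $\mathbb{A}^2$; for the $(1,-1)$-action the only such curves are the two coordinate axes. Up to swapping $u$ and $v$, I therefore assume that the section is the $v$-axis $\{u = w = 0\}$ downstairs.

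Next I would blow up the origin and analyze the three standard affine charts of $B_p(\sX_q) \subseteq \operatorname{Bl}_0\mathbb{A}^3$. In the $u$-chart ($v = uV$, $w = uW$) the proper transform is cut out by $V = u^{n-2} W^n$, which is smooth and isomorphic to $\mathbb{A}^2_{u,W}$; the $v$-chart is symmetric. In the $w$-chart ($u = wU$, $v = wV$) the proper transform is $UV = w^{n-2}$: smooth for $n \le 3$, and an $A_{n-3}$ singularity at the origin for $n \ge 4$. This already gives the claim that the singularities of $B_p(\sX_q)$ along the exceptional are Du Val. For the section, its proper transform is $\{W = 0\}$ in the $v$-chart, meeting the exceptional transversely at the origin of that chart, which is a smooth point of $B_p(\sX_q)$.

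For the crepancy $\pi^*K_{\sX_q} = K_{B_p(\sX_q)}$ I would do a direct check using adjunction on the hypersurface $uv - w^n = 0$: a local generator of $\omega_{\sX_q}$ near the $v$-axis is $\frac{dv \wedge dw}{v}$, and its pullback to each chart of the blow-up (using Gorenstein adjunction also on the chart $UV = w^{n-2}$) yields a nowhere-vanishing section of $\omega_{B_p(\sX_q)}$. Alternatively, Du Val singularities are canonical Gorenstein and their minimal resolution is crepant, so any partial resolution through which the minimal resolution factors is crepant. This last step is also what gives the factorization of the minimal resolution: reading the exceptional divisor of $B_p(\sX_q) \to \sX_q$ off the tangent cone (two lines $uv = 0$ for $n \ge 3$, or the smooth conic $uv - w^2$ for $n = 2$), the composition of $\pi$ with a minimal resolution $Z \to B_p(\sX_q)$ produces $n-1$ smooth rational exceptional curves in $Z \to \sX_q$ whose self-intersections are forced to be $-2$ by crepancy; this is the standard chain resolving $A_{n-1}$, so $Z \to \sX_q$ is the minimal resolution and the claimed factorization holds.

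The main technical point is to organize the bookkeeping across the three charts of $B_p(\sX_q)$, in particular to verify the coherent pullback of the canonical form and to pin down where the proper transform of the section sits. With these explicit descriptions in hand, all four assertions of the lemma follow.
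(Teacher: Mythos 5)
Your chart computations, the crepancy check via \cite{KM}, and the factorization of the minimal resolution through $B_p(\sX_q)$ all match the paper's proof and are fine. The gap is in the very first step, where you locate the section inside the local model: you assert that a smooth curve on $\sX_q$ through $p$ must be the image of a smooth $\mu_n$-invariant curve through the origin of $\mathbb{A}^2$, and that for the $\frac{1}{n}(1,-1)$-action the only such curves are the two coordinate axes. Both assertions are false. For the second, $\{x=y^{n-1}\}$ is smooth, passes through the origin, and is invariant ($\zeta x = \zeta^{-(n-1)}y^{n-1}$ since $\zeta^n=1$). For the first, the preimage of a smooth curve may be reducible: on $\{uv=w^{4}\}$ (a case covered by the lemma) the curve $t\mapsto(t^{2},t^{2},t)$ is smooth and passes through the origin, but its preimage in $\mathbb{A}^2$ is $\{x=y\}\cup\{x=-y\}$, a node. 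This is not a repairable imprecision, because the conclusion of the lemma genuinely depends on which smooth curve the section is: the curve $t\mapsto(t^{2},t^{2},t)$ has tangent direction $[0:0:1]$, so its proper transform under the blow-up of the origin meets the exceptional locus exactly at the node of the two exceptional lines, which is the $A_1$-singularity of $B_0(\{uv=w^4\})$. For that curve the assertion ``the proper transform meets the exceptional in a smooth point'' fails.

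What must be proved, and what the paper's proof actually establishes, is that the section is an \emph{eigencurve}. \'Etale locally $\sX'$ is $[\spec(k[x,y])/\Aut(p)]$ and the section is a smooth $\Aut(p)$-invariant divisor through the fixed point; its ideal is principal and preserved by the cyclic (hence diagonalizable) action, so a generator may be chosen among the eigenvectors of the action on $m_p/m_p^2$. After an equivariant change of coordinates the section is $\{y=0\}$, whose image downstairs is the axis $\{v=w=0\}$, and the representability of $\cX\to\cC$ and of $\cC\to\overline{\mathcal{M}}_{1,1}$ is what forces the weights to be $(1,-1)$ so that the quotient is $uv=w^{n}$ rather than the $A^{*}$ case. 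Once this identification is in place, your explicit analysis of the three charts and the crepancy and factorization arguments go through exactly as in the paper.
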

\begin{proof}
The statement is local around $s_i(q)$, and since $(\Sigma_i)_q$ is contained in the smooth locus of $(\sC')_q$, we can assume that
$(\sC')_q$ (and thus $(\sC)_q$) is smooth.

Consider then $(\cC,\Sigma_i) \to \overline{\mathcal{M}}_{1,1}$, a smooth twisted stable curve over $\spec(k)$, and let $(\cX,\mathcal{S})\to \cC$ be the corresponding
family of stable curves.
Now, $\cX\to \cC$ has a section, so we have morphisms $\Sigma_i \to \cC \to \cX$ which are closed embeddings. Moreover, the section $\cC \to \cX$
is contained in the smooth locus of $\cX$, therefore the morphism $\cC \to \cX$ étale locally looks like
$ \spec(k[x])\xrightarrow{\alpha} \spec(k[x,y])$,
where the $\alpha$ is a closed embedding corresponding to the ideal $(y)$.

Up to passing to an étale neighbourhood of $p$, we can replace $\cX$ with $[U/\operatorname{Aut}(p)]$ (\cite{Olsson}*{Theorem 11.3.1}).
But any automorphism
in $\operatorname{Aut}(p)$ sends the section to itself. Since $\Aut(p)$ is cyclic, if $m_p$ is the ideal sheaf of $p$ in $U$,
we can take a basis of $m_p/m_p^2$ given by eigenvalues, and we can choose
a generator of the ideal sheaf of the section in $\cO_{U,p}$ to be one of those eigenvalues.

Therefore, étale locally around $p$, $\mathcal{X}$ looks like
$[\spec(k[x,y])/\operatorname{Aut}(p)]$,
where a generator $g \in \operatorname{Aut}(p)$ sends $y \mapsto ay$ and $x \mapsto bx$ for some roots of unity $a,b$ (recall that $\operatorname{Aut}(p)$ is
a cyclic group). Since $\cX \to \cC$ is representable and has a section, $a$ is a generator of $\operatorname{Aut}(p)$. But $\cC \to \overline{\mathcal{M}}_{1,1}$ is representable, which means that any non-trivial automorphism of $\cC$ comes from a non-trivial automorphism of $\cX \to \cC$: also $b$ generates $\operatorname{Aut}(p)$. Then $ab=1$,
since the singularity is $A_n$ and not $A_n^*$. Therefore:
$$\operatorname{Coarse}([\spec(k[x,y])/\operatorname{Aut}(p)]) \cong \spec(k[u,v,w]/uv-w^{n})\text{ where } n:=|\operatorname{Aut}(p)|$$ and the
quotient map sends $u \mapsto x^n,v\mapsto y^n$ and $w \mapsto xy$.
Therefore $(y)$, the ideal sheaf of the section, maps to $(v,w)$.

Then it is enough to explicitly perform the blow-up of $p$, which can be performed étale locally, and check that the proper
transform of the section intersects the exceptional in a smooth point of the surface. This can also be checked étale locally.
These are the three charts of the blow-up:
$$\spec(k[u_1,v_1,w]/(u_1v_1-w^{n-2})) \text{ and the ideal }(v,w) \text{ becomes } (w),$$
$$\spec(k[u_1,v,w_1]/(u_1-w_1^nv^{n-2})) \text{ and the ideal }(v,w) \text{ becomes } (v),$$
$$\spec(k[u,v_1,w_1]/(v_1-w_1^{n}u^{n-2})) \text{ and the ideal }(v,w) \text{ becomes } (v_1u,w_1u)=(w_1u).$$
Thus the proper transform of the section intersects only the last chart, which is smooth.

One can check that $\pi^*(K_{\sX_q})=K_{B_p(\sX_q)}$ (see \cite{KM}*{Section 4.2}).
Moreover, by the classification of Du Val singularities (\cite{KM}*{Theorem 4.20}) we see that the only singularities along the exceptional are
$A_{n-3}$ singularities,
which are Du Val. Finally, recall that to obtain a minimal resolution of an $A_m$ singularity, we can keep blowing the singular point
(with its reduced structure). Since $B_p(\sX_q) \to \sX_q$ is a blow-up of a Du Val singular point with its reduced structure, a minimal resolution
$Z \to \sX_q$ factors through $B_p(\sX_q) \to \sX_q$.
\end{proof}
\begin{Oss}\label{Oss:contract:intermediate:duval}
 With the notation of Lemma \ref{Lemma:sing:duval:fibers}, one can produce a contraction morphism $B_p(\sX_q) \to Y$ that contracts the fiber components that do not meet the section.
 Moreover, since the proper transform of the section intersects the exceptional divisor of $B_p(\sX_q)$ in the smooth locus,
 we can understand the resulting surface $Y$ as follows. It is the surface obtained from $\sX_q$ replacing the twisted fiber through $p$ with a
 minimal cusp.
\end{Oss}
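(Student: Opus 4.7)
The plan is to construct $Y$ directly as a germ of a minimal Weierstrass fibration with the same generic fiber as $\sX_q$ at $s_i(q)$, and then to produce the contraction $B_p(\sX_q) \to Y$ via a common minimal resolution. This decouples the existence of the contraction from an ad hoc analytic argument.

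First, I would build $Y$. The restriction of $\sX_q$ to a punctured neighborhood of $s_i(q)$ in $(\sC)_q$ is an elliptic curve over a punctured disk, and its $j$-invariant extends across $s_i(q)$ (since $\sC_q \to \overline{\mathcal{M}}_{1,1}$ is defined there). By the classical existence of minimal Weierstrass models (\cite{Miranda}*{Proposition III.3.2}), there is a germ $Y \to \spec(\cO_{(\sC)_q, s_i(q)})$ of a minimal Weierstrass fibration with this generic fiber. The singularity types of $\sX_q$ allowed by Lemma \ref{Lemma:sing:duval:fibers} correspond via Table \ref{tab:table2} to Kodaira fiber types of additive reduction, so the closed fiber of $Y$ is a cusp. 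By the discussion preceding the paragraph on non-minimal intermediate fibers, $Y$ is then a germ with a minimal cusp.

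Next, I would construct the morphism $B_p(\sX_q) \to Y$ using a common minimal resolution. Let $\rho : Z \to \sX_q$ be the minimal resolution of $p$; by Lemma \ref{Lemma:sing:duval:fibers}, $\rho$ factors as $Z \to B_p(\sX_q) \xrightarrow{\pi} \sX_q$. Since $\sX_q$ and $Y$ share the same $j$-invariant at $s_i(q)$, their Kodaira fiber types agree, and by Table \ref{tab:table2} the same surface $Z$ is also a minimal resolution of $Y$ at its cuspidal point; this yields a birational morphism $\sigma : Z \to Y$. The curves contracted by $Z \to B_p(\sX_q)$ lie above $p$, hence above the cuspidal point of $Y$, so they are contracted by $\sigma$ as well. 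By the rigidity/universal property of the contraction $Z \to B_p(\sX_q)$ (applied to projective birational morphisms), $\sigma$ descends to a morphism $B_p(\sX_q) \to Y$. The contracted locus of this morphism is exactly the union of the fiber components of $B_p(\sX_q)$ not meeting the section: by Lemma \ref{Lemma:sing:duval:fibers}, the exceptional component meeting the section is identified with (the normalization of) the closed fiber of $Y$, while the remaining fiber components, including the proper transform of the twisted fiber, collapse to the cusp. This identifies $Y$ as the surface obtained from $\sX_q$ by replacing the twisted fiber through $p$ with a minimal cusp.

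The main obstacle is checking that $\sigma$ genuinely factors through $B_p(\sX_q)$, which I would handle via the universal property of the contraction $Z \to B_p(\sX_q)$: the curves it contracts are precisely those sitting over $p$, all of which are contracted by $\sigma$ as well. Once the morphism $B_p(\sX_q) \to Y$ is in hand, the geometric interpretation follows directly from the construction of $Y$.
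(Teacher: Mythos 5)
Your construction is correct in outline but takes a different route from the paper's. The paper obtains the contraction directly on $B_p(\sX_q)$: since by Lemma \ref{Lemma:sing:duval:fibers} the singularities along the exceptional divisor are Du Val (hence rational) and the fiber components missing the section form a negative definite configuration, Artin's contractibility criterion (\cite{Artinsing1}, \cite{Artinsing2}; compare the contractions quoted from \cite{AB1}) produces $B_p(\sX_q) \to Y$, and one then identifies $Y$ a posteriori as a germ with a minimal cusp because its closed fiber is irreducible, meets the section in the smooth locus, and has an additive Kodaira fiber on its minimal resolution. You instead build the target first, as the minimal Weierstrass model of the generic fiber, and descend from a common relatively minimal resolution by rigidity; this trades the contractibility criterion for Miranda's existence theorem and makes the identification of $Y$ immediate. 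Both are fine for the pointwise statement (the paper's mechanism is the one that later globalizes in families, which is what Proposition \ref{Prop:contracting:E:does:not:contribute:to:cohom} needs, but that is not at issue here).

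One step needs repair: the assertion that ``since $\sX_q$ and $Y$ share the same $j$-invariant at $s_i(q)$, their Kodaira fiber types agree'' is false as stated --- quadratic twists have the same $j$-invariant but different fiber types (e.g.\ $\operatorname{I}_0^*$ versus good reduction, $\operatorname{II}$ versus $\operatorname{IV}^*$). What you actually need, and what your own setup already provides, is that $Y$ has generic fiber \emph{isomorphic} to that of the germ of $\sX_q$ over the fraction field of $\cO_{(\sC)_q,s_i(q)}$ (the $j$-invariant plays no role); then the two relatively minimal smooth models over this DVR coincide by uniqueness of the Kodaira--N\'eron model, and Table \ref{tab:table2} guarantees that the minimal resolution $Z$ of $\sX_q$ at $p$ is indeed relatively minimal, so $Z$ is that common model. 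With this substitution the descent argument and your identification of the contracted locus go through.
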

The following lemma can be proved in a similar way, see \cite{Kollarresol}*{Section 2.4, page 86}.
\begin{Lemma}\label{Lemma:sing:nonduval:fibers}
 Let $q \in \Kg(\spec(k))$ and $p:=s_i(q)$, such that $(\sX)_q$ has a singularity at $p$ which is either
 $A_3^*$, $A_4^*$ or $A_6^*$. 
 Then if we take the blow up $Y \to (\sX')_q$ of $p$, the morphism $\operatorname{Coarse}(Y) \to \sX_q$ is a minimal resolution of $\sX_q$ around $p$.
\end{Lemma}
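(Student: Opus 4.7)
My plan is to follow the same strategy as in the proof of Lemma~\ref{Lemma:sing:duval:fibers}, adapted to the non--Du Val case. The starting point is an étale-local description of the stack $(\sX')_q$ near $p$. By \cite{Olsson}*{Theorem 11.3.1}, together with the representability of $\sX'\to\sC'$ and $\sC'\to\overline{\mathcal{M}}_{1,1}$ used in Lemma~\ref{Lemma:sing:duval:fibers}, there is an étale neighborhood of $p$ in $(\sX')_q$ of the form $[\spec(k[x,y])/\mu_n]$ with the ideal of the section generated by one of the two eigen-coordinates. The essential difference from Lemma~\ref{Lemma:sing:duval:fibers} is that we are now in the $A^*$-situation, so the two eigencharacters $a,b$ of $\mu_n$ on $m_p/m_p^2$ satisfy $a=b$ rather than $ab=1$; this is exactly the dichotomy between $\mathbb{A}^2/\tfrac{1}{n}(1,-1)$ and $\mathbb{A}^2/\tfrac{1}{n}(1,1)$ appearing in Lemma~\ref{lemma:classif:sing:tsm} and Remark~\ref{Rmk:remame:sing:tsm}.

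Next I would compute the blow-up of $(\sX')_q$ at $p$ explicitly in this model and pass to the coarse space. The blow-up $\operatorname{Bl}_0\spec(k[x,y])$ carries the natural lift of the $\mu_n$-action, and on the two standard affine charts with coordinates $(x,u=y/x)$ and $(t=x/y,y)$ this lifted action becomes $(x,u)\mapsto(\zeta x,u)$ and $(t,y)\mapsto(t,\zeta y)$, since the ratio $y/x$ is $\mu_n$-invariant. Taking $\mu_n$-invariants chart by chart yields $\spec(k[x^n,u])$ and $\spec(k[t,y^n])$, both smooth, which glue along the exceptional $\mathbb{P}^1$. Consequently $\operatorname{Coarse}(Y)$ is smooth in a neighborhood of the fiber over $p$, so $\operatorname{Coarse}(Y)\to \sX_q$ is a resolution of the singularity around $p$.

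Finally I would verify minimality. The exceptional divisor of $\operatorname{Coarse}(Y)\to\sX_q$ is a single smooth $\mathbb{P}^1$; on the chart $\spec(k[x^n,u])$ its reduced equation is $x=0$, and a standard computation (or recognition of this chart as part of the total space of $\mathcal{O}_{\mathbb{P}^1}(-n)$) shows its self-intersection equals $-n\le -2$. Hence the exceptional is not a $(-1)$-curve and no further contraction to a smooth surface is possible, so the resolution is minimal. The only genuinely technical step is the first one, namely producing the étale-local model; once that is in hand, everything else is a direct chart-by-chart computation, which is why the statement of the lemma merely refers to the parallel treatment in \cite{Kollarresol}*{Section 2.4}.
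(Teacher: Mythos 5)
Your proof is correct and follows exactly the route the paper intends: the paper gives no written proof, only the remark that the statement ``can be proved in a similar way'' to Lemma \ref{Lemma:sing:duval:fibers} together with the pointer to \cite{Kollarresol}*{Section 2.4}, and your argument is precisely that adaptation --- the étale-local model $[\spec(k[x,y])/\mu_n]$ with scalar ($\tfrac1n(1,1)$) action, the chart-by-chart computation showing the coarse blow-up is smooth, and the identification of the exceptional as a single $(-n)$-curve, hence minimality. No gaps.
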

Observe that we are performing the blow up on the stack $\sX'_q$, not on its coarse space.

 \begin{Oss}\label{Oss:contract:intermediate:nonduval}
 With the notation of Lemma \ref{Lemma:sing:nonduval:fibers}, let $\widetilde{Y}$ be the coarse space of $Y$. Then
 the exceptional divisor of $\widetilde{Y} \to \sX_q$ has a single irreducible component (see \cite{Kollarresol}). We can understand $\widetilde{Y}$
 as obtained from $\sX_q$ replacing the twisted fiber through $p$ with a \emph{minimal} intermediate fiber. In particular, there is a morphism
 that contracts the twisted component of this minimal intermediate fiber, producing a minimal cusp.
\end{Oss}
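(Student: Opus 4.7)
The plan is to prove the three assertions in turn: (i) the exceptional divisor of $\widetilde{Y}\to\sX_q$ has a single irreducible component; (ii) the new closed fiber of $\widetilde{Y}$ is a minimal intermediate fiber in the sense of Definition \ref{Def:minimal:intermediate}; (iii) the twisted component of this intermediate fiber can be contracted to yield a minimal cusp. The entire argument is étale-local around $p$ and closely parallels Lemma \ref{Lemma:sing:duval:fibers}; the essential difference is that the two local coordinates at $p$ now transform under $\Aut(p)$ by the same character rather than by mutually inverse ones.

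First, by the reasoning used in Lemma \ref{Lemma:sing:duval:fibers}, I would present $\sX'_q$ étale-locally around $p$ as $[\spec(k[x,y])/\mu_n]$ with $n:=|\Aut_{\sX'_q}(p)|$ and a generator $\zeta\in\mu_n$ acting by $(x,y)\mapsto(\zeta x,\zeta y)$; this is the non-Du Val case of Lemma \ref{lemma:classif:sing:tsm}. Since every linear form $ax+by$ spans a $\mu_n$-invariant line, after a linear change of coordinates the local equation of the section can be taken to be $(y)$, and the local equation of the twisted fiber through $p$ to be $(x+cy)$ for some $c\in k$. Computing the stack-theoretic blowup of the origin in the chart $y=ux$ yields $[\spec(k[x,u])/\mu_n]$ with $\mu_n$ acting by $x\mapsto\zeta x$ and $u\mapsto u$, whose coarse space is $\spec(k[x^n,u])\cong\mathbb{A}^2$, smooth, with exceptional a single smooth affine line. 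The other chart is symmetric, and gluing establishes (i), consistently with the classical minimal resolution cited from \cite{Kollarresol}.

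Next I would read off the fiber structure of $\widetilde{Y}\to\spec(R)$. In the coarse space of the $(x,u)$-chart, the proper transform of the section $(y)=(xu)$ is $\{u=0\}$, which meets the exceptional transversely at a smooth point of $\widetilde{Y}$. The proper transform of the twisted fiber $(x+cy)=(x(1+cu))$ is $\{1+cu=0\}$ when $c\neq 0$ (otherwise one passes to the symmetric chart), and meets the exceptional at a distinct point. Hence the closed fiber of $\widetilde{Y}\to\spec(R)$ consists of exactly two components: the exceptional $\mathbb{P}^1$, which is the intermediate component carrying the section, and the proper transform of the original twisted fiber, which is the twisted component. This is an intermediate fiber by definition, and since $Y\to\sX'_q$ is a minimal resolution of the singularity at $p$, the resulting intermediate fiber is minimal in the sense of Definition \ref{Def:minimal:intermediate}.

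Finally, to produce the contraction claimed in (iii), I would observe that the twisted component has strictly negative self-intersection on $\widetilde{Y}$: its self-intersection on $\sX_q$ was $0$ as a fiber of an equidimensional fibration, and blowing up a point on it strictly decreases this number. Applying Artin's contractibility criterion \cite{Artinsing1,Artinsing2} exactly as in the proof of Lemma \ref{Lemma:can:obtain:every:graph:from:intermediate}, one contracts the twisted component to produce an algebraic germ of an elliptic fibration whose closed fiber is a single cuspidal curve; by the definition of a minimal cusp, this germ has a minimal cusp. The main technical point is the verification that the coarse space of the stacky blowup is smooth along the exceptional, which is precisely what distinguishes the non-Du Val case of Lemma \ref{Lemma:sing:nonduval:fibers} from the Du Val case of Lemma \ref{Lemma:sing:duval:fibers}; once this is in place the remaining assertions reduce to bookkeeping and references to earlier results.
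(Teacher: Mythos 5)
Your proof is correct and follows exactly the route the paper intends: the Observation is stated with references rather than a written proof, and your argument fleshes out the same étale-local presentation $[\spec(k[x,y])/\mu_n]$ with the scalar $\frac{1}{n}(1,1)$-action used in Lemma \ref{Lemma:sing:duval:fibers}, the explicit blow-up chart computation showing the coarse exceptional is a single smooth rational curve, and the Artin-type contraction of the twisted component already invoked in Lemma \ref{Lemma:can:obtain:every:graph:from:intermediate}. No gaps; the identification of the resulting fiber as a minimal intermediate fiber and of the contracted germ as having a minimal cusp follows, as you say, directly from Definition \ref{Def:minimal:intermediate} and the definition of minimal cusp.
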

 Finally, the following lemma describes how these singularities behave in our family of surfaces.
 \begin{Lemma}\label{Lemma:open:and:closed:sing}
 For every $q \in \Kg(\spec(k))$ there is a neighbourhood $V$ of $q$ satisfying the following condition.
 For every $t \in V(\spec(k))$, there is an étale neighbourhood of $s_i(t) \in (\sX)_t$ which is isomorphic to an
 étale neighbourhood of $s_i(q) \in (\sX)_q$.
In other words, the type of singularities along $s_i$ are locally constant.
 \end{Lemma}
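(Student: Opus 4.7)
The plan is to reduce to the analysis in the proof of Lemma \ref{Lemma:sing:duval:fibers}, done relatively over $\Kg$. The étale local structure of $\sX$ near $s_i$ is governed by two discrete pieces of data: the order $n$ of the stabilizer group $G := \Aut(s_i(q))$, and the pair of characters by which $G$ acts on two transverse formal coordinates at $s_i(q)$. Once both pieces are exhibited as varying continuously in a neighborhood of $q$, discreteness forces them to be locally constant, and the conclusion follows.

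First, use that $\Sigma_i \to \Kg$ is an étale gerbe (part of the definition of twisted stable map). Its band is a locally constant group scheme, so after replacing $\Kg$ by a connected étale neighborhood $V$ of $q$ we may assume it is the constant cyclic group $G$. Next, apply Olsson's theorem (\cite{Olsson}*{Theorem 11.3.1}) in its relative form to produce an étale presentation of $\sX'$ around the stacky point above $s_i(q)$ of the shape $[W/G] \to V$, with $W \to V$ smooth of relative dimension $2$, and with the lift of $s_i$ to $W$ a $G$-stable finite étale section.

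Then, mimicking the argument of Lemma \ref{Lemma:sing:duval:fibers}, choose relative formal coordinates $x,y$ along the section $s_i \subseteq W$ which are eigenvectors for the $G$-action, with $y$ cutting out the section. This is possible because the conormal bundle of the section inside $W$, together with a complementary $\cO_V$-line inside the restriction of $\Omega_{W/V}$ to $s_i$, are locally free $\cO_V$-modules carrying a $G$-action; their isotypic decompositions are again locally free over $V$, and the associated pair of characters $(\chi_x,\chi_y) \in \Hom(G,\bG_m)^2$ is a locally constant function on the connected base $V$. Taking invariants, an étale neighborhood of $s_i(t)$ in $(\sX)_t$ is $\spec(k[x,y]^G)$ for the $G$-action through the same fixed pair of characters for every $t \in V$, and in particular it is isomorphic to the analogous neighborhood at $t = q$.

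The main obstacle is verifying the relative form of Olsson's presentation and of the weight decomposition, namely that both the quotient description $[W/G]$ and the choice of weight-homogeneous coordinates persist over the whole base $V$ rather than just at the single point $q$. For the first, the relative version follows from the étaleness of the gerbe and the standard argument that a coarse-moduli presentation at a point spreads to an étale neighborhood when the stabilizer is locally constant. For the second, it follows from the fact that in characteristic $0$ any action of a finite group on a coherent sheaf locally free over $V$ decomposes into isotypic sub-bundles compatibly with base change, so the characters appearing are locally constant on $V$.
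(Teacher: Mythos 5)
Your proposal is correct and follows essentially the same route as the paper: the paper likewise presents $\sX'$ étale-locally near $s_i(q)$ as a quotient $[W/G]$ with $G=\Aut(s_i(q))$, observes that $(T_{W/\Kg})$ restricted to the ($G$-fixed) locus over the section is a rank-2 bundle with $G$-action, and concludes that the corresponding homomorphism $G \to \GL_2$ — equivalently your pair of characters — is locally constant on a connected base, which determines the quotient singularity. Your isotypic-decomposition phrasing and the paper's "homomorphism of group objects over $Z$ is pulled back from $\spec(k)$" are the same argument.
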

  \begin{proof}
  Assume is given a smooth surface $X$ and a fixed point $x \in X$ for the action of a cyclic group $H$. Then the
  type of singularity of the image of $x$ through $X \to X/H$ depends only on the action of $H$ on the tangent space $T_xX$ of $X$ at $x$. 
  
  Up to replacing $\sX'$ with an étale neighbourhood of $p:=s_i(q)$, we can assume that $\sX' \cong [W/G]$ where $G=\operatorname{Aut}(p)$.
  The closed embedding $\Sigma_i \to \sX'$ corresponds to a closed subset $Z\subseteq W$. Since $\Sigma_i$ is a gerbe banded by $G$,
  every $z \in Z$ is $G$-invariant. Moreover, since $\Sigma_i$ is contained in the smooth locus of $\sX'$, $(T_{\sX'/\Kg})_{|\Sigma_i}$
  is a vector bundle of rank 2. Then $(T_{W/\Kg})_{|Z}$ is a vector bundle of rank 2 with an action of $G$. Up to shrinking $Z$, we can assume
  $(T_{W/\Kg})_{|Z} \cong Z \times \mathbb{A}^2_{\rho}$ where $\rho:G\times Z \to \operatorname{GL}_2\times Z$ is a homomorphism
  of group objects over $Z$. We can further assume that $Z$ is connected, and since $G$ is finite, $\rho$ is the pull back through $Z \to \spec(k)$ of a homomorphism $G \to \operatorname{GL}_2$. In other words, the action of $G$ on $(T_{W/\Kg})_{|Z}$ is locally constant.
 \end{proof}
 \begin{Cor}\label{Cor:sing:open:and:closed}
   For every $n$, the set $\{q \in \Kg: s_i(q)$ is a singularity of type $A_n^*\}$ is open and closed.
 \end{Cor}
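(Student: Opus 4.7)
The plan is to deduce this corollary directly from Lemma \ref{Lemma:open:and:closed:sing} together with the classification of possible singularity types from Lemma \ref{lemma:classif:sing:tsm}. Let $T_n := \{q \in \Kg : s_i(q) \text{ is a singularity of type } A_n^*\}$. The key observation is that Lemma \ref{Lemma:open:and:closed:sing} provides, for each $q \in T_n$, an open neighborhood $V$ of $q$ on which every fiber has an étale-local model at $s_i(t)$ matching the one at $s_i(q)$. Since étale-local singularity types are preserved under étale neighborhoods, every $t \in V$ satisfies $s_i(t) \in T_n$, so $V \subseteq T_n$ and $T_n$ is open.

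For closedness I would argue via the finiteness of the list of possible singularity types. By Lemma \ref{lemma:classif:sing:tsm}, the singularity of $\sX_q$ at $s_i(q)$ lies in the finite list $\{\text{smooth}, A_1, A_2, A_3, A_4, A_5, A_3^*, A_4^*, A_6^*\}$ (the smooth case corresponding to a twisted stable fiber with trivial automorphism group, where the singularity is absent). Each of the analogous loci, one per entry in this list, is open by the very same argument applied to Lemma \ref{Lemma:open:and:closed:sing}. Hence the complement of $T_n$ in $\Kg$ is a finite union of open sets, and therefore open, which shows that $T_n$ is closed.

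There is no significant obstacle here: once Lemma \ref{Lemma:open:and:closed:sing} is established the corollary is essentially immediate. The only point that requires minor care is making sure the list of possible singularity types exhausts all possibilities for $s_i(q)$, which is precisely the content of Lemma \ref{lemma:classif:sing:tsm} together with the normal crossing/smooth case. Thus the entire proof reduces to a one-line invocation of the previous lemma plus a finiteness observation.
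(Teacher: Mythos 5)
Your argument is correct and is exactly how the paper intends the corollary to follow: Lemma \ref{Lemma:open:and:closed:sing} makes the singularity type along $s_i$ locally constant, so each type-locus is open, and its complement is a union of the (finitely many, though finiteness is not even needed) other open type-loci, hence each locus is also closed. The only blemish is your list of possible types, which should be read off from Lemma \ref{lemma:classif:sing:tsm} and Remark \ref{Rmk:remame:sing:tsm} (namely smooth, $A_1$, $A_2$, $A_3$, $A_5$ and $A_2^*$, $A_3^*$, $A_5^*$ in that indexing); this does not affect the argument, since any exhaustive finite partition into open pieces suffices.
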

 \begin{Cor}\label{Corollary:same:dimension:completion}
 For every $q \in \Kg(\spec(k))$ there is a neighbourhood $V$ of $q$ such that, for every $r$ and every $t \in V$,
 $\dim_k(\cO_{\sX_q}/m^r_{s_i(q)})=\dim_k(\cO_{\sX_t}/m^r_{s_i(t)})$
 \end{Cor}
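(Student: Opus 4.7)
The plan is to reduce the statement directly to Lemma~\ref{Lemma:open:and:closed:sing}, which already does the substantive geometric work: it produces an open neighborhood $V$ of $q$ such that for every $t \in V$ there is an étale neighborhood $(U,u)$ of $s_i(q) \in \sX_q$ which is simultaneously an étale neighborhood of $s_i(t) \in \sX_t$. What remains is the formal observation that étale-local isomorphism is enough to equate Artinian quotients of local rings.

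First I would take $V$ as supplied by Lemma~\ref{Lemma:open:and:closed:sing}, so that for each $t \in V$ we have a scheme $(U,u)$ equipped with étale morphisms
$$(U,u) \longrightarrow (\sX_q, s_i(q)) \qquad \text{and} \qquad (U,u) \longrightarrow (\sX_t, s_i(t)).$$
Next I would invoke the standard fact that an étale morphism induces an isomorphism on completed local rings at corresponding closed points; applying this to both arrows yields
$$\widehat{\cO}_{\sX_q,\,s_i(q)} \;\cong\; \widehat{\cO}_{U,\,u} \;\cong\; \widehat{\cO}_{\sX_t,\,s_i(t)}.$$

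Finally, for any local Noetherian ring $(A,m)$ the quotient $A/m^r$ is Artinian and in particular complete, so $A/m^r \cong \widehat{A}/\widehat{m}^r$. Applying this to $A = \cO_{\sX_q, s_i(q)}$ and $A = \cO_{\sX_t, s_i(t)}$ and comparing $k$-dimensions gives
$$\dim_k \bigl(\cO_{\sX_q}/m^r_{s_i(q)}\bigr) \;=\; \dim_k \bigl(\widehat{\cO}_{\sX_q,\,s_i(q)}/\widehat{m}^r\bigr) \;=\; \dim_k \bigl(\widehat{\cO}_{\sX_t,\,s_i(t)}/\widehat{m}^r\bigr) \;=\; \dim_k \bigl(\cO_{\sX_t}/m^r_{s_i(t)}\bigr),$$
as required. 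There is no real obstacle here; all of the work was carried out in Lemma~\ref{Lemma:open:and:closed:sing}, and this corollary is just the passage from étale-local constancy of singularities to numerical invariants of the local rings.
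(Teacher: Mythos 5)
Your proof is correct and follows essentially the same route as the paper: the paper's one-line proof deduces from Lemma \ref{Lemma:open:and:closed:sing} that the henselizations of $\cO_{\sX_q,s_i(q)}$ and $\cO_{\sX_t,s_i(t)}$ agree, while you pass through completions instead — a cosmetic difference, since both isomorphisms induce the identity on the Artinian quotients $A/m^r$. The only implicit point worth noting is that the étale maps induce isomorphisms on completions because all residue fields are $k$ (we are at closed points over an algebraically closed field), which your argument uses silently but correctly.
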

 \begin{proof}
  The henselization of $\cO_{\sX_q,s_i(q)}$ is isomorphic to the one of $\cO_{\sX_t,s_i(t)}$ from Lemma \ref{Lemma:open:and:closed:sing}.
 \end{proof}
The following proposition allows us to perform the blow-ups of Lemma \ref{Lemma:sing:duval:fibers} and \ref{Lemma:sing:nonduval:fibers} simultaneously.
\begin{Prop}\label{Proposition:blowup:commutes}
  Let $f:X \to B$ be a proper morphism of schemes, and let $s:B \to X$ be a section of $f$.
 Assume that $B$ is reduced, and for every $b_1,b_2 \in B$ and every integer $r$,
 $$\dim_k(\cO_{X_{b_1}}/m^r_{s(b_1)})=\dim_k(\cO_{X_{b_2}}/m^r_{s(b_2)}).$$
 
 Then for every $p \in B$, if $B_{S}(X)$ is the blow-up of $X$ along $s(B)$, and
 $B_{S_p}(X_p)$ is the blow-up of $X_p$ along $s(p)$, the following diagram is fibered:
 $$\xymatrix{B_{S_p}(X_p) \ar[r] \ar[d] & B_{S}(X)\ar[d] \\ \spec(k(p)) \ar[r] & B}$$
\end{Prop}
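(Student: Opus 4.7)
The plan is to reduce the claim to the statement that $\operatorname{Proj}$ commutes with base change, and for this to produce the correct fiber one needs each power of the ideal $I:=I_{s(B)}$ to base-change correctly to $X_p$.

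First, I would observe that the closed subscheme $V(I^r)\subseteq X$ is finite over $B$: it is proper, and set-theoretically it coincides with $s(B)$, which maps isomorphically to $B$ via the section. Consequently, for every $r$ the coherent sheaf $\cO_X/I^r$ has finite support over $B$, its higher direct images vanish, and a standard cohomology-and-base-change argument identifies $f_*(\cO_X/I^r)\otimes_{\cO_B} k(b)$ with $\cO_{X_b}/m^r_{s(b)}$.

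Next, I would promote the hypothesis into flatness. By assumption $\dim_{k(b)}\bigl(f_*(\cO_X/I^r)\otimes k(b)\bigr)$ is constant in $b$ for every $r$. Since $B$ is reduced, a coherent sheaf with locally constant fiber rank is locally free; so $f_*(\cO_X/I^r)$ is locally free for every $r$, and via the finite morphism $V(I^r)\to B$ this is equivalent to $\cO_X/I^r$ being $B$-flat. The short exact sequence $0\to I^r\to \cO_X\to \cO_X/I^r\to 0$ together with the resulting vanishing $\operatorname{Tor}_1^{\cO_B}(\cO_X/I^r,k(b))=0$ then yields an exact sequence
\[0\to I^r\otimes_{\cO_B} k(b)\to \cO_{X_b}\to \cO_{X_b}/I^r_b\to 0,\]
where $I_b$ is the ideal of $s(b)$ in $X_b$. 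Thus the natural surjection $I^r\otimes_{\cO_B} k(b)\twoheadrightarrow I^r_b$ is an isomorphism, and these isomorphisms are compatible with the multiplication $I^r\otimes I^s\to I^{r+s}$, hence give an isomorphism of graded $\cO_{X_b}$-algebras.

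Finally, since $B_{s(B)}(X)=\operatorname{Proj}_X\bigl(\bigoplus_{r\geq 0} I^r\bigr)$ and $\operatorname{Proj}$ commutes with base change, applying the previous isomorphisms of graded algebras yields $B_{s(B)}(X)\times_B\operatorname{Spec} k(p)\cong \operatorname{Proj}_{X_p}\bigl(\bigoplus_{r\geq 0} I^r_p\bigr)=B_{s(p)}(X_p)$, which is the desired identification. The step I expect to be the main obstacle is the passage from the locally constant fiber dimension to the $B$-flatness of $\cO_X/I^r$; this is precisely where the hypothesis on fiber dimensions (supplied by Corollary \ref{Corollary:same:dimension:completion}) and the reducedness of $B$ enter in an essential way, both being indispensable for the freeness conclusion.
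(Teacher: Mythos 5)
Your proposal is correct and follows essentially the same route as the paper: reduce to showing $I^r\otimes_{\cO_B}k(p)\to I^r_p$ is an isomorphism, deduce this from flatness of $\cO_X/I^r$ over $B$ via the short exact sequence $0\to I^r\to\cO_X\to\cO_X/I^r\to 0$, and obtain that flatness from finiteness of $V(I^r)\to B$, reducedness of $B$, and the constancy of the fiber dimensions. The paper phrases the key step as injectivity of $I^r\otimes_A A/n_p\to A/n_p$ rather than vanishing of $\operatorname{Tor}_1$, but this is the same argument.
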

\begin{proof}
 The statement is local so we can assume that $X=\spec(A),B=\spec(R)$. Let $m_p$ be the ideal of $R$ corresponding to $p$; $n_p:=m_pA$ the ideal of
 $X_p$; $I_p$ the ideal of $s(p) \in X_p$ and let $I$ be the ideal of $S:=s(B)$. Now,
 $B_{S_p}(X_p)=\operatorname{Proj}(\bigoplus _{r \ge 0} I_p^r)$ whereas $B_S(X) \times_B \spec(k(p)) \cong B_S(X) \times_X X_p\cong \operatorname{Proj}(\bigoplus _{
 r\ge 0} I^r
 \otimes_A A/n_p)$.
 Thus to prove the thesis it suffices to show that
 $$I^r\otimes_A A/n_p \cong I_p^r \text{ } \text{ for every } p \text{ and }r.$$

 Consider the following exact sequence:
 $$0 \to I^r \to A \to A/I^r \to 0. \text{ } \text{ }\text{ }\text{ }\text{ }\text{ } (*)$$
 Tensoring it with $A/n_p$ we get
 $ I^r \otimes_A A/n_p  \to A/n_p \to A/(I^r+n_p) \to 0$.
 Now, $A/(I^r+n_p) \cong (A/n_p)/(I_p)^r$. Therefore to prove the desired result it suffices to show that $I^r \otimes_A A/n_p  \to A/n_p$ is injective.
 But as $R$-modules, $I^r \otimes_A A/n_p \cong I^r \otimes_A(A \otimes _R R/m_p) \cong I^r \otimes_R R/m_p$ and $A/n_p \cong A \otimes_R R/m_p$.
 Moreover the map $I^r \otimes_A A/n_p  \to A/n_p$ is induced tensoring with $R/m_p$ the sequence $(*)$. Therefore if we can show that $A/I^r$ is a
 flat $R$-module, we have the thesis.
 
 But $A/I^r$ is \emph{finite} over $R$, i.e. the corresponding sheaf of $R$-modules is coherent. Therefore since $B$ is reduced,
 to show flatness it is enough
 to show that the map $B \to \mathbb{N}$, $p \mapsto \dim_k(A/(I^r+n_p))$ is locally constant, which holds by hypothesis.
\end{proof}
\subsection{Construction of $(\sY,s\sS+\vec{a}\sF) $}\label{subsection:construction:family:of:surfaces} 
 This subsection is divided into two parts. First, we replace all the multiple twisted fibers of $\sX \to \sC$, through two
 blow-ups, to get $B \to \sC \to \Kg$. Then we contract some fiber components. This procedure replaces all the multiple twisted fibers with minimal cusps.
  
 \begin{Step_1}
  We perform the blow-ups of Lemma \ref{Lemma:sing:nonduval:fibers} simultaneously.
 \end{Step_1}

 First we find the closed subset we have to blow-up.
 For each $1 \le i \le d$, let $U_i \hookrightarrow \Kg$ be the closed subset such that $(\sX)_t$ has a singularity of type $A_m^*$ on $s_i(t)$,
 for some $m$ (if there are no $A_m^*$ singularities, $U_i=\emptyset$).
 From Corollary \ref{Cor:sing:open:and:closed},
 $U_i \to \Kg$ is a closed embedding, so also $(\Sigma_i)_{|U_i} \to \Sigma_i$ is a closed embedding. Since $\Sigma_i \to \sC'$ and
 $\sC' \cong \sS' \to \sX'$ are closed embeddings, also the composition $(\Sigma_i)_{|U_i} \to \sX'$ is a closed embedding. 
 Let $\mathscr{Z}_i$ be the closed substack that corresponds to $(\Sigma_i)_{|U_i} \to \sX'$,
 and let $\mathscr{Z}:=\bigcup _{i=1}^n \mathscr{Z}_i$. We can understand $\mathscr{Z}$
 as a substack of $\sX'$ whose coarse space is the set of points $p\in \sX$ such that $(\sX)_{(h\circ g)(p)}$ has an $A_m^*$ singularity at $p$.
  \begin{Notation}
  We need to give a name to the fibers of $g:\sX' \to \sC'$ that contain $\sZ$: let $\sE':=g^{-1}(g(\sZ))$. Notice that $\sE'$ is a Cartier divisor.
 \end{Notation}
Let $\sB:=B_{\sZ}(\sX')$ be the blow-up of $\sX'$ along $\sZ$. For every $t \in \Kg(\spec(k))$, étale locally,
$\sZ_t$ are an union of closed points in the smooth locus of $\sX'_t$, therefore from Proposition \ref{Proposition:blowup:commutes},
for every $t \in \Kg(\spec(k))$ we have that
$$(\sB)_{t} \cong B_{(\sZ)_{t}}(\sX'_t).$$
\begin{Notation} Let $\sA^*$ be the exceptional divisor of $\sB \to \sX'$, and let $\sE^*$ (resp. $\sS_B$) be the proper transform of $\sE'$ (resp. $\sS'$).
 Let $\widetilde{\sB}$ (resp. $\widetilde{\sS_{\sB}}$, $\widetilde{\sA^*}$ and $\widetilde{\sE^*}$)
be the coarse moduli space of $\sB$
(resp. $\sS_{\sB}$, $\sA^*$ and $\sE^*$). 
\end{Notation}
\begin{Step_1}
 We perform the blow-ups of Lemma \ref{Lemma:sing:duval:fibers} simultaneously.
\end{Step_1}

As before, for each $1 \le i \le d$, let $V_i \hookrightarrow \Kg$ be the closed subset such that $(\widetilde{\sB})_t$ has a singularity of type $A_m$ on $s_i(t)$,
for some $m$. Then $s_i(Z_i)$ is a closed subscheme, and let $Z:=\bigcup_{i=1}^ns_i(Z_i)$. Then let $B:=B_{Z}(\widetilde{\sB})$
be the blow-up of $Z$ in $\widetilde{\sB}$.
From Corollary
\ref{Corollary:same:dimension:completion} and Proposition \ref{Proposition:blowup:commutes}, we have:
$$(B_{Z}(\widetilde{\sB}))_{t} \cong B_{(Z)_{t}}(\widetilde{\sB}_t).$$
Observe that $B$ comes with a map $B \to \sX$, thus we still have a morphism $g_B:B \to \sC$. Moreover, on
$\sC$ there is the divisor $D$ given by $\bigcup_i \sigma_i(V_i)$.
\begin{Notation}
On $B$, we have the following divisors: $\sS_{B}$ (resp. $\sA^*_B$, $\sE_B^*$), the strict transform of $\widetilde{\sS_{\sB}}$ (resp.
$\widetilde{\sA^*_B}$, $\widetilde{\sE_B^*}$),
and $\sF_B:=g_B^{-1}(D)$. 
\end{Notation} 
We can understand $\sF_B$(resp. $\sA^*_B$) as a family of  fibers (resp. intermediate components of minimal intermediate fibers),
where the associated twisted fiber
has an $A_n$ (resp. $A_n^*$) singularity.
Thus we have constructed a family of surface pairs
$$(B,s\sS_B+\sF_B+\sA^*_B+\sE_B^*) \to \Kg.$$

\begin{bf}Contraction morphism $B \to \sY'$.\end{bf} We need to contract the extra components produced by the blow-up. We begin with an observation:
\begin{Oss}\label{Oss:before:the:blow:down:Qcartier}
 The divisors $K_{B / \Kg}$, $\sS_B$, $\sA_B^*$, $\sE^*_B$ and $\sF_B$ are $\mathbb{Q}$-Cartier.
\end{Oss}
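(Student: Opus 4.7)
The plan is to exhibit $B$ as the coarse moduli space of a smooth tame DM stack, which implies that $B$ is $\mathbb{Q}$-factorial, so every Weil divisor on $B$ is $\mathbb{Q}$-Cartier.

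First I would dispatch the easy cases. The divisor $\sF_B = g_B^{-1}(D)$ is Cartier because $D \subset \sC$ is a disjoint union of images of sections of $\sC \to \Kg$, hence a Cartier divisor; and $\sS_B$ is Cartier because Lemmas \ref{Lemma:sing:duval:fibers} and \ref{Lemma:sing:nonduval:fibers} show that the proper transform of the section through both blow-ups lies in the smooth locus of $B$.

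For the remaining divisors $\sA_B^*$, $\sE_B^*$ and $K_{B/\Kg}$, I would trace the construction at the stack level. On $\sX'$, the section $\sS'$ is contained in the smooth locus, since sections of stable 1-pointed genus-1 curves pass through the smooth locus of the family and $\sC' \to \Kg$ is a nodal family over a normal base; consequently $\sS'$, the preimage $\sE'$ of a Cartier divisor on $\sC'$, and the relative dualizing sheaf $K_{\sX'/\Kg}$ are all Cartier on $\sX'$. The first blow-up $\sB = B_{\sZ}(\sX') \to \sX'$ has center $\sZ \subset \sS'$ contained in the smooth locus and cut out (\'etale locally) by two equations, so $\sB$ is smooth near the exceptional locus and $\sA^*$, $\sE^*$, $K_{\sB/\Kg}$ are Cartier on $\sB$.

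For the second blow-up, I would lift $Z \subset \widetilde{\sB}$ to a substack $\widehat{Z} \subset \sB$ sitting inside $\sS_{\sB}$, hence again in the smooth locus of $\sB$. Blowing up $\widehat{Z}$ in $\sB$ produces a smooth stack $\mathscr{B}$ whose exceptional divisor, strict transforms and relative canonical are Cartier. Since the center of the blow-up lies in the tame (in fact smooth) locus, formation of coarse moduli commutes with the blow-up, so the coarse space of $\mathscr{B}$ is identified with $B$ and the strict transforms descend to $\sA_B^*$, $\sE_B^*$ and $K_{B/\Kg}$ on $B$. Therefore $B$ has quotient singularities and is $\mathbb{Q}$-factorial, which yields the claim. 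The main technical point to verify carefully is the commutation of the stacky blow-up with the passage to the coarse moduli space, so that the divisors appearing in the statement of the observation indeed arise as coarse-space images of Cartier divisors on the smooth stack $\mathscr{B}$.
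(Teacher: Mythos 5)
Your plan hinges on exhibiting $B$ as the coarse moduli space of a smooth DM stack $\mathscr{B}$ obtained by blowing up $\sB$ along a lift $\widehat{Z}$ of $Z$, and on the assertion that ``formation of coarse moduli commutes with the blow-up.'' This is the step that fails, and it fails for a structural reason. The scheme $B$ is \emph{defined} as the blow-up of the coarse space $\widetilde{\sB}$ at the points $s_i(t)$ where the fiber has an $A_m$ singularity, i.e.\ a $\frac{1}{n}(1,-1)$ quotient singularity. Blowing up the coarse point of $\spec(k[u,v,w]/(uv-w^n))$ and taking the coarse space of the blow-up of the stacky point of $[\mathbb{A}^2/\frac{1}{n}(1,-1)]$ are genuinely different operations: the first (the computation in the proof of Lemma \ref{Lemma:sing:duval:fibers}) produces one exceptional curve with an $A_{n-3}$ singularity on it, while the second produces the quotient of $B_0\mathbb{A}^2$ by the induced action, which has cyclic quotient singularities of type $\frac{1}{n}(1,-2)$ at the two fixed points of the exceptional $\mathbb{P}^1$ -- already for $n=5$ these are not the same surface. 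This distinction is precisely why the paper performs the blow-up at the stack level for the $A_m^*$ points (Lemma \ref{Lemma:sing:nonduval:fibers}) but at the coarse level for the $A_m$ points (Lemma \ref{Lemma:sing:duval:fibers}). So your $\mathscr{B}$ does not have coarse space $B$, and the claimed descent of Cartier divisors from $\mathscr{B}$ to $B$ has nothing to descend along.

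There is a second, independent problem with the blanket strategy ``$B$ is $\mathbb{Q}$-factorial'': even the stack $\sB$ is not smooth. Its fibers over $\Kg$ are surfaces fibered in nodal curves over nodal curves, and $\Kg$ itself is only normal, so $\sB$ (hence any candidate $\mathscr{B}$) is smooth only in a neighbourhood of the section. For the divisors supported near the section this is harmless, but $K_{B/\Kg}$ is a global class and cannot be handled by a local analysis near $\sS_B$ alone. The paper's proof avoids both issues by arguing divisor by divisor: $K_{\sB/\Kg}$ is Cartier because $\sB \to \Kg$ is a family of Gorenstein surfaces (no smoothness needed), a power of it descends to the coarse space $\widetilde{\sB}$, and the second blow-up is crepant by \cite{AB1}*{Theorem 6.1}, so $K_{B/\Kg}$ is the pull-back of a $\mathbb{Q}$-Cartier divisor. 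Your treatment of $\sF_B$ (preimage of a Cartier divisor on $\sC$) is fine and agrees with the intended argument; the treatment of $\sS_B$ via the smooth locus is essentially right provided you phrase it relatively to $\Kg$ (a flat family of fiberwise Cartier divisors is a relative Cartier divisor), but the core of the observation -- $K_{B/\Kg}$, $\sA_B^*$, $\sE_B^*$ -- is not established by your argument.
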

\begin{proof}We prove just the case of $K_{B / \Kg}$, the other cases are similar.
The divisor $K_{\sB / \Kg}$ is Cartier since $\sB \to \Kg$ is a family of Gorenstein surfaces.
Thus $K_{\widetilde{B}/\Kg}$ is $\mathbb{Q}$-Cartier, since $\sB$ is a Deligne-Mumford stack of finite type and $\widetilde{\sB}$ is its coarse space.
Finally from \cite{AB1}*{Theorem 6.1}, if
$p:B \to \widetilde{\sB}$ is the blow-up of $Z$, $p^*(K_{\widetilde{\sB}/\Kg}) \cong K_{B/\Kg}$.\end{proof}

Now the strategy is the following. For each point $\spec(k) \to \Kg$, the surface $B_p$ has a contraction morphism
$B_p \to X$ that replaces the non-irreducible fibers with minimal cusps
(see Observation \ref{Oss:contract:intermediate:duval} and \ref{Oss:contract:intermediate:nonduval}).
Our goal is to perform these contractions simultaneously. We need to
find a line bundle on
$B$ which is base point free, and such that the morphism
$B \to \sY'$ it induces contracts the fiber components that do not intersect the section. 

From Lemma \ref{Lemma:open:and:closed:sing} and Table \ref{tab:table2},
there is a divisor $\sE^{\I\I}_B$ (resp. $\sE^{\I\I\I}_B$ and $\sE^{\I\operatorname{V}}_B$)
supported on $\sE^*_B$ such that, for every $p \in \Kg$,
the minimal resolution of the fiber through $(\sE^{\I\I}_B)_p$ (resp. $(\sE^{\I\I\I}_B)_p$ and $(\sE^{\I\operatorname{V}}_B)_p$) is a
$\I\I$ (resp. $\I \I \I$, $\I \operatorname{V}$) fiber. One can define similarly $\sA^{\I\I}$, $\sA^{\I\I\I}$ and $\sA^{\I \operatorname{V}}$, supported on $\sA^*_B$.
Consider then
$\sD:=\sS+\frac{5}{6} \sA^{\I\I}+ \frac{3}{4} \sA^{\I\I\I}+\frac{2}{3}\sA^{\I \operatorname{V}}+\sE^{\I\I}_B+\sE^{\I\I\I}_B+\sE^{\I\operatorname{V}}_B$.

\begin{Prop}\label{Prop:contracting:E:does:not:contribute:to:cohom}
Let $s \in \Kg(\spec(k))$, let $X:=B_s$, let $D:=\sD_s$ and let $f:B_s\to C:=\sC_s$ be the morphism induced by $B \to \sC$.
Then $(X,D)$ is slc, $K_X+D$ is $f$-nef, and for $m$ divisible enough, we have $R^if_*(\cO_{X}(m( K_X+D)))=0$ for $i>0$. Moreover,
the stable model of $(X,D)$ over $C$ is obtained from $(X,D)$ contracting the fiber components that do not intersect the section.
\end{Prop}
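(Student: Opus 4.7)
My plan is to verify the four conclusions in turn: (1) slc-ness of $(X,D)$, (2) $f$-nefness of $K_X+D$, (3) the higher direct image vanishing, and (4) the description of the stable model. The guiding principle is that the coefficients $5/6,3/4,2/3$ on $\sA^{\I\I},\sA^{\I\I\I},\sA^{\I\operatorname{V}}$ (together with coefficient $1$ on the twisted components supported on $\sE^*_B$) are chosen precisely to match the Kodaira log-canonical thresholds recorded in Corollary \ref{Cor:stability:condition:for:minimal:W:fibration}, so that the map $\pi\colon X\to X^s$ contracting the twisted components $E$ onto the minimal cusps of the associated minimal Weierstrass fibration is ``log crepant'' in the appropriate sense.

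For slc-ness, away from $\supp(\sE^*_B)$ the surface $X$ has at worst Du Val singularities by Lemmas \ref{Lemma:sing:duval:fibers} and \ref{Lemma:sing:nonduval:fibers}, while $D$ restricts to the section $\sS$ contained in the smooth locus, so $(X,D)$ is klt there. Near an intermediate fiber, the pair looks locally like $(X,S+cA+E)$ with $c\in\{5/6,3/4,2/3\}$ corresponding to the Kodaira type of $E$. A discrepancy computation on the minimal resolution of the elliptic singularity of $X^s$ (using Table \ref{tab:table2}) shows that $(X,S+cA+E)$ is log-canonical precisely because $c$ is the Kodaira lc threshold.

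For $f$-nefness I check $(K_X+D)\cdot C\ge 0$ on every fiber component $C$. On an irreducible fiber $F$ one has $(K_X+D)\cdot F=S\cdot F=1$. On an intermediate fiber $A+mE$ the computation proceeds via adjunction on $A\cong\bP^1$ and on the arithmetic genus one curve $E$, combined with the fiber relations $(A+mE)\cdot A=(A+mE)\cdot E=0$ and the section intersection $S\cdot A=1$; the choice of coefficient $c$ matching the lc threshold forces $(K_X+D)\cdot E=0$ while $(K_X+D)\cdot A>0$. For the cohomology vanishing, $K_X+D$ is then $f$-nef and $f$-big (its restriction to a generic smooth elliptic fiber is $\cO(\sigma(c))$ of degree one), and relative Kawamata--Viehweg vanishing for slc pairs in the form of Koll\'ar or Fujino yields $R^if_*\cO_X(m(K_X+D))=0$ for $i>0$, provided $m$ is divisible enough that $m(K_X+D)$ is Cartier.

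Finally, for the stable model, the relative log canonical algebra $\bigoplus_m f_*\cO_X(m(K_X+D))$ is finitely generated (as $K_X+D$ is $f$-nef and $f$-big with $(X,D)$ slc), and its relative Proj over $C$ is by definition the stable model $X^s\to C$ of $(X,D)$ over $C$. By the intersection calculations the induced morphism $X\to X^s$ contracts exactly those fiber curves with vanishing intersection against $K_X+D$, which are precisely the twisted components $E$, i.e.\ the fiber components not meeting the section. The main technical obstacle is the case-by-case local intersection calculation on the intermediate fibers, where one must correctly account for the multiplicity $m$ in the fiber structure $A+mE$ for each Kodaira type from Table \ref{tab:table2}; once this is done, lc-ness, nefness, and the identification of the contracted locus all drop out of the same computation.
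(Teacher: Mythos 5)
Your treatment of slc-ness, $f$-nefness, and the identification of which curves are contracted follows essentially the same route as the paper (which delegates the local discrepancy and intersection computations on the marked fibers to \cite{AB1}*{Lemmas 4.5, 4.6 and 4.7}). The genuine gap is in your vanishing step. You propose to apply relative Kawamata--Viehweg/Fujino vanishing directly to $f\colon X\to C$ on the grounds that $K_X+D$ is $f$-nef and $f$-big. But $(X,D)$ is only slc, not klt: the twisted components $E$ carry coefficient $1$ in $D$ and are therefore lc centers of the pair. The vanishing theorem for slc pairs (\cite{Fuj-vanishing}*{Theorem 1.10}) requires the relevant divisor to be nef and \emph{log-big} over the base, i.e.\ big on every slc stratum; here $(K_X+D)\cdot E=0$ on exactly those one-dimensional strata --- this degree-zero condition is the very reason the $E$'s get contracted. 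So the hypothesis fails for $f$, and ``nef and big'' alone is not a sufficient replacement for lc/slc pairs; this is precisely where klt and lc vanishing diverge.

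The paper circumvents this by factoring $f=g\circ p$, where $p\colon X\to X'$ is the contraction given by log-abundance and $g\colon X'\to C$ has $K_{X'}+D'$ relatively ample, so that $R^ig_*$ of the pushed-forward line bundle vanishes for trivial reasons. Since $K_X+D=p^*(K_{X'}+D')$, the projection formula and the Leray spectral sequence reduce everything to showing $R^ip_*\cO_X=0$ for $i>0$. This in turn is proved by applying Fujino's theorem to the pair $(X,0)$ and the divisor $0$: one checks that $-K_X$ is $p$-nef (it is trivial along the non-marked contracted fibers and $p$-nef along $D$ by negativity of the exceptional intersection form) and $p$-log-big, because the only positive-dimensional lc centers of $(X,0)$ lie in the double locus, which is disjoint from the $p$-exceptional curves. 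You would need to insert some such detour --- or an alternative argument such as rationality of the contracted configurations --- to make your step 3 go through.
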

See also \cite{AB1}*{Lemma 4.5, 4.6 and 4.7} for similar results.
\begin{proof} From \cite{AB2}*{Proposition 4.3} the pair $(\sX_s,\sS_s)$ is slc. But $X$ is obtained performing some blow-ups of $\sX_s$. In particular, where $X$ and $\sX_s$ are isomorphic, $(X,S)$ is slc. Namely, $X$ has slc singularities away from $D$ and the exceptional locus of of $X \to \sX_s$ along the fibers $(\sF_B)_s$.
Since Du Val singularities are lc, and the singularities on the exceptional locus of $X\to \sX_s$ along the fibers $(\sF_B)_s$
are Du Val from Lemma \ref{Lemma:sing:duval:fibers},
the pair $(X,D)$ is lc away from $D$. Using \cite{AB1} and since the marked fibers are of the types
of \cite{AB1}*{Lemma 4.5, 4.6 and 4.7}, the pair $(X,D)$ is lc also along $D$, so it is slc. We now check that $K_X+D$ is $f$-nef.

For any point $q \in X\smallsetminus D$, from Observation \ref{Oss:all:fib:irred:implies:pulback} and
Lemma \ref{Lemma:sing:duval:fibers} there is a neighbourhood $U$ of $f(q)$ such that
$(K_X)_{|f^{-1}(U)} \cong \cO_{f^{-1}(U)}$. So $K_X+D$ is $f$-nef along $f^{-1}(U)$. From
\cite{AB1}*{Lemma 4.5, 4.6 and 4.7}, the divisor $K_X+D$ is $f$-nef also along $D$.

Let then
$(X',D':=p_*(D))$ be the stable model of $(X,D)$ over $C$, with contraction morphism $p:X \to X'$.
Since $K_X+D$ is $f$-nef, the morphism $p$ is given by log-abundance. We need to understand which fiber components
it contracts, i.e. for which fiber components $F$, we have $(K_X+D).F=0$.
As in the previous paragraph, for any point $q \in X\smallsetminus D$ there is a neighbourhood $U$ of $f(q)$ such that
$(K_X)_{|f^{-1}(U)} \cong \cO_{f^{-1}(U)}$. So any fiber component not contained in $D$ that does
not intersect $S$ gets contracted by $p$. Moreover, from
\cite{AB1}*{Lemma 4.5, 4.6 and 4.7}, $p$ contracts the twisted components of the intermediate fibers of $D$: all the new fibers of $X'$ are minimal cusps.

Since $K_X+D$ is nef, we have that
$K_X+D=p^*(K_{X'}+D')$. From its definition, $X'$ comes with a morphism $g:X' \to C$ such that
 $K_{X'}+D'$ is $g$-ample.
 Let then $m$ be divisible enough such that both $m(K_{X'}+D')$ and $m(K_X+D)$
 are Cartier,
 and $R^ig_*(m(K_{X'}+D'))=0$ for $i>0$. Let $L_X:=\cO_X(m(K_X+D))$ and
 $L_{X'}:=\cO_{X'}(m(K_{X'}+D'))$.
 Then $R^ip_*(L_X)=R^ip_*(p^*L_{X'})=L_{X'}R^ip_*(\cO_{X})$. If we can prove that $R^ip_*(\cO_{X})=0$ for $i>0$, the Leray spectral sequence will give the desired vanishing.
 Now, the positive dimensional log-canonical centers of $X$ are supported along the double locus, so $-K_X$ is log-big since every $p$-exceptional curve is not supported on the double locus. Moreover, from \cite{AB1}*{Theorem 6.1} and since the intersection pairing is negatively definite along the exceptional curves, $-K_X$ is $p$-nef along $D$. From Lemma \ref{Lemma:sing:duval:fibers}, $-K_X$ is trivial along $(\sF_B)_s$ so it is $p$-nef everywhere. Therefore from \cite{Fuj-vanishing}*{Theorem 1.10} we have $R^ip_*(\cO_{X})=0$ for $i>0$.
 \end{proof}
 Let $\Psi:B \to \sC$ be the morphism
induced by $\sX \to \sC$. Then
from Proposition \ref{Prop:contracting:E:does:not:contribute:to:cohom} and from cohomology and base change, for $\ell$ divisible enough,
$\operatorname{Proj}(\bigoplus _{m=1}^\infty \Psi_*\cO_{B}( m\ell(K_{B/\Kg}+ \sD) ))$
commutes with base change, giving a family of elliptic fibrations $g_{\sY'}^{\text{ }}:\sY' \to \sC$. Let $\pi:B \to \sY'$ be the corresponding morphism,
and let $\sS':=\pi_*(\sS_B)$.

For every $p \in \Kg(\spec(k))$, the fibers of $\sY'_p$ not contained in the double locus are either DM stable, or minimal cusps.
So if $\sY_p'$ is normal, it is a minimal Weierstrass fibration.
Moreover, for every $n \le m \le d$, the universal twisted curve
over $\Kg_{g,m}(\overline{\mathcal{M}}_{1,1},d)$ has $m \ge n$ marked stacky points. Then if we choose the first $n$ of these points, 
from the definition of $\Kg$, the family of curves $\sC \to \Kg$ has $n$ distinguished sections $\sigma_i$. We denote $\sF'_i:=g_{\sY'}^{-1}(\sigma_i(\sC))$ and $\vec{a}\sF'$ the $\mathbb{Q}$-divisor $\sum a_i \sF_i'$.
\begin{Oss}\label{Oss:description:that:the:blow:ups:replace:twisted:with:mcusps}
We can understand the family of surfaces $\sY'$ as obtained from $\sX$ replacing any multiple twisted fiber of $\sX_p$ with a minimal cusp, for every $p$.
\end{Oss}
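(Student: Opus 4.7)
[Proof proposal for Observation \ref{Oss:description:that:the:blow:ups:replace:twisted:with:mcusps}]
The strategy is to fix a geometric point $p \in \Kg(\spec(k))$ and to check the statement fiberwise, analysing the restriction of the construction $\sX \leftarrow B \to \sY'$ to $p$. The work is local over $\sC_p$ near each twisted fiber. A twisted fiber of $\sX_p$ is multiple exactly when the point $s_i(p)$ is a singularity of $\sX_p$ (i.e. of Kodaira type $\operatorname{II}, \operatorname{III}, \operatorname{IV}, \operatorname{II}^*, \operatorname{III}^*, \operatorname{IV}^*$ or $\I_n^*$, by Table \ref{tab:table2}); a non-multiple twisted fiber is a DM-stable genus one one-pointed curve passing through a smooth point of $\sX_p$, so no blow-up is performed there and the fiber is unchanged along $\sX_p \leftarrow B_p \to \sY'_p$.

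First I want to establish that the construction commutes with restriction to $p$. The two blow-up stages have already been shown to commute with base change via Proposition \ref{Proposition:blowup:commutes} (using Corollary \ref{Corollary:same:dimension:completion} to see that the hypotheses hold), so $B_p$ is obtained from $\sX_p$ by performing the blow-ups of Lemma \ref{Lemma:sing:nonduval:fibers} at the $A_m^*$-singularities and those of Lemma \ref{Lemma:sing:duval:fibers} at the $A_m$-singularities along $s_i(p)$. To see that the contraction $B \to \sY'$ also commutes with base change to $p$, I will combine Proposition \ref{Prop:contracting:E:does:not:contribute:to:cohom} with standard cohomology and base change: since $R^i(\Psi_p)_*(\cO_{B_p}(m\ell(K_{B_p} + \sD_p))) = 0$ for $i>0$ and $\ell$ divisible enough, the formation of $\operatorname{Proj}(\bigoplus_m \Psi_*\cO_{B}(m\ell(K_{B/\Kg} + \sD)))$ commutes with base change to $p$. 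Hence $\sY'_p$ is precisely the stable model of $(B_p, \sD_p)$ over $\sC_p$, and the contraction morphism $B_p \to \sY'_p$ coincides with the one produced in the last part of Proposition \ref{Prop:contracting:E:does:not:contribute:to:cohom}, which contracts exactly the fiber components of $B_p \to \sC_p$ that do not meet the section.

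With these preliminaries in hand, the identification at each multiple twisted fiber of $\sX_p$ reduces to applying the two local observations proved above. At an $A_m$-singularity along $s_i(p)$, Lemma \ref{Lemma:sing:duval:fibers} shows that a single blow-up produces a surface on which the proper transform of the section meets a smooth point of the exceptional and the remaining singularities are Du Val; Observation \ref{Oss:contract:intermediate:duval} then says that contracting the fiber components not meeting the section replaces the original twisted fiber with a minimal cusp. At an $A_m^*$-singularity, Lemma \ref{Lemma:sing:nonduval:fibers} shows that the stacky blow-up of $\sZ$ produces (on coarse spaces) a minimal intermediate fiber whose twisted component can then be contracted; by Observation \ref{Oss:contract:intermediate:nonduval} the result is again a minimal cusp replacing the original multiple twisted fiber. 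Since the contraction $B_p \to \sY'_p$ produced by the Proj construction contracts exactly the components not meeting the section, it induces precisely these two local replacements at the respective singularities, and leaves untouched the fibers that are already DM-stable.

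The only real obstacle is the verification that the Proj construction commutes with base change simultaneously at every $p$, which is why Proposition \ref{Prop:contracting:E:does:not:contribute:to:cohom} was set up to give the vanishing $R^if_*(\cO_{X}(m(K_X+D)))=0$ uniformly in the fiber; granting this, the rest is a purely local identification of the birational modifications in a neighbourhood of each multiple twisted fiber, which is handled by the observations cited above.
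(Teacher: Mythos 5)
Your proposal is correct and follows the same route the paper takes: the Observation is stated as a summary of the construction just carried out, whose justification is exactly the combination of Proposition \ref{Proposition:blowup:commutes} (blow-ups commute with restriction to fibers), Proposition \ref{Prop:contracting:E:does:not:contribute:to:cohom} together with cohomology and base change (so the Proj contraction also commutes with restriction), and the local identifications in Observations \ref{Oss:contract:intermediate:duval} and \ref{Oss:contract:intermediate:nonduval}. Nothing is missing.
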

We give now a definition that will use later:
\begin{Def}\label{Def:bounded:family:tsm:limits}We call $(\sY',s\sS'+\vec{a}\sF') \to \Kg$ the \underline{bounded family of twisted stable maps limits} with weight vector $I$. 
\end{Def}

A priori, there might be some $p \in \Kg(\spec(k))$ such that $(\sY',s\sS'+\vec{a}\sF')_p$ is not slc. However, from Proposition \ref{Prop:contracting:E:does:not:contribute:to:cohom}, the only points on which it fails to be slc are along the divisor $\supp(\sS'_p + \vec{a} \sF_p)$; and from Corollary
\ref{Cor:stability:condition:for:minimal:W:fibration} the type of singular fibers is locally constant. So from Lemma
\ref{Lemma:open:and:closed:sing} there is an open embedding $\Kg_{slc} \to \Kg$ such that
for every $p \in \Kg_{slc}(\spec(k))$, the surface pair $(\sY',s\sS'+\vec{a}\sF')_p$ is
slc. We will abuse notation, and still denote with $\sY'$ (resp. $\sS'$ and $\sF'$) the family $\sY' \times_\Kg \Kg_{slc}$ (resp. $\sS' \times_\Kg \Kg_{slc}$ and $\sF' \times_\Kg \Kg_{slc}$).
Notice that $\Kg_{slc}$ will depend on $I$, and it is not empty since
$I$ is admissible. Moreover, $\sY' \to \Kg_{slc}$ is a flat proper family of surfaces, so there is an open embedding $\Kg^\circ \hookrightarrow \Kg_{slc}$ such that all the fibers of
$\sY'\times_{\Kg_{slc}}\Kg^\circ
\to \Kg^\circ$ are normal \cite{EGAIV}*{Theorem 12.2.4}.
\begin{Notation}\label{Def:sY}
 We define $(\sY,s\sS+\vec{a}\sF) \to \Kg^\circ$ to be $(\sY',s\sS'+\vec{a}\sF')_{|\Kg^\circ} \to \Kg^\circ$. 
\end{Notation}
Observe that the surface pairs appearing as fibers of $(\sY,s\sS+\vec{a}\sF) \to \Kg^\circ$ are all the possible stable Weierstrass fibrations with weight vector $I$.
\subsection{Construction of the parameter space $\cE_I$}\label{Subsection:construction:parameter:space:EI}
Once we define our parameter spaces $\cE_I$, we want to relate them through wall-crossing morphisms, reducing the weights. In order to reduce the weights on an irreducible component of $\sF$, it is convenient to have such an irreducible component as part of the data. But since in the formalism of \cite{KP} (see Subsection \ref{Subsection:stable:pairs}) the divisor $D$ does \emph{not} come with an ordering on the irreducible components, we need to find an ad hoc solution to keep track of them. 
Since for every $p \in \Kg^\circ(\spec(k))$ the point $\sF_p \cap \sS_p$ is a smooth point, for every irreducible component in $\sF_p$ we have a section $\tau_i:\Kg^\circ \to \sS$ which sends $p \mapsto (\sS \cap \sF_i)_p$. We will use $\tau_i$ to keep track of the irreducible components of $\sF$.

Observe that the
log-canonical divisor $K_{\sY/\Kg^\circ}+s\sS+\vec{a}\sF$ is $\mathbb{Q}$-Cartier, so the volume of the surface pairs in the family
$(\sY,s\sS+\vec{a}\sF) \to \Kg^\circ$ is constant on the connected components of $\Kg^\circ$. Since $\Kg$ is of finite type,
there are finitely many of possible volumes.
Let then $v_1,...,v_r$ be such volumes. Let 
$\overline{\I}:=\{b \in (0,1]: b=  \delta_{0}s+\sum a_i \delta_{i} $ for every $(\delta_{i})_{i=0}^n \in \{0,1\}^{n+1}\}$ (i.e. the possible numbers in $(0,1]$ obtained adding some of the $a_i$'s and $s$). Let $\mathcal{M}':=\bigcup_{i=1}^r \mathscr{M}_{v_i,\overline{\I}}$ be
the moduli of stable surface pairs of volumes $v_i$ for $1\le i \le r$ and
coefficient set $\overline{\I}$ (see Subsection \ref{Subsection:stable:pairs} for the definition of $\mathscr{M}_{n,\overline{\I}}$). Since $\sM_{v_i,\overline{\I}}$ is of finite type for every $i$, also $\mathcal{M}'$ is of finite type.

Therefore (recall that, from the beginning of Section \ref{Section:construction:of:the:moduli:surface:pairs}, $\Kg$ is normal) we have a morphism
$\Psi':\Kg^\circ \to \mathcal{M}'$
induced by the family $(\sY,s\sS+\vec{a}\sF) \to \Kg^\circ$.
Moreover, let $\mathcal{X} \to \mathcal{M}'$ be the universal family of surfaces. If we denote with
$\sH:=\sH om_{\mathcal{M}'}(\mathcal{M}',\mathcal{X})$ the Hom-stack of \cite{Homstack}, it parametrizes sections of $\mathcal{X} \to \mathcal{M}'$. Therefore, let $\mathcal{M}:=\mathcal{M} \times (\sH)^n$. The sections $\tau_i$ induce a morphism $\Psi: \Kg^\circ \to \mathcal{M}$.
 We define $\mathcal{W}_I$ (resp. $\cE_I^{sn}$) to be the normalization
 (resp. seminormalization) of the closure of the image of $\Psi$. 
 
 More precisely, take an atlas $U \to \mathcal{M}$,
and an atlas $V$ of $\Kg\times_{\mathcal{M}}U$. Then
there is a morphism
 $\psi:V \to U$ such that the following diagram commutes:
 $$\xymatrix{V \ar[r]^{\psi} \ar[d] & U\ar[d] \\ \Kg \ar[r]_{\Psi} & \mathcal{M}}$$
Let $R:=U \times_{\mathcal{M}}U$,
with the two projections $p_1,p_2:R\rightrightarrows U$. Now, $p_1^{-1}(\psi(V))$ and $p_2^{-1}(\psi(V))$ have the same $k$-points. Since
$p_1$ and $p_2$ are smooth, they are open, therefore $p_1^{-1}(\overline{\psi(V)})=\overline{p_1^{-1}(\psi(V))}$ and similarly for $p_2$. So
if we put the reduced structure on $\overline{\psi(V)}$, 
$$p_1^{-1}(\overline{\psi(V)})=
\overline{p_1^{-1}(\psi(V))}=\overline{p_2^{-1}(\psi(V))}=p_2^{-1}(
\overline{\psi(V)}).$$
Let us denote with $U_\mathcal{W}:=\overline{\psi(V)}$ and
$R_\mathcal{W}:=p_1^{-1}(\overline{\psi(V)})$, with their reduced structure.
Then if $\pi_i$ is the restriction $(p_i)_{|p_1^{-1}(\overline{\psi(V)})}$, the two arrows
$\pi_1,\pi_2:R_\cE \rightrightarrows U_\cE$ give a groupoid structure induced
by the one of $R\rightrightarrows U$. This defines a closed substack
$\widetilde{\mathcal{W}_{I}} \subseteq \mathcal{M}$.
\begin{Def}\label{Def:EI:and:EIsn}
 Let $\mathcal{W}_I$ be the normalization of $\widetilde{\mathcal{W}_I}$, and let $\cE_I^{sn}$ be its seminormalization.
 We will call $\mathcal{W}_I$ the \underline{moduli space of elliptic surfaces with
 weight data $I$}.
\end{Def}
Similarly, we can define $\cE_I'$ to be the normalization of the closure of the image of $\Psi'$. 
\begin{Notation}
  Given $I=(s,\vec{a},\beta)$ an admissible weight vector, we will denote with $\sX_I \to \cE_I$ (resp. $\sX_I^{sn} \to \cE_I^{sn}$) the universal surface.
\end{Notation}
 From \cite{Elkik}*{Th\'eor\'eme 4} and since
 $\sX_I \to \cE_I$  and $\sX_I^{sn} \to \cE_I^{sn}$ are universally closed, there are open substacks of $\cE_I$ and $\cE_I^{sn}$ which parametrize normal surfaces with rational singularities. But Du Val singularities are Gorenstein rational singularities (\cite{KM}*{Corollary 5.24}), and Gorenstein singularities are open. Therefore there is an open substack which parametrizes surfaces with only Du Val singularities. Moreover, having $n$ distinct marked fibers is an open condition as well. So there are open substacks $\cE_I^\circ \subseteq \cE_I$ and $(\cE_I^\circ)^{sn} \subseteq \cE_I^{sn}$ parameterizing stable Weierstrass fibrations with weight vector $I$.
 
\begin{Oss}\label{Oss:specialization:of:the:interior:of:the:moduli}
Since $\Kg$ is \emph{of finite type}, also $V$ is of finite type. Since $\mathcal{M}$ is locally of finite type,
using Chevalley's theorem we see that any point in $\overline{\psi(V)}$ is the specialization of a point in $\psi(V)$. Therefore
$\cE_I^\circ$ is dense in $\cE_I$.
\end{Oss}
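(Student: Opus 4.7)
The plan is to show that every closed point of $\cE_I$ is the specialization of a closed point of $\cE_I^\circ$, and to do this at the level of atlases. Recall that $\cE_I$ is by construction the normalization of the closure $\widetilde{\mathcal{W}_I}$ of the image of $\Psi: \Kg^\circ \to \mathcal{M}$, and $\cE_I^\circ$ is an open substack carved out by the open conditions of having Du Val singularities and $n$ distinct marked fibers. Since the normalization map is surjective and finite it preserves topological density, so it is enough to show that the image of $\Psi$ is dense in $\widetilde{\mathcal{W}_I}$ in the strong sense that every $k$-point in the closure specializes from a $k$-point of the image.

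To reduce to schemes I would pick smooth atlases $U \to \mathcal{M}$ and $V \to \Kg^\circ \times_\mathcal{M} U$ together with the induced morphism $\psi: V \to U$, so that (locally on the atlas) $\widetilde{\mathcal{W}_I}$ corresponds to $\overline{\psi(V)}$ with its reduced structure. By construction, every $k$-point of $\psi(V)$ maps to a surface pair that is a stable Weierstrass fibration with weight vector $I$, and hence lies in the preimage of $\cE_I^\circ$. Thus the problem is reduced to showing that every $k$-point of $\overline{\psi(V)}$ is the specialization of a $k$-point of $\psi(V)$.

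The input that makes this work is that $\Kg$ is \emph{of finite type}, since it is defined as the normalization of an atlas of $\bigcup_{n \le m \le d} \mathcal{K}_{g,m}(\overline{\mathcal{M}}_{1,1}, d)$, a finite union of proper, hence of finite type, DM stacks. Consequently $V$ is a Noetherian scheme of finite type over $k$, and as $\mathcal{M}$ is locally of finite type the map $\psi$ is a finite-type morphism between locally Noetherian schemes. By Chevalley's theorem, $\psi(V)$ is then a constructible subset of $U$, and a standard topological argument shows that any point in the closure of a constructible subset of a locally Noetherian scheme is the specialization of a point of that subset.

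Combining these steps gives density of $\cE_I^\circ$ in $\cE_I$. The only potential obstacle is the bookkeeping needed to check that finite-type-ness of $\Kg$ really does transfer to $V$ and that Chevalley applies on the atlas rather than directly on the stack; without finite-type-ness the image $\psi(V)$ need not be constructible, and the specialization argument would fail. Once these points are in place the conclusion is essentially a formal consequence.
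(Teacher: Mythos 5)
Your proposal is correct and follows essentially the same route as the paper: it uses the finite-type-ness of $\Kg$ (hence of $V$), applies Chevalley's theorem on the atlas to get constructibility of $\psi(V)$, and concludes that every point of $\overline{\psi(V)}$ specializes from a point of $\psi(V)$. The extra bookkeeping you add (that normalization preserves density, and that $\psi(V)$ lands in the preimage of $\cE_I^\circ$) is implicit in the paper's argument and is handled correctly.
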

 \begin{Oss}\label{Oss:definition:psi:KtoW}Recall that $\Kg$ is normal. Then the morphism $\Psi:\Kg \to \cM$, which induces $\Kg \to \widetilde{\cE_I}$, factors through the normalization $\cE_I \to \widetilde{\cE_I}$ (\cite{AB3}*{Lemma A.5}) giving $\Kg \to \cE_I$.
 \end{Oss}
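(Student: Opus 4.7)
The plan is to verify the observation in two steps, first establishing the factorization through the closed substack $\widetilde{\cE_I}$ of $\cM$, and then invoking the universal property of normalization to lift it to $\cE_I$.

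For the first step, I would look directly at the construction of $\widetilde{\cE_I}$. Recall that we chose an atlas $U \to \cM$ and an atlas $V$ of $\Kg \times_{\cM} U$, with morphism $\psi : V \to U$, and then defined $U_\mathcal{W} := \overline{\psi(V)}$ (with reduced structure) and descended the resulting invariant closed subscheme to the closed substack $\widetilde{\cE_I}\subseteq \cM$. Tautologically, $\psi$ factors through $U_\mathcal{W}$, and this factorization is compatible with the groupoid relations $\pi_1,\pi_2 : R_\mathcal{W} \rightrightarrows U_\mathcal{W}$ constructed in the excerpt. Descending this factorization along the smooth atlas $V \to \Kg$ produces the morphism $\Kg \to \widetilde{\cE_I}$ whose composition with $\widetilde{\cE_I} \hookrightarrow \cM$ recovers $\Psi$.

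For the second step, the universal property of the normalization of a reduced DM stack says that every morphism from a normal DM stack factors uniquely through the normalization. This is exactly the content of \cite{AB3}*{Lemma A.5}. Since $\Kg$ is normal by construction (as stated at the beginning of Section~\ref{Section:construction:of:the:moduli:surface:pairs}, where $\Kg$ was taken to be the normalization of an atlas), and $\cE_I \to \widetilde{\cE_I}$ is the normalization morphism, applying this lemma to the morphism $\Kg \to \widetilde{\cE_I}$ produced in the first step gives the desired lift $\Kg \to \cE_I$.

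I do not expect any real obstacle here: both steps are essentially formal. The only subtle point is making sure that the descent of the factorization from the atlases $V$ and $U$ to the stacks $\Kg$ and $\widetilde{\cE_I}$ is well-defined, which follows from the compatibility of $\psi$ with the groupoid actions already verified in the construction of $\widetilde{\cE_I}$. Everything after that is a direct citation of \cite{AB3}*{Lemma A.5}.
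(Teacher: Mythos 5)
Your argument is correct and coincides with what the paper does: the factorization through $\widetilde{\cE_I}$ is tautological from the definition of $\widetilde{\cE_I}$ as the (stackification of the) closure of the image of $\psi$, and the lift to $\cE_I$ is exactly the appeal to \cite{AB3}*{Lemma A.5}, which is the paper's entire justification as well. No gap to report.
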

\begin{Oss}Since $\mathcal{M}'$ is proper, also $\cE_I'$ is proper. Moreover, also $\cE_I$ and $\cE_I^{sn}$ are of finite type. This can be checked using that $\cE_I'$ is of finite type and that the Hom-scheme is a disjoint union of schemes of finite type (see \cite{arbarello-cornalba-griffiths}), so the moduli problems represented by $\cE_I$ and $\cE_I^{sn}$ are bounded. Finally, since the universal family of surfaces $\mathcal{X} \to \mathcal{M}$ is proper we can check, using the valuative criterion, that $\cE_I$ and $\cE_I^{sn}$ are proper.
\end{Oss}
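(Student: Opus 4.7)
I would prove the three assertions in the order stated. First, properness of $\cE_I'$ is immediate: by construction $\widetilde{\cE_I'} \hookrightarrow \mathcal{M}'$ is a closed substack of a proper DM stack, so it is proper; then $\cE_I' \to \widetilde{\cE_I'}$ is the normalization, which in our setting is a finite morphism (we work over a field of characteristic zero and our stacks are excellent), and a finite morphism to a proper stack is proper. The same finiteness will also be used at the end.

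For finite type of $\cE_I$ and $\cE_I^{sn}$, the only subtlety in $\mathcal{M} = \mathcal{M}' \times (\sH)^n$ is the Hom-stack, since $\mathcal{M}'$ is already of finite type. By the results cited in the paper, $\sH$ is a disjoint union of finite-type open substacks indexed by discrete invariants (e.g. the numerical class of the section). I would argue that $\Psi: \Kg^\circ \to \mathcal{M}$ factors through only finitely many such components: indeed $\Kg$, and therefore any atlas $V \to \Kg^\circ \times_{\mathcal{M}} U$, is of finite type, so its image in $\mathcal{M}$ meets only finitely many components of $\sH^n$. Since $\widetilde{\mathcal{W}_I}$ is defined as the closure of $\psi(V)$ with reduced structure, it lives inside a finite-type closed substack of $\mathcal{M}$. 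Normalization and seminormalization are finite operations (over our excellent base), so they preserve finite type; this yields the claim for $\cE_I$ and $\cE_I^{sn}$.

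For properness, I would check the valuative criterion. Fix a DVR $R$ with fraction field $K$, and a map $\spec(K) \to \cE_I^{sn}$ (the argument for $\cE_I$ is analogous, with possibly a ramified cover of $\spec(R)$ as explained below). Composing with the forgetful morphism $\cE_I^{sn} \to \cE_I'$ and using properness of $\cE_I'$, we extend the surface pair $(X_K, D_K)$ to a family $(X_R, D_R) \to \spec(R)$ parametrized by $\cE_I'$. The $n$ marked sections $\tau_{i,K}: \spec(K) \to X_K$ are then required to extend to $\tau_{i,R}: \spec(R) \to X_R$; this is exactly the valuative criterion applied to the proper morphism $X_R \to \spec(R)$, which holds since $\mathcal{X} \to \mathcal{M}$, hence its restriction, is proper. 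The data $(X_R, D_R, \tau_{i,R})$ define a morphism $\spec(R) \to \mathcal{M}$ which, because its restriction to the generic point lands in $\mathcal{W}_I^{sn} \subseteq \widetilde{\cE_I}$, must factor through $\widetilde{\cE_I}$ by reducedness.

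The one step that requires care, and is the main obstacle, is the final lift from $\widetilde{\cE_I}$ to its (semi)normalization. For $\cE_I^{sn}$, the DVR $\spec(R)$ is already seminormal and reduced, so a morphism $\spec(R) \to \widetilde{\cE_I}$ whose generic fiber lifts to $\cE_I^{sn}$ lifts uniquely by the universal property of seminormalization. For $\cE_I$, one instead passes to a ramified cover $\spec(R') \to \spec(R)$ (allowed in verifying properness for DM stacks), after which $\spec(R')$ is normal and the lift to the normalization $\cE_I \to \widetilde{\cE_I}$ exists by the universal property. This yields the required extension, completing the verification of the valuative criterion.
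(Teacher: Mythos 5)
Your proposal is correct and follows the same three-step outline the remark itself sketches: properness of $\cE_I'$ from closedness in the proper stack $\mathcal{M}'$ plus finiteness of normalization, boundedness from the finite type of $\Kg$ forcing the image to meet only finitely many finite-type components of the Hom-stack, and the valuative criterion with the sections $\tau_i$ extended via properness of the universal surface. The only caveat is cosmetic: rather than invoking a "forgetful morphism $\cE_I^{sn}\to\cE_I'$" (whose existence would require lifting through a normalization), it is cleaner to compose with the projection to $\mathcal{M}'$ and use properness of the closed substack $\widetilde{\cE_I'}$ directly, which is what your argument really uses.
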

Since the seminormalization is functorial (see \cite{Kollarsing}*{10.16}) the stack $\cE_I^{sn}$, when restricted to seminormal schemes, represents the following pseudofunctor. For $B$ seminormal, the objects of $\cE_I^{sn}(B)$ are the families $(X\to B,\omega_{X/B}^{\otimes m} \to \sL)$ as in Subsection \ref{Subsection:stable:pairs}, with $n$ sections $\{\tau_i\}_{i=1}^n$ of $X \to B$. Moreover, for every $b \in B$, the pair $(X_b,D_b)$ can be obtained as the closed fiber of a family of stable pairs $(X,D) \to \spec(R)$ over a DVR $R$, where the generic fiber is a minimal Weierstrass fibration with weight vector $I$. Finally, let $\eta$ be the generic point of $\spec(R)$. Then if $S_\eta$ is the section and $\vec{a}F_\eta:=D_\eta-S_\eta$, the point $\tau_i(b)$ is the limit of a point of intersection of $S_\eta$ and $\vec{a}F_\eta$.  

Now, assume that the surface parametrized by $\cE_I^\circ$ are \underline{not}
the product of two elliptic curves, with the divisor being the section $S$ and a fiber $F$. Then from Lemma \ref{Lemma:W:fibration:remembers:the:fibration}, the section $S$ is uniquely determined by $(X,D)$. Moreover, there is a morphism $\chi:\sY \to \sX_I$ induced by $\Kg \to \cE_I$, and a morphism $\chi^{sn}:\sY \to \sX_I^{sn}$. Proceeding as before we can introduce the following
\begin{Notation}\label{Notation:def:SI} We will denote with $\sS_I$ (resp. $\sS_I^{sn}$) the closure of $\chi(\sS)$ in $\sX_I$ (resp. $\chi^{sn}(\sS)$ in $\sX_I^{sn}$). Notice that $\sS_I$ (resp. $\sS_I^{sn}$) is a closed substack of the support of $\operatorname{Coker}(\phi:\omega_{\sX_I/\cE_I}^{\otimes m} \to \cL)$ (resp. $\operatorname{Coker}(\phi^{sn}:\omega_{\sX_I^{sn}/\cE_I^{sn}}^{\otimes m} \to \cL^{sn})$) where $\phi$ and $\phi^{sn}$ are obtained from Definition \ref{Def:KP:stable:pairs}.\end{Notation}
Similarly, for every $j$, let $\sF_{j,\sY}
  \hookrightarrow \sY$ be the irreducible component of $\sF$ with coefficient $a_j$. 
  \begin{Notation}We denote with $(\sF_{j})_I$ be closure of $\chi(\sF_{j,\sY})$ in $\sX_I$; and let $\sF_I :=\bigcup_j (\sF_j)_I$.
  \end{Notation}
  The $(\sF_j)_I$ are distinguished since we introduced the sections $\tau_j$.
\begin{Remark}
From now on, we will restrict ourselves to the case in which $\cE_I^\circ$ does \underline{not} parametrize surface pairs which are the product of two elliptic curves, with a single marked fiber.
\end{Remark}

\section{One parameter degenerations of Weierstrass fibrations}\label{section:singularities:threefold}
The goal of this section is to understand the boundary of $\cE_I$
finding the stable limits of a Weierstrass fibration. The case $\vec{a}=0$ and $s=1$ is studied in \cite{LaNave}, in \cite{AB3} is treated the case $s=1$ and $\vec{a}$ arbitrary. We want to understand what happens if $s$ is allowed to vary as well.

To fix the notation,
let $R$ be a DVR, and let $(X,sS+\vec{a}F) \to \spec(R)$ be a surface pair, induced by a morphism
$\spec(R) \to \cE_I$ for some weight vector $I$. Let
$\eta$ be the generic point of $\spec(R)$, $p$ the closed one, and assume that $\eta \mapsto \cE_I^\circ$.
\begin{Def}
 We will call a family of elliptic surfaces $(X,sS+\vec{a}F) \to \spec(R)$ as above, a \underline{stable degeneration}. We will call a threefold pair $(X,D) \to \spec(R)$ a \underline{degeneration} if there is an effective $\mathbb{Q}$-divisor $D'$ such that, $(X,D+D')$ is a stable degeneration, and $\supp(D')$ is the closure of some fibers on the generic fiber.
\end{Def}
The example we have in mind for a degeneration is obtained from a stable degeneration decreasing the weights on the fibers.

Our first goal is to understand the threefold $(X,sS+\vec{a}F)$. Since we are already
provided with a birational modification of $(X,sS+\vec{a}F)$, namely its associated tsm limit
$(X',sS'+\vec{a}F')$ (see Subsection \ref{subsection:definition:tsm:limit}),
this will be achieved taking the stable model of $(X',sS'+\vec{a}F')$. 

\subsection{Twisted stable maps-limits}\label{subsection:definition:tsm:limit}
In this subsection we construct a modification of a degeneration $(X,sS+\vec{a}F) \to \spec(R)$ such that,
up to replacing $\spec(R)$ with a ramified cover of it, is birational to $X$ (see also \cite{LaNave}*{Lemma 4.2.1}).
Let $\eta$ be the generic point of $\spec(R)$ and let $p$ be the closed one. By definition,
$(X,sS+\vec{a}F)_\eta$ comes with a morphism to a curve $g:X_\eta \to C_\eta$ such that it is a minimal Weierstrass fibration.
Therefore, there is an open subset $U \subset C_\eta$ such that $(g^{-1}(U),(S_\eta)_{|g^{-1}(U)}) \to U$ is a family of DM stable 
elliptic curves. Since $\overline{\mathcal{M}}_{1,1}$ is a proper DM stack, up to replacing $C_\eta$ with a suitable root-stack
$\sC_\eta' \to C_\eta$, the morphism $U \to \overline{\mathcal{M}}_{1,1}$ extends to a morphism $\sC_\eta' \to \overline{\mathcal{M}}_{1,1}$.
We can assume it to be representable, up to replacing $\sC_\eta'$ with the relative coarse moduli space of $\sC_\eta' \to \overline{\mathcal{M}}_{1,1}$.
Observe also that $(\sC_\eta')_{|U}=(C_\eta)_{|U}$.

Let $(\Sigma_i)_\eta \subseteq \sC_\eta'$ be the cosed substack which corresponds to points with non-trivial stabilizers.
Then $(\sC_\eta', \{(\Sigma_i)_\eta \}) \to \overline{\mathcal{M}}_{1,1}$
is a twisted stable map. But since $\mathcal{K}_{g,n}(\overline{\mathcal{M}}_{1,1},d)$ is proper, up to replacing $\spec(R)$ with
a ramified cover of it, we can extend the twisted stable map to get a family of twisted stable curves $(\sC', \{\Sigma_i\}) \to \spec(R)$, with
a morphism
$(\sC', \{\Sigma_i\}) \to \overline{\mathcal{M}}_{1,1}$. The latter corresponds to a family of genus one DM stable curves with a section:
$$(\sX', \sS') \to (\sC', \{\Sigma_i\}) \to \spec(R)$$
Observe that $\sX'\times_{\sC'}U \cong X\times_C U$.

We now take the coarse spaces, $(\sX, \sS) \to (\sC, \{\sigma_i\}) \to \spec(R)$, and proceed as in Subsection
\ref{subsection:construction:family:of:surfaces} to replace all the multiple twisted fibers with minimal cusps (see also Observation \ref{Oss:description:that:the:blow:ups:replace:twisted:with:mcusps}).
Let $X'$ be the resulting threefold.
From the construction there is an isomorphism $\xi:X_\eta \to X'_\eta$, thus let $S':=\xi_*(S)$ and $\vec{a}F':=\xi_*(\vec{a}F)$.
We denote with $(X',sS'+\vec{a}F')$ the resulting surface pair.
Observe that we have a morphism $X' \to C'\to \spec(R)$ where $C' \to \spec(R)$ is a family of nodal curves.
\begin{Def}\label{Def:tsm:limit}
 With the same notation as before,
 we call the family $(X',sS'+\vec{a}F') \to \spec(R)$ the \underline{twisted stable maps-limit} (or tsm limit) associated to
 $(X,sS+\vec{a}F)$. 
\end{Def}
Observe in particular that $(X'_\eta,sS'_\eta+\vec{a}F'_\eta) \cong (X_\eta,sS_\eta+\vec{a}F_\eta)$.
\begin{Remark}
 A priori the tsm limit depends on the ramified cover of $\spec(R)$ we choose. We will ignore this subtlety since it will not cause any issue in what follows.
\end{Remark}
\begin{Oss}\label{Oss:cosed:fiber:tsm:limit:is:lc}
Observe that from the construction of $(X',sS'+\vec{a}F')$ of Subsection \ref{subsection:construction:family:of:surfaces}, the closed fiber $(X'_p,sS_p'+\vec{a}F'_p)$ is slc. Moreover, proceeding as in Subsection \ref{subsection:construction:family:of:surfaces}, one can show that $S'$ and each irreducible component of $F'$ is
 $\mathbb{Q}$-Cartier.\end{Oss}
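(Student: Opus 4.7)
The plan is to replay, with $\spec(R)$ in place of $\Kg^\circ$, the construction of Subsection \ref{subsection:construction:family:of:surfaces}. After the properness of $\mathcal{K}_{g,n}(\overline{\mathcal{M}}_{1,1},d)$ has been used (and $R$ possibly replaced by a ramified cover) to obtain the family $(\sX',\sS') \to (\sC',\{\Sigma_i\}) \to \spec(R)$, the coarse spaces and the two blow-up steps of Subsection \ref{subsection:singularities:and:simult:blowup} produce a threefold $B \to \spec(R)$ with divisors $\sS_B$, $\sA_B^*$, $\sE_B^*$, $\sF_B$, and then the log-canonical divisor $K_{B/R}+\sD$ (with $\sD$ as defined before Proposition \ref{Prop:contracting:E:does:not:contribute:to:cohom}) is relatively semiample over the base curve, giving a contraction $B\to X'$ over $C'$.

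For the slc part, I apply Proposition \ref{Prop:contracting:E:does:not:contribute:to:cohom} directly to the closed fiber: $(B_p,\sD_p)$ is slc, and its stable model over $C'_p$, obtained by contracting the fiber components not meeting the section, is precisely $X'_p$ by construction of $X'$. Adding the marked fibers $\vec{a}\sF$ (coming from the sections of $\sC'\to \spec(R)$ corresponding to the marked points) preserves slc because $I$ is admissible, so the coefficients on the singular fibers satisfy the bounds of Corollary \ref{Cor:stability:condition:for:minimal:W:fibration}. Thus $(X'_p,sS'_p+\vec{a}F'_p)$ is slc.

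For the $\mathbb{Q}$-Cartier part, I follow the argument of Observation \ref{Oss:before:the:blow:down:Qcartier}. On the stack $\sX'$ the section $\sS'$ is Cartier, and each $(\sF_i)_{\sB}$ is the pullback of a Cartier divisor from $\sC'$; after the two blow-ups along smooth loci of the strict transforms of $\sS$ (Lemma \ref{Lemma:sing:nonduval:fibers} and \ref{Lemma:sing:duval:fibers}), the analogous divisors on $\sB$ remain Cartier, so their images on the coarse space $B$ are $\mathbb{Q}$-Cartier. For the contraction $B\to X'$, I use that the exceptional curves are fiber components disjoint from $\sS_B$ and contained in the full fiber $(\sF_i)_B$ over $\sigma_i(C'_p)$. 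Hence they are numerically trivial against $\sS_B$ and against each $(\sF_i)_B$ individually, so by the relative base point free theorem applied to a high multiple of each of these divisors separately, each pushforward $S'$ and $(F_i)'$ is $\mathbb{Q}$-Cartier on $X'$.

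The main obstacle is the last point: ensuring that each \emph{individual} marked fiber $(F_i)'$ is $\mathbb{Q}$-Cartier, not merely the total $\vec{a}F'$ or $K_{X'/R}+sS'+\vec{a}F'$. This reduces to checking that the contracted curves — the fiber components not meeting the section — meet each $(\sF_i)_B$ with intersection zero. Components contracted over an unmarked fiber meet no $(\sF_i)_B$ at all; components contracted over a marked fiber $\sigma_i(C'_p)$ are contained in $(\sF_i)_B$, so the intersection with $(\sF_i)_B$ is again zero because $(\sF_i)_B$ is a fiber of $B \to C'$. Once this verification is in place, the relative base point free theorem applies to each divisor individually and yields the conclusion.
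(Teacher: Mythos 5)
Your proposal is correct and follows exactly the route the paper intends: the Observation is stated without a separate proof precisely because it amounts to replaying Subsection \ref{subsection:construction:family:of:surfaces} over $\spec(R)$, using Proposition \ref{Prop:contracting:E:does:not:contribute:to:cohom} for slc-ness and Observation \ref{Oss:before:the:blow:down:Qcartier} together with descent along the fiber contraction (zero intersection with the contracted curves, plus $R^1\pi_*\cO=0$ from that same proposition, or the relative base point free theorem) for $\mathbb{Q}$-Cartierness of $S'$ and of each $F_i'$ individually. One small rephrasing: the coefficient bounds of Corollary \ref{Cor:stability:condition:for:minimal:W:fibration} hold on the closed fiber not ``because $I$ is admissible'' but because they hold on the generic fiber (which is a stable pair by hypothesis) and the singular-fiber types along the marked sections are locally constant over $\spec(R)$ by Lemma \ref{Lemma:open:and:closed:sing}.
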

Thus we constructed a birational modification $(X',sS'+\vec{a}F')$ of $(X,sS+\vec{a}F)$. All the irreducible components of $X'_p$
are lc elliptic fibrations, with all the fibers irreducible. 

\subsection{Stable reduction}
The goal of this subsection is to study a stable degeneration, taking the stable model of $(X',sS' + \vec{a}F')$. Our main result is the following:
\begin{Teo}\label{Teo:descripton:bir:trans:to:get:lcdeg} Let $(X,sS+\vec{a}F)$ be a stable degeneration, let $p$ (resp. $\eta$) be the closed (resp. generic) point of $\spec(R)$, and let $(X',sS'+\vec{a}F')$ be its tsm limit.
 Then there is a $\mathbb{Q}$-divisor $G^{(1)}$, with each irreducible component of $\supp(G^{(1)})$ which is $\mathbb{Q}$-Cartier, such that
 $(X^{(1)},D^{(1)}):=(X',sS'+\vec{a}F'+G^{(1)})$ is a stable degeneration.
 Moreover, we can obtain $(X,sS+\vec{a}F)$ from $(X^{(1)},D^{(1)})$ performing a series of birational transformations
 $$\xymatrix{\ar @{} [r]
 (X^{(1)},D^{(1)}) \ar@{.>}^{f^{(1)}}[r] &(X^{(2)},D^{(2)}) \ar@{.>}^-{f^{(2)}}[r]&\text{...} \ar@{.>}^-{f^{(m-2)}}[r]
 &(X^{(m-1)},D^{(m-1)}) \ar@{.>}^-{f^{(m-1)}}[r] & (X,sS+\vec{a}F)}$$
 satisfying the following conditions:
 \begin{itemize}
 \item $(X^{(i)},D^{(i)})$ is a stable pair; 
  \item  $D^{(i)}$ is obtained from $f^{(i-1)}_*(D^{(i-1)})$ reducing the weights on
$(f^{(i-1)} \circ... \circ f^{(1)})_*(G^{(1)})$;
  \item If $i<m-1$, the rational morphism $f^{(i)}$ is produced through some steps of the MMP which are either a divisorial contraction of some irreducible components of $X^{(i)}_p$, or a flip of La Nave,
  or the composition of such a divisorial contraction and a flip of La Nave;
  \item $f^{(m-1)}$ is a morphism, and is
 the contraction of some (possibly none)
 irreducible components of $X^{(m-1)}_p$ and $S^{(m-1)}_p$, where $S^{(1)}:=S'$ and $S^{(i+1)}:=f^{(i)}_*(S^{(i)})$, and
 \item If $C$ is an irreducible component of $S'$ that gets contracted through $f^{(m-1)}\circ f^{(m-2)} \circ ... \circ f^{(1)}$, then
 $C \cong \mathbb{P}^1$.
 \end{itemize}
\end{Teo}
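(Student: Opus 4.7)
My plan follows the general framework of La Nave \cite{LaNave} and Ascher--Bejleri \cite{AB3}, adapted to the case of a variable section weight $s \le 1$. The idea is to first realize the tsm limit $(X',sS'+\vec{a}F')$ as a member of a one-parameter family of stable pairs obtained by adding a divisor $G^{(1)}$ supported on fibers, and then connect it to $(X,sS+\vec{a}F)$ by running a weight-reducing MMP that decreases the coefficients of $G^{(1)}$ back to zero.

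\textbf{Construction of $G^{(1)}$.} By Observation \ref{Oss:cosed:fiber:tsm:limit:is:lc}, the closed fiber of $(X',sS'+\vec{a}F')$ is slc, and both $S'$ and each irreducible component of $F'$ are $\mathbb{Q}$-Cartier. However, $K_{X'/R}+sS'+\vec{a}F'$ is typically not $R$-ample, because $(X,sS+\vec{a}F)$ is obtained from $(X',sS'+\vec{a}F')$ by contracting certain components of $X'_p$. To fix this, I mark additional fibers: take $G^{(1)} = \sum b_j H_j$ where the $H_j$ are closures of fibers chosen so that every irreducible component of $X'_p$ receives a marked fiber, and the $b_j \in [0,1)\cap \mathbb{Q}$ are close to $1$. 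Using Lemma \ref{Lemma:if:all:fibers:irred:if:we:dont:contract:the:section:it:is:stable} together with the fact that all fibers of $X'_p$ are irreducible by construction of the tsm limit in Subsection \ref{subsection:construction:family:of:surfaces}, for the $b_j$ close enough to $1$ the divisor $K_{X'}+sS'+\vec{a}F'+G^{(1)}$ has positive degree on every non-marked curve of $X'_p$, hence is $R$-ample; this makes $(X^{(1)},D^{(1)}):=(X',sS'+\vec{a}F'+G^{(1)})$ a stable degeneration. Each $H_j$ is $\mathbb{Q}$-Cartier because it is a fiber.

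\textbf{Weight-reduction MMP and classification of steps.} For $t \in [0,1]$ set $D^{(t)} := sS'+\vec{a}F'+tG^{(1)}$; at $t=1$ we recover $D^{(1)}$, while the canonical model of $(X',D^{(0)})$ over $\spec(R)$ is $(X,sS+\vec{a}F)$ since $(X',D^{(0)})$ and $(X,sS+\vec{a}F)$ have the same generic fiber and both have slc closed fiber. Rational polyhedrality of the wall and chamber structure in the parameter $t$ yields finitely many walls $1 = t_0 > t_1 > \cdots > t_{m-1} = 0$ where the log-canonical model changes, producing the rational maps $f^{(i)}$ and the pairs $(X^{(i)},D^{(i)})$. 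Because the MMP is trivial over $\eta$, every extremal ray contracted lies in the closed fiber and is of one of two kinds. Either the ray is generated by a curve whose deformations sweep out an entire irreducible component $Y \subseteq X^{(i)}_p$, giving a divisorial contraction of $Y$; or it is generated by a $\mathbb{P}^1$-component $C$ of $S^{(i)}_p$ on which $K+D^{(i)}$ becomes negative, in which case the flip exists and, by the local analysis of \cite{LaNave}*{Theorem 7.1.2} together with Proposition \ref{Prop:recognize:the:intermediate:from:int:pairing}, is a flip of La Nave. Other types are ruled out: multisections of the fibration on each component have positive intersection with $K+D^{(t)}$ by Observation \ref{Oss:all:fib:irred:implies:pulback} and the intersection-form analysis of Subsection \ref{Subsection:intermediate:fibers}. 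When both types of contraction trigger at the same wall, $f^{(i)}$ is their composition. The terminal morphism $f^{(m-1)}$ is the canonical model map at $t=0$, contracting the remaining components of $X^{(m-1)}_p$ and $S^{(m-1)}_p$ on which $K+sS+\vec{a}F$ is trivial; any contracted section component is $\mathbb{P}^1$ because it spans an extremal ray in the closed fiber.

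\textbf{Main obstacle.} The hardest part is identifying each flipping step as a flip of La Nave in the present setting, since with $s$ allowed to be strictly less than $1$ the numerical thresholds at which a section component becomes $K+D$-negative differ from those in \cite{LaNave} and \cite{AB3}. The argument requires showing that, in a formal neighborhood of any flipping curve, the threefold is still isomorphic to the toric chart of \cite{LaNave}*{Theorem 7.1.2}; I plan to achieve this using the classification of singularities in Subsection \ref{Subsection:intermediate:fibers} to pin down the local type of intermediate fiber on each side of the flip, and Proposition \ref{Prop:recognize:the:intermediate:from:int:pairing} to match the intersection-form data. A secondary difficulty is ensuring termination of the MMP, which I will handle wall-by-wall using standard relative threefold termination results.
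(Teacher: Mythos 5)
Your overall strategy (stabilize the tsm limit by adding a fiber divisor $G^{(1)}$, then run a weight-reducing MMP on $G^{(1)}$) is the same as the paper's, but the proposal has two genuine gaps. First, the construction of $G^{(1)}$ does not actually produce a stable pair. By Lemma \ref{Lemma:if:all:fibers:irred:if:we:dont:contract:the:section:it:is:stable}, ampleness on a component $Y^i$ is equivalent to $(K+sS'+\vec{a}F'+G^{(1)}+E).(S'\cap Y^i)>0$, and by Lemma \ref{Lemma:conditions:when:to:contract:section} this can fail whenever the total fiber-marking degree on that component is at most $2$. A single marked fiber per component with coefficient $b_j<1$ therefore need not suffice (e.g.\ a genus-$0$ component with $s$ close to $1$, small $\deg\sL$, at most one double-locus fiber and no pre-existing markings). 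The paper's choice forces $(S'\cap Y^i).G^{(1)}=3$ on every component, using \emph{many} fibers with \emph{small} coefficients so that lc-ness is automatic from Corollary \ref{Cor:stability:condition:for:minimal:W:fibration}; it also passes to a ramified cover of $\spec(R)$ so that the chosen fibers are pullbacks of Cartier point-divisors on $C'$, which is where $\mathbb{Q}$-Cartierness really comes from.

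Second, and more seriously, the classification of the intermediate steps $f^{(i)}$ — that the only contractions occurring are divisorial contractions of components of $X^{(i)}_p$ and flips of La Nave, with the codimension-two exceptional locus confined to section components — is the actual content of the theorem, and you have deferred it ("I plan to achieve this\dots"). The paper does not re-derive this: it invokes Theorem \ref{Teo:generalization:of:appendix}, the extension of \cite{AB3}*{Theorem B.10} to $s\le 1$, which supplies exactly this classification one irreducible component of $G$ at a time. Your sketch via extremal rays also leaves out the bookkeeping the paper needs to make the induction run: that each $G^{(i)}$ remains $\mathbb{Q}$-Cartier after every step (the paper cites \cite{Fuj-contraction}*{Theorem 16.4, (3)} and \cite{KM}*{Propositions 3.36, 3.37}, and deliberately stops reducing weights before any small contraction, reserving the final step for log-abundance). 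Finally, your justification of the last bullet point ("any contracted section component is $\mathbb{P}^1$ because it spans an extremal ray") is not valid: a section component inside a divisorially contracted component, or one contracted by the final abundance morphism, need not span an extremal ray. The correct argument is that a contracted $C$ satisfies $(K+D).C\le 0$, so Lemma \ref{Lemma:conditions:when:to:contract:section} forces the base curve to have arithmetic genus $0$ (genus $1$ being excluded since $\vec{a}F+E\neq 0$ on such a component), hence $C\cong\mathbb{P}^1$.
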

We will produce $f^{(i)}$ through some steps of the MMP,
whereas $f^{(m-1)}$ will be obtained through log-abundance. 
For the proof we mainly follow the strategy in \cite{AB3}. The
main ingredient will be Theorem \ref{Teo:generalization:of:appendix}, 
which
is a slight generalization of \cite{AB3}*{Theorem B.10}. The proof is the same, just notice that in \cite{AB3}*{Appendix B},
we never used that $s=1$.
\begin{Teo}\label{Teo:generalization:of:appendix}
   Assume that $(X,sS+\vec{a}F) \to \spec(R)$ is a stable degeneration, over a DVR $R$. Assume that
 we can write $\vec{a}F=\vec{b}F+G$ where $G$ is an effective $\mathbb{Q}$-Cartier $\mathbb{Q}$-divisor.
 Assume finally that $(X,sS+\vec{b}F+\beta G)$ is stable for every rational $\beta_0 < \beta \le 1$, but $K_X+S+\vec{b}F+\beta_0G$ is nef.
  Then the codimension two exceptional locus arising from taking the stable model of $(X, sS + \vec{b}F+\beta_0 G)$ will be a union of components of
  the section of the closed fiber.
  
  Moreover, if $\supp(G)$ is irreducible and $\beta_0>0$, there is an $\epsilon$ small enough such that the stable model of $(X,sS+\vec{b}F+
  (\beta_0-\epsilon)G)$ can be obtained from $(X,sS+\vec{b}F+
  (\beta_0-\epsilon)G)$ performing some divisorial contractions and at most one flip of La Nave.
\end{Teo}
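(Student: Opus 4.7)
The plan is to follow, essentially verbatim, the proof of \cite{AB3}*{Theorem B.10}. In that argument (Appendix B of loc.\ cit., also written by the present author) the hypothesis $s=1$ is never actually invoked: all that is used is that $sS$ appears with \emph{some} fixed coefficient in $(0,1]$ so as to make the pair slc and to give well-defined intersection numbers against curves in the section. So I would re-run that proof, flagging at each intersection-theoretic step that $s$ is a harmless parameter.

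\textbf{Step 1 --- Stable model via log-abundance.} The pair $(X,sS+\vec{b}F+\beta_0 G)\to\spec(R)$ is slc with $K_X+sS+\vec{b}F+\beta_0 G$ nef over $\spec(R)$. Since $\dim X=3$, the log-canonical ring is finitely generated by \cite{KMMcK}, and the stable model $X^s$ is realized as its $\operatorname{Proj}$; write $\pi:X\dashrightarrow X^s$ for the induced birational map. The codimension-two exceptional locus of $\pi$ is the union of curves $C\subseteq X_p$ with $(K_X+sS+\vec{b}F+\beta_0 G)\cdot C=0$ which are \emph{not} contained in a $\pi$-exceptional divisor.

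\textbf{Step 2 --- First assertion.} Since $K_X+sS+\vec{a}F$ is ample and $\vec{a}F=\vec{b}F+G$, any such curve $C$ satisfies $(1-\beta_0)\,G\cdot C = (K_X+sS+\vec{a}F)\cdot C > 0$, so $C$ is horizontal to the elliptic/pseudoelliptic structure on the component $Y\subseteq X_p$ that contains it. A component-by-component check, using Observation~\ref{Oss:all:fib:irred:implies:pulback}, Lemma~\ref{Lemma:if:all:fibers:irred:if:we:dont:contract:the:section:it:is:stable} and Lemma~\ref{Lemma:conditions:when:to:contract:section}, shows that the only horizontal curves on $Y$ with $(K_Y+(sS+\vec{b}F+\beta_0 G)|_Y)\cdot C=0$ and which can be contracted in codimension two rather than via a divisorial contraction are $\mathbb{P}^1$'s inside the section components of $X_p$. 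The intersection calculations there treat $s\in(0,1]$ as a free parameter.

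\textbf{Step 3 --- Second assertion.} Assume $\supp(G)$ is irreducible and $\beta_0>0$, and decrease $\beta_0$ to $\beta_0-\epsilon$ for $\epsilon>0$ small. Any $(K_X+sS+\vec{b}F+(\beta_0-\epsilon)G)$-negative extremal ray over $\spec(R)$ was $(K_X+sS+\vec{b}F+\beta_0 G)$-trivial, hence by Step~2 is either (a) a divisorial ray that we simply contract, or (b) a small ray along a $\mathbb{P}^1$ inside the section of some component of $X_p$. In case (b) we apply \cite{LaNave}*{Theorem 7.1.2}, whose local toric model of the flipping neighbourhood is independent of the weights and therefore produces the flip of La Nave for our $s\le 1$ as well.

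\textbf{Main obstacle.} The delicate point is that \emph{at most one} flip is required. The argument is that, since $\supp(G)$ is irreducible, only one horizontal divisor has its coefficient changing at the wall, so there is a unique component $X_1\subseteq X_p$ whose section becomes destabilized at $\beta_0$. After the single flip of La Nave transforms the section $\mathbb{P}^1\subseteq X_1$ into the intermediate component of a new intermediate fiber on the proper transform $X_2^+$, the perturbed log-canonical class becomes relatively ample over $\spec(R)$ in a neighbourhood of the flipped locus, and no further small contractions can appear. The remaining $K$-negative rays are then divisorial, and contracting them yields the stable model. As in Step~2, this final bookkeeping is affine-linear in $s$, so the $s=1$ argument of \cite{AB3} transports unchanged.
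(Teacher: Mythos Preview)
Your proposal is correct and takes essentially the same approach as the paper: the paper's entire proof is the one-line remark that the argument of \cite{AB3}*{Theorem B.10} (and more generally Appendix B of loc.\ cit.) goes through verbatim because the hypothesis $s=1$ is never actually used there. Your Steps 1--3 and the ``main obstacle'' paragraph simply unpack that citation, which is a reasonable elaboration but not a different route.
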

\begin{proof}[Proof of Theorem \ref{Teo:descripton:bir:trans:to:get:lcdeg}]
We apply Theorem \ref{Teo:generalization:of:appendix}. First, we find a $\mathbb{Q}$-Cartier
$\mathbb{Q}$-divisor $G'$, which makes $(X',sS'+\vec{a}F'+G')$ a stable pair. 
We choose $G'$ as follows.
Let $\{Y^i\}_i$ be the irreducible components of $X'_p$. Then
$Y^i$ is an elliptic fibration with all the fibers irreducible.
We choose some fibers on it, say $F_1^i,...,F_{m_i}^i$, such that
they do not intersect the double locus $Y^i\smallsetminus \overline{(X'_p\smallsetminus Y^i)}$. Let $\{F_1,...,F_m\} =\bigcup _{i,j} \{F^i_{m_j}\}$
be union of the $F_j^i$.

Let $C' \to \spec(R)$ be the family of nodal curves associated to $X'$, and let $h':X' \to C'$ be the associated morphism. Then the fibers
$F_i$ map to some closed points $q_1,...,q_m$ of $ C'_p$, supported on the smooth locus of $C'_p$. Up to replacing $\spec(R)$ with a covering
of it, we can assume that there are closed points $x_1,...,x_m$ of $C'_\eta$, such that $\overline{\{x_i\}} \cap X'_p= q_i$.
Then $\overline{\{x_i\}}$ are Cartier divisors. Up to adding some fibers $F_i$, we can choose
$G^{(1)}:= \sum b_i (h')^*(\overline{\{x_i\}})$ such that $(S' \cap Y^i).G^{(1)}=3$, and $b_i$ are small and positive, such that
$(X',sS'+\vec{a}F'+G^{(1)})$ is lc.

\begin{center}
\includegraphics[scale=0.30]{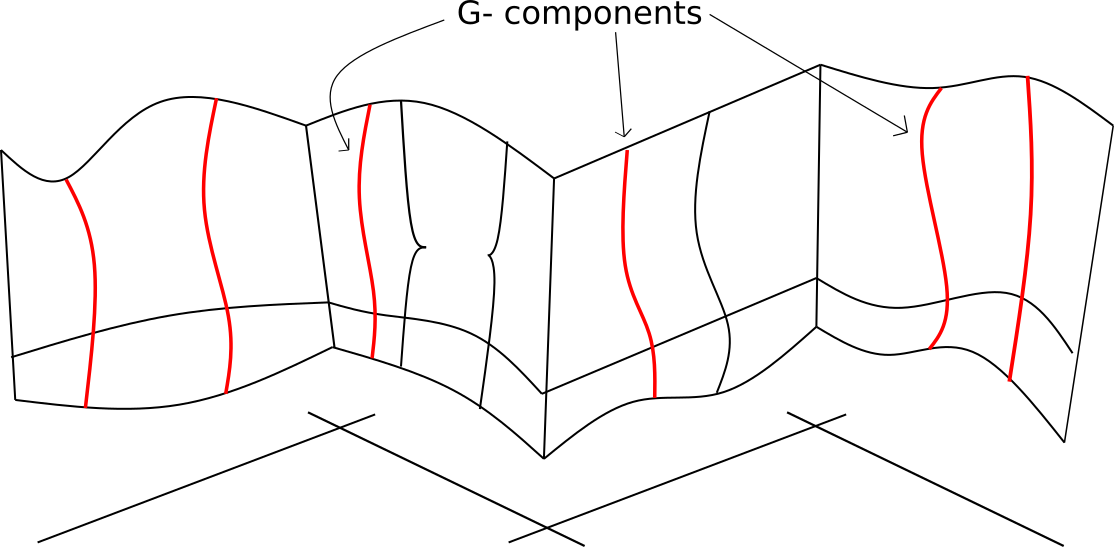}
\end{center}
Now we show that $(X',sS'+\vec{a}F'+G^{(1)})$ is a stable pair.
We need to check that $K_{X'}+sS'+\vec{a}F'+G^{(1)}$ is ample when restricted to the the generic fiber, and to every irreducible component of the closed fiber.
This follows from Lemma \ref{Lemma:conditions:when:to:contract:section}, since we added the fibers $G^{(1)}$. 

Now we can apply Theorem \ref{Teo:generalization:of:appendix}, and decrease the weights on $G^{(1)}$ one at the time.
This produces a series of birational transformations 
$$\xymatrix{\ar @{} [r]
 (X^{(1)},D^{(1)}) \ar@{.>}^{f^{(1)}}[r] &(X^{(2)},D^{(1)}) \ar@{.>}^-{f^{(2)}}[r]&\text{...} \ar@{.>}^-{f^{(m-2)}}[r] &
 (X^{(m-1)},D^{(m-1)})}$$
Since we want all the $f^{(i)}$ to be steps of the MMP, to guarantee that our divisors remain $\mathbb{Q}$-Cartier, we need to avoid any small contraction.
So we decrease the weights until they are all small, but positive, rational numbers.
 Then for every $i$, from Theorem \ref{Teo:generalization:of:appendix} the birational transformation
 $f^{(i)}$ is either a divisorial contraction of an irreducible component of the special fiber, or a flip of La Nave, or the composition
 of both. 
 Moreover, $D^{(i)}$ is an effective divisor supported on $\supp((f^{(i)} \circ ... \circ f^{(1)})_*(sS'+\vec{a}F'+G'))$,
 and $G^{(i)}:=D^{(i)}-(f^{(i)} \circ ... \circ f^{(1)})_*(sS'+\vec{a}F')$ is effective.
 Each $f^{(i)}$ is a composition of steps of the MMP, so using \cite{Fuj-contraction}*{Theorem 16.4, (3)} and proceeding as in \cite{KM}*{Proposition 3.36, 3.37}
 one can show that $G^{(i)}$ are $\mathbb{Q}$-Cartier for every $i$.
 Then we can proceed reducing the weights until $(f^{(m-2)} \circ ... \circ f^{(1)})_*(sS'+\vec{a}F')$ is nef.
 At this point Theorem \ref{Teo:generalization:of:appendix} applies again.
 
 We are left with showing the last bullet point. Since $f^{(i)}$ does not contract curves which are positive for $K_{X^{(i)}}+D^{(i)}$, and since we are not contracting $S_\eta$, the last bullet point follows from Lemma \ref{Lemma:conditions:when:to:contract:section}.
\end{proof}
\begin{Cor}\label{Cor:description:lc:degeneration}
 Let $(X,sS+\vec{a}F) \to \spec(R)$ be a stable degeneration. Then:
 \begin{enumerate} \item There is a flat family of nodal curves $C \to \spec(R)$ and a map $h:X \to C$ such that $h_{|S}:S \to C$ is an isomorphism; \item The irreducible components of $(X,sS+\vec{a}F)_p$ are either pseudoelliptic surfaces (Definition \ref{Def:pseudoelliptic}) which map to a point through $h$, or elliptic surfaces (Definition \ref{Def:slc:elliptic:surface}), and
  \item  The double locus of $(X,sS+\vec{a}F)_p$ is supported on some twisted fibers or pseudofibers, and on the twisted components of every intermediate fiber and pseudofiber.
 \end{enumerate}
\end{Cor}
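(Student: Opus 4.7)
The plan is to construct the family of nodal curves $C$ inductively along the birational chain
$(X^{(1)}, D^{(1)}) \dashrightarrow \cdots \dashrightarrow (X, sS+\vec{a}F)$
of Theorem~\ref{Teo:descripton:bir:trans:to:get:lcdeg}, starting from the base curve of the tsm limit and adjusting it at each step so that the fibration structure persists. The base case $i=1$ is provided directly by the construction in Subsection~\ref{subsection:definition:tsm:limit}: the tsm limit $X^{(1)}=X'$ comes with a morphism $h^{(1)}:X^{(1)}\to C^{(1)}$ to a flat family of nodal curves $C^{(1)}=C'$, with $h^{(1)}_{|S^{(1)}}$ an isomorphism, every irreducible component of $X^{(1)}_p$ an slc elliptic surface with all fibers irreducible, and double locus supported on fibers.

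For the inductive step, I would examine the two possible types of $f^{(i)}$. If $f^{(i)}$ is a divisorial contraction of a fiber subcomponent, or of a whole pseudoelliptic component to a point, then $h^{(i)}$ is constant on the contracted locus and so factors through $X^{(i+1)}$, giving $h^{(i+1)}$ with $C^{(i+1)}:=C^{(i)}$. If instead $f^{(i)}$ is a flip of La Nave along a curve $C^{\flat}\subseteq S^{(i)}$, then by La Nave's explicit local picture the irreducible component $X_1^{(i)}\subseteq X^{(i)}_p$ containing $C^{\flat}$ meets the rest of $X^{(i)}_p$ along a \emph{single} fiber; hence its image $C_1^{(i)}:=h^{(i)}(X_1^{(i)})\subseteq C^{(i)}_p$ is attached to the rest of $C^{(i)}_p$ at a single node, and $C_1^{(i)}\cong\mathbb{P}^1$ by the last bullet of Theorem~\ref{Teo:descripton:bir:trans:to:get:lcdeg} combined with $h^{(i)}_{|S^{(i)}}$ being an isomorphism. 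Contracting this rational tail gives a new flat family of nodal curves $C^{(i+1)}$, and the flipped component $X_1^{(i+1)}$, now pseudoelliptic, maps via $h^{(i+1)}$ to the node $C_1^{(i)}$ collapses to. The terminal morphism $f^{(m-1)}$ from log-abundance is treated in the same way: the $\mathbb{P}^1$ components of $S^{(m-1)}_p$ that it contracts correspond to rational tails of $C^{(m-1)}_p$, and contracting these produces the desired $C$, with $h_{|S}:S\to C$ an isomorphism since the section components being contracted on $X$ and on $C$ match exactly.

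Statements (2) and (3) then follow by tracking component and double locus types along the chain. Initially only elliptic surfaces with irreducible fibers appear, with the double locus supported on twisted fibers. Each flip of La Nave converts one elliptic surface component into a pseudoelliptic one, attached to its neighbour along the twisted component of a newly created intermediate fiber; the resulting intermediate component (or its pseudo counterpart) enters the double locus exactly via that twisted component. Divisorial contractions only remove whole pseudoelliptic components or simplify existing intermediate fibers, so neither creates new component types nor enlarges the double locus.

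The main obstacle is the verification, at each flip of La Nave and at the final step, that the component of $C^{(i)}_p$ being contracted is a smooth rational curve attached to the remainder of $C^{(i)}_p$ at a single node, so that the result is again nodal. This reduces to the single-fiber-in-double-locus description of the flip of La Nave (which controls the nodes of $C^{(i)}_p$ through the isomorphism $h^{(i)}_{|S^{(i)}}$) and to the $\mathbb{P}^1$-statement in the last bullet of Theorem~\ref{Teo:descripton:bir:trans:to:get:lcdeg}; once these are in place, the rest of the argument is routine bookkeeping along the chain.
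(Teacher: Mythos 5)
Your proposal follows essentially the same route as the paper: induction along the chain of birational transformations from Theorem~\ref{Teo:descripton:bir:trans:to:get:lcdeg}, with (2) and (3) read off from the description of the flip of La Nave, and (1) obtained by contracting the component of the base curve corresponding to each contracted section component. Two technical points that the paper handles explicitly are left implicit in your write-up. First, after a flip $X^{(i)}\dashrightarrow X^{(i+1)}$ neither direction is a morphism, so the fibration $h^{(i+1)}:X^{(i+1)}\to C^{(i+1)}$ does not simply "factor"; one must extend a rational map defined off the flipped curve, which the paper does using normality of $X^{(i)}$ and \cite{Giansiracusa-Gillam}*{Theorem 7.3}, and the same reference plus the Zariski main theorem is what upgrades the induced map $S^{(i)}\to C^{(i)}$ from a homeomorphism to an isomorphism of schemes --- your phrase "the section components being contracted match exactly" only gives a bijection on components. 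Second, for the terminal log-abundance contraction the contracted section components need not be rational tails: the paper only bounds the number of double-locus points on such a component by two (via Lemma~\ref{Lemma:conditions:when:to:contract:section}), so they may be bridges, and one should check that contracting such a $\mathbb{P}^1$ still yields a flat family of nodal curves. Neither point derails your argument, but both require the justification the paper supplies.
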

Notice that Corollary \ref{Cor:description:lc:degeneration} gives a description of the possible surface pairs of $\cE_I (\spec(k))$:
\begin{center}
\includegraphics[scale=0.23]{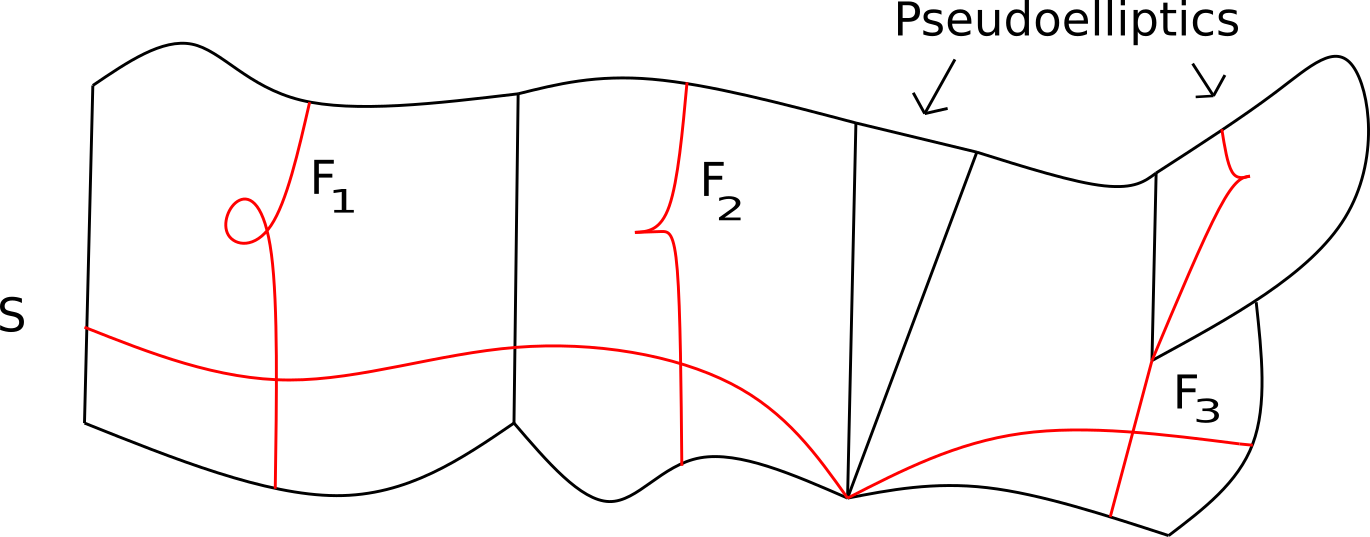}
\end{center}
\begin{proof}
 With the notation of Theorem \ref{Teo:descripton:bir:trans:to:get:lcdeg}, we show by induction that (1), (2) and (3) hold
 for each $(X^{(i)},D^{(i)})$. 
 For $(X',sS'+\vec{a}F')$ the points (1), (2) and (3) hold. Assume they hold for $(X^{(i-1)},D^{(i-1)})$, and let
 $h^{(i-1)}_{|S^{(i)}}:S^{(i-1)} \to C^{(i-1)}$ be the isomorphism of (1).

Observe that the third and the fourth bullet points of Theorem \ref{Teo:descripton:bir:trans:to:get:lcdeg} provide us with the possible choices for the transformation $f^{(i-1)}:X^{(i-1)} \dashrightarrow X^{(i)}$. 
 Then from the definitions of pseudoelliptic and elliptic surfaces and the description of the flip of La Nave, (2) and (3) hold for $(X^{(i)},D^{(i)})$. Moreover, if $f^{(i-1)}$ is the contraction of a pseudoelliptic component, then also (1) holds. We are left with checking that (1) holds even if $f^{(i-1)}$ is not the contraction of a pseudoelliptic component.
 
 From Theorem \ref{Teo:descripton:bir:trans:to:get:lcdeg},
the irreducible component $C$ of $S^{(i-1)}_p$ that gets contracted through $X^{(i-1)} \dashrightarrow X^{(i)}$ is
 isomorphic to $\mathbb{P}^1$. Then let $Y$ be the elliptic component containing $C$.
From Lemma \ref{Lemma:conditions:when:to:contract:section}, and since $X^{(i-1)} \dashrightarrow X^{(i)}$ contracts only non-positive curves, the number of fibers of $Y$ contained in the double locus of $X^{(i-1)}_p$ is at most 2. Therefore $C$ has at most 2 points in the double locus of  $S^{(i-1)}_p$. By inductive hypothesis, the same holds for $D:=h^{(i-1)}(C)$ and $C^{(i-1)}_p$. But $C^{(i-1)} \to \spec(R)$ is a family of nodal curves, with the generic fiber smooth. So there is a contraction morphism $C^{(i-1)} \to C^{(i)}$ that contracts $D$.
This produces a new (flat) family of curves $C^{(i)} \to \spec(R)$.

Now, since $X^{(i)}$ is normal, it is clear (using \cite{Giansiracusa-Gillam}*{Theorem 7.3}) that there is a morphism $X^{(i)} \to C^{(i)}$, which induces $S^{(i)} \to C^{(i)}$; and the latter is an homeomorphism. Moreover, we know that $S^{(i)}_\eta \to C^{(i)}_\eta$ is an isomorphism. Then from \cite{Giansiracusa-Gillam}*{Theorem 7.3},
the rational map $C^{(i)} \dashrightarrow S^{(i)}$ extends to a morphism $\sigma:C^{(i)} \to S^{(i)}$. But from the Zariski main theorem, the composition $C^{(i)} \to S^{(i)} \to C^{(i)}$ is an isomorphism. Therefore $\cO_{S^{(i)}} \to \sigma_* \cO_{C^{(i)}}$ is surjective, so $\sigma$ is a closed embedding. Since the divisor $S^{(i)}$ is the closure of $S^{(i)}_\eta$, we get that $S^{(i)} \to C^{(i)}$ is an isomorphism.
\end{proof}
\begin{Cor}\label{Cor:SI:is:a:fam:of:nodal:curves}The morphism $\sS_I \to \cE_I$ is a family of nodal curves.
\end{Cor}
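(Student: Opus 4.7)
The plan is to reduce the statement to Corollary \ref{Cor:description:lc:degeneration}(1), which already gives the conclusion on DVR-valued points, and then propagate it to all of $\cE_I$. First, observe that $\sS_I \to \cE_I$ is proper since it is a closed substack of the proper morphism $\sX_I \to \cE_I$; so only the geometric nodality of the fibers and flatness of the morphism remain to be established.

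For the fiberwise description, I would fix a geometric point $p:\spec(k) \to \cE_I$ and use Observation \ref{Oss:specialization:of:the:interior:of:the:moduli} together with the fact that $\cE_I$ is a DM stack of finite type to produce a DVR $R$ with a map $\spec(R) \to \cE_I$ sending the generic point into $\cE_I^\circ$ and the closed point to $p$. Pulling back the universal family yields a stable degeneration $(X, sS+\vec{a}F) \to \spec(R)$, and one checks that the pullback of $\sS_I$ along this map is the section divisor $S \subseteq X$: both are the scheme-theoretic closure of $S_\eta$ in the $S_2$-threefold $X$ and they coincide on the generic fiber. Corollary \ref{Cor:description:lc:degeneration}(1) then furnishes a flat family of nodal curves $C \to \spec(R)$ with $S \cong C$, so $(\sS_I)_p \cong C_p$ is a nodal curve of arithmetic genus $g$.

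For flatness, I would invoke the Hilbert polynomial criterion for proper morphisms over a reduced Noetherian base. The base $\cE_I$ is normal, and $\sX_I / \cE_I$ carries the relatively ample line bundle $\cL$ of Definition \ref{Def:KP:stable:pairs}; restrict it to $\sS_I$. On each geometric fiber the Hilbert polynomial of $(\sS_I)_y$ with respect to $\cL|_{(\sS_I)_y}$ takes the shape
$$
P(t) \;=\; t\cdot m\bigl(K_{X_y}+sS_y+\vec{a} F_y\bigr)\cdot S_y + (1-g).
$$
The arithmetic genus $g$ is constant (it is the genus of the base curve, which is part of the weight vector $I$), and the intersection number is locally constant on $\cE_I$ because the family $\sX_I \to \cE_I$ of stable pairs has locally constant volume on connected components. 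Local constancy of $P(t)$ then forces flatness.

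The step I expect to be the main obstacle is not conceptual but technical: one must carefully identify the pullback of $\sS_I$ along a DVR-valued point with the section divisor $S$ of the resulting stable degeneration, and rigorously verify that the relevant intersection number is locally constant across $\cE_I$. Both are mostly bookkeeping, but require care since $\sS_I$ is defined rather abstractly as the closure of an image inside the universal surface rather than by an intrinsic universal property.
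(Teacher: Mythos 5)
Your overall strategy (reduce to Corollary \ref{Cor:description:lc:degeneration} along DVRs, then get flatness from constancy of Hilbert polynomials over the reduced base) is the same as the paper's, but two of your steps beg precisely the question that the paper's proof is designed to answer. First, the pullback $\sS_I \times_{\cE_I}\spec(R)$ is \emph{not} known to be the scheme-theoretic closure of $S_\eta$: taking fiber products does not commute with taking closures, so a priori the pullback may acquire embedded or non-reduced components along the special fiber. All one knows for free is that its \emph{support} agrees with that of $S$; showing that it actually equals $S$ (i.e.\ that it is reduced) is the content of the corollary, and the paper obtains it only as a consequence of flatness, via uniqueness of the flat limit. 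Second, and for the same reason, your Hilbert polynomial $P(t)=t\cdot m(K_{X_y}+sS_y+\vec{a}F_y)\cdot S_y+(1-g)$ is the Hilbert polynomial of the reduced nodal curve $S_y$, not of the scheme-theoretic fiber $(\sS_I)_y$ to which the flatness criterion applies. If $(\sS_I)_y$ had an embedded point, its Euler characteristic would differ from $1-g$, so asserting this formula for all $y$ is circular: it assumes the fibers are already the nodal curves you are trying to produce. (A smaller issue: local constancy of the volume $(K+D)^2$ does not by itself give local constancy of $(K+D)\cdot S_y$ without flatness of $\sS_I$.)

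The paper circumvents this by working with $\sS_B$ as a \emph{family of generically Cartier divisors} over the normal atlas $B$, in the sense of \cite{kollarbook}*{Chapter 4}. The point of \cite{kollarbook}*{Theorem 4.26 and 4.2} and \cite{kollarbook}*{Proposition 4.34} is that flatness of such a family can be tested using the \emph{divisorial} restrictions along DVRs --- that is, using the honest divisor $S$ of each stable degeneration, which Corollary \ref{Cor:description:lc:degeneration} shows is a flat family of reduced nodal curves with constant Hilbert polynomial --- rather than the uncontrolled scheme-theoretic fibers of the closure $\sS_B$. To repair your argument you would need to either quote this machinery or otherwise rule out embedded points in the fibers of $\sS_I$ before computing their Hilbert polynomials.
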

\begin{proof}Recall that $\sS_I$ is defined as the closure of the image of $\chi(\sS)$ in $\sX_I$ (see Notation \ref{Notation:def:SI}). In particular, whenever we take a stable degeneration $(X,sS+\vec{a}F) \to \spec(R)$ induced by a morphism $\spec(R) \to \cE_I$, the pull-back $\sS_I \times_{\cE_I} \spec(R)$ is \emph{supported} over $S$, since both are the closure of $\sS_I \times_{\cE_I} \spec(k(\eta))$ where $\eta$ is the generic point of $\spec(R)$. To prove the desired result, it suffices to show that
$\sS_I \times_{\cE_I} \spec(R)$ agrees with $S$, i.e. we need to show that it is reduced. 

Consider
$B \to \cE_I$ an atlas which is a scheme, and let $\sX_B:= \sX_I \times_{\cE_I} B$. 
Up to shrinking $B$ we can assume that $\sX_B \to B$ is projective. Fix an embedding
$\sX_B \hookrightarrow \mathbb{P}^N_B$, and let $\sS_B:=\sS_I \times_{\cE_I}B$.
From Corollary \ref{Cor:description:lc:degeneration}, for each $b \in B$, the scheme $(\sS_B)_b$ is supported on a nodal curve, and (again from Corollary \ref{Cor:description:lc:degeneration}) for every stable degeneration $(Y,sS+\vec{a}F) \to \spec(R)$, the divisor $S$ is a \emph{flat} family of (reduced) nodal curves. Furthermore, if we assume such a stable degeneration comes from a morphism $\spec(R) \to B$, then $S=(\sS_B \times_B \spec(R))^{red}$ and if $\eta$ is the generic point of $\spec(R)$, then $S_\eta=(\sS_B)_\eta$. If we show that $\sS_B$ is flat, then $S=\sS_B \times_B \spec(R)$ from the uniqueness of the flat limit, and the latter has no embedded points. We use the results of \cite{kollarbook}*{Chapter 4}.

Since $B$ is \emph{normal} and from \cite{kollarbook}*{Theorem 4.26 and 4.2}, $\sS_B \to B$ is a \emph{family of generically Cartier divisors}, and for every stable degeneration $(Y,sS+\vec{a}F) \to \spec(R)$, the morphism $S \to \spec(R)$ is flat. But then the Hilbert polynomial of $S_\eta$ agrees with the one of $S_p$.
Then the flatness of $\sS_B \to B$ follows from \cite{kollarbook}*{Proposition 4.34}.
\end{proof}

\section{Stable reduction algorithm and $\mathbb{Q}$-Cartier chambers}\label{Section:QCartier:threshold}
This section is mainly devoted at showing that it is possible to divide the set of all admissible weights into finitely many chambers satisfying the following condition.
For every $I_1:=(s_1,\vec{a}_1,\beta)$, $I_2:=(s_2,\vec{a}_2,\beta)$ in the same open chamber,
let $(X',s_1S'+\vec{a}_1F')$ be the tsm limit of $(X,s_1S+\vec{a}_1F)  \to \spec(R)$. We show that the stable
model of $(X',s_2S'+\vec{a}_2F')$ is $(X,s_2S+\vec{a}_2F)$. Observe that this is a \emph{necessary} condition for having
a \emph{finite} wall and chamber decomposition.
To achieve this, we study the steps of stable reduction we perform on $(X',sS'+\vec{a}F')$.
The results of this section (especially Theorem \ref{Teo:stable:model:depends:only:on:num:data}) will be the key ingredients to prove Theorem \ref{Teo:intro:have:flat:fibration}  (Theorem \ref{Teo:no:psudo}).
\begin{Def}\label{Def:numerical:data}Let $R$ be a DVR and let $p$ (resp. $\eta$) be the closed (resp. generic) point of $\spec(R)$. Let $C_i'$ be the irreducible components of $S_p'$. The \underline{numerical data} associated to a tsm limit $(X',sS'+\vec{a}F') \to \spec(R)$ is the data of:
 \begin{itemize}
  \item The dual weighted graph of $(S_p',\vec{b}F'_{|S'_p})$ for every rational vector $\vec{b}$;
  \item For every $i$, the intersection numbers $(K_{X'}).C_i'$ and $S'.C_i'$.
 \end{itemize}
\end{Def}
Observe that $(\vec{a}F').C'_i$ is determined by the first bullet point, since $F'$ intersects $C_i$ transversally in the smooth locus. Observe also that
the numerical data of a tsm limit depends only on its special fiber, so we give the following definition:
\begin{Def}\label{Def:ref:num:data:special:fiber}
 Let $(X',sS'+\vec{a}F') \to \spec(R)$ be a tsm limit, and let $p$ be the closed point of $\spec(R)$.
 We define the numerical
 data of $(X',sS'+\vec{a}F')_p$ to be the one of $(X',sS'+\vec{a}F')$.
\end{Def}

Now, let $(X',sS'+\vec{a}F')$ be a tsm limit. Let $(X,sS+\vec{a}F)$ be the
stable model of $(X',sS'+\vec{a}F')$. Using Theorem \ref{Teo:descripton:bir:trans:to:get:lcdeg}, we can
factor $X' \dashrightarrow X$ into a sequence of explicit birational transformations
$X' \dashrightarrow X^{(2)} \dashrightarrow ... \dashrightarrow X^{(m)}=X$.
A priori, the number $m$ and the order of these birational transformations is not unique. We show that we can choose these birational transformations
using only the numerical data. Namely,
we show that, once
we choose $m$ and such an order for $X' \dashrightarrow X$, for any tsm limit $(X'',sS''+\vec{a}F'')$ with the same numerical data of
$(X',sS'+\vec{a}F')$, we can assume that stable reduction is performed
applying $m$ birational transformations of Theorem \ref{Teo:descripton:bir:trans:to:get:lcdeg}, in the same order (see Theorem \ref{Teo:stable:model:depends:only:on:num:data}) .
Thus, if we show that the kind of birational transformations one has to perform on $X'$ to get the stable model of
$(X',s_1S'+\vec{a}_1F')$ are the same as those to get the stable model of $(X',s_2S'+\vec{a}_2F')$, the same conclusion will hold for any tsm limit $X''$
with the same numerical data.
Therefore we show that after a non-canonical choice (namely such an ordering for the stable reduction
$X' \dashrightarrow X$), the steps of stable reduction for any other
$(X'',sS''+\vec{a}F'')$ as above are uniquely determined.

Next, we observe that we can stratify an atlas of $\bigcup _{n \le m \le d}\mathcal{K}_{g,m}(\overline{\mathcal{M}}_{1,1},d)$ into finitely many strata $\mathcal{Z}_i$,
such that any two tsm limits that limit to a point in $\mathcal{Z}_i$, have the same numerical data (Proposition \ref{Prop:stratification:exists}).
Then it is enough to show that such a chamber-decomposition exists for a \emph{fixed} $(X',sS'+\vec{a}F')$, which
follows from studying the birational transformations in stable reduction.
\subsection{Numerical data and stable reduction}\label{Subsection:alg:nature:stable:reduction}
Let $(X',sS'+\vec{a}F')$ be a tsm limit over $\spec(R)$, and let $X' \to C'$
the corresponding morphism to a family of nodal curves. This subsection is aimed at proving Theorem \ref{Teo:stable:model:depends:only:on:num:data}. In particular, we show that the steps of stable reduction on $(X',sS'+\vec{a}F')$
can be chosen only using its numerical data.

The main idea is the following. Theorem \ref{Teo:descripton:bir:trans:to:get:lcdeg} describes the possible birational transformations
one has to perform on $X'$ to get $X$.
In particular, in order to have either a flip or a small contraction, we need to contract a component of the section of the special fiber.
Then we can control when flips happen, checking when a section-component is a negative curve. Similarly, we can check when a divisorial contraction
happens checking when the log-canonical divisor, when restricted to an irreducible component of the special fiber, has self intersection 0.
Our
goal is to show that all this
can be checked using the numerical data of $(X',sS'+\vec{a}F')$.

We begin with a definition that generalizes Definition \ref{Def:numerical:data}:
\begin{Def}\label{Def:refined:num:data}
Let $R$ be a DVR and let $p$ (resp. $\eta$) be the closed (resp. generic) point of $\spec(R)$.
 Given a degeneration $(Y,sS+\vec{a}F) \to \spec(R)$, we say that it has a \underline{refined numerical data} if each irreducible
 component of $\supp(sS+\vec{a}F)$ is $\mathbb{Q}$-Cartier. In this case, its refined numerical data consists of:
 \begin{itemize}
  \item The dual weighted graph of $(S_p,\vec{c}F_{|S_p})$ for every rational vector $\vec{c}$;
  \item For every $C_i$ irreducible component of $S_p$,
  the intersection numbers $(K_{Y}).C_i$ and $S.C_i$;
  \item For every irreducible component $Z$ of $Y_p$ and every rational vector $\vec{c}$, the intersection numbers
  $((K_{Y}+sS+\vec{c}F)_{|Z})^2$.
 \end{itemize}
\end{Def}

Notice that the refined numerical data of $(Y,sS+\vec{a}F)$ does not depend on $s$ and $\vec{a}$. 
 Observe also that
the refined numerical data of a degeneration depends only on its special fiber:
\begin{Def}\label{Def:num:data:special:fiber}
 Let $(X,sS+\vec{a}F) \to \spec(R)$ be a degeneration, let $p$ be the closed point of $\spec(R)$.
 We define the refined numerical
 data of $(X,sS+\vec{a}F)_p$ to be the one of $(X,sS+\vec{a}F)$.
\end{Def}
\begin{Lemma}\label{Lemma:numerical:data:gives:rnum:data} Let $(X,sS+\vec{a}F)$ be a tsm limit. Then the refined numerical data of $(X,sS+\vec{a}F)$
is determined by its numerical data. \end{Lemma}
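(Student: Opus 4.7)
The numerical data and the refined numerical data of $(X,sS+\vec{a}F)$ share the dual weighted graph of $(S_p,\vec{c}F_{|S_p})$ and the intersection numbers $K_X\cdot C_i$, $S\cdot C_i$ along the components $C_i$ of $S_p$; the refined data only adds, for each irreducible component $Z$ of $X_p$ and each rational $\vec{c}$, the self-intersection $\bigl((K_X+sS+\vec{c}F)_{|Z}\bigr)^2$. The plan is to express each such self-intersection in terms of the numerical data.

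The key structural input comes from the construction of the tsm limit in Subsection \ref{subsection:definition:tsm:limit}: every multiple twisted fiber has been replaced by a minimal cusp (Observation \ref{Oss:description:that:the:blow:ups:replace:twisted:with:mcusps}), so each irreducible component $Z\subseteq X_p$ is an irreducible slc elliptic fibration $f_Z:Z\to B_Z$ with \emph{all} scheme-theoretic fibers irreducible, whose double locus $E_Z$ is supported on fibers (Observation \ref{Oss:cosed:fiber:tsm:limit:is:lc}). In addition, the section $S$ induces a bijection between components of $X_p$ and components of $S_p$: each $Z$ contains a unique section component $C_Z:=S\cap Z$, which is a section of $f_Z$. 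By adjunction $K_X{|_Z}=K_Z+E_Z$, so Observation \ref{Oss:all:fib:irred:implies:pulback} produces a $\mathbb{Q}$-divisor $D_{\vec{c}}$ on $B_Z$ with
$$(K_X+\vec{c}F)_{|Z} \;=\; K_Z+E_Z+\vec{c}F_{|Z} \;=\; f_Z^*(D_{\vec{c}}).$$

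Expanding on $Z$ and using $(f_Z^*D_{\vec{c}})^2=0$ together with $C_Z\cdot f_Z^*(\text{pt})=1$,
$$\bigl((K_X+sS+\vec{c}F)_{|Z}\bigr)^2 \;=\; (f_Z^*D_{\vec{c}}+sC_Z)^2 \;=\; 2s\,\deg(D_{\vec{c}})+s^2\,C_Z^{\,2},$$
so it remains to extract both terms from the numerical data. For the first, $\deg(D_{\vec{c}})=f_Z^*(D_{\vec{c}})\cdot C_Z=(K_X+\vec{c}F)\cdot C_Z=K_X\cdot C_Z+\vec{c}F\cdot C_Z$, where $K_X\cdot C_Z$ is already part of the numerical data and $\vec{c}F\cdot C_Z$ is read off the weighted decoration at the vertex $C_Z$ of the dual weighted graph of $(S_p,\vec{c}F_{|S_p})$, using that $F$ meets $C_Z$ transversally in smooth points. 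For the second, since $S$ is $\mathbb{Q}$-Cartier (Observation \ref{Oss:cosed:fiber:tsm:limit:is:lc}) and $S_{|Z}=C_Z$, the projection formula gives $C_Z^{\,2}=S\cdot C_Z$, again numerical data. The one substantive check is the hypothesis of Observation \ref{Oss:all:fib:irred:implies:pulback} that every fiber of $f_Z$ be irreducible; this is precisely why the blow-ups of Subsection \ref{subsection:construction:family:of:surfaces} were engineered to replace every multiple twisted fiber with a minimal cusp.
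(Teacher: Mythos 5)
Your proof is correct and follows essentially the same route as the paper: reduce to the third bullet point of Definition \ref{Def:refined:num:data}, use Observation \ref{Oss:all:fib:irred:implies:pulback} to write $(K_X+\vec{c}F)_{|Z}$ as a pullback from the base curve, and expand the square so that only $(K_X)\cdot C_Z$, $\vec{c}F\cdot C_Z$ and $S\cdot C_Z$ appear. Your version is in fact slightly more careful than the paper's, since you make the adjunction $K_X{|_Z}=K_Z+E_Z$ and the role of the double locus explicit before invoking the observation.
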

\begin{proof} Let $C_i$ be the irreducible components of $S_p$.
For every $i$ and every weight vector $\vec{b}$, the following intersection pairings are part of the data:
 $(K_{X}).C_i$,
  $(S).C_i$, and
  $(\vec{b}F).C_i$. We need to show that these determine the third bullet point in Definition \ref{Def:refined:num:data}.

Let $Y_i$ be the irreducible components of $X_p$.
Since $\pi_i:Y_i \to C_i$ is an
elliptic fibration with all the fibers irreducible, from Observation \ref{Oss:all:fib:irred:implies:pulback} there are $\mathbb{Q}$-Cartier $\mathbb{Q}$-divisors $D_1,D_2 \subseteq C_i$ such that
$\pi_i^*(D_1)=(K_{X})_{|Y_i}$ and $\pi^*(D_2)=\vec{b}F_{|Y_i}$.
Therefore we have $(\pi_i^*(D_1+D_2)+sC_i)^2=2s(\pi_i^*(D_1+D_2)).S +s^2(S)^2=2s(((K_{X})_{|Y_i}).C_i+(\vec{b}F).C_i)+s^2S^2$.\end{proof}
We study how the refined numerical data changes after a transformation of Theorem \ref{Teo:descripton:bir:trans:to:get:lcdeg}:
\begin{Lemma}\label{Lemma:recovering:refined:numerical:data}
 Let $(X,D):=(X,sS+\vec{a}F) \to \spec(R)$ be a degeneration, with each irreducible component of $\supp(D)$
 that is $\mathbb{Q}$-Cartier.
 Assume that $(Y,D_Y)$ is obtained from $(X,D)$ through a step of the MMP
which is either a divisorial contraction of an irreducible component of the special fiber, or La Nave's flip.
Then the refined numerical data of $(Y,D_Y)$
is determined by the one of $(X,D)$, and the type of birational transformation
$\pi:X\dashrightarrow Y$.
\end{Lemma}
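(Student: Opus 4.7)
The plan is to treat the two possible types of $\pi$ separately, using the projection formula applied to a common resolution in each case.

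\textbf{Divisorial contraction.} Suppose $\pi: X \to Y$ contracts an irreducible component $Z$ of $X_p$. Since $\pi$ is a step of the MMP, the divisor $Z$ is $\mathbb{Q}$-Cartier on $X$, so I can write
$$K_X = \pi^* K_Y + a Z, \qquad S = \pi^* S_Y + b Z, \qquad F_j = \pi^* (F_j)_Y + c_j Z,$$
for rational numbers $a, b, c_j$, using that $S$ and the $F_j$ are $\mathbb{Q}$-Cartier on $X$ by hypothesis. The coefficients $a, b, c_j$ are pinned down by the requirement that $\pi^*(\cdot)$ has zero intersection with any $\pi$-exceptional curve. Choosing a generating set of such exceptional curves from components of $S_p$ contained in $Z$ and from fiber components of $Z$, and using that the relevant intersection numbers, namely $K_X \cdot C$ and $S \cdot C$ for $C$ a section-component in $Z$, together with the self-intersections of $(K_X+sS+\vec{c}F)|_Z$, are all recorded in the refined numerical data of $(X,D)$, I can solve for $a, b, c_j$ algebraically. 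The first two bullets of the refined numerical data of $(Y, D_Y)$ then follow by projection formula applied to the components $C_i$ of $S_p$ not contained in $Z$. For the third bullet I expand $((K_Y + sS_Y + \vec{c}F_Y)|_{Z'})^2$ as a triple intersection, pull back via the displayed relations, and collect the resulting triple intersections on $X$.

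\textbf{La Nave's flip.} Here $\pi: X \dashrightarrow X^+$ is the small modification described in the excerpt. It admits a common resolution $W$ on which the flipped curve $C \subset S_p$ is blown up and the new intermediate component $A \subset X^+_p$ appears as the result of a contraction. Since the flip is a toric operation on a local chart, the combinatorial structure of $(X^+_p, S^+_p, F^+_p)$ is determined by that of $(X_p, S_p, F_p)$: the affected elliptic component becomes a pseudoelliptic attached to its neighbour through a new twisted-component/intermediate-component pair. Intersection numbers on $X^+$ are then computed from those on $X$ via pullback to $W$ and pushforward, with Proposition \ref{Prop:recognize:the:intermediate:from:int:pairing} pinning down the intersection form on the new intermediate fiber from the intersection-theoretic data of $C$.

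\textbf{Main obstacle.} The most delicate bookkeeping is the third bullet of the refined numerical data under a divisorial contraction, because these are surface-level self-intersections rather than curve data on $S_p$. The reduction above expresses them as combinations of triple intersections on $X$ involving $Z$ and the strict transform $Z''$ of $Z'$; each such triple intersection is either a self-intersection already recorded by the third bullet on $X$ after restricting to the appropriate component, or else a curve intersection recorded by the first two bullets. Verifying that this reduction always produces the claimed answer, and in particular that the linear system determining $a, b, c_j$ is non-degenerate, is the place where the $\mathbb{Q}$-Cartier hypothesis on each irreducible component of $\supp(D)$ is used essentially.
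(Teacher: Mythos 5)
Your overall strategy --- compute the discrepancies of $\pi$ and then push everything through the projection formula --- runs into a genuine gap exactly at the step you defer to the end: the claim that the coefficients $a,b,c_j$ in $K_X=\pi^*K_Y+aZ$, $S=\pi^*S_Y+bZ$, $F_j=\pi^*(F_j)_Y+c_jZ$ can be solved for from the refined numerical data. Determining these coefficients requires intersection numbers of the exceptional component $Z$ against curves (e.g.\ $Z\cdot C$ for $C$ a section-component contained in $Z$), and your third-bullet computation by ``triple intersections'' further requires quantities such as $(Z|_W)^2$ and $Z|_W\cdot K_X|_W$ for the neighbouring components $W$ of $X_p$, as well as the analogous data for $Z'$ on $Y$ (which need not even be $\mathbb{Q}$-Cartier). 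None of these appear in the refined numerical data as defined (Definition \ref{Def:refined:num:data}), which records only the dual weighted graph of $(S_p,\vec{c}F|_{S_p})$, the numbers $K_X\cdot C_i$ and $S\cdot C_i$, and the self-intersections $((K_X+sS+\vec{c}F)|_Z)^2$. The same problem recurs in your flip case: computing intersection numbers ``via pullback to $W$ and pushforward'' through a common resolution needs the discrepancies of $W$ over both sides of the flip, which again are not recorded. So the solvability of your linear system is not a verification left to the reader; it is the entire content of the lemma, and it is not clear it can be carried out with the data available.

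The paper's proof avoids discrepancies altogether by exploiting the degeneration structure. For $\mathbb{Q}$-Cartier divisors $M,N$ with proper transforms $M',N'$, the intersection of $M'$ with the restriction of $N'$ to the whole special fiber equals the corresponding number on the generic fiber, which $\pi$ does not touch; combined with the fact that $\pi$ is an isomorphism away from two components $Z_1,Z_2$ of $X_p$, this gives the identity $(M',N'_1)=(M,N_1+N_2)-(M',N'_2)$. Choosing $Z_2$ so that $S_Y$ restricts trivially to its image kills the last term for the second bullet point, and for the flip the one remaining unknown, $((K_Y+D_Y)|_{Z'_2})^2$ on the new pseudoelliptic component, is computed explicitly in Observation \ref{Oss:we:can:recover:selfintersection:lc:on:psudo} from $(K+\vec{a}F+E)\cdot S$ and $S^2$ on the elliptic component before its section is contracted --- quantities that are determined by the data. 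To salvage your approach you would either have to enlarge the numerical data to include the normal-bundle-type intersections above, or prove they are determined by the existing data; the global identity is the cleaner route.
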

With the notation of Theorem \ref{Teo:descripton:bir:trans:to:get:lcdeg}, we will use Lemma \ref{Lemma:recovering:refined:numerical:data} on $(X^{(i)},(f^{(i)})^{-1}_*D^{(i+1)})$ for $i<m-1$.\\
\emph{Proof}.
Using \cite{Fuj-contraction}*{Theorem 16.4, (3)} and proceeding as in \cite{KM}*{Proposition 3.36, 3.37},
 one can show that every irreducible component of $\supp(D_Y)$
 is $\mathbb{Q}$-Cartier. Let $S_Y:=\pi_*(S)$ and $\vec{a}F_Y:=\pi_*(\vec{a}F)$. 
 
 Since we know $\pi$, we know if it either contracts an irreducible component of $S_p$ or not. If
 it does, let $C_j$ be such a component.
  Since the dual weighted graph of $(S_p,\vec{a}F_p)$ and the edge corresponding to $C_j$ are part of the data, we know the dual
  weighted graph of $((S_Y)_p,(\vec{a}F_Y)_p)$. Thus we only need to show that we can recover the second and third bullet points of the definition of refined
  numerical data (Definition \ref{Def:refined:num:data}).
  
  Let then $M,N \subseteq X$ be two $\mathbb{Q}$-Cartier $\mathbb{Q}$-divisors such that
  $\supp(  M)$ and $\supp( N )$ are flat over $\spec(R)$.
  Let $M',N' \subseteq Y$ be their proper
  transform. Proceeding as above, we can show that $M'$ and $N'$ are $\mathbb{Q}$-Cartier.  Let $Z_j$ be the irreducible components of $X_p$, and let $Z_j':=\pi_*(Z_j)$. For every $Z_j$ (resp. $Z_j'$), let
  $M_j:=M_{|Z_j}$ (resp. $M'_j:=M'_{|Z'_j}$) and $N_j:=N_{|Z_j}$ (resp. $N'_j:=N'_{|Z'_j}$).
 
From the explicit description of $\pi$, there are $Z_1$ and $Z_2$
  such that $\pi$ is an isomorphism on $X \smallsetminus \{Z_1,Z_2\}$.
  These two irreducible components are, a priori, not uniquely determined. However, they always exist.  
  Then we can compute:
\begin{align*}
(M',N'_1)&=(M',N'_p - (N'_p - N'_1))
=(M',N'_p)-(M',N'_p - N'_1)=\\
&=(M'_\eta,N'_\eta)-(M',N'_p - N'_1-N'_2)-(M',N'_2)=\\&
=(M_\eta,N_\eta)-(M,N_p - N_1-N_2)-(M',N'_2)=\\
&=(M,N_p)-(M,N_p - N_1-N_2)-(M',N'_2)=(M,N_1+N_2)-(M',N'_2)
\end{align*}
where the fourth equality follows since $X \smallsetminus \{Z_1,Z_2\} \cong Y \smallsetminus \{Z_1',Z_2'\}$.

Now, we choose $Z_2$
such that $(S_Y)_{|Z'_2}=0$. Then
if we replace $N$ with $S$ and $M$ with either $K_{X}$, $\vec{a}F$ or $S$; we have  $(M',N'_2)=0$.
Therefore we can recover the second bullet point of Definition
\ref{Def:refined:num:data}.

For the third bullet point, we replace $M$ and $N$ with $K_{X}+ D$. If the contraction is divisorial, then $Z'_2=0$ and again
we can use the equalities above right away. Otherwise, $Z'_2$ is a pseudoelliptic
component and we need to show that we can recover
$((K_{Y}+ D_Y)_{|Z'_2})^2$ from the refined numerical data of $(X,D)$. Observe
now that since $X_p$ and $Y_p$ are nodal in codimension 1, we can compute that
$(K_{X})_{|Z_2}=K_{Z_2}+E \text{ and }(K_{Y})_{|Z'_2}=K_{Z'_2}+E'$,
where $E'$ and $E$ are is supported on the double locus. The following observation finishes the proof of Lemma \ref{Lemma:recovering:refined:numerical:data}:
\begin{Oss}\label{Oss:we:can:recover:selfintersection:lc:on:psudo}
 Let $(Z',\vec{a}F'+E')$ be a pseudoelliptic surface, with one twisted pseudofiber $E'$,
 and assume $Z'$ is obtained from $(Z,sS+\vec{a}F+E)$ contracting the section. Let
 $L_{Z'}$ be the lc divisor of $(Z',\vec{a}F'+E')$ and $L_Z$ the one of $(Z,sS+\vec{a}F+E)$.
 Then if  $c:=\frac{(K_Z+\vec{a}F+E,S)}{-(S,S)}$, we have $$(L_{Z'})^2=(L_Z)^2+2(c-b)(K_{Z}+sS+\vec{a}F+E).S+(c-b)(S)^2.$$
 
 In particular, we can determine $(L_{Z'})^2$ from some intersection pairings on $Z$.
\end{Oss}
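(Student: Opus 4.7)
The plan is to do a direct intersection-theoretic calculation exploiting that the contraction $\pi: Z \to Z'$ of the section is a birational morphism between normal surfaces on which both log-canonical divisors are $\mathbb{Q}$-Cartier. The key observation is that $\pi^{*} L_{Z'}$ must differ from $L_Z$ by a multiple of $S$, and the coefficient is determined by a single intersection number with $S$.

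First I would note that since $\pi$ is an isomorphism away from $S$ and $\pi_{*}S=0$, while $\pi_{*}(K_Z+\vec{a}F+E)=K_{Z'}+\vec{a}F'+E'$, the pullback formula for $\mathbb{Q}$-Cartier divisors yields
\[
\pi^{*}L_{Z'} \;=\; K_Z+\vec{a}F+E+cS
\]
for a unique $c \in \mathbb{Q}$. To pin down $c$, intersect both sides with $S$: by the projection formula $\pi^{*}L_{Z'}.S = L_{Z'}.\pi_{*}S = 0$, so $(K_Z+\vec{a}F+E).S + cS^{2}=0$, giving exactly $c=(K_Z+\vec{a}F+E).S/(-S^{2})$ as in the statement. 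Note that $S^{2}<0$ since $S$ is contractible on the normal surface $Z$, so $c$ is well-defined.

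Next I would compute the self-intersection by projection formula: $(L_{Z'})^{2}=(\pi^{*}L_{Z'})^{2}$. Writing
\[
\pi^{*}L_{Z'} \;=\; L_Z+(c-s)S,
\]
and expanding the square, I get
\[
(L_{Z'})^{2} \;=\; (L_Z)^{2}+2(c-s)\,L_Z.S+(c-s)^{2}S^{2},
\]
which is the claimed formula once one substitutes $L_Z.S=(K_Z+sS+\vec{a}F+E).S$ (and with the understanding that $b$ in the statement is $s$, the coefficient of $S^{2}$ being $(c-s)^{2}$). The final sentence of the observation is then immediate: each term on the right-hand side is either an intersection number on $Z$ already appearing in the refined numerical data, or is obtained from such intersection numbers via the formula for $c$.

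The main obstacle, if there is any, is purely bookkeeping: verifying that $S$ is $\pi$-exceptional with $\pi_{*}S=0$ (true since $\pi$ contracts only $S$), that $S^{2}\neq 0$ (true since $S$ is contracted to a point on the normal surface $Z'$), and that the projection formula applies on normal surfaces for $\mathbb{Q}$-Cartier divisors (standard). No subtle MMP or vanishing result is needed, since the computation is local around the contracted curve and only uses the definition of $\mathbb{Q}$-Cartier pullback together with $\pi_{*}\pi^{*}=\mathrm{id}$ on Weil $\mathbb{Q}$-divisors of $Z'$.
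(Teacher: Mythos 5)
Your proposal is correct and follows essentially the same route as the paper: write $\pi^{*}L_{Z'}=K_Z+cS+\vec{a}F+E$, determine $c$ by intersecting with the contracted curve $S$, and expand $(\pi^{*}L_{Z'})^2=(L_Z+(c-s)S)^2$. You also correctly spot that the displayed formula in the statement has typos ($b$ should be $s$, and the last coefficient should be $(c-s)^2$), consistent with what the paper's own computation actually yields.
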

\begin{proof}[Proof of Observation \ref{Oss:we:can:recover:selfintersection:lc:on:psudo}]
Let $p:Z \to Z'$ be the contraction of $S$. Then there is a $c$ such that
$$K_{Z}+cS+\vec{a}F+E=p^*(K_{Z'}+\vec{a}F'+E').$$
We can compute $c=\frac{(K_Z+\vec{a}F+E,S)}{-(S,S)}$. Then
\begin{align*}
 &(L_{Z'})^2=(K_{Z}+cS+\vec{a}F+E)^2=(K_{Z}+sS+\vec{a}F+E+(c-s)S)^2=\\
 &=(K_{Z}+sS+\vec{a}F+E)^2+2(c-s)(K_{Z}+sS+\vec{a}F+E).S+(c-s)^2(S)^2.
\end{align*}\end{proof}
We are finally ready to prove the following theorem:
\begin{Teo}\label{Teo:stable:model:depends:only:on:num:data}
 Let $(X',sS'+\vec{a}F')$ and $(X'',sS'+\vec{a}F'')$ be lc pairs which are either:
 \begin{enumerate}
 \item Two tsm limits with the same numerical data, or
 \item Two degenerations having two effective $\mathbb{Q}$-divisors $\vec{b}G'$ and $\vec{b}G''$ such that $(X',sS'+\vec{a}F'+\vec{b}G')$ and $(X'',sS''+\vec{a}F''+\vec{b}G'')$ are stable degenerations with the same refined numerical data.
 \end{enumerate}

 Assume that, to take the stable model of $(X',sS'+\vec{a}F')$, we perform $r$ birational transformations of Theorem \ref{Teo:descripton:bir:trans:to:get:lcdeg},
 $$X'=:X^{(1)}\dashrightarrow X^{(2)}\dashrightarrow...\dashrightarrow X^{(r-1)}\dashrightarrow X^{(r)}=X$$
 
 Then we can take the stable model of $(X'',sS'+\vec{a}F'')$ performing $r$ birational transformations, 
 $$X''=:Z^{(1)}\dashrightarrow Z^{(2)}\dashrightarrow...\dashrightarrow Z^{(r-1)}\dashrightarrow Z^{(r)}=Y$$
 and we can assume that $X^{(i)} \dashrightarrow X^{(i+1)}$ is a La Nave's flip (resp. divisorial contraction of an elliptic component, divisorial contraction of a pseudoelliptic component, small contraction) if and only if
 $Z^{(i)} \dashrightarrow Z^{(i+1)}$ is a La Nave's flip (resp. divisorial contraction of an elliptic component, divisorial contraction of a pseudoelliptic component, small contraction). 
\end{Teo}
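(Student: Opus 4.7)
My plan is to reduce case (1) to case (2), and then prove case (2) by induction on $r$, using Lemma \ref{Lemma:recovering:refined:numerical:data} to propagate the identification of refined numerical data along the MMP, and using the explicit recipe in Theorem \ref{Teo:descripton:bir:trans:to:get:lcdeg} to read off which step to perform from that data.

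First I would dispose of case (1). By Observation \ref{Oss:cosed:fiber:tsm:limit:is:lc}, in a tsm limit each irreducible component of $\supp(sS'+\vec{a}F')$ is $\mathbb{Q}$-Cartier, so Lemma \ref{Lemma:numerical:data:gives:rnum:data} shows that the numerical data determines the refined numerical data. I would then add auxiliary $\mathbb{Q}$-Cartier divisors $\vec{b}G'$ and $\vec{b}G''$ exactly as in the construction of $G^{(1)}$ in the proof of Theorem \ref{Teo:descripton:bir:trans:to:get:lcdeg} (fibers of $h':X' \to C'$ pulled back from points in $C'_\eta$), chosen with identical combinatorics on $S'_p$ and $S''_p$, and small enough weights $\vec{b}$ to keep the pair lc and to guarantee stability. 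The stable reduction of the original $(X',sS'+\vec{a}F')$ is then the tail of the stable reduction of $(X',sS'+\vec{a}F'+\vec{b}G')$ after all weights on $G'$ have been reduced to their target values, and similarly for $X''$. This puts us in the setting of case (2).

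For case (2), I would argue by induction on $r$, maintaining the hypothesis that at step $i$ the degenerations $(X^{(i)},D^{(i)})$ and $(Z^{(i)},D_Z^{(i)})$ have the same refined numerical data (in particular, each irreducible component of the supports is $\mathbb{Q}$-Cartier). The inductive step has two parts. First, I would show that the type of the transformation $f^{(i)}:X^{(i)}\dashrightarrow X^{(i+1)}$ is visible in the refined numerical data: by Theorem \ref{Teo:descripton:bir:trans:to:get:lcdeg} it is either a divisorial contraction of a $(K+D)$-trivial elliptic or pseudoelliptic component, a flip of La Nave, or a small contraction (which we avoid by the weight choice in the construction of $G^{(i)}$). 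Divisorial contractions are detected by the vanishing of $((K_{X^{(i)}}+D^{(i)})|_Z)^2$ on some component $Z$ of the special fiber, which is the third bullet of Definition \ref{Def:refined:num:data}; flips of La Nave are detected by the existence of a section-component $C_j$ with $(K_{X^{(i)}}+D^{(i)}).C_j<0$, which is the second bullet; and whether $Z$ is elliptic or pseudoelliptic is determined by the first bullet (whether any edge of the dual weighted graph lies in $Z$). Choosing in $Z^{(i)}$ the component or curve corresponding to the one chosen in $X^{(i)}$ under the tautological identification of refined numerical data, I can perform the same type of step on $Z^{(i)}$, producing $Z^{(i+1)}$. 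Second, I would invoke Lemma \ref{Lemma:recovering:refined:numerical:data} to conclude that $(X^{(i+1)},D^{(i+1)})$ and $(Z^{(i+1)},D_Z^{(i+1)})$ again share the same refined numerical data, closing the induction.

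The main obstacle I anticipate is the potential non-uniqueness of the MMP sequence: at a given $X^{(i)}$ there may be several candidate $(K+D)$-negative section-components or several $(K+D)$-trivial fiber components, so the given factorization on $X'$ encodes a \emph{choice} of ordering. I do not try to show that this ordering is canonical; instead, I show only that once an ordering is fixed on the $X'$ side, the analogous choice on $X''$ exists (since the refined numerical data coincide) and yields an MMP-valid step — for flips this uses that the flip of La Nave is described toric-locally in \cite{LaNave}, so its existence depends only on the combinatorics of the intermediate/pseudoelliptic configuration visible to the refined numerical data. One small delicacy is the pseudoelliptic case in Observation \ref{Oss:we:can:recover:selfintersection:lc:on:psudo}: after a contraction of a section component, the new log canonical on the pseudoelliptic side has self-intersection determined by intersection numbers already recorded on $X^{(i)}$, so the inductive invariant is preserved under contractions of pseudoelliptic components as well.
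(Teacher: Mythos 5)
Your proposal is correct and follows essentially the same route as the paper: reduce (1) to (2) by adding auxiliary fiber divisors with matching combinatorics on the section (the paper normalizes this by taking three fibers of weight $\frac{1}{12}$ per section component, so that the augmented pairs have the same numerical data, and then invokes Lemma \ref{Lemma:numerical:data:gives:rnum:data}), and prove (2) by propagating the refined numerical data along the MMP via Lemma \ref{Lemma:recovering:refined:numerical:data} while reading off each wall from the intersection numbers $(L-tG_1^{(i)}, C_j^{(i)})$ and $((L-tG_1^{(i)})_{|Y_j^{(i)}})^2$ recorded in Definition \ref{Def:refined:num:data}. Your handling of the non-canonical ordering and of the pseudoelliptic case via Observation \ref{Oss:we:can:recover:selfintersection:lc:on:psudo} also matches the paper.
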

Observe that for $(2)$, from the definition of refined numerical data, each irreducible component of $\supp(sS'+\vec{a}F'+\vec{b}G')$ (resp. $\supp(sS''+\vec{a}F''+\vec{b}G')$) is $\mathbb{Q}$-Cartier. Moreover, the refined numerical data of $(X',sS'+\vec{a}F')$ is the same as the one of $(X'',sS''+\vec{a}F'')$.
\begin{proof}We first reduce (1) to proving (2), and then we prove (2).

(1): 
From Theorem \ref{Teo:descripton:bir:trans:to:get:lcdeg}, up to replacing $\spec(R)$ with an \'etale cover of it,
we can add to $sS'+\vec{a}F'$ a $\mathbb{Q}$-divisor $G'$ such that all its irreducible components
$\mathbb{Q}$-Cartier and $(X',sS'+\vec{a}F'+G')$ is a stable degeneration. Moreover, let $\{Y_j'\}_j$ be the irreducible components of $X'_p$, and let $C_j':=Y_j' \cap S'$. To make $G'$ more canonical, we can choose it as follows:
\begin{itemize}
\item $G_p' \cap Y'_j$ is supported on some non-multiple fibers away from the double locus;
\item The weights on each irreducible component of $G'$ are $\frac{1}{12}$, and 
\item For each $Y'_j$
the we require that $((G'\cap Y'_j),C_j')=3$. 
\end{itemize}
In particular, this determines the numerical data of $(X',sS'+\vec{a}F'+G')$.
The choice $\frac{1}{12}$ is not essential, however we need to make sure that our pair is slc, see Corollary \ref{Cor:stability:condition:for:minimal:W:fibration}.
We define in a similar way $G''$, and observe that $(X',sS'+\vec{a}F'+G')$ and $(X'',sS''+\vec{a}F''+G'')$ have the same numerical data.
 Then from Lemma \ref{Lemma:numerical:data:gives:rnum:data}, this determines uniquely the \emph{refined} numerical data of $(X',sS'+\vec{a}F'+G')$ and $(X'',sS''+\vec{a}F''+G'')$.
But now (1) follows from (2).

(2): Let $(X^{(1)},sS^{(1)}+\vec{a}F^{(1)}+G^{(1)}):=(X',sS'+\vec{a}F+\vec{b}G)$, let $D^{(1)}:= sS^{(1)}+\vec{a}F^{(1)}+G^{(1)}$.

We proceed as in Section \ref{section:singularities:threefold}, lowering the weights on $G^{(1)}$. This produces
a sequence of birational transformations
as in Theorem \ref{Teo:descripton:bir:trans:to:get:lcdeg}:
 $$\xymatrix{\ar @{} [r]
 (X^{(1)},D^{(1)}) \ar@{.>}^{f^{(1)}}[r] &(X^{(2)},D^{(2)}) \ar@{.>}^-{f^{(2)}}[r]&\text{...} \ar@{.>}^-{f^{(m-2)}}[r]
 &(X^{(m-1)},D^{(m-1)}) \ar@{.>}^-{f^{(m-1)}}[r] & (X,sS+\vec{a}F)}$$
 where $(X^{(i)},D^{(i)})$ is a stable degeneration, and $(f^{(i)} \circ... \circ f^{(1)})_*( sS^{(1)}+\vec{a}F^{(1)}+G^{(1)})-D^{(i)}$ and
 $-(f^{(i)} \circ... \circ f^{(1)})_*( sS^{(1)}+\vec{a}F^{(1)})+D^{(i)}$ are effective (i.e. we reduce the weights on $G^{(i)}$).

 By Lemma \ref{Lemma:recovering:refined:numerical:data}, if we know the birational transformation $f^{(i)}$ and the refined numerical data of
 $(X^{(i)},D^{(i)})$, we know the refined numerical data of $(X^{(i+1)},D^{(i+1)})$ when it exists. Then
 it suffices to show that we can choose $f^{(i)}$ and $(X^{(i+1)},D^{(i+1)})$ using only the refined numerical data of $(X^{(i)},D^{(i)})$. 
 
 Let then $G^{(i)}:=D^{(i)}-(f^{(i)} \circ... \circ f^{(1)})_*( sS^{(1)}+\vec{a}F^{(1)})$ and let $L_{X^{(i)}}$ be the log-canonical divisor of
$(X^{(i)},D^{(i)})$. If
 for every $0 \le t \le 1$ both $(L_{X^{(i)}}-tG^{(i)},C_j^{(i)})$ and
$((L_{X^{(i)}}-tG^{(i)})_{|Y^{(i)}_j})^2$ remain non-negative for every $j$, from Theorem
 \ref{Teo:generalization:of:appendix} the divisor $K_{X^{(i)}}+D^{(i)}-G^{(i)}$ is nef.
 Then to get $(X,sS+\vec{a}F)$ we need to use log-abundance on
 $(X^{(i)},D^{(i)}-G^{(i)})$. But from Theorem \ref{Teo:generalization:of:appendix} \emph{we have a set of candidates for the possible non-positive curves}. We know that the exceptional locus is a union of irreducible components of $X^{(i)}_p$,
and irreducible components of $S^{(i)}_p$. To find the first ones we compute $((L_{X^{(i)}}-G^{(i)})_{|Y^{(i)}_j})^2$,
to find the others we compute $(L_{X^{(i)}}-G^{(i)},C_j^{(i)})$, for every $j$.

Otherwise $K_{X^{(i)}}+D^{(i)}-G^{(i)}$ is not nef. Then, up to reducing the weights on $G^{(i)}$ keeping $(L_{X^{(i)}},C_j^{(i)})$ and
$((L_{X^{(i)}})_{|Y^{(i)}_j})^2$ positive for every $j$, we can choose
an irreducible component $G_1^{(i)}$ of $\supp(G^{(i)})$ such that for a $0 < t <1$, either
$(L_{X^{(i)}}-tG_1^{(i)},C_j^{(i)})= 0$ or 
$((L_{X^{(i)}}-tG_1^{(i)})_{|Y^{(i)}_j})^2 = 0$.

 Then for $\epsilon$ small enough, to make $K_{X^{(i)}}+D^{(i)}-(t-\epsilon)G^{(i)}$ nef we need to perform a step of the MMP.
 From Theorem \ref{Teo:generalization:of:appendix} this is either a divisorial contraction or a flip of La Nave.
 We can control the divisorial
contractions computing $((L_{X^{(i)}}-(t+\epsilon)G^{(i)}_1)_{|Y^{(i)}_j})^2$. We can control La Nave's flips since we have a finite set of candidates
for isolated negative curves.
Namely, it is enough to compute $(L_{X^{(i)}}-(t+\epsilon)G^{(i)}_1,C_j^{(i)})$ for every $j$.
This produces the new threefold pair $(X^{(i+1)},D^{(i+1)})$ with the morphism
$f^{(i)}:(X^{(i)},D^{(i)}-(t+\epsilon)G_1^{(i)}) \dashrightarrow (X^{(i+1)},D^{(i+1)})$. \end{proof}

\subsection{Wall and chamber decomposition and $\mathbb{Q}$-Cartier walls}
This subsection is mainly devoted at proving Theorem \ref{Teo:wall:and:chamb:dec:stable:models}: we prove that we can divide the set of all admissible weight vectors into finitely many chambers,
where the stable models do not change. 

Recall that in Section \ref{Section:construction:of:the:moduli:surface:pairs} we picked $\Kg \to \bigcup_{n \le m \le d} \mathcal{K}_{g,m}(\overline{\mathcal{M}}_{1,1},d)$, the normalization of an atlas, and we defined
a family of surface pairs $(\sY',s\sS'+\vec{a}\sF') \to \Kg$: the bounded family of tsm limits (see Definition \ref{Def:bounded:family:tsm:limits}). With this notation, we have the following
\begin{Prop}\label{Prop:stratification:exists}There is a scheme of finite type $\sZ$ with a surjective (quasi-finite) morphism $\iota:\sZ \to \Kg$ satisfying the following. Given a connected component $\sZ_i$ of $\sZ$ and two points $p_1,p_2 \in \sZ_i$, the pairs $(\sY'_{\iota(p_1)},s\sS_{\iota(p_1)}'+\vec{a}\sF_{\iota(p_1)}')$ and 
$(\sY_{\iota(p_2)}',s\sS_{\iota(p_2)}'+\vec{a}\sF_{\iota(p_2)}') $ have the same numerical data.
\end{Prop}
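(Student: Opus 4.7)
The plan is to stratify $\Kg$ by the combinatorial type of the family of pointed nodal curves $(\sC', \{\sigma_j\})$ and then take finite \'etale covers of the strata to obtain a consistent labelling of the irreducible components of the section. On each piece of the resulting stratification, the intersection numbers in Definition \ref{Def:numerical:data} will be locally constant by flatness.

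\emph{Stratification.} Since $\sC' \to \Kg$ is a flat proper family of nodal curves with $\sS' \cong \sC'$, the number of nodes of $\sC'_p$ is upper semicontinuous on $\Kg$, and the incidence of the marked sections $\sigma_j$ with the normalization components of $\sC'_p$ is constructible. Hence there is a finite decomposition $\Kg = \bigsqcup_\gamma \Kg_\gamma$ into locally closed subschemes on which the combinatorial type $\gamma$ of $(\sC'_p, \{\sigma_j(p)\})$ is constant. On each $\Kg_\gamma$, the set of irreducible components of the normalization of $\sS'_p$ has a fixed cardinality; the associated \emph{labelling scheme} $\sZ_\gamma \to \Kg_\gamma$, which parametrizes bijections with a fixed index set, is finite \'etale. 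Set $\sZ := \bigsqcup_\gamma \sZ_\gamma$; then $\iota : \sZ \to \Kg$ is of finite type, surjective and quasi-finite. Over $\sZ$ we have a distinguished flat family of divisors $C_1, \ldots, C_r \subset \sS' \times_\Kg \sZ$ whose union is the whole section.

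\emph{Constancy of intersection numbers.} The divisors $K_{\sY'/\Kg}$ and $\sS'$ are relatively $\mathbb{Q}$-Cartier by Observation \ref{Oss:cosed:fiber:tsm:limit:is:lc} and its analogue for $K_{\sY'/\Kg}$. Since $\Kg$ is of finite type, we may choose a single $m>0$ such that both $\omega_{\sY'/\Kg}^{\otimes m}$ and $\cO_{\sY'}(m\sS')$ are line bundles. Their pullbacks to each flat family $C_i \to \sZ$ are locally free sheaves whose degree is constant on connected components of $\sZ$; dividing by $m$ gives local constancy of $K_{\sY'}.C_i$ and $\sS'.C_i$. Finally, the dual weighted graph of $(\sS'_p, \vec{b}\sF'_{|\sS'_p})$ depends only on the combinatorial type $\gamma$ together with the assignment of the marked sections $\sigma_j$ to the labelled components $C_i$, both of which are constant on each connected component of $\sZ$. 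Hence the numerical data is constant there.

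\emph{Main obstacle.} The principal subtlety is the labelling step: one needs globally defined labels for the components of $\sS'_p$ that vary flatly in $p$, which is why passing to the finite \'etale cover $\sZ_\gamma \to \Kg_\gamma$ is essential, since in a family the components of the normalization may be monodromy-permuted even when their number is constant. A secondary point is fixing a uniform $m$ for $\mathbb{Q}$-Cartierness across the whole family, which is handled by finiteness of the Cartier index over the finite type base $\Kg$.
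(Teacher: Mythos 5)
Your proposal is correct and follows essentially the same route as the paper: stratify by the combinatorial/dual-graph type of the weighted section, pass to a quasi-finite cover to rigidify the components of the normalized section (the paper does this via Koll\'ar's equinormalization plus the \'etale separation of connected components from FGA, where you posit a labelling scheme), and then conclude by local constancy of degrees of line bundles in flat proper families. The only point worth making explicit is that the existence of your finite \'etale labelling scheme itself rests on equinormalizability of $\sS'$ over each (reduced) stratum, which the paper secures by a further stratification citing Koll\'ar's simultaneous normalization.
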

In particular, there are finitely many numerical data for tsm limits coming from $\Kg$.
\begin{proof}
Over $\Kg$ we have the following objects:
\begin{enumerate}
 \item A family of nodal weighted curves $(\sS',\vec{a}\sF_{|\sS'}') \to \Kg$;
 \item A family of divisors $(K_{\sY'/\Kg})_{|\sS'}$ and
 \item A family of divisors $s\sS'_{|\sS'}$.
\end{enumerate}
Consider first a stratification $\sZ_1 \to \Kg$, with $\sZ_1$ of finite type, such that two points $p_1,p_2$ are in
the same connected component of $\sZ_1$ if and only if the curves $(\sS',\vec{a}\sF_{|\sS'}')\times_{\Kg}\spec(p_1)$ and $(\sS',\vec{a}\sF_{|\sS'}') \times_{\Kg}
\spec(p_2)$ have the same dual weighted graph. Take then a stratification $\sZ_2 \to \sZ_1$ such that the family of
curves $\sS'\times_U \sZ_2$ is equinormalizable (see \cite{Kollarequinorm}). Let $\sS^n$ be the family of curves that
simultaneously normalizes $\sS'\times_{\Kg} \sZ_2$, and let $\psi: \sS^n \to \sS'$ be the induced morphism. Then on
$\sS^n$ we have the divisors $D_1:=\psi^*((K_{\sY'/\Kg})_{|\sS'})$ and $D_2:=\psi^*\sS'_{|\sS'}$. Thus now $\sS^n\to \sZ_2$ is a flat family of
possibly not connected smooth curves.

A priori, even if we take a connected component $T$ of $\sS^n$, the corresponding morphism $T \to
\sZ_2$ will not be a family of connected curves, so we cannot yet distinguish the connected components of the fibers of $\sS^n \to \sZ_2$ using the
geometry of $\sS^n$. But from \cite{FGA}*{Corollary 8.2.18},
up to taking an étale cover of $\sZ_2$, we can assume that for every
connected component $W$ of $\sZ_2$ and
 for every
$q \in W$, there is a bijection between the connected components of $\sS^n_q$ and those of $\sS^n \times_{\sZ_2} W$.
Namely, up to replacing $\sZ_2$ with an étale cover of it, we can assume that each connected component $\sS_j^n$ of $\sS^n$ gives
a family of connected curves $\sS^n_j \to \sZ_2$. Then
using Riemann-Roch for curves on each connected component of $\sS^n$, and the theorems on cohomology and base change,
we see that for every connected component $\sS^n_j$ of $\sS^n$,
the maps $\sZ_2 \to \mathbb{Z}$ that send $z \mapsto \deg((D_1)_{|(\sS^n_j)_z}))$ and $z \mapsto \deg((D_2)_{|(\sS^n_j)_z}))$
are locally constant. 
Therefore we can find the desired morphism $\sZ \to \sZ_2 \to \Kg$.
\end{proof}
The main consequence of Proposition \ref{Prop:stratification:exists} is the following theorem:

\begin{Teo}\label{Teo:wall:and:chamb:dec:stable:models} There is a finite wall and chamber decomposition for the set of all admissible weighs, satisfying the following conditions.
Let $I':=(s',\vec{a}',\beta)$ and $I'':=(s'',\vec{a}'',\beta)$ be two vectors in the same chamber,
and let $(X',s'S'+\vec{a}'F')$ be a tsm limit with stable model $(Y,s'S+\vec{a}'F)$. Then 
the stable model of $(X',s''S'+\vec{a}''F')$ is $(Y,s''S+\vec{a}''F)$.
\end{Teo}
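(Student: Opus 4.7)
The plan is to combine Proposition \ref{Prop:stratification:exists} with Theorem \ref{Teo:stable:model:depends:only:on:num:data} to reduce the problem to showing, for each fixed numerical data, that the space of admissible weights decomposes into finitely many polynomial chambers. By Proposition \ref{Prop:stratification:exists} only finitely many numerical data $\nu$ arise among tsm limits coming from $\Kg$, so it suffices to fix one such $\nu$ and exhibit a finite wall and chamber decomposition on the slice of admissible weights $I=(s,\vec a,\beta)$ whose tsm limit has numerical data $\nu$; the full decomposition is then the common refinement of these finitely many slice-decompositions.

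For fixed $\nu$, pick any representative tsm limit $(X',sS'+\vec aF')$ with numerical data $\nu$ and, as in the proof of Theorem \ref{Teo:stable:model:depends:only:on:num:data}, augment it by the canonical auxiliary $\mathbb{Q}$-Cartier divisor $G^{(1)}$ of weight $\tfrac{1}{12}$ on selected fibers to form a stable degeneration, then run the weight-reduction MMP of Theorem \ref{Teo:generalization:of:appendix}. By Lemma \ref{Lemma:numerical:data:gives:rnum:data} and Lemma \ref{Lemma:recovering:refined:numerical:data}, the refined numerical data at every intermediate stage $(X^{(i)},D^{(i)})$ is a function of $\nu$ together with the record of transformations already performed. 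At stage $i$ the choice among the mutually exclusive options of Theorem \ref{Teo:generalization:of:appendix} (divisorial contraction of an elliptic or pseudoelliptic component, flip of La Nave, or termination) is governed by the signs of finitely many intersection numbers $(L_{X^{(i)}},C_j^{(i)})$ and $((L_{X^{(i)}})_{|Y^{(i)}_j})^2$. Since $L_{X^{(i)}}$ is linear in $(s,\vec a)$ and the required intersections with curves or self-intersections on surface components are expressible through the refined numerical data, these tests are polynomial (of degree at most $2$) in $(s,\vec a)$ with coefficients determined by $\nu$ and the history of the algorithm.

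The walls of the decomposition of the slice associated to $\nu$ are declared to be the finitely many hypersurfaces
\[
\{(s,\vec a)\,:\,P(s,\vec a)=0\}
\]
obtained by letting $P$ range over all polynomial test functions arising in every possible run of the algorithm. That this collection is \emph{finite} is where the main obstacle lies: one must bound uniformly, independently of the weight vector, both the number of MMP steps and the number of candidate intersection numbers checked at each step. The bound on the number of steps is delivered by Theorem \ref{Teo:descripton:bir:trans:to:get:lcdeg} combined with the algorithmic description of Theorem \ref{Teo:stable:model:depends:only:on:num:data}: the transformation types and targets are prescribed by $\nu$ and by the choices (finite at each step) of which wall is crossed first, so the tree of possible histories is finite. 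The bound on the candidate intersection numbers per step is immediate from the finitely many components of $X^{(i)}_p$ and $S^{(i)}_p$, whose count is controlled by the refined numerical data.

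Finally, once the walls are in place, the open chambers are the connected components of the complement, and by construction every polynomial test function has constant sign on a chamber. By Theorem \ref{Teo:stable:model:depends:only:on:num:data} any tsm limit with numerical data $\nu$ is brought to its stable model via the same sequence of transformations for every weight vector in a given chamber; and since the pushed-forward coefficients $s$ and $\vec a$ on $S$ and on $F$ are simply read off the input, if $(X',s'S'+\vec a'F')$ has stable model $(Y,s'S+\vec a'F)$ then $(X',s''S'+\vec a''F'')$ has stable model $(Y,s''S+\vec a''F)$ for any $I''$ in the same chamber as $I'$, yielding the theorem.
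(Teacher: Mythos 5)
Your proposal is correct and follows essentially the same route as the paper: reduce to finitely many numerical data via Proposition \ref{Prop:stratification:exists}, invoke Theorem \ref{Teo:stable:model:depends:only:on:num:data} to pass to finitely many representative tsm limits, and for each one declare the walls to be the zero loci of the finitely many intersection numbers $(L_{Y_i},C_i)$ and $(L_{Y_i})^2$, which are polynomial of degree at most two in the weights. The paper phrases the finiteness via the finitely many surface pairs $(Y_i,D_i)$ that can occur as components of intermediate stable models rather than via a tree of algorithm histories, but this is the same argument.
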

\begin{proof}
From how the stable limit is constructed, there is such a finite wall and chamber decomposition for a \underline{fixed}
tsm limit $(X',sS'+\vec{a}F')$. Indeed, from Theorem \ref{Teo:descripton:bir:trans:to:get:lcdeg},
there are finitely many possibilities for the possible special fibers of the stable model of
$(X',sS'+\vec{a}F')$, when $(s,\vec{a})$ varies.
Therefore there are finitely many surface pairs $(Y_i,D_i)$ satisfying the following condition. Up to changing the coefficients of the components of $D_i$,
any irreducible component of the special fiber of the stable model of $(X',tS'+\vec{b}A')$, when $a$ and $\vec{b}$ vary, is one of the $(Y_i,D_i)$.
Let
$L_{Y_i}$ be the lc divisor of $(Y_i,D_i)$. Since the coefficients of  $D_i$ depend on $(s,\vec{a})$, so does $L_{Y_i}$.
Let $C_i$ be the section component of $Y_i$ (we put $C_i=0$ if $Y_i$ is a pseudoelliptic component).
From Theorem \ref{Teo:descripton:bir:trans:to:get:lcdeg},
the stable model of $(X',sS'+\vec{a}F')$ changes when either we contract one of the $Y_i$ or one of the $C_i$.
Then we can declare our walls to be given by the equations $(L_{Y_i})^2=0$ and $(L_{Y_i},C_i)=0$, when these
are not identically 0. In particular there are finitely many of them.

Similarly, if we take a finite set of tsm limits $\{(X_i',sS_i'+\vec{a}F_i')_i\}$,
intersecting the wall and chambers decompositions given by each
$(X_i',sS_i'+\vec{a}F_i')$ gives a wall and chamber decomposition that holds for every $(X_i',sS_i'+\vec{a}F_i')$. We reduce
to this situation using Theorem \ref{Teo:stable:model:depends:only:on:num:data} and Proposition \ref{Prop:stratification:exists}.

From Proposition \ref{Prop:stratification:exists}, we can find a morphism
$\sZ \to \Kg$ of finite type and surjective, such that $\sZ$ has connected components $\{\sZ_i\}_{i=1}^m$, and for each $i$ and each $q_1, q_2 \in \sZ_i$,
the numerical data of $(\sX_{q_1},s'\sS'_{q_1}+\vec{a}\sF'_{q_1})$ is the same as the one of
$(\sX_{q_2},s'\sS'_{q_2}+\vec{a}\sF'_{q_2})$. 

Let then $\{p_1,...,p_m\}$ be closed points of $\sZ$, corresponding to $(Y_i,sS_i+\vec{a}F_i)$, such that $p_i \in \sZ_i$.
Let $\{(X_i,sS_i+\vec{a}F_i) \to \spec(R_i)\}$
be $m$ tsm limits such that $(X_i,sS_i+\vec{a}F_i)_p \cong (Y_i,sS_i+\vec{a}F_i)$. Then since $m$ is finite, there is a finite wall and chamber decomposition
for $\{(X_i,sS_i+\vec{a}F_i)\}_{i=1}^m$, and from Theorem \ref{Teo:stable:model:depends:only:on:num:data}
such a wall and chamber decomposition will work for any tsm limit.
\end{proof}

 Now, since the wall and chamber decomposition of Theorem \ref{Teo:wall:and:chamb:dec:stable:models} is finite,
 for every $(s,a_1,...,a_m,\beta)$ and for every $i$, there are finitely many walls
 that the line segments $$(1-t)(s,a_1,...,a_n,\beta)+t(s,a_1,...,a_{i-1},0,a_{i+1},...,a_n,\beta) \text{ with }0< t < 1$$ cross. Let $\{t_1^{(i)},...,t_{m_i}^{(i)}\}$ be such that the walls are at $t=t_j^{(i)}$ for $1 \le j \le m_i$ (if there are no walls, we define $m_i=1$ and $t^{(i)}_1:=\infty$).
 A similar conclusion holds
 for $$(1-t)(s,a_1,...,a_n,\beta)+t(0,a_1,...,a_n,\beta) \text{ with }0< t < 1$$
 and let $\{t_{1}^{(s)},...,t_{q_s}^{(s)}\}$ be such that these walls are at $t=t_i^{(s)}$ for $1 \le i \le q_s$.
\begin{Def}
 With the notation above, we
 define the \underline{$\mathbb{Q}$-Cartier threshold} for the weight data $I$ to be $w(I):=\operatorname{min}_{i,j,\ell}(t_j^{(i)},t_\ell^{(s)})$.\end{Def}
 The $\mathbb{Q}$-Cartier threshold is a positive number, which is at most the "distance" of $I$ from any wall we meet, decreasing any weight.
 Observe that
 $w(I)>0$ for every $I$.

\begin{Cor}\label{Corollary:Qcartier:threshold:guarantees:Qcartier}
 Let $I=(s,\vec{a},\beta)$ be an admissible weight vector.
 For every $0<\epsilon<w(I)$ and for every $j$, let
 $I':=(s-\epsilon,\vec{a},\beta)$ and $I_j:=(s,\vec{a}=(a_1,...,a_{j-1},a_j-\epsilon,a_{j+1},...,a_m),\beta)$.
 Then the universal divisors $\sS_{I'}$ and $(\sF_j)_{I_j}$ are $\mathbb{Q}$-Cartier for every $j$.
\end{Cor}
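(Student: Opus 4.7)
The plan is to exhibit $\epsilon\sS_{I'}$ as the difference of two $\mathbb{Q}$-Cartier divisors on $\sX_{I'}$, and analogously for $(\sF_j)_{I_j}$. We already know that
\[
L_{I'} := K_{\sX_{I'}/\cE_{I'}} + (s-\epsilon)\sS_{I'} + \vec{a}\sF_{I'}
\]
is $\mathbb{Q}$-Cartier, since it is the log-canonical polarization of the universal family of $\cE_{I'}$ coming from Definition \ref{Def:KP:stable:pairs}. If we can also show that
\[
L_{I} := K_{\sX_{I'}/\cE_{I'}} + s\sS_{I'} + \vec{a}\sF_{I'}
\]
is $\mathbb{Q}$-Cartier on $\sX_{I'}$, then $L_I - L_{I'} = \epsilon\sS_{I'}$ is $\mathbb{Q}$-Cartier, so $\sS_{I'}$ is. The case of $(\sF_j)_{I_j}$ is handled by the analogous subtraction obtained by reducing the coefficient $a_j$ by $\epsilon$ instead of reducing $s$.

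The $\mathbb{Q}$-Cartier-ness of $L_I$ would first be checked on DVR pullbacks. Let $\spec(R)\to\cE_{I'}$ be a map from a DVR and consider the pulled-back stable degeneration $(X,(s-\epsilon)S+\vec{a}F)\to\spec(R)$ of weight $I'$. By Corollary \ref{Cor:description:lc:degeneration} and Theorem \ref{Teo:descripton:bir:trans:to:get:lcdeg}, up to a ramified cover of $R$, this arises as the stable model of a tsm limit $(X',(s-\epsilon)S'+\vec{a}F')$. Because $\epsilon<w(I)$, the weights $I$ and $I'$ lie in the same open chamber of the wall-and-chamber decomposition of Theorem \ref{Teo:wall:and:chamb:dec:stable:models}, so applying that theorem to the same tsm limit $X'$ with weight $I$ produces $(X,sS+\vec{a}F)$ as its stable model: the same underlying surface, with only the coefficient on $S$ changed. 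Consequently $(X,sS+\vec{a}F)\to\spec(R)$ is a stable degeneration of weight $I$, so $K_X+sS+\vec{a}F$ is $\mathbb{Q}$-Cartier, and subtracting the $\mathbb{Q}$-Cartier divisor $K_X+(s-\epsilon)S+\vec{a}F$ shows $\epsilon S$, hence $S$, is $\mathbb{Q}$-Cartier on $X$.

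To upgrade this pointwise statement to one on the universal surface $\sX_{I'}$, I would use that the wall-and-chamber decomposition is finite, so only finitely many combinatorial types of stable degenerations occur in the chamber of $I$; this bounds the Cartier index of $S$ across all fibers by a single integer $N$. The reflexive sheaf $\cO_{\sX_{I'}}(N\sS_{I'})^{\vee\vee}$ is then fiberwise an invertible sheaf of uniform rank, and by the standard theory of reflexive sheaves in flat families of slc surfaces (\cite{kollarbook}*{Chapter 5}) it is itself a line bundle on $\sX_{I'}$, yielding $N\sS_{I'}$ Cartier. This final descent is the main obstacle: the DVR-level computation is an immediate consequence of Theorem \ref{Teo:wall:and:chamb:dec:stable:models}, but turning the resulting fiberwise $\mathbb{Q}$-Cartier-ness into $\mathbb{Q}$-Cartier-ness on the total space requires the chamber-uniform bound on Cartier indices and a reflexive-sheaf descent argument.
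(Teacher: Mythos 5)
Your DVR-level computation is exactly the paper's: pull back to $\spec(R)$ with generic point mapping to $\cE_{I'}^\circ$, use the definition of $w(I)$ together with Theorem \ref{Teo:wall:and:chamb:dec:stable:models} to see that $(X,(s-\epsilon')S+\vec{a}F)$ is stable for every $0<\epsilon'<w(I)$, and subtract two $\mathbb{Q}$-Cartier log canonical divisors to conclude that $S$ is $\mathbb{Q}$-Cartier over the DVR. The gap is in your globalization step, which you correctly identify as the crux but then resolve with the wrong tool. A uniform bound $N$ on the Cartier index of $S$ \emph{on the fibers} does not give $\mathbb{Q}$-Cartier-ness on the total space: the reflexive hull $\cO_{\sX_{I'}}(N\sS_{I'})^{\vee\vee}$ need not commute with restriction to fibers, so "fiberwise invertible of uniform rank" is neither a consequence of your index bound nor sufficient to make the hull a line bundle. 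Fiberwise $\mathbb{Q}$-Cartier divisors that fail to be $\mathbb{Q}$-Cartier on the total space are the standard pathology here; this is precisely why defining families of divisors is delicate (cf.\ Subsection \ref{Subsection:stable:pairs}). Your argument also conflates "DVR-level" with "fiberwise": the former is one-parameter information and is strictly stronger.

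What closes the argument in the paper is \cite{kollarbook}*{Theorem 4.36}: over a \emph{normal} base, a family of generically Cartier divisors is $\mathbb{Q}$-Cartier as soon as its pullback to every spectrum of a DVR (with generic point in a suitable dense open) is $\mathbb{Q}$-Cartier. Your DVR computation is exactly the input this criterion wants, so no index bound or reflexive-sheaf descent is needed at all. To invoke it, however, you must use that $\cE_{I'}$ is normal (hence any atlas is normal): normality of the base is what guarantees, via \cite{kollarbook}*{Theorems 4.26 and 4.2}, that $\sS_{I'}$ is a family of generically Cartier divisors, and it is the one point in the paper where working with the normalization rather than the seminormalization $\cE_{I'}^{sn}$ is essential. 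Replacing your final paragraph with an appeal to this theorem turns your proposal into the paper's proof.
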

This is the main point where we use that $\cE_I$ is \emph{normal}, instead working with $\cE_I^{sn}$. First notice that to use the two Definition \ref{Def:stable:pairs:normal:base} and \ref{Def:KP:stable:pairs} interchangeably, we need to stick with working over normal bases. Moreover, to apply
\cite{kollarbook}*{Theorem 4.36} we need $\sS_{I'}$ to be a family of \emph{generically Cartier divisors}, which follows from normal bases from \cite{kollarbook}*{Theorem 4.26 and Theorem 4.2}.
\begin{proof}
We prove the case of $I'$, the
other cases can be proved in the same way.

We need to show that if $B \to \cE_{I'}$ is an étale atlas, where $B$ is a scheme, if $(X,(s-\epsilon) S+\vec{a}F) \to B$ is the
corresponding family of surface pairs, then $S$ is a $\mathbb{Q}$-Cartier divisor. From
\cite{kollarbook}*{Theorem 4.36} we can replace $X$ with a DVR $R$, and we can further assume that the generic point of $\spec(R)$ maps to
$\cE_I^\circ$. But then from Theorem \ref{Teo:wall:and:chamb:dec:stable:models} and from the definition of $w(I)$,
also $(X,(s-\epsilon') S+\vec{a}F)$ is a stable pair, for any $0 <\epsilon'<w(I)$. Thus both
$K_{X}+(s-\epsilon') S+\vec{a}F$ and $K_X+(s-\epsilon) S+\vec{a}F$ are $\mathbb{Q}$-Cartier, which implies that $S$ is $\mathbb{Q}$-Cartier.
\end{proof}From Theorem \ref{Teo:wall:and:chamb:dec:stable:models}, if two weight vectors $I$ and $I'$ are in the same chamber, the spaces $\cE_I$ and $\cE_{I'}$ parametrize the same surface pairs. Therefore it is reasonable to expect the following Corollary (see also Proposition \ref{Prop:iso:se:riduco:s} and \cite{AB3}*{Theorem 1.2}).
\begin{Cor}Assume that $I$ and $I'$ are in the same open chamber. Then $\cE_I \cong \cE_I'$.
\end{Cor}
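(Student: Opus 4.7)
The plan is to construct mutually inverse morphisms $\cE_I \to \cE_{I'}$ and $\cE_{I'} \to \cE_I$ by re-weighting the universal family. By symmetry it suffices to produce the first morphism. Write $I=(s,\vec{a},\beta)$ and $I'=(s',\vec{a}',\beta)$, and consider the universal data $(\sX_I, s\sS_I + \vec{a}\sF_I)\to \cE_I$ together with the sections $\tau_i$. I will show that replacing the coefficients by those of $I'$ produces a family of stable pairs over $\cE_I$ in the sense of Subsection~\ref{Subsection:stable:pairs}, and that the induced classifying morphism factors through $\cE_{I'}$.

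First I would verify the $\mathbb{Q}$-Cartier property: since $I$ and $I'$ lie in a common open chamber, the whole line segment between them avoids all walls, so by Corollary~\ref{Corollary:Qcartier:threshold:guarantees:Qcartier} (applied finitely many times, chamber by chamber along the segment) the divisors $\sS_I$ and each $(\sF_j)_I$ are $\mathbb{Q}$-Cartier. Hence $K_{\sX_I/\cE_I}+s'\sS_I+\vec{a}'\sF_I$ is $\mathbb{Q}$-Cartier. Next I would check that the fibers are stable pairs with weight vector $I'$. Since $\cE_I$ is normal, by \cite{kollarbook}*{Theorem 4.36} (and the equivalence of Definitions~\ref{Def:stable:pairs:normal:base} and \ref{Def:KP:stable:pairs} over normal bases for $m$ large enough) it suffices to verify stability after pulling back along any map $\spec(R)\to\cE_I$ from a DVR whose generic point maps to $\cE_I^\circ$. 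But for such a trait, $(X_\eta,s'S_\eta+\vec{a}'F_\eta)$ is a stable Weierstrass fibration with weight $I'$ by admissibility of $I'$, and Theorem~\ref{Teo:wall:and:chamb:dec:stable:models} guarantees that the stable model of the associated tsm limit with the new weights is precisely $(X, s'S+\vec{a}'F)$, i.e.\ the closed fiber of the re-weighted family is already stable.

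Having established this, the re-weighted family defines a classifying morphism $\cE_I\to \mathcal{M}$ to the KP moduli; together with the preserved sections $\tau_i$, it gives $\Phi\colon \cE_I\to \mathcal{M}\times(\sH)^n$. Over the open locus $\cE_I^\circ$ the image consists of stable Weierstrass fibrations with weight vector $I'$, hence lies in the image of $\Psi$ for $I'$; taking closures and using the normality of $\cE_I$ together with \cite{AB3}*{Lemma A.5} (as in Observation~\ref{Oss:definition:psi:KtoW}), $\Phi$ factors as $R_{I,I'}\colon \cE_I\to \cE_{I'}$. Performing the symmetric construction produces $R_{I',I}\colon \cE_{I'}\to \cE_I$.

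Finally I would check that $R_{I,I'}$ and $R_{I',I}$ are mutually inverse. On the dense open $\cE_I^\circ$ this is tautological because the section and marked fibers of a stable Weierstrass fibration are intrinsic, so both compositions restrict to the identity on $\cE_I^\circ$ and $\cE_{I'}^\circ$. Since both $\cE_I$ and $\cE_{I'}$ are separated Deligne--Mumford stacks and $\cE_I^\circ$ is schematically dense in $\cE_I$ (Observation~\ref{Oss:specialization:of:the:interior:of:the:moduli}), the two compositions must agree with the identity morphisms, concluding the proof. The main technical point to watch in carrying this out is verifying fiberwise stability of the re-weighted family on the boundary of $\cE_I$; this is exactly what Theorem~\ref{Teo:wall:and:chamb:dec:stable:models} is designed to deliver.
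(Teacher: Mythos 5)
Your construction of the forward morphism is essentially the paper's: re-weight the universal family, use Corollary \ref{Corollary:Qcartier:threshold:guarantees:Qcartier} for $\mathbb{Q}$-Cartierness, and use Theorem \ref{Teo:wall:and:chamb:dec:stable:models} (via a trait through $\cE_I^\circ$) to see that the re-weighted fibers are already stable. Where you diverge is the endgame. The paper builds only the one morphism $\cE_I\to\cE_{I'}$ and then invokes Proposition \ref{proposition:iso:dm:stacks}: the morphism is representable, proper, quasi-finite, and an isomorphism over the dense open $\cE_I^\circ$, hence an isomorphism. You instead build both $R_{I,I'}$ and $R_{I',I}$ and argue they are mutually inverse because they agree with the identity on a dense open. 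That route can be made to work, but as written it is the weakest point of your argument: for stacks, two $1$-morphisms being $2$-isomorphic over a dense open substack does not formally propagate to the whole stack by "separatedness plus density" alone --- one has to work with the Isom-stack (finite over $\cE_I$ by separatedness) and extend the section over the boundary, which again uses normality. The cleanest repair is to apply Proposition \ref{proposition:iso:dm:stacks} to the compositions (or, as the paper does, to the single morphism), which makes the construction of the second morphism unnecessary. Two smaller points: (i) the paper reduces to the case where the difference divisor $\sD=s'\sS_I+\vec{a}'\sF_I-(s\sS_I+\vec{a}\sF_I)$ is effective, changing one coefficient at a time, precisely so that the map $\omega_{\sX_I/\cE_I}^{\otimes m}\to\sL\otimes\cO_{\sX_I}(m\sD)$ can be defined by composing with an inclusion of sheaves; you should say how your map $\omega^{\otimes m}\to\sL'$ is defined when some coefficients increase and others decrease. (ii) The appeal to \cite{kollarbook}*{Theorem 4.36} is not quite what you want for fiberwise stability; what you need is only that every closed point of $\cE_I$ is a specialization of a point of $\cE_I^\circ$ along a DVR (Observation \ref{Oss:specialization:of:the:interior:of:the:moduli}), after which Theorem \ref{Teo:wall:and:chamb:dec:stable:models} gives stability of the re-weighted closed fiber. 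With these adjustments your argument is correct.
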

\begin{proof}Let $I:=(s,(a_1,...,a_n),\beta)$ and let $I':=(t,(b_1,...,b_n),\beta)$. Since we are in an open chamber, up to changing a coefficient at the time, we can assume that the divisor $\sD:=t\sS_I+\vec{b}\sF_I-(s\sS_I+\vec{a}\sF_I)$ is effective. From Corollary \ref{Corollary:Qcartier:threshold:guarantees:Qcartier}, $\sD$ is $\mathbb{Q}$-Cartier. Composing the morphism $\omega_{\sX_I/\cE_I}^{\otimes m} \to \sL$ (see Definition \ref{Def:KP:stable:pairs}) with the inclusion $\sL \to \sL \otimes \cO_{\sX_I}(m\sD)$ for $m$ divisible enough, gives a morphism $\phi:\omega_{\sX_I/\cE_I}^{\otimes m} \to \sL \otimes \cO_{\sX_I}(m\sD)$. From Theorem \ref{Teo:wall:and:chamb:dec:stable:models}, the family $(\sX_I, \phi) \to \cE_I$ is a family of stable pairs (Definition \ref{Def:KP:stable:pairs}). Then it induces a morphism $\cE_I \to \cE_{I'}^{sn}$, and from \cite{AB3}*{Lemma A.5 (3)} a morphism $\cE_I \to \cE_{I'}$.
This morphism restricts to an isomorphism $\cE_I^{\circ} \to \cE_{I'}^{\circ}$, it is quasi-finite from Theorem \ref{Teo:wall:and:chamb:dec:stable:models}, representable and proper. Then it is an isomorphism from Proposition \ref{proposition:iso:dm:stacks}.
\end{proof}
\section{Cohomology vanishing and wall-crossing morphisms}\label{Section:cohom:vanishing:and:w:crossing}
We begin by outlining the strategy we follow for proving that there are wall-crossing morphisms. We emphasize what are the main ideas, and how they are guaranteed in our case.

\begin{bf}The set-up:\end{bf} For every admissible weight vector $I$ we have two seminormal (in our case, normal) moduli spaces for stable surface pairs, namely $\cE_I^\circ$ and $\cE_I$; with a dense open embedding $\cE_I^\circ \to \cE_I$. 

When we decrease the weights on the divisor to go from $I$ to $I'$, we have a reduction morphism $r_{I,I'}:\cE_I^\circ \to \cE_{I'}^\circ$. Assume that $I$ parametrizes surface pairs $(X,sS+\sum a_i F_i)$,
$I'$ parametrizes surface pairs $(X,tS+\sum b_i F_i)$, and $\pi:\sX_I \to \cE_I$ is the universal family of surfaces. For $d$ divisible enough, the morphism $r_{I,I'}$ is induced by $\operatorname{Proj}(\bigoplus_n \pi_*(\cO_{\sX^\circ_I}( nd(K_{\sX^\circ_I/\cE^\circ_I}+t\sS^\circ_I+ \vec{b}\sF^\circ_I)))).$
These morphisms give a finite wall-and-chamber decomposition for the interior of $\cE_I$, i.e. for the moduli spaces $\cE_I^\circ$.
Our goal is to extend $r_{I,I'}$ to get $R_{I,I'}:\cE_I \to \cE_{I'}$, as in the introduction.

\begin{bf}Step 1: We check a necessary condition.\end{bf} In the previous section we proved a necessary condition for having a \emph{finite} wall and chamber decomposition. Namely, we showed that for every $I$, there is a positive number (the $\mathbb{Q}$-Cartier threshold $w(I)$) satisfying the following. Take any two admissible vectors $I_1$ and $I_2$ different from $I$ and obtained from $I$ reducing the coefficient on a marked divisor by less than $w(I)$. Then we can obtain the surfaces of $\cE_{I_2}$ from those parametrized by $\cE_{I_1}$, simply by adjusting the coefficients on the marked divisor.

\begin{bf}Step 2: We check a $\mathbb{Q}$-Cartier condition. \end{bf}We ensure that, if we are in an open chamber, the divisor we want to reduce the weights of is $\mathbb{Q}$-Cartier (see Corollary \ref{Corollary:Qcartier:threshold:guarantees:Qcartier}).

\begin{bf}Step 3: From an open chamber, we reach a wall.\end{bf} We show, by a cohomology vanishing, that if we reduce the weights until when the log-canonical divisor is no longer ample but is still nef, the log-plurigenera commutes with base change (Theorem \ref{Teo:cohom:vanishing}, see also
\cite{kollar2018logs} and \cite{kollar2018log}). This gives a morphism from an open chamber to a wall.

\begin{bf}Step 4: From a wall, we reach an open chamber decreasing the weights.\end{bf} A priori, once we reach a wall, we cannot simply reduce the weights on the divisor to get a reduction morphism. In fact, the divisor we would like to reduce the weights of might not be $\mathbb{Q}$-Cartier: we need to proceed differently. In this case, we show that Proposition \ref{proposition:iso:dm:stacks} applies.
\\

We now prove the cohomology vanishing mentioned in Step 3 above. See also
\cite{kollar2018logs} and \cite{kollar2018log} for similar results.
\begin{Teo}\label{Teo:cohom:vanishing}
 Let $\spec(R)$ be a DVR, with generic (resp. closed) point $\eta$ (resp. $p$). Let $(X,D) \to \spec(R)$ be a morphism, with $(X,D)$ lc and
 $(X_p,D_p)$ slc. If $(K_X+D)$ is nef and $(K_X+D)_{|X_\eta}$ is log-big, then for $m$ divisible enough, $H^i(\cO_{X_p}(m(K_{X_p}+D_p)))=0$ for $i>0$.
\end{Teo}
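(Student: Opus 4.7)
The plan is to reduce the cohomology vanishing on the slc closed fiber $X_p$ to a global vanishing on the total space $X$, which can then be handled by Fujino's relative Kawamata--Viehweg vanishing theorem for log-canonical pairs (as in \cite{Fuj-vanishing}).

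First, I would exploit that $f\colon X \to \spec(R)$ is flat over the DVR $R$, so the closed fiber $X_p$ is a principal Cartier divisor on $X$, cut out by a uniformizer $\pi$ of $R$. In particular multiplication by $\pi$ provides an isomorphism $\cO_X \cong \cO_X(-X_p)$. Choose $m$ divisible enough so that both $m(K_X+D)$ and $m(K_{X_p}+D_p)$ are Cartier; this is possible since $(X,D)$ is lc and $(X_p,D_p)$ is slc. As $K_{\spec(R)}$ is trivial, adjunction gives $K_X|_{X_p} \cong K_{X_p}$, and since $D$ has no component supported on $X_p$, there is a canonical identification
\[
\cO_X(m(K_X+D))|_{X_p} \cong \cO_{X_p}(m(K_{X_p}+D_p)).
\]
The short exact sequence
\[
0 \to \cO_X(m(K_X+D)) \xrightarrow{\cdot \pi} \cO_X(m(K_X+D)) \to \cO_{X_p}(m(K_{X_p}+D_p)) \to 0
\]
then reduces the theorem to showing $H^i(X, \cO_X(m(K_X+D))) = 0$ for every $i>0$.

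Second, I would apply Fujino's relative vanishing to $f\colon X \to \spec(R)$. The pair $(X,D)$ is lc by hypothesis; $K_X+D$ is nef and hence $f$-nef; and the hypothesis that $(K_X+D)|_{X_\eta}$ is log-big translates directly into $K_X+D$ being $f$-log-big, since the log-canonical centers of $(X,D)$ that dominate $\spec(R)$ correspond to the log-canonical centers of $(X_\eta, D_\eta)$. Fujino's theorem then yields
\[
R^i f_*\bigl(\cO_X(m(K_X+D))\bigr) = 0 \quad \text{for all } i>0
\]
and $m$ divisible enough. Since $\spec(R)$ is affine, this is equivalent to $H^i(X, \cO_X(m(K_X+D)))=0$ for $i>0$, which combined with the short exact sequence above gives the desired vanishing on $X_p$.

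The main obstacle I foresee is verifying that Fujino's vanishing is applicable in the precise form needed: one must check that log-bigness on the generic fiber is genuinely enough to guarantee the $f$-log-bigness hypothesis, i.e.\ that every lc center of $(X,D)$ either dominates $\spec(R)$ (and is therefore controlled by the generic fiber) or lies inside $X_p$ in a way that does not obstruct the argument. If the direct application of \cite{Fuj-vanishing} turns out to be subtle here, a fallback would be to pass to a $\mathbb{Q}$-factorial dlt modification of $(X,D)$, apply a Kollár-type vanishing in the spirit of \cite{kollar2018log}, and then push forward to conclude the vanishing on $X$.
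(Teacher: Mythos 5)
Your proposal follows the same route as the paper: apply Fujino's vanishing (\cite{Fuj-vanishing}*{Theorem 1.10}) to get $R^if_*(\cO_X(m(K_X+D)))=0$ for $i>0$, then restrict to the closed fiber (your short exact sequence with multiplication by a uniformizer is just the explicit form of the cohomology-and-base-change step the paper invokes). However, the ``obstacle'' you flag at the end is not a technicality to be deferred --- it is the crux of the proof and the only place where the hypothesis that $(X_p,D_p)$ is \emph{slc} is used, so leaving it open leaves a genuine gap. Log-bigness of $(K_X+D)|_{X_\eta}$ only controls the lc centers that dominate $\spec(R)$; if some lc center of $(X,D)$ were contained in the closed fiber $X_p$, the restriction of $K_X+D$ to it need not be big and Fujino's theorem would not apply. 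The paper closes this by citing \cite{kollarbook}*{Proposition 2.13}: since $X_p$ is a Cartier divisor and $(X_p,D_p)$ is slc, inversion of adjunction shows that no lc center of $(X,D)$ is contained in $X_p$, i.e.\ every lc center meets the generic fiber, and therefore nefness plus log-bigness on $X_\eta$ yields log-bigness of $K_X+D$ on all of $X$. With that one sentence added, your argument is complete and coincides with the paper's; your proposed fallback through a dlt modification is unnecessary.
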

\begin{proof}
From \cite{kollarbook}*{Proposition 2.13}, the lc centers of $(X,D)$ intersect the generic fiber. Then $K_X+D$ is nef and log-big, so
from \cite{Fuj-vanishing}*{Theorem 1.10} (see also \cite{Kollarsing}*{Theorem 10.37}) we have $R^if_*(\cO_X(m(K_X+D)))=0$
for every $m$ divisible enough and for $i>0$. But since $(K_X)_{p}=K_{X_p}$,
from cohomology and base change, also
$H^i(\cO_{X_p}(m(K_{X_p}+D_p)))=0$.
\end{proof}

We will use the following two propositions for the case in which the divisor we would like to reduce the weighs of is not $\mathbb{Q}$-Cartier (see Step 4).

\begin{Prop}\label{proposition:iso:dm:stacks}
 Let $f:\sX_1 \to \sX_2$ be a representable, proper morphism of seminormal DM stacks (of finite type over an algebraically closed field $k$
 of characteristic 0).
 Assume that the morphism of sets $|\sX_1(\spec(k))| \to |\sX_2(\spec(k))|$ has finite non-empty fibers. Assume one of the following:
 \begin{enumerate}
  \item $\sX_1(\spec(k)) \to \sX_2(\spec(k))$ is an equivalence of groupoids, or
  \item $\sX_1$ and $\sX_2$ are normal, there
  is an open dense substack $U_2 \to \sX_2$ such that $U_1:=\sX_1 \times_{\sX_2} U_2 \to U_2$
  is an isomorphism, and $U_1$ is dense in $\sX_1$.
 \end{enumerate}

 Then $f$ is an isomorphism. 
\end{Prop}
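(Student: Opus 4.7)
The plan is to reduce the statement to a question about schemes via an étale atlas of $\sX_2$. Since $f$ is representable, proper, and has finite fibers on $k$-points, it is quasi-finite, hence finite; since the fibers are non-empty, $f$ is moreover surjective. Pick an étale atlas $V \to \sX_2$ by a scheme, and set $W := V \times_{\sX_2} \sX_1$: by representability of $f$, the space $W$ is a scheme, and the induced morphism $g : W \to V$ is a finite surjection of schemes. Seminormality and normality are étale-local, so $V$ and $W$ inherit these properties from $\sX_2$ and $\sX_1$; since being an isomorphism is étale-local on the target, it suffices to prove $g$ is an isomorphism.

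For case $(2)$, the morphism $g$ is an isomorphism over the dense open $V \times_{\sX_2} U_2 \subseteq V$, whose preimage is dense in $W$ by density of $U_1$ in $\sX_1$. Hence $g$ is a finite birational morphism from the reduced scheme $W$ to the normal scheme $V$: the coherent $\cO_V$-algebra $g_* \cO_W$ agrees with $\cO_V$ at every generic point of $V$ and therefore is contained in the total ring of fractions of $V$; by normality of $V$, the inclusion $\cO_V \hookrightarrow g_* \cO_W$ must be an equality, and $g$ is an isomorphism.

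For case $(1)$, the plan is to translate the equivalence of groupoids $\sX_1(\spec k) \to \sX_2(\spec k)$ into a universal bijection on the atlas fiber product, inducing trivial residue field extensions, and then conclude via the defining property of seminormality of $V$. Unwinding the fiber product, a $k$-point of $W$ over $v \in V(\spec k)$ with image $x \in \sX_2(\spec k)$ is an isomorphism class of pairs $(y, \alpha)$ with $y \in \sX_1(\spec k)$ and $\alpha : f(y) \simeq x$; essential surjectivity plus full faithfulness of the equivalence (and representability of $f$, which forces injectivity of $\Aut(y) \to \Aut(f(y))$) produce exactly one such class for each $v$. All residue fields at $k$-points being $k$, the residue-field condition is automatic at closed points. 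The delicate point---which I expect to be the main obstacle---is to promote the bijection on $k$-points to a universal bijection, so that $g$ actually qualifies as a universal homeomorphism with trivial residue fields; using the equivalence on automorphism groups, one should verify that the scheme-theoretic fiber of $g$ over each $k$-point of $V$ is reduced (i.e.~is $\spec k$ rather than a non-trivial infinitesimal thickening), and that closed-point bijectivity of a finite surjection of reduced finite-type schemes over $k = \bar{k}$ extends to a bijection at all points and trivial residue field extensions everywhere. Once this is established, seminormality of $V$ forces $g$ to be an isomorphism, completing the proof.
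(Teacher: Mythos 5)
Your proposal follows essentially the same route as the paper: pass to an \'etale atlas $V \to \sX_2$, note that $W := V\times_{\sX_2}\sX_1$ is finite over $V$, handle case (2) by ``finite birational onto a normal scheme,'' and handle case (1) by ``finite universal homeomorphism with trivial residue field extensions onto a seminormal scheme.'' Case (2) as you wrote it is correct and is in substance the paper's argument (the paper phrases the birationality step as a count of connected components of the preimage of each connected component of $V$; your formulation via $g_*\cO_W$ sitting inside the total ring of fractions accomplishes the same thing, using that the isomorphism locus is dense in $W$ so no component of $W$ maps into a proper closed subset).

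In case (1), however, one step of your plan is confused. You propose to ``verify that the scheme-theoretic fiber of $g$ over each $k$-point of $V$ is reduced, i.e.\ is $\spec k$,'' and \emph{then} invoke seminormality. This is backwards: if you could show the scheme-theoretic fibers are all $\spec k$, then $g_*\cO_W$ would be locally free of rank one and $\cO_V \to g_*\cO_W$ an isomorphism by Nakayama --- no seminormality needed. Conversely, the groupoid equivalence on $k$-points cannot rule out an infinitesimal thickening in the fiber (the normalization of the cuspidal cubic is finite, bijective on $k$-points, with non-reduced scheme-theoretic fiber over the cusp); ruling this out is precisely what seminormality is for. The correct division of labor, which is also what the paper does, is: from bijectivity on closed points of the finite surjection $g$ of reduced finite-type $k$-schemes, deduce only the \emph{topological} statement that $g$ is a (universal) homeomorphism with trivial residue field extensions (closed points are enough since $g$ is finite, hence closed, and fiber cardinality is semicontinuous); then the defining property of seminormality of $V$ (every finite bijection from a reduced scheme inducing isomorphisms of residue fields is an isomorphism) finishes the proof. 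The second half of your sentence already contains this correct sufficient condition, so the fix is only to drop the reducedness-of-fibers step rather than to add anything. Two smaller glosses: $W$ is a priori only an algebraic space (representability is by algebraic spaces); it is a scheme because $g$ is finite, hence affine. And the opening claim that finiteness of the fibers of $|\sX_1(\spec k)|\to|\sX_2(\spec k)|$ gives quasi-finiteness of $g$ uses that the $k$-points of the stacky fiber are indexed by pairs (object, isomorphism), both ranging over finite sets since automorphism groups of objects of a DM stack are finite --- this is spelled out in the paper and worth including.
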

One can understand Proposition \ref{proposition:iso:dm:stacks} as an analogue of the Zariski main theorem for representable morphisms, see \cite{LMB}*{Theorem 16.5} and \cite{Picard}*{Theorem A.5} for similar results. 
\begin{proof}
 Let $V_2 \to \sX_2$ be an \'etale atlas which is a scheme, let $V_1:=\sX_1 \times_{\sX_2}V_2$ and let $\psi:V_1 \to V_2$ be the second projection.
 Since $f$ is representable, $V_1$ is an algebraic space.
 Since $f$ is proper, also $\psi$ is proper, then from \cite{Olsson}*{Theorem 7.2.10} we see that $V_2$ is a scheme.
 It is enough to show that $\psi$ is an isomorphism.
 
  \underline{Assuming (1):} For every morphism $\spec(k) \to V_2$, observe that $\spec(k) \times_{V_2} V_1 \cong \spec(k) \times_{\sX_2}\sX_1$.
  From the definition of fibered product of fibered categories (\cite{Olsson}), for every
 morphism $\spec(k) \to V_2$, there is an isomorphism $\spec(k) \cong \spec(k)\times_{V_2}V_1$.
 So now the situation is the following. We have a proper quasi-finite morphism
 $\psi:V_1 \to V_2$ between two seminormal schemes (of finite type over $k$, with $k=\overline{k}$ and of characteristic 0),
 and we know that $\psi$ is bijective on $k$-points. We want to show that $\psi$ is an isomorphism.
 
 First notice that $\psi$ is finite (so in particular affine), since it is proper and quasi-finite.
 Since $\psi$ is proper, it is closed. But a closed bijective morphism between two topological spaces is an homeomorphism, so
 $V_1$ and $V_2$ are homeomorphic.
 Therefore we have a proper morphism, which is an
 homeomorphism, between two seminormal schemes of finite type over an algebraically closed field of characteristic 0: it is an isomorphism.
 
 \underline{Assuming (2):} First we show that $\psi$ is finite. Consider a point $p:\spec(k)\to \sX_1$, and let $q:=f(p)$. From the definition of fibred product of categories fibred in groupoids (\cite{Olsson}*{Section 3.4.9.}), we have an inclusion of sets 
 $$\spec(k) \times_{\sX_2}\sX_1\subseteq \{(a,\sigma): a \in \sX_1(\spec(k)) \text{ such that }f(a)\cong q; \text{ }\sigma \in \operatorname{Hom}_{\sX_2(\spec(k))}(f(a),q)\}.$$
 Since $|\sX_2(\spec(k))| \to |\sX_1(\spec(k))|$ has finite fibers and since the objects of $\sX_i(\spec(k))$ have finite automophisms, $\spec(k) \times_{\sX_2}\sX_1$ is finite. Then notice that
 $\spec(k)\times_{V_2}V_1 \cong \spec(k) \times_{\sX_2}\sX_1$, so the morphism $\psi$ is quasi-finite. Since it is proper, it is finite.
 
 Consider $Z \to V_2$ a connected component, let $T:=Z \times_{V_2}V_1$ and let $g:T \to Z$ be the corresponding map. To prove the desired result
 is enough to show that $g$ is an isomorphism.
 
 We show first that $T$ has a single irreducible component. Since it is normal, it is enough to show that it is connected.
 Since $U_i$ is dense in $\sX_i$, for every connected component $T_i$ of $T$, the open subset $U_1 \times_{\sX_1}T_i$ is
 non-empty in $T_i$. So $U_1 \times_{\sX_1}T_i$ is dense in $T_i$, and in particular there is a bijection
 between the connected components of $U_1\times_{\sX_1}T$ and those of $T$. The same reasoning applies to $Z$, so the open subset $Z \times_{\sX_2}U_2$ is
 dense in $Z$.
 Since $U_1 \xrightarrow{f_{|U_1}} U_2$ is an isomorphism, also its pull-back
 $U_1 \times_{\sX_1} T \to U_2\times_{\sX_2} Z$ is an isomorphism. But then:
 \begin{align*}
  1=\# \text{(connected components of }Z\times_{\sX_2}U_2)&=\# \text{(connected components of }U_1 \times_{\sX_1} T)=\\
  &=\# \text{(connected components of }T)
 \end{align*}
Then $g$ is a birational finite morphism, and $T$ and $V$ are normal varieties: $g$ is an isomorphism.
 \end{proof}
 
 The main application of Proposition \ref{proposition:iso:dm:stacks} is the following proposition (see also Step 4 above):
\begin{Prop}\label{Prop:iso:se:riduco:s}
 Let $I:=(s,\vec{a}=(a_1,...,a_m),\beta)$ be an admissible weight vector. Then for every $0<\epsilon<w(I)$ and for every $j$,
 $I':=(s-\epsilon,\vec{a},\beta)$ and $I_j:=(s,\vec{a}=(a_1,...,a_{j-1},a_j-\epsilon,a_{j+1},...,a_m),\beta)$
 are such that $\mathcal{W}_{I} \cong \cE_{I'}\cong \cE_{I_j}$.
\end{Prop}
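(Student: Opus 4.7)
I will prove $\cE_{I'}\cong \mathcal{W}_I=\cE_I$; the isomorphism $\cE_{I_j}\cong \mathcal{W}_I$ is entirely parallel, obtained by replacing the role of $\sS_{I'}$ by $(\sF_j)_{I_j}$ throughout. The strategy is to construct a morphism $\Phi:\cE_{I'}\to \cE_I$ by modifying the universal family over $\cE_{I'}$, and then apply Proposition \ref{proposition:iso:dm:stacks}.

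First, I would fix $m$ divisible enough so that the universal family over $\cE_{I'}$ is given by a morphism $\phi':\omega_{\sX_{I'}/\cE_{I'}}^{\otimes m}\to \cL'$ as in Definition \ref{Def:KP:stable:pairs}, and so that by Corollary \ref{Corollary:Qcartier:threshold:guarantees:Qcartier} the sheaf $\cO_{\sX_{I'}}(m\epsilon\sS_{I'})$ is an invertible $\cO_{\sX_{I'}}$-module. Composing $\phi'$ with the natural inclusion $\cL'\hookrightarrow \cL'\otimes \cO_{\sX_{I'}}(m\epsilon\sS_{I'})$ produces a new morphism
$$\widetilde{\phi}\colon \omega_{\sX_{I'}/\cE_{I'}}^{\otimes m}\longrightarrow \cL'\otimes \cO_{\sX_{I'}}(m\epsilon\sS_{I'}),$$
which on every fiber has the effect of replacing the coefficient $s-\epsilon$ on the section by $s$, keeping $\vec a$ unchanged. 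Since $\epsilon<w(I)$, Theorem \ref{Teo:wall:and:chamb:dec:stable:models} guarantees that no wall is crossed between $I'$ and $I$, so that $(\sX_{I'},\widetilde{\phi})$ is a family of stable pairs with weight vector $I$ in the sense of Definition \ref{Def:KP:stable:pairs}. Together with the distinguished sections $\tau_i$ already present as data of the universal family over $\cE_{I'}$, this produces a morphism $\cE_{I'}\to \cE_I^{sn}$; since $\cE_{I'}$ is normal, by \cite{AB3}*{Lemma A.5 (3)} (exactly as in Observation \ref{Oss:definition:psi:KtoW}) it lifts to a morphism $\Phi:\cE_{I'}\to \cE_I$.

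Next, I would check the hypotheses of Proposition \ref{proposition:iso:dm:stacks}. The morphism $\Phi$ is proper, since both $\cE_{I'}$ and $\cE_I$ are proper DM stacks of finite type. It is representable: at every $k$-point an automorphism of $(X,(s-\epsilon)S+\vec aF)$ preserving the sections $\tau_i$ is literally the same datum as an automorphism of $(X,sS+\vec aF)$ preserving $\tau_i$, since both are automorphisms of $X$ preserving $\mathrm{Supp}(S+\sum F_i)$ and fixing the $\tau_i$. Finally, $\Phi$ is an equivalence of groupoids on $\spec(k)$-points: by Theorem \ref{Teo:wall:and:chamb:dec:stable:models} the underlying stable surface pairs parametrized by $\cE_I$ and $\cE_{I'}$ are identical once the coefficient on $S$ is adjusted, and the previous sentence identifies the automorphism groupoids. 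Case (1) of Proposition \ref{proposition:iso:dm:stacks} then forces $\Phi$ to be an isomorphism.

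The main obstacle is verifying that the modified family $(\sX_{I'},\widetilde{\phi})$ is indeed a \emph{stable} family with weight vector $I$, i.e.\ that on every fiber the log-canonical divisor $K+sS+\vec aF$ remains ample and $(X,sS+\vec aF)$ remains slc. Both are consequences of the inequality $\epsilon<w(I)$ combined with Theorem \ref{Teo:wall:and:chamb:dec:stable:models}: the latter ensures that no walls are crossed between $I'$ and $I$, so the stable model is preserved, and the ampleness of the log-canonical divisor is unaffected by the interpolation of weights.
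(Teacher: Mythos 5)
There is a genuine gap, and it sits at the heart of your argument: the claim that $(\sX_{I'},\widetilde{\phi})$ is a family of stable pairs with weight vector $I$. The inequality $\epsilon<w(I)$ guarantees that no wall is crossed for weights $(s-\epsilon',\vec a,\beta)$ with $0<\epsilon'<w(I)$, but it says nothing about the endpoint $\epsilon'=0$: the vector $I$ itself may lie \emph{on} a wall (indeed this is exactly the situation in which the proposition is used in the proof of Theorem \ref{Teo:wall:crossings:morphisms}). In that case, on a boundary fiber $(X_p,(s-\epsilon)S_p+\vec a F_p)$ of the universal family over $\cE_{I'}$ there can be a section component $C$ with $(K_{X_p}+sS_p+\vec aF_p)\cdot C=0$, so after twisting by $\cO(m\epsilon\sS_{I'})$ the log-canonical divisor is only nef, not ample, and $(\sX_{I'},\widetilde\phi)$ is \emph{not} a family in the sense of Definition \ref{Def:KP:stable:pairs}. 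The paper's proof therefore does not reuse $\sX_{I'}$: it forms the relative stable model $\sY:=\operatorname{Proj}\bigl(\bigoplus_n\pi_*\cO_{\sX_{I'}}(mn(K_{\sX_{I'}/\cE_{I'}}+s\sS_{I'}+\vec a\sF_{I'}))\bigr)$, and the whole point of the cohomology vanishing Theorem \ref{Teo:cohom:vanishing} is to show that this $\operatorname{Proj}$ commutes with base change so that $\sY\to\cE_{I'}$ is a flat family. A further nontrivial step you skip is transporting the polarization $\omega^{\otimes r}\to\sL$ from $\sX_{I'}$ to $\sY$ across the contraction $f:\sX_{I'}\to\sY$, which the paper does via reflexive hulls on the locus where $f$ is an isomorphism in codimension two.

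This first error propagates into Step 2. Because $f$ genuinely contracts section components, the surfaces parametrized by $\cE_I$ and $\cE_{I'}$ are \emph{not} the same, so $\Phi$ is not an equivalence of groupoids on $k$-points and case (1) of Proposition \ref{proposition:iso:dm:stacks} does not apply; the paper instead uses case (2), which requires showing that $\Phi$ is surjective with \emph{finite} fibers on $k$-points. Finiteness of the fibers is the real content: one must show that the blow-ups reintroducing the contracted section components are uniquely determined, which is Observation \ref{Oss:section:uniquely:determined:on:psudo}, and for the $I_j$ case one additionally needs \cite{LaNave}*{Lemma 7.1.6} together with Proposition \ref{Prop:recognize:the:intermediate:from:int:pairing} to recover the contracted intermediate components — so the two cases are not "entirely parallel." Likewise your representability argument ("the automorphisms are literally the same") presupposes that the two surfaces coincide; since they do not, injectivity of $\Aut_{\cE_{I'}}(p)\to\Aut_{\cE_I}(\Phi(p))$ has to be argued as in Observation \ref{Oss:automorphism}, by noting that the contraction is an isomorphism over a dense open set of the target.
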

Before proceeding with the proof of Proposition \ref{Prop:iso:se:riduco:s}, we remark the following
\begin{Oss}\label{Oss:automorphism}
Let $(X,D)$ be a stable slc surface. Let $D'$ be an effective $\mathbb{Q}$-Cartier $\mathbb{Q}$-divisor with $\supp(D') \subseteq \supp(D)$, such that $K_{X}+D+ D'$ is nef, and $(X,D+D')$ is slc. Let $(Y,D_Y)$ be the stable model of $(X,D+D')$, and let $p:X \to Y$ the morphism induced by taking the stable model. Then $p$ does not contract any irreducible component of $X$. In particular, let $\phi_X$ be automorphism  of $(X,D)$, let $\phi_Y$ be an automorphism of $(Y,D)$, and assume that $p \circ \phi_X=\phi_Y \circ p$. Then $\phi_X=\Id$ implies $\phi_Y=\Id$.  
\end{Oss}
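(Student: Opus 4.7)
The substantive claim is the first one (no component of $X$ is contracted by $p$); the automorphism statement is then formal. The idea is to exhibit, on any hypothetically contracted component $X_i \subseteq X$, a curve on which $K_X + D + D'$ has \emph{strictly positive} degree, contradicting nefness-then-zero on fibers of $p$.

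Recall that $p : X \to Y$ is the log-canonical morphism associated to $K_X + D + D'$, obtained by log-abundance (as in Subsection~\ref{Subsection:stable:pairs}), so a curve $C \subseteq X$ is $p$-exceptional if and only if $(K_X + D + D') \cdot C = 0$. Suppose toward contradiction that some irreducible component $X_i$ of $X$ is contracted by $p$, meaning $\dim p(X_i) < 2$. I would produce a curve $C \subseteq X_i$ with the following three properties:
\begin{enumerate}
\item $C$ is $p$-exceptional, so $(K_X + D + D') \cdot C = 0$;
\item $C \not\subseteq \supp(D)$;
\item $(K_X + D) \cdot C > 0$.
\end{enumerate}
For (1)--(2): if $p(X_i)$ is a point, let $C$ be a general member of a very ample linear system on $X_i$; this automatically satisfies (1), and (2) holds since $\supp(D)|_{X_i}$ is $1$-dimensional while the linear system is base-point free. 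If $p(X_i)$ is a curve, let $C$ be a general fiber of $p|_{X_i}$; such a $C$ is $p$-exceptional, and the general fiber avoids the proper closed subset $\supp(D)|_{X_i}$. Property (3) follows from stability of $(X,D)$: $(K_X + D)|_{X_i}$ is ample, so meets every curve on $X_i$ positively.

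Combining (2) with the hypothesis $\supp(D') \subseteq \supp(D)$ gives $D' \cdot C \geq 0$ (as $D'$ is effective and $C \not\subseteq \supp(D')$). Together with (3) this yields $(K_X + D + D') \cdot C = (K_X+D) \cdot C + D' \cdot C > 0$, contradicting (1). Hence no component is contracted.

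For the ``in particular'' clause: given $p \circ \phi_X = \phi_Y \circ p$ and $\phi_X = \Id$, we have $p = \phi_Y \circ p$. Since $p$ is surjective, this forces $\phi_Y = \Id$ on $Y$. (The non-contraction statement is the real ingredient when one instead wants the converse implication $\phi_Y = \Id \Rightarrow \phi_X = \Id$: then $p \circ \phi_X = p$ and, because $p$ is birational onto each of its components by the main claim, $\phi_X$ is the identity on the dense open $p$-isomorphism locus, hence everywhere by separatedness.) The only potential subtlety in the proof is confirming that the general fiber in the ``$p(X_i)$ is a curve'' case indeed lies outside $\supp(D)$; this is immediate from dimension count on the one-parameter family of such fibers.
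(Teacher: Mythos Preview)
Your proof is correct and follows the same route as the paper's: exhibit, on each component $X_i$, a curve $C$ with $C \not\subseteq \supp(D)$ and use $(K_X+D+D')\cdot C \ge (K_X+D)\cdot C > 0$ (ampleness of $K_X+D$, effectiveness of $D'$) to contradict the vanishing required for $p$-exceptional curves. The paper simply takes $C$ to be an ample hyperplane section of $X_i$ and concludes in one line; it does not separate the cases $\dim p(X_i)=0$ and $\dim p(X_i)=1$. Your case analysis is in fact a bit more careful on this point: when $p(X_i)$ is a curve, a general hyperplane section of $X_i$ is a multisection of $p|_{X_i}$ and is therefore \emph{not} itself contracted, so passing to a general fiber, as you do, is what actually produces a $p$-exceptional curve there. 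One minor wording issue: in that case you only need, and only use, that the general fiber is not \emph{contained in} $\supp(D)$ (so that $D'\cdot C\ge 0$); it need not literally avoid $\supp(D)$, and indeed it typically will meet horizontal components of $D$.

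Your remark that the stated direction $\phi_X=\Id\Rightarrow\phi_Y=\Id$ already follows from surjectivity of $p$ alone, while the non-contraction claim is what underlies the converse (via $p$ being an isomorphism on a dense open), is correct; the paper phrases the automorphism argument through the latter viewpoint.
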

\begin{proof}
To show that $p$ does not contract any irreducible component of $Y'$, take an ample hyperplane section $H$, not contained in $\supp(D)$, of each irreducible component of $X$. Then $0<(K_X+D).H\le (K_X+D+D').H$: the divisor $H$ does not get contracted.

For the part on the automorphisms, observe first that $p$ is an isomorphism on an open dense subset $U$ of $X$, and $p(U)$ is dense in $Y$. Then if $\phi_X=\Id$ there is an open dense subset (namely $p(U)$) where $\phi_Y$ and $\Id$ agree. Therefore $\phi_Y=\Id$.\end{proof}
\begin{proof}[Proof of Proposition \ref{Prop:iso:se:riduco:s}] We first tackle the case of $I'$.
We construct a morphism $\Phi:\cE_{I'} \to \cE_I$, and using Proposition \ref{proposition:iso:dm:stacks} we show that it is an isomorphism.
To produce such a morphism, we use the universal property of the moduli space constructed in \cite{KP}. In particular, we
construct $\sY \to \cE_{I'}$, a family of slc surfaces, and a relatively very ample line bundle $\sL$ over $\sY$, with a morphism
$\omega_{\sY/\cE_{I'}}^{\otimes r} \to \sL$ satisfying the assumptions of Definition \ref{Def:KP:stable:pairs}.

\begin{bf}Step 1: construction of $\Phi$.\end{bf}
 We start by constructing the family of surfaces $\sY \to \cE_{I'}$. To make this step less notation-heavy, we drop the subscript $I'$ on $\sX_{I'}$, $\sS_{I'}$ and $\sF_{I'}$. This should cause no confusion. Let $\epsilon<w(I)$. From Corollary \ref{Corollary:Qcartier:threshold:guarantees:Qcartier}, the divisor
 $\sS$ is $\mathbb{Q}$-Cartier.
 Let $\pi:(\sX,(s-\epsilon)\sS+\vec{a}\sF) \to \cE_{I'}$ be the universal family, and consider the $\mathbb{Q}$-Cartier divisor
 $\sD:=s\sS+\vec{a}\sF$.
 
 Let $R$ be a DVR, with generic point $\eta$ and closed one $p$, and let $(X,(s-\epsilon)S+\vec{a}F)$ be a stable degeneration over $\spec(R)$.
 By definition of $w(I)$, for every $\epsilon$ small enough the pair
 $(X,(s-\epsilon)S+\vec{a}F)$ is lc, so also for $\epsilon=0$ it is lc. Moreover, since
 for every $w(I)>\epsilon>0$, the pair $(X,(s-\epsilon)S+\vec{a}F)$ is a stable pair, and since the nef cone is closed, $K_X+sS+\vec{a}F$ is nef.
 Let then $(X,sS+\vec{a}F):=(X,D)$.
 Observe that 
 $(K_X+D)_{|X_{\eta}}$ is ample, so $(K_X+D)_{|X_{\eta}}$ is log-big.
 
 Therefore the hypothesis of Theorem \ref{Teo:cohom:vanishing} apply, and $H^1(m(K_{X_p}+D_{X_p}))=0$ for $m$ divisible enough.
 Thus for $m$ divisible enough $\operatorname{Proj}(\bigoplus_{n \in \mathbb{N}}\pi_*(\cO_{\sX}(mn(K_{\sX/\cE_{I'}}+\sD)))$
 commutes with base change, and gives a family of surfaces $\xi:\sY \to \cE_{I'}$ with a map $f:\sX \to \sY$. If we take a morphism $\spec(R) \to \cE_{I'}$ which sends the generic point to $\cE_{I'}^\circ$, we can pull back $f$ to get $f_R:\sX_R \to \sY_R$. Then the morphism $f_R$ is obtained taking the stable model of $(\sX_R,s\sS_R+\vec{a}\sF_R)$. In particular, since $\eta \mapsto \cE_{I'}^\circ$ and from Observation \ref{Oss:automorphism}, the exceptional locus of $f_R$ has codimension at least 2.
 
 We construct now $\omega_{\sY/\cE_{I'}}^{\otimes r} \to \sL$. Let $m$ be such that $\sG:=\cO_{\sX_{I'}}(m(K_{\sX/\cE_{I'}}+\sD))$ is Cartier. By the definition of the moduli psudofunctor of
 \cite{KP} (see also Section \ref{Section:background:tsm:and:stable:pairs}), there is a morphism
 $\omega_{\sX/\cE_{I'}}^{\otimes a} \to \cO_{\sX}(a(K_{\sX/\cE_{I'}}+(s-\epsilon)\sS+\vec{a}\sF))$ for $a$ divisible enough. This induces $\omega_{\sX/\cE_{I'}}^{\otimes am} \to \cO_{\sX}(am(K_{\sX/\cE_{I'}}+(s-\epsilon)\sS+\vec{a}\sF))$, and composing it with the inclusion $\cO_{\sX}(am(K_{\sX/\cE_{I'}}+(s-\epsilon)\sS+\vec{a}\sF)) \to \sG^{\otimes a}$ gives $\omega_{\sX/\cE_{I'}}^{\otimes am}\to \sG^{\otimes a}$ and
 $f_*(\omega_{\sX/\cE_{I'}}^{\otimes am})\to f_*(\sG^{\otimes a})$. Moreover, from the definition of $\sY$, for $a$ divisible enough $f_*(\sG^{\otimes a})$ is a line bundle.
 
  Now, from the explicit description of $\sX_p
 \to \sY_p$ for every $p$, there is an open subset $U \subseteq \sY$ of codimension 2 such that $f^{-1}(U) \xrightarrow{f} U$ is an isomorphism and $U \to \cE_I$ is Gorenstein. Let $j:f^{-1}(U) \to \sX$ be the inclusion. Then the restriction morphism $\omega_{\sX/\cE_{I'}}^{\otimes am} \to j_*(\omega_{f^{-1}(U)/\cE_{I'}}^{\otimes am})$ can be pushed forward to get a morphism $f_*(\omega_{\sX/\cE_{I'}}^{\otimes am}) \to (f\circ j)_*(\omega_{f^{-1}(U)/\cE_{I'}}^{\otimes am}) \cong \omega_{U/\cE_{I'}}^{\otimes am} \cong (\omega_{\sY/\cE_{I'}}^{\otimes am})_{|U}$. Observe that the sheaves $f_*(\omega_{\sX/\cE_{I'}}^{\otimes am})$ and $\omega_{\sY/\cE_{I'}}^{\otimes am}$ agree in codimension 2, therefore
$$f_*(\omega_{\sX/\cE_{I'}}^{\otimes am})^{**} \cong \omega_{\sY/\cE_{I'}}^{[am]}.$$But $f_*(\sG^{\otimes a})$ is a line bundle, so the morphism $f_*(\omega_{\sX/\cE_{I'}}^{\otimes am})\to f_*(\sG^{\otimes a})$ factors as
 $f_*(\omega_{\sX/\cE_{I'}}^{\otimes am})\to
 f_*(\omega_{\sX/\cE_{I'}}^{\otimes am})^{**} \to f_*(\sG^{\otimes a})$. Thus, composing the canonical morphism $\omega_{\sY/\cE_{I'}}^{\otimes am} \to \omega_{\sY/\cE_{I'}}^{[am]}$ with the isomorphism above, we get
$\omega_{\sY/\cE_{I'}}^{\otimes am} \to f_*(\sG^{\otimes a})$.
 Now, $f_*(\sG^{\otimes a})$ is an ample line bundle, and if we choose $b$ divisible enough, $f_*(\sG^{\otimes a})^{\otimes b}$ is
 very ample: we can take $\sL:=f_*(\sG^{\otimes a})^{\otimes b}$.

 Let $r:=amb$ and let $\alpha:\omega_{\sY/\cE_{I'}}^{\otimes r} \to \sL$ be the morphism we just constructed.
To check that for each $p \in \cE_{I'}(\spec(k))$, the morphism $\alpha_{|\sY_p}:\omega_{\sY_p}^{\otimes r} \to \sL_y$ satisfies the required properties of Definition \ref{Def:KP:stable:pairs}, we first
choose a DVR $R$, with generic point (resp. closed point) $\eta$ (resp. $p$), and with a morphism $\spec(R) \to \cE_{I'}$. We require that $\eta \mapsto \cE_{I'}^\circ$ and $q \mapsto p$. Then we first pull back $\alpha$ to $\spec(R)$, and then to $p$. But now let $\sY_R:=\sY \times_{\cE_{I'}}\spec(R)$ and similarly
$\sX_R:=\sX \times_{\cE_{I'}}\spec(R)$; let $f_R:\sX_R \to \sY_R$ be the induced morphism and let
$\alpha_R:\omega_{\sY_R/\spec(R)}^{\otimes r} \to \sL_{\sY_R}$ be the morphism induced by the pull back of $\alpha$ (induced as in \cite{KP}*{Definition 5.6}). Notice that the construction of Section \ref{section:singularities:threefold} give us a particular choice of $\beta:\omega_{\sY_R/\spec(R)}^{\otimes r} \to \sL_{\sY_R}$, which satisfies the assumptions of Definition \ref{Def:KP:stable:pairs}.
Moreover, since $f_R$ is an isomorphism in codimension 2, $\alpha_R$ and $\beta$ agree on an open subset of codimension 2. Finally, $\alpha_R$ and $\beta$ are uniquely determined by their induced morphisms $\alpha':\omega_{\sY_R/\spec(R)}^{[r]} \to \sL_{\sY_R}$ and $\beta':\omega_{\sY_R/\spec(R)}^{[r]} \to \sL_{\sY_R}$. But now all the sheaves are reflexive, and since $\alpha'$ and $\beta'$ agree in codimension 2 they agree everywhere.
Therefore the morphism $\omega_{\sY/\cE_{I'}}^{\otimes r} \to \sL$ satisfies the requirements of Definition \ref{Def:KP:stable:pairs}.

Recall finally that, to distinguish the fibers in $\sF$, we added $n$ sections $\sigma_i:\cE_{I'} \to \sX$. Composing these with $f$ gives $n$ sections $\cE_{I'} \to \sY$. This data induces we a morphism $\psi:\cE_{I'} \to \cE_{I}^{sn}$, it factors through the normalization $\cE_{I} \to \cE_I^{sn}$ (see \cite{AB3}*{Lemma A.5 (3)}), and gives $\Phi:\cE_{I'} \to \cE_{I}$.
 
 \begin{bf} Step 2: $\Phi$ is an isomorphism. \end{bf} We check that Proposition \ref{proposition:iso:dm:stacks} applies. We need to check that:
 (1) $\Phi$ is proper; (2) $\Phi$ is an isomorphism when restricted to an appropriate open substack of
 $\cE_{I}$; (3) $\Phi$ is representable, and (4) $\Phi(\spec(k))$ is surjective with finite fibers. Let $p \in \cE_{I'}(\spec(k))$ be a point corresponding to
 $(Y',(s-\epsilon)S'+\vec{a}F')$, and assume $\Phi(p)$ corresponds to $(Y,sS+\vec{a}F)$.

 $(1)$ follows since $\cE_{I'}$ is proper, whereas for $(2)$ we can take $\cE_{I}^\circ$.
To check $(3)$, it is enough to show that
the morphism $\Phi_p:\Aut_{\cE_{I'}}(p) \xrightarrow{ }
\Aut_{\cE_{I}}(\Phi(p))$ is injective. This follows from Observation \ref{Oss:automorphism}.
$(4)$: since $\cE_{I'}$ is proper, $\Phi$ is closed. Since $\Phi_{|\cE_{I'}^\circ}:\cE_{I'}^\circ \to \cE_{I}^\circ$
is an isomorphism, and $\cE^\circ_{I}$ is dense,
$\Phi$ is dominant. Thus
$\Phi$ is surjective, we need to check that it has finite fibers.

Since the auxiliary sections we introduced to define $\cE_I$ are a finite set of points in $Y$ and $Y'$ supported on the finite set of points $\supp(S) \cap \supp(\vec{a}F)$ and
$\supp(S') \cap \supp(\vec{a}F')$, to show that $\Phi$ has \emph{finite} fibers, we can ignore them.

To get $(Y,sS+\vec{a}F)$
it is enough to contract some components of $S'$,
without contracting any irreducible component of $Y'$. Therefore to get $p$ from $\Phi(p)$ it is enough
to perform a sequence of blow-ups to reintroduce the section-components contracted. Our goal is to show that the ideal sheaves we blow-up are uniquely
determined. This follows from Observation \ref{Oss:section:uniquely:determined:on:psudo}. 

The case $I_j$ is similar as above, except for the proof of $(4)$. For the proof of $(4)$, we need to show that
if $Y' \to Y$ is the contraction of some intermediate components of some intermediate fibers or pseudofibers, then we can
perform a sequence of blow-ups to reintroduce the intermediate components contracted.
The blow-ups we perform are along points on which $S$ is not $\mathbb{Q}$-Cartier.  This can be done as follows. First, proceeding as above, we can reintroduce the sections on each pseudoelliptic component on which $Y' \to Y$ is
not an isomorphism, to get a surface $Z$. Then from \cite{LaNave}*{Lemma 7.1.6}, if we perform a flip of La Nave on a degeneration having $Z$ as closed fiber, the self intersection of the intermediate component introduced by the flip is uniquely determined by the self intersection of the contracted section component. From Proposition \ref{Prop:recognize:the:intermediate:from:int:pairing}, this determines uniquely the intermediate component.\end{proof}

Before proving Theorem \ref{Teo:intro:extension:morphism}, it is convenient to adopt the following \begin{Notation}\label{Notation:ai:zero}It is convenient to generalize Definition \ref{Def:minimal:W:fib} allowing some of the $a_i$ to be 0. In this case, we do \emph{not} consider the corresponding fibers $F_{i}$ part of the data. For example, if $a_{i}=0$ for $r<i\le n$, we consider the pairs
$(X,sS+a_1F_1+...+a_nF_n)$ and $(X,sS+a_1F_1+...+a_rF_r)$ to be the same. Similarly, we consider the moduli spaces $\cE_{(s,(a_{1},...,a_n),\beta)}$ and $\cE_{(s,(a_1,...,a_r),\beta)}$ to be the same.
\end{Notation}

Now, given $I_1:=(s_1,\vec{a}_1,\beta) \le I_2:=(s_2,\vec{a}_2,\beta)$
two admissible weights, there are morphisms $r_{I_2,I_1}:\cE_{I_2}^\circ \to \cE_{I_1}^\circ$,
which on closed points can be described sending $(X,s_2S+\vec{a}_2A)\mapsto (X,s_1S+\vec{a}_1A)$. 
The main result of the section is the following (see Theorem \ref{Teo:intro:extension:morphism} and \cite{AB3}*{Theorem 1.5} if $s=1$):
\begin{Teo}\label{Teo:wall:crossings:morphisms}
There are morphisms $R_{I_2,I_1}:\cE_{I_2}\to \cE_{I_1}$ which extend $r_{I_2,I_1}$.
\end{Teo}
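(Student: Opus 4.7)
The plan is to construct $R_{I_2,I_1}$ as a composition of finitely many elementary morphisms, each crossing at most one wall of the finite wall-and-chamber decomposition of Theorem \ref{Teo:wall:and:chamb:dec:stable:models}. Choose a chain of admissible weights $I_2 = J_0 \ge J_1 \ge \cdots \ge J_N = I_1$ along the segment from $I_2$ to $I_1$ so that every consecutive pair $(J_k,J_{k+1})$ falls into exactly one of three situations: (a) both lie in the same open chamber; (b) $J_k$ is in an open chamber and $J_{k+1}$ is on the adjacent wall, obtained by decreasing a single coefficient; (c) $J_k$ is on a wall and $J_{k+1}$ lies just below it, obtained by decreasing a single coefficient by less than $w(J_k)$. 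It suffices to construct $R_{J_k,J_{k+1}}$ in each of these three cases.

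In case (a), the corollary immediately following Corollary \ref{Corollary:Qcartier:threshold:guarantees:Qcartier} already gives $\cE_{J_k}\cong \cE_{J_{k+1}}$. In case (c), Proposition \ref{Prop:iso:se:riduco:s} applied with $I=J_k$ and $I'=J_{k+1}$ (noting $w(J_k)>0$ by definition, even when $J_k$ is on a wall) produces an isomorphism $\Phi:\cE_{J_{k+1}}\xrightarrow{\sim}\cE_{J_k}$, whose inverse is $R_{J_k,J_{k+1}}$. Thus the essential content lies in case (b).

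For case (b), we adapt the Proj construction in the proof of Proposition \ref{Prop:iso:se:riduco:s}. Write $J_{k+1}=(t,\vec{b},\beta)$. Over the universal family $\pi:\sX_{J_k}\to\cE_{J_k}$, the divisor $\sD:=t\sS_{J_k}+\vec{b}\sF_{J_k}$ is $\mathbb{Q}$-Cartier by Corollary \ref{Corollary:Qcartier:threshold:guarantees:Qcartier} (since $J_{k+1}$ lies within distance $w(J_k)$ of $J_k$ in the chamber's closure). Because $J_{k+1}$ is in the closure of the chamber of $J_k$, Theorem \ref{Teo:wall:and:chamb:dec:stable:models} together with closedness of the nef cone gives that $K_{\sX_{J_k}/\cE_{J_k}}+\sD$ is fiberwise nef. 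For each $p\in\cE_{J_k}(\spec k)$, pick a DVR $\spec R\to\cE_{J_k}$ with generic point in $\cE_{J_k}^\circ$ (possible by density, Observation \ref{Oss:specialization:of:the:interior:of:the:moduli}) and closed point $p$; the hypotheses of Theorem \ref{Teo:cohom:vanishing} apply, because on the generic fiber $(K+\sD)|_{X_\eta}$ is log-big (the weight $J_{k+1}$ remains admissible and the fiber is a stable Weierstrass fibration), yielding $H^i(\cO_{(\sX_{J_k})_p}(m(K+\sD)_p))=0$ for $i>0$ and $m$ divisible enough. By cohomology and base change, $\operatorname{Proj}\bigl(\bigoplus_m \pi_*\cO(m(K_{\sX_{J_k}/\cE_{J_k}}+\sD))\bigr)$ commutes with base change and produces a family of stable surfaces $\sY\to\cE_{J_k}$. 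Following the proof of Proposition \ref{Prop:iso:se:riduco:s} verbatim (constructing the relatively very ample $\sL$ on $\sY$, the morphism $\omega_{\sY/\cE_{J_k}}^{\otimes r}\to\sL$ satisfying Definition \ref{Def:KP:stable:pairs} via the reflexive-hull argument, and pushing forward the distinguishing sections $\tau_i$), we obtain a morphism $\cE_{J_k}\to\cE_{J_{k+1}}^{sn}$, which factors through $\cE_{J_{k+1}}\to\cE_{J_{k+1}}^{sn}$ by \cite{AB3}*{Lemma A.5} to produce $R_{J_k,J_{k+1}}$.

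The composition $R_{I_2,I_1}:=R_{J_{N-1},J_N}\circ\cdots\circ R_{J_0,J_1}$ restricts on $\cE_{I_2}^\circ$ to the stable-model construction defining $r_{I_2,I_1}$, so it extends $r_{I_2,I_1}$ as required. The main obstacle is case (b), specifically ensuring that fiberwise nefness of $K+\sD$ and the cohomology vanishing propagate uniformly across all of $\cE_{J_k}$ (not just its interior). Nefness follows from Theorem \ref{Teo:wall:and:chamb:dec:stable:models} via closedness of the nef cone on the universal family; the vanishing, although only stated in Theorem \ref{Teo:cohom:vanishing} along one-parameter families, is forced pointwise on $\cE_{J_k}$ by density of $\cE_{J_k}^\circ$ and then yields compatibility with base change via the standard cohomology-and-base-change machinery.
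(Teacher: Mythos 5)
Your proposal is correct and follows essentially the same route as the paper: walk along the segment of weights, use the Proj construction with Theorem \ref{Teo:cohom:vanishing} and cohomology-and-base-change to descend from an open chamber to the adjacent wall, and invoke Proposition \ref{Prop:iso:se:riduco:s} to step off the wall into the next chamber, with finiteness guaranteed by Theorem \ref{Teo:wall:and:chamb:dec:stable:models}. The only cosmetic difference is that the paper first reduces to varying $s$ and $\vec{a}$ separately via the intermediate weight $I_3:=(s_1,\vec{a}_2,\beta)$, which your single-coefficient-at-a-time chain accomplishes implicitly.
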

In particular, using Notation \ref{Notation:ai:zero}, there is a forgetful morphism
$\cE_{(s,(a_1,...,a_n),\beta)} \to \cE_{(s,(0),\beta)}$ when the weight vector $(s,(0),\beta)$ is admissible.
\begin{proof}
The proof follows closely \cite{Hassett}*{Theorem 4.1}.
Let $I_3:=(s_1,\vec{a}_2,\beta)$.
It suffices to prove that there are morphisms $\cE_{I_2} \to \cE_{I_3}$ and $\cE_{I_3} \to \cE_{I_1}$, which
extend $r_{I_2,I_3}$ and $r_{I_3,I_1}$. Namely, if we can prove the result in the cases $\vec{a}_1=\vec{a}_2$, and $s_1=s_2$; we can prove the result
in general.
We tackle the case $\vec{a}:=\vec{a}_1=\vec{a}_2$, the other case is analogous.

Consider then $I(t):=((1-t)s_2 + ts_1,\vec{a},\beta)$.
Up to replacing $s_2$ with $s_2-\epsilon$ for $\epsilon <w(I)$,
and using Proposition \ref{Prop:iso:se:riduco:s} and Corollary \ref{Corollary:Qcartier:threshold:guarantees:Qcartier}, we can assume that $\sS_{I_2}$
is $\mathbb{Q}$-Cartier. From
Theorem \ref{Teo:wall:and:chamb:dec:stable:models}, there are finitely many $t$ such that $I(t)$ is on a wall. In particular, since
$w(I)>0$, there is a positive $t_1$ such that for
$0\le t<t_1$, the divisor $K_{\sX_{I_2}/\cE_{I_2}}+((1-t)s_2+ts_1)\sS_{I_2}+\vec{a}\sF_{I_2}$ is ample relatively to $\cE_{I_2}$, but when
$t=t_1$ it is only nef. Consider then, for $d$ divisible enough,

$$\sY:=\operatorname{Proj}(\bigoplus _{m=1}^\infty \pi_*\cO_{\sX_{I_2}}( md(((1-t_1)s_2+t_1s_2)\sS_{I_2}+\vec{a}\sF_{I_2}))) \to \cE_{I_2}$$
From Theorem \ref{Teo:cohom:vanishing} and the theorems on cohomology and base change, $\sY \to \cE_{I_2}$ is a family
of surfaces, and there is a morphism $f:\sX_{I_2} \to \sY$. We proceed as in step 1 of the proof of Proposition \ref{Prop:iso:se:riduco:s},
to produce a line bundle $\sL$ on $\sY$, and a morphism
$\omega_{\sY/\cE_{I_2}}^{\otimes r} \to \sL$ to get a morphism $\cE_{I_2} \to \cE_{I(t_1)}$. Up to replacing $t_1$ with $t_1+\epsilon$ and from Proposition
\ref{Prop:iso:se:riduco:s}, we can assume that $\sS_{I(t_1)}$ is $\mathbb{Q}$-Cartier.

Then we repeat the procedure above, replacing $s_2$ with $(1-t_1)s_2+t_1s_1$. If we keep iterating, since
there are finitely many walls from Theorem \ref{Teo:wall:and:chamb:dec:stable:models}, in finitely many steps we get to $s_1$.
\end{proof}
\begin{Cor}\label{Cor:image:determided:by:rnd}
Let $I_1:=(s_1,\vec{a}_1,\beta)$ and $I_2:=(s_2,\vec{a}_2,\beta)$ be two admissible weight vectors, and assume that $I_1 \le I_2$. Assume also that $I_2$ is in an open chamber. Let $p \in \cE_{I_2}(\spec(k))$ corresponding to $(X,s_2S+\vec{a}_2F)$. Then $R_{I_2,I_1}(p)$ is uniquely determined by the refined numerical data of $(X,s_2S+\vec{a}_2F)$.
\end{Cor}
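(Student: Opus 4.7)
The strategy is to unwind $R_{I_2,I_1}$ into the explicit wall-crossing procedure of Theorem \ref{Teo:wall:crossings:morphisms} and then observe that each step of that procedure is controlled by the refined numerical data via Theorem \ref{Teo:stable:model:depends:only:on:num:data} and Lemma \ref{Lemma:recovering:refined:numerical:data}. First, by Observation \ref{Oss:specialization:of:the:interior:of:the:moduli}, pick a DVR $R$ with closed point $q$ and a stable degeneration $(\mathcal{X},s_2\mathcal{S}+\vec{a}_2\mathcal{F})\to \spec(R)$ whose generic fiber maps to $\cE_{I_2}^\circ$ and whose closed fiber is the pair $(X,s_2S+\vec{a}_2F)$ corresponding to $p$. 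Since $I_2$ lies in an open chamber, Corollary \ref{Corollary:Qcartier:threshold:guarantees:Qcartier} ensures that $\mathcal{S}$ and the components of $\mathcal{F}$ are $\mathbb{Q}$-Cartier, so the refined numerical data of this degeneration is defined and coincides with that of $p$.

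Now run the wall-crossing algorithm in the proof of Theorem \ref{Teo:wall:crossings:morphisms}: along the segment $I(t)=((1-t)s_2+ts_1,(1-t)\vec{a}_2+t\vec{a}_1,\beta)$ there are only finitely many values $0<t_1<\dots<t_N\le 1$ at which $K_{\mathcal{X}}+I(t)(\mathcal{S},\mathcal{F})$ fails to be ample, and at each such $t_j$ the subsequent stable model is obtained by the explicit MMP steps of Theorem \ref{Teo:descripton:bir:trans:to:get:lcdeg} (divisorial contractions of components of the special fiber, flips of La Nave, and a terminal log-abundance contraction). This produces a sequence of intermediate stable degenerations $(\mathcal{X}^{(j)},\mathcal{D}^{(j)})\to\spec(R)$ with $(\mathcal{X}^{(0)},\mathcal{D}^{(0)})=(\mathcal{X},s_2\mathcal{S}+\vec{a}_2\mathcal{F})$, and $R_{I_2,I_1}(p)$ is represented by the closed fiber of $(\mathcal{X}^{(N)},\mathcal{D}^{(N)})$.

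Induct on $j$. By Theorem \ref{Teo:stable:model:depends:only:on:num:data}(2), the specific sequence of MMP operations transforming $(\mathcal{X}^{(j)},\mathcal{D}^{(j)})$ into $(\mathcal{X}^{(j+1)},\mathcal{D}^{(j+1)})$ at the wall $t=t_{j+1}$ depends only on the refined numerical data of the closed fiber of $(\mathcal{X}^{(j)},\mathcal{D}^{(j)})$; by Lemma \ref{Lemma:recovering:refined:numerical:data}, after each individual divisorial contraction or flip the new refined numerical data of the closed fiber is determined by the previous one together with the type of step. Composing these determinations, both the complete list of birational operations performed between stages $j$ and $j+1$ and the refined numerical data of the closed fiber of $(\mathcal{X}^{(j+1)},\mathcal{D}^{(j+1)})$ are recovered from the refined numerical data of the closed fiber of $(\mathcal{X}^{(j)},\mathcal{D}^{(j)})$. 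Iterating from $j=0$ to $j=N$, the whole procedure producing $R_{I_2,I_1}(p)$ from $(X,s_2S+\vec{a}_2F)$ is dictated by the refined numerical data of $p$, so $R_{I_2,I_1}(p)$ itself is uniquely determined by this data. The main obstacle is simply verifying that the hypotheses of Theorem \ref{Teo:stable:model:depends:only:on:num:data}(2) persist at every intermediate stage—i.e., that each $(\mathcal{X}^{(j)},\mathcal{D}^{(j)})$ is a stable degeneration with $\mathbb{Q}$-Cartier boundary components—which follows from Corollary \ref{Corollary:Qcartier:threshold:guarantees:Qcartier} applied at each intermediate admissible weight together with the standard preservation of the $\mathbb{Q}$-Cartier property under the allowed MMP steps (cf.\ the proof of Lemma \ref{Lemma:recovering:refined:numerical:data}).
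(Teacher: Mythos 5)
Your proof is correct in substance and rests on the same two pillars as the paper's: choose a degeneration $(\cX,s_2\cS+\vec{a}_2\cF)\to\spec(R)$ over a DVR whose special fiber is $p$, and reduce to Theorem \ref{Teo:stable:model:depends:only:on:num:data} together with the commutativity of restriction-to-the-special-fiber with $R_{I_2,I_1}$. The difference is one of economy: the paper applies Theorem \ref{Teo:stable:model:depends:only:on:num:data}(2) exactly once, to the single degeneration $(\cX,s_1\cS+\vec{a}_1\cF)$ with the \emph{target} weights (which is a degeneration in the sense of Section \ref{section:singularities:threefold}, since adding the effective divisor $(s_2-s_1)\cS+(\vec{a}_2-\vec{a}_1)\cF$ recovers a stable degeneration whose refined numerical data is that of $p$), and lets the internal algorithm of that theorem do all the sequencing. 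You instead unwind $R_{I_2,I_1}$ wall by wall and re-run the theorem plus Lemma \ref{Lemma:recovering:refined:numerical:data} at each stage. That works, but it forces you to confront a point the paper's route avoids: your intermediate stages sit \emph{on} the walls $t=t_j$, and precisely there $\mathbb{Q}$-Cartierness of the section can fail (this is the issue addressed by Step 4 of Section \ref{Section:cohom:vanishing:and:w:crossing} and Proposition \ref{Prop:iso:se:riduco:s}); Corollary \ref{Corollary:Qcartier:threshold:guarantees:Qcartier} only guarantees $\mathbb{Q}$-Cartierness for weights strictly inside a chamber, not on a wall. The fix is easy — take your intermediate weights just past each wall, inside the next open chamber — but as written this step is the one soft spot in an otherwise sound argument, and it is a complication you can dispense with entirely by following the one-shot application of Theorem \ref{Teo:stable:model:depends:only:on:num:data}(2).
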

\begin{proof}
Choose a degeneration $(\cX,s_2\cS+\vec{a}_2\cF) \to \spec(R)$  with special fiber $(X,s_2S+\vec{a}_2F) \to \spec(k)$. From Theorem \ref{Teo:stable:model:depends:only:on:num:data}, the stable model of
$(\cX,s_1\cS+\vec{a}_1\cF)$ depends only on the refined numerical data of $(\cX,s_2\cS+\vec{a}_2\cF)$ (which is the refined numerical data of $(X,s_2S+\vec{a}_2F)$). To prove the desired result it suffices to notice that the following square commutes:
$$\xymatrix{ \cE_{I_2}(\spec(R))  \ar[d]_-{R_{I_2,I_1}(\spec(R))} \ar[r]& \cE_{I_2}(\spec(k)) \ar[d]^{R_{I_2,I_1}(\spec(k))} \\ \cE_{I_1}(\spec(R)) \ar[r]& \cE_{I_1}(\spec(k))}$$\end{proof}
\section{Universal curve and remarkable chambers}\label{Section:universal:curve}
By definition, a Weierstrass fibration $X$ comes with a surjective morphism $X \to C$ to a curve.
In particular, any surface pair $(X,sS+\vec{a}F)$ corresponding to a closed point of $\cE_I^\circ$ comes with a morphism $X \to C$, and $S$ is a section of it.
In fact, in \cite{AB3} the surfaces
the two authors parametrize admit a morphism to a curve, and there is a universal curve over the moduli space they construct.
The first goal of this section is to show that, also if $s <1$, there is such an universal curve. We first construct an auxiliary parameter space $\widetilde{\cE_I}$ as in \cite{AB3}, which comes with an universal curve by definition. Then
we use Proposition \ref{proposition:iso:dm:stacks} to show that $\widetilde{\cE_I}\cong \cE_I$.

After that, we prove Theorem \ref{Teo:intro:have:flat:fibration}. We show that, given any admissible weight
vector $I=(s,\vec{a},\beta)$, we can find $s'$ satisfying the following condition. For any point $p \in \cE_{(s',\vec{a},\beta)}(\spec(k))$, the corresponding surface pair has no pseudoelliptic components.

\subsection{The universal curve $\sC \to \cE_I$}

Let $\sX_I^{sn} \to \cE_I^{sn}$ the universal surface, and let $\sS_I^{sn} \to \cE_I^{sn}$ be the universal section (see Notation \ref{Notation:def:SI}).
Let $\mathfrak{M}_g$ be the algebraic stack of prestable curves of genus $g$, with universal family $\mathfrak{C} \to \mathfrak{M}_g$.
Let us denote with $\sH$ the following stack:
$$\sH om_{\cE_I^{sn} \times \kM_g}(\sX_I \times \kM_g, \cE_I^{sn} \times \kC) \times \sH om_{\cE_I^{sn}\times \kM_g}(\cE_I^{sn} \times \kC,\sS_I^{sn} \times \kM_g)$$
where for the properties of the Hom-stacks we refer to \cite{Homstack}.
Now, recall that in Subsection
\ref{subsection:construction:family:of:surfaces} (see Notation \ref{Def:sY}) we constructed a family of Weierstrass fibrations $(\sY,s\sS+\vec{a}\sF) \to \Kg^\circ$, with
universal curve $\sC_{\sY} \to \Kg^\circ$. 
Moreover, over $\Kg^\circ$ we have the morphism $\sY \to \sC_{\sY}$ and the section $\sC_{\sY} \to \sS$. Finally, recall that to keep track of the irreducible components of $\sF$, we put $n$ auxiliary sections $\sigma_i:\Kg^\circ \to \sY$.

This data induces a morphism
 $\Psi:\Kg^\circ \to \sH$.
Proceeding as in Subsection \ref{Subsection:construction:parameter:space:EI} we
define $\widehat{\cE_I}$ to be the closure of the image of $\Psi$.
Over $\widehat{\mathcal{W}_{I}}$, there are the following universal objects:
\begin{itemize}
 \item The pull back of $(\sX_I,\omega_{\sX_I/\cE_I^{sn}}^{\otimes m} \to \sL)$ that gives $(\cX,\omega_{\cX/\widehat{\cE_I}}^{\otimes m} \to \sG)$ (see Definition \ref{Def:KP:stable:pairs});
 \item The $n$ sections of $\cX \to \widehat{\cE_I}$, coming from those of $\sX_I^{sn} \to \cE_I^{sn}$ (see Subsection \ref{Subsection:construction:parameter:space:EI}); 
 \item The pull back of $\kC \to \kM_g$ that gives $\cC \to \widehat{\cE_I}$;
 \item Two universal morphisms, $\alpha:\cX \to \cC$ and $\beta:\cC \to \sS_I^{sn} \times_{\cE_I^{sn}}\widehat{\cE_I}$. 
\end{itemize}
\begin{Notation}\label{Def:the:section:is:a:section}
 Let $\widetilde{\cE_I}' \subseteq \widehat{\cE_I}$ be the locally closed substack where
 $\alpha \circ \beta$ is an isomorphism and $\beta$ is surjective. Let
 $\widetilde{\cE_I}$ be the seminormalization of $\widetilde{\cE_I}'$, and let
$\cS:=\sS_I^{sn} \times_{\cE_I^{sn}}\widetilde{\cE_I}$.
\end{Notation}
From the universal property of $\cE_I^{sn}$, there is a morphism $f:\widetilde{\cE_I} \to \cE_I^{sn}$.
We want to show that $f$ is an isomorphism.
\begin{Oss}In Notation \ref{Def:the:section:is:a:section}, it is tempting to look instead at the locus where $\alpha \circ \beta$ and $\beta$ are both isomorphisms. However, we do not know if $\sS_I^{sn} \to \cE_I^{sn}$ is a family of nodal curves, since a priori there might be non-reduced fibers (however, we know it for $\sS_I \to \cE_I$ from Corollary \ref{Cor:SI:is:a:fam:of:nodal:curves}). 
\end{Oss}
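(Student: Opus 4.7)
The plan is to justify why the definition in Notation \ref{Def:the:section:is:a:section} uses the weaker requirement ``$\alpha \circ \beta$ is an isomorphism and $\beta$ is surjective'' rather than the more natural-looking ``both $\alpha \circ \beta$ and $\beta$ are isomorphisms''. First I would observe that, for $\beta : \cC \to \cS$ to be an isomorphism, a necessary condition is that $\cS \to \widetilde{\cE_I}$ be a flat family of reduced nodal curves of the same arithmetic genus as $\cC$. Indeed, $\cC$ is pulled back from $\kM_g$ and is automatically such a family, while $\beta$ is already a proper surjection; so $\beta$ is an isomorphism exactly when the comparison map $\cO_\cS \to \beta_* \cO_\cC$ has trivial kernel and cokernel, which fails as soon as $\cS$ has a non-reduced fiber.

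Next I would trace through where the proof of Corollary \ref{Cor:SI:is:a:fam:of:nodal:curves} crucially uses normality of $\cE_I$. That argument applied \cite{kollarbook}*{Theorem 4.26 and Theorem 4.2} to deduce that $\sS_I \to \cE_I$ is a family of generically Cartier divisors, and then \cite{kollarbook}*{Proposition 4.34} to upgrade this to flatness (hence reducedness of the fibers, since the generic one is reduced) via a Hilbert polynomial comparison over a DVR. Both steps require the base to be \emph{normal}; Koll\'ar's divisorial-families machinery is not known to extend to seminormal bases. Consequently, over $\cE_I^{sn}$ there is no guarantee that the fibers of $\sS_I^{sn}$ are reduced: some fiber could in principle acquire an embedded point or a generic multiplicity, which would make $\beta$ fail to be a scheme-theoretic isomorphism even while remaining a bijective proper map on underlying topological spaces.

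The weaker condition adopted in Notation \ref{Def:the:section:is:a:section} sidesteps this issue entirely: $\beta$ surjective together with $\alpha \circ \beta = \Id$ is exactly the data needed to realize $\cC$ as the reduction of $\cS$ along each fiber and to recognize it as a section of $\alpha$, which is all that is used in what follows. The main obstacle in the remark is therefore the genuine gap between the normal and seminormal cases in Koll\'ar's divisorial-families theorems; rather than attempting to bridge this gap, the definition is calibrated so that the construction of $\widetilde{\cE_I}$ only relies on properties that are available over a seminormal base. As a final sanity check, I would note that after the forthcoming identification $\widetilde{\cE_I} \cong \cE_I^{sn}$ and the passage to the normalization $\cE_I$, Corollary \ref{Cor:SI:is:a:fam:of:nodal:curves} applies and the stronger version of $\beta$ being an isomorphism is recovered \emph{a posteriori}; but it cannot be built into the definition at this stage.
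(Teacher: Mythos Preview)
Your elaboration is correct and captures exactly the reasoning the paper is gesturing at, but note that the paper gives no proof here: this \texttt{Oss} is a bare remark with no accompanying \texttt{proof} environment. The content of the observation is self-explanatory commentary on why Notation~\ref{Def:the:section:is:a:section} is phrased as it is, and your expansion---tracing the normality hypothesis through Corollary~\ref{Cor:SI:is:a:fam:of:nodal:curves} and Koll\'ar's divisorial-families results in \cite{kollarbook}---is precisely the justification the author has in mind (compare the discussion preceding the proof of Corollary~\ref{Corollary:Qcartier:threshold:guarantees:Qcartier}). One small inaccuracy: you write ``$\alpha \circ \beta = \Id$'', but the condition in Notation~\ref{Def:the:section:is:a:section} is only that this composite be an isomorphism, not the identity; and strictly speaking the composite is $\alpha_{|\cS} \circ \beta$, since $\alpha$ has source $\cX$ while $\beta$ lands in $\cS \subset \cX$.
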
 
\begin{Oss}
 From \cite{Homstack}, $\sH$ is locally of finite type over $\spec(k)$. Therefore also $\widetilde{\cE_I}$ is
 locally of finite type.
\end{Oss}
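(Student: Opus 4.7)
The plan is to trace the construction of $\widetilde{\cE_I}$ back to $\sH$ and verify at each step that the finite-type property is preserved. Recall the chain: $\widehat{\cE_I}$ is defined as the closure of the image of $\Psi:\Kg^\circ \to \sH$, then $\widetilde{\cE_I}'$ is a locally closed substack of $\widehat{\cE_I}$, and finally $\widetilde{\cE_I}$ is the seminormalization of $\widetilde{\cE_I}'$. The base fact, that $\sH$ is locally of finite type, is given to us by the results of \cite{Homstack} on Hom-stacks, so we may take it as input.

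First, I would observe that $\widehat{\cE_I}$ is a closed substack of $\sH$. Indeed, closures of substacks in an ambient algebraic stack are realized as closed substacks (with their reduced structure), and a closed substack of a locally of finite type stack is again locally of finite type. Next, $\widetilde{\cE_I}'$ is cut out inside $\widehat{\cE_I}$ by the conditions that $\alpha\circ\beta$ be an isomorphism and $\beta$ be surjective; these are locally closed conditions, so $\widetilde{\cE_I}'\hookrightarrow \widehat{\cE_I}$ is a locally closed immersion, and hence $\widetilde{\cE_I}'$ is locally of finite type.

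Finally, I need to know that passing to the seminormalization preserves this property. Since $\widetilde{\cE_I}'$ is locally of finite type over a field of characteristic zero, it is in particular excellent, and the seminormalization morphism $\widetilde{\cE_I}\to \widetilde{\cE_I}'$ is finite (see \cite{Kollarsing}*{10.16} for the relevant properties of seminormalization on DM stacks, reducing to the case of schemes via an atlas). A finite morphism is of finite type, so composing with the structure morphism of $\widetilde{\cE_I}'$ to $\spec(k)$ gives that $\widetilde{\cE_I}\to \spec(k)$ is locally of finite type, as claimed. There is no real obstacle in this argument; the only point that requires a moment of thought is making sure that the seminormalization of a DM stack can be computed \'etale locally on an atlas, so that one may reduce to the scheme-theoretic statement.
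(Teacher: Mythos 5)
Your proposal is correct and matches the reasoning the paper leaves implicit: $\widehat{\cE_I}$ is a closed substack of the locally finite type stack $\sH$, $\widetilde{\cE_I}'$ is locally closed in it, and seminormalization is a finite morphism, so the property passes through each step. The paper offers no written proof for this observation, and your argument is exactly the intended one.
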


We start by describing the objects of $\widetilde{\cE_I}(\spec(k))$. One can deduce in a similar way the case
$\widetilde{\cE_I}(\spec(R))$ for every DVR $R$.
The groupoid $\widetilde{\cE_I}(\spec(k))$ has as object the quadruplets $((X,sS+\vec{a}F); C; \pi:X\to C; \sigma:C \to S)$
consisting of:
\begin{itemize}
 \item $(X,sS+\vec{a}F)$, an object of $\cE_I^{sn}(\spec(k))$;
 \item $C$, and object of $\mathfrak{M}_g(\spec(k))$;
 \item Two morphisms $\pi:X \to C$ and $\sigma:C \to S$ such that $\pi_{|S}\circ \sigma$ is an isomorphism and $\sigma$ is surjective.
\end{itemize}
We will not explicitly write the auxiliary sections, and we consider them as part of the data when we write $(X,sS+\vec{a}F)$. This should cause no confusion, since they will not play any significant role.

We also require an extra condition, since we are taking the closure of the image of $\Psi$.
We require that there is a DVR $R$, a
threefold pair $(\cX,s\cS+\vec{a}\mathcal{A}) $ which is an object
of $\cE_I^{sn}(\spec(R))$, and a family of prestable genus $g$ curves $\cC \to \spec(R)$, satisfying the following two conditions:
\begin{itemize}
 \item If $p$ (resp. $\eta$) is the closed (resp. open) point of $\spec(R)$, we require that there is a morphism $\cX \to \cC \to\spec(R)$
 which has a section $\cC \to \cS$; and there are isomorphisms
$(\cX,s\cS+\vec{a}\mathcal{F})_p \cong  (X,sS+\vec{a}F)$ and $\cC_p \cong C$ which make the obvious diagrams commutative;
\item $(\cX,s\cS+\vec{a}\mathcal{F})_\eta \to \cC_\eta \to \spec(k(\eta))$ and the section $\cC_\eta \to \cS_\eta$ are in the
image of $\Psi$.
\end{itemize}
A morphism between $((X,sS+\vec{a}F); C; \pi:X\to C; \sigma:C \to S)$ and
$((X',sS'+\vec{a}F'); C'; \pi':X'\to C'; \sigma':C' \to S')$ is the data of two isomorphisms $(f_1,f_2)$, with
$f_1:(X,sS+\vec{a}F) \to (X',sS'+\vec{a}F')$ and $f_2:C \to C'$, such that the obvious diagrams commute.
\begin{Prop}\label{Prop:the:auxiliary:stack:is:separated}
 The stack $\widetilde{\cE_I}$ is separated.
\end{Prop}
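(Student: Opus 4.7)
The plan is to verify the valuative criterion of separatedness. Fix a DVR $R$ with fraction field $K$, and let $\xi_1, \xi_2 \in \widetilde{\cE_I}(\spec(R))$ be two objects equipped with an isomorphism $\psi_K:\xi_{1,K} \to \xi_{2,K}$ over $\spec(K)$. Each $\xi_i$ consists of a stable family $(\cX_i, s\cS_i+\vec{a}\sF_i) \to \spec(R)$ coming from $\cE_I^{sn}(\spec R)$, together with a prestable genus-$g$ curve $\cC_i \to \spec R$ and morphisms $\pi_i:\cX_i \to \cC_i$, $\sigma_i:\cC_i \to \cS_i$ satisfying the conditions of Notation \ref{Def:the:section:is:a:section}. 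Since $\cE_I^{sn}$ is proper (in particular separated), the surface part of $\psi_K$ extends uniquely over $\spec R$. After applying this extension I may assume $\cX_1 = \cX_2 =: \cX$ and $\cS_1 = \cS_2 =: \cS$, and the task reduces to producing a unique isomorphism of the curve data extending the one already fixed on the generic fiber.

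The main step is to show that the two images of $\sigma_i$ in $\cS$ coincide as closed subschemes. Since $\cC_i$ and $\cS$ are both proper over $\spec R$, each $\sigma_i$ is proper; and the relation $\pi_i|_{\cS}\circ \sigma_i = \operatorname{id}_{\cC_i}$ shows that $\sigma_i$ is a monomorphism. A proper monomorphism is a closed immersion, so each $\sigma_i$ realises $\cC_i$ as a closed subscheme $Z_i \subseteq \cS$ (surjective, by hypothesis). Since $\cC_i \to \spec R$ is flat, being a family of prestable curves, so is $Z_i$; hence $Z_i$ is the scheme-theoretic closure of its generic fiber in $\cS$. The isomorphism $\psi_K$ identifies $Z_{1,K}$ with $Z_{2,K}$ inside $\cS_K$, and by uniqueness of scheme-theoretic closure in a flat family over a DVR I conclude that $Z_1 = Z_2$. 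The composition $\phi := \sigma_2^{-1} \circ \sigma_1:\cC_1 \xrightarrow{\sim} \cC_2$ then gives a canonical isomorphism over $\cS$ extending the one provided by $\psi_K$.

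It remains to verify compatibility of $\phi$ with the maps $\pi_i$, i.e.\ that $\phi \circ \pi_1 = \pi_2$ as morphisms $\cX \to \cC_2$. These two morphisms agree on $\cX_K$ by construction, and since $\cC_2 \to \spec R$ is separated their equaliser is a closed substack of $\cX$. Flatness of $\cX$ over the DVR $\spec R$ implies that $\cX_K$ is schematically dense in $\cX$, so the equaliser must be all of $\cX$. Uniqueness of the extending isomorphism of $\xi_1$ with $\xi_2$ is automatic from the construction, completing the verification of the valuative criterion. The main subtle point is the closed-immersion step: the combination \emph{proper, surjective, with a left inverse} is exactly what transfers the separatedness question from the varying curves $\cC_i$ to the fixed ambient $\cS$, after which scheme-theoretic closure in a flat family over a DVR concludes the argument.
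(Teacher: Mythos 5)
Your proof is correct and follows essentially the same route as the paper's: verify the valuative criterion, extend the isomorphism of surface pairs using separatedness of the moduli of stable pairs, recover the isomorphism of curves from the data of the section $\sigma$, and deduce the remaining commutativity from the agreement on the (schematically dense) generic fiber. Your packaging of the middle step — $\sigma_i$ is a proper split monomorphism, hence a closed immersion, and the images coincide by uniqueness of the flat closure over the DVR — is a slightly more careful rendering of the paper's direct definition $G:=\pi'\circ H\circ\sigma$ together with $H(S)=\overline{h(S_\eta)}=S'$, but it is the same argument in substance.
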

\begin{proof}
We use the valuative criterion for separatedness, \cite{LMB}*{Proposition 7.8}.
Let $R$ be a DVR, and let $\eta$ (resp. $p$) the generic (resp. closed) point of $\spec(R)$. Consider
two families $((X,sS+\vec{a}F)),C, \pi:X \to C, \sigma:C \to S)$ and
$((X',sS'+\vec{a}F')),C', \pi':X' \to C', \sigma':C' \to S')$ in $\widetilde{\cE_I}(\spec(R))$.
Assume there are two isomorphisms $h:(X,sS+\vec{a}F)_\eta \to (X',sS'+\vec{a}F')_\eta$ and $g:C_\eta \to C_\eta'$ such that the following two squares
commute:
$$\xymatrix{(X,sS+\vec{a}F)_\eta \ar[r]^{h} \ar[d] & (X',sS'+\vec{a}F')_\eta\ar[d] \\ C_\eta \ar[r]_{g} & C'_\eta} \text{ } \text{ }
\xymatrix{S_\eta \ar[r]^{h_{|S_\eta}} & S'_\eta \\ C_\eta \ar[r]_{g} \ar[u]^{\sigma_\eta} & C'_\eta \ar[u]_{\sigma'_\eta}}$$

where $\sigma_\eta$ and $\sigma'_\eta$ are the two sections. We need to find two isomorphisms $H$
and $G$ which extend $h$ and $g$ respectively, and witch make two corresponding diagrams commutative.

The moduli of stable pairs is separated, so we can find an isomorphism $H:(X,sS+\vec{a}F) \to (X',sS'+\vec{a}F')$.
Moreover, $H(S)= \overline{h(S_\eta)}=S'$, so there is an isomorphism $S \cong S'$.
Therefore we have an isomorphism $G:=\pi'\circ H \circ \sigma :C \to C'$.
To show that $H$ and $G$ induce
a morphism of $\widetilde{\cE_I}^{sn}(\spec(R))$ we just need to check the commutativity condition. Namely, we need to check that $G \circ \pi=\pi' \circ H$ and $(H_{|S})\circ \sigma = \sigma' \circ G$. But these are morphisms of separated and reduced schemes, and they agree when we restrict them to the generic fiber. Therefore they agree everywhere.
\end{proof}
Observe that coupling Proposition \ref{Prop:the:auxiliary:stack:is:separated} with Corollary \ref{Cor:description:lc:degeneration}, we have a description of the objects on the boundary of $\widetilde{\cE_I}$. We will use the following lemma in the proof of Proposition \ref{Prop:we:have:the:universal:curve}:
\begin{Lemma}\label{Lemma:extend:iso:last:section}
 Let $\alpha:=((X,sS+\vec{a}F); C; \pi:X\to C; \sigma:C \to S)$ and
$\beta:=((X',sS'+\vec{a}F'); C'; \pi':X'\to C'; \sigma':C' \to S')$ be two objects of $\widetilde{\cE_I}(\spec(k))$. Assume
that there is an isomorphism $f_1: (X,sS+\vec{a}F) \to (X',sS'+\vec{a}F')$. Then there is a unique $f_2:C \to C'$ such that
$(f_1,f_2)$ is an isomorphism $\alpha \to \beta$.
\end{Lemma}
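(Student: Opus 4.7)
The plan is to define $f_2$ via transport along the sections, verify the two required commutativities, and derive uniqueness from a monomorphism argument.

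First I will argue that $\sigma:C\to S$ is an isomorphism, with inverse $\pi|_S$, and analogously for $\sigma'$. By Notation \ref{Def:the:section:is:a:section} the composition $\pi|_S\circ\sigma$ is an isomorphism of $C$, and since $\widetilde{\cE_I}$ is the seminormalization of the closure of the image of $\Psi:\Kg^\circ\to\sH$, on which the section is genuinely a section, this isomorphism is forced to be the identity throughout $\widetilde{\cE_I}$; so $\sigma$ is a split monomorphism with left inverse $\pi|_S$. The surjectivity of $\sigma$ together with the fact that $S$ is a reduced nodal curve (which will follow from Corollary \ref{Cor:SI:is:a:fam:of:nodal:curves} in the cases we are in) promotes $\sigma$ to an isomorphism, so $\sigma^{-1}=\pi|_S$. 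I then set $f_2:=\sigma'^{-1}\circ f_1|_S\circ\sigma:C\to C'$. The second diagram $\sigma'\circ f_2=f_1|_S\circ\sigma$ holds tautologically, and uniqueness is automatic: any candidate $f_2$ making $(f_1,f_2)$ a morphism must satisfy the same equation, which determines it since $\sigma'$ is a monomorphism.

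It then remains to verify the first diagram $f_2\circ\pi=\pi'\circ f_1$ as morphisms $X\to C'$. A direct calculation using $\sigma^{-1}=\pi|_S$ and its analogue for $\sigma'$ shows that the two sides agree when restricted to $S$. To extend this agreement to all of $X$, I will appeal to the fact that $\pi:X\to C$ is its own Stein factorization, i.e.\ $\pi_*\cO_X=\cO_C$: any morphism from $X$ that is constant on every fiber of $\pi$ factors uniquely through $\pi$. Since $f_1$ is an isomorphism and $\pi'_*\cO_{X'}=\cO_{C'}$, we also have $(\pi'\circ f_1)_*\cO_X=\cO_{C'}$. Thus, provided $\pi'\circ f_1$ is constant on each fiber of $\pi$, it will factor as $\tilde f_2\circ\pi$ for a unique $\tilde f_2:C\to C'$, and the agreement on $S$ will force $\tilde f_2=f_2$.

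The heart of the argument, and the main obstacle, is showing that $f_1$ sends each fiber of $\pi$ into a fiber of $\pi'$. For this I will use Corollary \ref{Cor:description:lc:degeneration} to decompose $X$ into its irreducible components, each of which is either pseudoelliptic or an slc elliptic surface. Since $f_1$ is an isomorphism of surface pairs preserving the section $S$, it preserves this decomposition: pseudoelliptic components (characterized as those disjoint from $S$) map to pseudoelliptic components, while slc elliptic components map to slc elliptic components. A pseudoelliptic component $Y\subseteq X$ is itself a single fiber of $\pi$ and is sent to a single fiber of $\pi'$, so the constancy is immediate there. For an elliptic component $Y$ the fibration $\pi|_Y$ is determined up to isomorphism of the target curve by the numerical class of a general fiber (a variant of Lemma \ref{Lemma:W:fibration:remembers:the:fibration} transferred to the slc setting, using that the general fiber has trivial canonical class and spans a unique nef ray with self-intersection zero), so $f_1|_Y$ must carry fibers of $\pi|_Y$ to fibers of $\pi'|_{f_1(Y)}$. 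Combining the two cases yields the required constancy, and hence the lemma.
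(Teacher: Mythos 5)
Your overall architecture matches the paper's: define $f_2=(\sigma')^{-1}\circ f_1|_S\circ\sigma$, get uniqueness from the right-hand square, and reduce the left-hand square to the statement that $f_1$ carries (generic) fibers of $\pi$ to fibers of $\pi'$. The genuine gap is in the one step that carries all the weight: your assertion that on an elliptic component $Y$ the general fiber ``spans a unique nef ray with self-intersection zero'', so that the fibration is intrinsic to the surface pair. As stated this is false --- an elliptic surface can carry two distinct fibration structures (a product of elliptic curves is exactly the exceptional case already recorded in Lemma \ref{Lemma:W:fibration:remembers:the:fibration}), and, more to the point, an irreducible curve $M$ with $M^2=0$ and $M\cdot K_Y=0$ could a priori be a multisection rather than a fiber. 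What is needed, and what the paper supplies, is an argument using the full boundary divisor: one characterizes the generic irreducible fibers as the irreducible curves $G$ with $G^2=0$, $G\cdot K_Y=0$ and $G\cap\supp(\vec{a}F_Y+E)=\emptyset$ (where $E$ is the coefficient-one double locus), and then shows that an irreducible \emph{multisection} $M$ satisfying these conditions would force $\vec{a}F_Y=0$ and $(K_Y+\vec{a}F_Y+E)\cdot S_Y=0$, whence $(K_Y+sS_Y+\vec{a}F_Y+E)\cdot S_Y\le 0$ because $S_Y^2\le 0$ by \cite{AB3}*{Lemma B.1} --- contradicting stability of the component. Your parenthetical ``variant of Lemma \ref{Lemma:W:fibration:remembers:the:fibration} transferred to the slc setting'' is precisely the content that has to be proved, and the transfer is exactly where the stability of $(Y,sS_Y+\vec{a}F_Y+E)$, in particular the marking of the double locus with coefficient one, must enter; without it the claim fails.

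Two smaller points. Your route through Stein factorization needs $\pi_*\cO_X=\cO_C$ for these reducible slc surfaces (with pseudoelliptic components contracted to points), which is not established anywhere; the paper sidesteps this by noting that two morphisms of reduced separated schemes agreeing set-theoretically on a dense open subset agree everywhere, so it suffices to treat only the generic fibers away from the double locus and from $\supp(\vec{a}F)$ --- which also spares you from proving constancy on the intermediate and twisted fibers. Your justification that $\sigma$ is an isomorphism is fine and somewhat more careful than the paper's.
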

\begin{proof}
  We need to find a morphism $f_2:C \to C'$ which makes these two diagrams
commutative:
$$\xymatrix{(X,sS+\vec{a}F) \ar[r]^{f_1} \ar[d]_{\pi} & (X',sS'+\vec{a}F')\ar[d]^{\pi'} \\ C \ar[r]_{f_2} & C'} \text{ } \text{ }
\xymatrix{S \ar[r]^{f_1} & S'\\ C \ar[r]_{f_2} \ar[u]^{\sigma} & C' \ar[u]_{\sigma'}}$$
Since $\sigma$ and $\sigma'$ are isomorphism, using the diagram one right, we need to show that
$f_2:=(\sigma')^{-1} \circ f_1 \circ \sigma$ makes the
diagram on the left commutative, i.e. we need to show that $\pi'\circ f_1=\pi' \circ f_1 \circ \sigma \circ \pi$. It is enough to show it on sets, since we are dealing with reduced separated schemes. For the same reason, it is enough to show that it commutes when restricted to a dense open subset.

We first show that if $G$ is an irreducible curve whose support is a fiber of $\pi$ which does not intersect the double locus and $\supp(\vec{a}F)$, then $f(G)$ is supported on a fiber of $\pi'$.
From Corollary \ref{Cor:description:lc:degeneration},
for every irreducible component $D$ of $C$, there is an irreducible elliptic component $Y$ of $X$ such that $\sigma(D)\subseteq Y$.
Moreover, let $S_Y:=S_{|Y}$, $F_Y:=F_{|Y}$ and let $E:=Y\cap \overline{(X\smallsetminus Y)}$ be the double locus.
Then $(Y,sS_Y+\vec{a}F_Y+E)$ is stable.

We can characterize the irreducible curves $G$ in $Y$ as above which are fibers as follows. We need to have $G^2=0$, $G.K_Y=0$ and $G\cap \supp(\vec{a}F_Y+E) = \emptyset$. Indeed, an irreducible fiber satisfies these requirements ($K_Y$ is supported on some fiber components). Moreover, if an irreducible multisection $M$ satisfies these requirements, then $\vec{a}F_Y=0$. Furthermore, since $M \cap \supp(E)=\emptyset$, $M$ passes through the intermediate components of the intermediate fibers. But then the fiber components intersected by $M$ are the fiber components intersected by $S_Y$, and since $(K_Y+\vec{a}F_Y+E).M=0$ we have $(K_Y+\vec{a}F_Y+E).S_Y=0$. Since $S_Y^2\le 0$ from \cite{AB3}*{Lemma B.1}, we have $(sS_Y+K_Y+\vec{a}F_Y+E).S_Y\le0$ which contradicts the stability assumption.
Then the irreducible fibers are determined by the surface pair $(X,sS+\vec{a}F)$, so $\pi$ and $\pi'$, generically, have the same fibers.

Now, for every $p \in X$ supported on a generic irreducible fiber for both $\pi$ and $\pi'$, we show that $(\pi'\circ f_1)(p)=(\pi' \circ  f_1  \circ \sigma  \circ  \pi) (p)$. This boils down to proving that $f_1(p)$ and $(f_1 \circ \sigma \circ \pi )(p)$ are in the same fiber for $\pi'$. But then it is enough to show that
$p$ and $(\sigma \circ \pi) (p)$ are in the same fiber for $\pi$, which follows since $\pi \circ \sigma = \Id$.
\end{proof}

\begin{Prop}\label{Prop:we:have:the:universal:curve}
 The morphism $f:\widetilde{\cE_I} \to \cE_I^{sn}$ is an isomorphism. 
\end{Prop}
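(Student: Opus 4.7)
The plan is to apply Proposition \ref{proposition:iso:dm:stacks}(1) to $f$: both $\widetilde{\cE_I}$ and $\cE_I^{sn}$ are seminormal DM stacks by construction, so it suffices to check that $f$ is representable, proper, and an equivalence of groupoids on $\spec(k)$-points.

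Representability and the fully faithful half of the equivalence of groupoids follow directly from Lemma \ref{Lemma:extend:iso:last:section}. Indeed, given any two lifts $\alpha,\alpha'$ of a common object $q\in\cE_I^{sn}(\spec(k))$ and any isomorphism $f_1$ of the underlying surface pairs, that lemma produces a unique $f_2$ completing $(f_1,f_2)$ to a morphism $\alpha\to\alpha'$; specializing to $\alpha=\alpha'$ and $f_1=\Id$ injects $\Aut(\alpha)$ into $\Aut(f(\alpha))$, giving representability. For essential surjectivity, take $q=(X,sS+\vec{a}F)\in\cE_I^{sn}(\spec(k))$. Since $\cE_I^{\circ,sn}$ is dense in $\cE_I^{sn}$ (Observation \ref{Oss:specialization:of:the:interior:of:the:moduli}), I would choose a DVR $R$ and a morphism $\spec(R)\to\cE_I^{sn}$ sending the generic point to $\cE_I^{\circ,sn}$ and the closed point to $q$. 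The resulting stable degeneration $(\cX,s\cS+\vec{a}\cF)$ carries, by Corollary \ref{Cor:description:lc:degeneration}, a family of nodal curves $\cC\to\spec(R)$ together with $h:\cX\to\cC$ whose restriction $h_{|\cS}:\cS\to\cC$ is an isomorphism. Restricting this data to the closed fiber yields a quadruple $(X,C,\pi,\sigma)$; its generic fiber, coming from $\cE_I^{\circ,sn}$, lies in the image of $\Psi$ (possibly after harmless ramified base change), so the closure condition defining $\widetilde{\cE_I}$ is met.

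For properness I would invoke the valuative criterion. Separatedness is Proposition \ref{Prop:the:auxiliary:stack:is:separated}, and finite-typeness of $\widetilde{\cE_I}\to\cE_I^{sn}$ follows because $\beta=(g,d)$ bounds the genus of the curves and the degree of the relevant section data, so the defining Hom-stack $\sH$ restricts to a finite-type substack over $\cE_I^{sn}$. Existence of limits is handled as in the surjectivity argument above: given $\spec(R)\to\cE_I^{sn}$ with a lift of the generic fiber to $\widetilde{\cE_I}$, Corollary \ref{Cor:description:lc:degeneration} applied to the induced stable degeneration furnishes an extension over $\spec(R)$, and its restriction to $\eta$ agrees with the prescribed generic lift by the full faithfulness already established.

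The main obstacle will be careful bookkeeping in the essential surjectivity and valuative-criterion steps: verifying that the quadruples produced by Corollary \ref{Cor:description:lc:degeneration} actually land in the locally closed stratum $\widetilde{\cE_I}'\subseteq\widehat{\cE_I}$ (where $\alpha\circ\beta$ is an isomorphism and $\beta$ is surjective), not merely in $\widehat{\cE_I}$. The identity $\alpha\circ\beta=\Id$ is immediate from $h_{|\cS}$ being an isomorphism, while surjectivity of $\beta$ follows from the structure theorem in Corollary \ref{Cor:description:lc:degeneration}, according to which each component of $\cX_p$ is either an elliptic component mapping onto an irreducible component of $\cC_p$ or a pseudoelliptic component contracted to a point by $h$; so only routine verifications remain.
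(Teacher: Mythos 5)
Your overall strategy is the paper's: reduce to Proposition \ref{proposition:iso:dm:stacks}(1), get injectivity on points and on automorphism groups from Lemma \ref{Lemma:extend:iso:last:section}, get surjectivity on $k$-points and existence of limits from Corollary \ref{Cor:description:lc:degeneration}, and get separatedness from Proposition \ref{Prop:the:auxiliary:stack:is:separated}. All of those steps match.

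The one place where your argument has a real gap is quasi-compactness of $\widetilde{\cE_I}$. You assert that ``$\beta=(g,d)$ bounds the genus of the curves and the degree of the relevant section data, so the defining Hom-stack $\sH$ restricts to a finite-type substack over $\cE_I^{sn}$.'' That is not a proof, and as stated it is not true: $\sH$ is built from Hom-stacks, which are only \emph{locally} of finite type --- disjoint unions of finite-type pieces indexed by the Hilbert polynomials of the graphs of the morphisms $X\to C$ and $C\to S$ --- and fixing $g$ and $d$ does not by itself bound which of these infinitely many components can occur. This matters because the valuative criterion you invoke requires the morphism to be of finite type. The paper closes this gap differently: it passes to an atlas $B\to\cE_I$, uses Corollary \ref{Cor:SI:is:a:fam:of:nodal:curves} to know $\sS_B\to B$ is a family of nodal curves, identifies the relevant parameter space inside $\operatorname{Hom}_B(\sX_B,\sS_B)$ (an open subscheme of a disjoint union of Hilbert schemes), and then observes that the morphism $\chi$ from $F:=\Kg^\circ\times_{\cE_I}B$ --- which \emph{is} of finite type --- hits only finitely many Hilbert-polynomial components, so the closure $H$ of its image is of finite type and surjects onto $\widetilde{\cE_I}$ by the density/surjectivity already established. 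You should replace your finite-typeness assertion with an argument of this kind (boundedness comes from the finite-typeness of $\Kg^\circ$ and the fact that every point of $\widetilde{\cE_I}$ is a limit of points in the image of $\Psi$, not from $\beta$ alone). The remaining steps of your proposal, including the verification that the limits land in the locally closed stratum where $\alpha\circ\beta$ is an isomorphism and $\beta$ is surjective, are fine.
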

\begin{Cor}\label{Cor:we:have:the:curve}
There is a family of curves $\sC_I \to \cE_I$, and a morphism $\sX_I \to \sC_I \to \cE_I$ satisfying the following condition. For every $\spec(k) \to \cE_I^\circ$, the corresponding morphism $(\sX_I)_p \to (\sC_I)_p$ is the morphism to a curve in the definition of a Weierstrass fibration.
\end{Cor}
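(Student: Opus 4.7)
The plan is to verify the hypotheses of Proposition \ref{proposition:iso:dm:stacks}(1) for the morphism $f:\widetilde{\cE_I}\to \cE_I^{sn}$. There are three things to check: representability of $f$, properness of $f$, and the equivalence of the induced functor on $\spec(k)$-valued points.

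\textbf{Representability and the equivalence on $k$-points.} Given a quadruplet $\alpha=(X,C,\pi,\sigma)$, any automorphism $(f_1,f_2)$ with $f_1=\Id_X$ must, by Lemma \ref{Lemma:extend:iso:last:section}, satisfy $f_2=\sigma^{-1}\circ\sigma=\Id_C$, so the map $\Aut(\alpha)\to \Aut(X)$ is injective; by the same lemma (applied with arbitrary $f_1$) it is also surjective, since the recipe $f_2=\sigma^{-1}\circ f_1\circ \sigma$ defines a compatible automorphism on $C$. This simultaneously proves that $f$ is representable and that $f(\spec(k))$ is fully faithful. For essential surjectivity, take $X\in \cE_I^{sn}(\spec(k))$ coming from a stable degeneration $(\cX,s\cS+\vec{a}\cF)\to\spec(R)$ whose generic point maps to $\cE_I^\circ$ and whose special fiber is $X$; Corollary \ref{Cor:description:lc:degeneration} supplies a family of nodal curves $\cC\to\spec(R)$ with a morphism $\cX\to\cC$ such that $\cS\to\cC$ is an isomorphism. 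Specializing at the closed point produces a quadruplet $(X,C,\pi,\sigma)$ lifting $X$. Together with the fully faithful statement, this shows the fiber of $f$ over any $k$-point is a groupoid with a single object and trivial automorphisms, hence non-empty and finite.

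\textbf{Properness.} Proposition \ref{Prop:the:auxiliary:stack:is:separated} already gives separatedness of $\widetilde{\cE_I}$, and hence of $f$. For the existence half of the valuative criterion, let $R$ be a DVR with fraction field $K$ and suppose given a commutative square
\[
\xymatrix{\spec(K)\ar[r]\ar[d] & \widetilde{\cE_I}\ar[d]^{f} \\ \spec(R)\ar[r] & \cE_I^{sn}}
\]
Pulling back the universal family along the bottom arrow yields a family $(\cX,s\cS+\vec{a}\cF)\to\spec(R)$ belonging to $\cE_I^{sn}(\spec(R))$, and by Corollary \ref{Cor:description:lc:degeneration} there is a flat family of nodal curves $\cC\to\spec(R)$ together with a morphism $\cX\to\cC$ for which $\cS\to\cC$ is an isomorphism. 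After possibly a finite base change of $R$, this quadruplet agrees on $\spec(K)$ with the given data (uniqueness of such an extension on the generic fiber follows from Lemma \ref{Lemma:extend:iso:last:section}), hence defines the required lift $\spec(R)\to\widetilde{\cE_I}$. Combined with separatedness and finite type-ness inherited from $\sH$, this proves that $f$ is proper.

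\textbf{Conclusion and Corollary \ref{Cor:we:have:the:curve}.} With these three properties in hand, Proposition \ref{proposition:iso:dm:stacks}(1) yields that $f:\widetilde{\cE_I}\to \cE_I^{sn}$ is an isomorphism. The universal family of nodal curves $\cC\to\widetilde{\cE_I}$, together with the morphisms $\cX\to\cC$ and $\cC\to \cS$, transports via $f^{-1}$ to corresponding objects over $\cE_I^{sn}$; pulling back through the normalization morphism $\cE_I\to\cE_I^{sn}$ then produces $\sC_I\to\cE_I$ and a morphism $\sX_I\to\sC_I$ compatible with $\sS_I\to\cE_I$. By construction of $\Psi:\Kg^\circ\to\sH$, on the open substack $\cE_I^\circ$ this morphism restricts to the structural fibration of a Weierstrass fibration.

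The main technical obstacle is the existence part of the valuative criterion for properness: one has to verify that the family of nodal curves produced by Corollary \ref{Cor:description:lc:degeneration} is compatible with the quadruplet structure already present on the generic fiber. This is where separatedness of $\widetilde{\cE_I}$ (Proposition \ref{Prop:the:auxiliary:stack:is:separated}) and the uniqueness clause of Lemma \ref{Lemma:extend:iso:last:section} intervene to match the two structures.
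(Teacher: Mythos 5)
Your overall strategy coincides with the paper's: both reduce the Corollary to showing that $f:\widetilde{\cE_I}\to\cE_I^{sn}$ is an isomorphism via Proposition \ref{proposition:iso:dm:stacks}(1), both use Lemma \ref{Lemma:extend:iso:last:section} for the equivalence on $k$-points and for injectivity on automorphism groups, and both use Corollary \ref{Cor:description:lc:degeneration} together with a valuative criterion for the existence of limits. However, there is one genuine gap: you dismiss quasi-compactness with the phrase ``finite type-ness inherited from $\sH$.'' The stack $\sH$ is a product of Hom-stacks, and as the paper explicitly records, it is only \emph{locally} of finite type over $\spec(k)$ (a Hom-stack decomposes as a disjoint union of infinitely many finite-type pieces, indexed for instance by Hilbert polynomials of graphs). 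Consequently $\widetilde{\cE_I}$ is a priori only locally of finite type, and the valuative criterion alone cannot yield properness of a morphism that is not known to be quasi-compact (compare $\coprod_{n\in\mathbb{N}}\spec(k)\to\spec(k)$, which satisfies both valuative criteria but is not proper). The paper devotes the longest step of its proof precisely to this point: it realizes a finite-type cover of $\widetilde{\cE_I}$ by noting that $\operatorname{Hom}_B(\sX_B,\sS_B)$ is an open subscheme of a disjoint union of Hilbert schemes, that $F:=\Kg^\circ\times_{\cE_I}B$ is of finite type, and hence that the closure $H$ of the image of the induced morphism $\chi:F\to\operatorname{Hom}_B(\sX_B,\sS_B)^\circ$ lands in finitely many finite-type components; surjectivity of $H\to\widetilde{\cE_I}$ (which uses Corollary \ref{Cor:description:lc:degeneration} and the already-established equivalence on points) then gives quasi-compactness. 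Your proof needs this argument, or an equivalent one, before the valuative criterion can be applied.

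A secondary, more minor point: when matching the generic fibers in the existence half of the valuative criterion, you invoke Lemma \ref{Lemma:extend:iso:last:section}, which is stated only for objects over an algebraically closed field, whereas the generic point of $\spec(R)$ is not algebraically closed. The matching should instead be justified by separatedness of the moduli of stable pairs together with the argument of Proposition \ref{Prop:the:auxiliary:stack:is:separated} (reduced separated schemes and agreement on a dense open), which is how the paper handles uniqueness over DVRs. The remaining steps of your proposal (representability via injectivity on automorphisms, essential surjectivity via stable degenerations, and the final transport of $\cC$, $\alpha$, $\beta$ through $f^{-1}$ and the normalization $\cE_I\to\cE_I^{sn}$) agree with the paper's argument.
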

\begin{proof}[Proof of Proposition \ref{Prop:we:have:the:universal:curve}]
The strategy is to apply Proposition \ref{proposition:iso:dm:stacks}. 
Let $\alpha:=((X,sS+\vec{a}F); C; \pi:X\to C; \sigma:C \to S)$ be an object of $\widetilde{\cE_I}(\spec(k))$.

\underline{$f$ is surjective on $k$-points}: This follows from Corollary \ref{Cor:description:lc:degeneration}.

\underline{$f$ is injective on $k$-points}: Given $\beta:=((X,sS+\vec{a}F); C'; \pi':X'\to C'; \sigma':C' \to S')$,
we need to show that $\alpha \cong \beta$. This follows from
Lemma \ref{Lemma:extend:iso:last:section}.

\underline{$f_\alpha:\Aut(\alpha) \to \Aut(f(\alpha))$ is bijective}: This follows again from Lemma \ref{Lemma:extend:iso:last:section}.

So $\widetilde{\cE_I}(\spec(k)) \to \cE_I^{sn}(\spec(k))$ is an equivalence.
We show that $\widetilde{\cE_I}$ is proper.
We already know it is separated (Proposition \ref{Prop:the:auxiliary:stack:is:separated}), we show now that $\widetilde{\cE_I}$ is quasi-compact.

\underline{$\widetilde{\cE_I}$ is quasi-compact:} Consider
$B \to \cE_I$ an atlas which is a scheme, and let $\sS_B:= \sS_I \times_{\cE_I}B$. From Corollary \ref{Cor:SI:is:a:fam:of:nodal:curves} the second projection $\sS_B \to B$ is a family of nodal curves.

Recall that the Hom-scheme $\operatorname{Hom}_B(\sX_B,\sS_B)$ is an open subscheme of $\amalg _{p(t)}\operatorname{Hilb}_{\sX_B \times_B \sS_B/B}^{p(t)}$ (see \cite{arbarello-cornalba-griffiths}*{IX.7}). In particular, it is a disjoint union of schemes of finite type.
Moreover, over $\operatorname{Hom}_B(\sX_B,\sS_B)$ there are the following universal objects:\begin{itemize}
\item A family of surfaces $\sX_H \to \operatorname{Hom}_B(\sX_B,\sS_B)$;

\item A family of curves $\sS_H \to \operatorname{Hom}_B(\sX_B,\sS_B)$ which has a closed embedding $\sS_H \to \sX_H$, and
\item A universal morphism $\phi:\sX_H \to \sS_H$.
\end{itemize}
Composing the closed embedding $\sS_H\to \sX_H$ and $\phi$, gives a map $g:\sS_H \to \sS_H$.
Consider the open subscheme $\operatorname{Hom}_B(\sX_B,\sS_B)^\circ \subseteq \operatorname{Hom}_B(\sX_B,\sS_B)$ where $g$ is an isomorphism.

Now, over $\Kg^\circ$ we have the family of surface pairs $(\sY,s\sS+\vec{a}\sF)$ which are stable Weierstrass fibrations; with a morphism $\sY \to \sC_\sY$ which is an isomorphism when restricted to $\sS$. From Observation \ref{Oss:definition:psi:KtoW} there is a morphism $\Psi:\Kg^\circ \to \cE_I^{sn}$, let $F:= \Kg^\circ \times_{\cE_I}B$. Observe that $F$ is of finite type. Let $\sY_F:=\sY \times_{\Kg^\circ} F$ and let $\sC_F:=\sC_\sY \times_{\Kg^\circ}F$. The morphism $\sY \to \sC_\sY$ induces $\sY_F \to \sC_F$ which in turn induces a morphism $\chi:F \to \operatorname{Hom}_B(\sX_B,\sS_B)^\circ$:

$$\xymatrix{\sY_F \ar[r]& \sC_F \ar[r] & F \ar[r] \ar[d] \ar@{.>}[ld]_-{\chi} & \Kg^\circ \ar[d] \\ &\operatorname{Hom}_B(\sX_B,\sS_B)^\circ \ar[r]& B \ar[r] & \cE_I}$$

Let $H$ be the closure of the image of $\chi$. Since $F$ is of finite type and $\operatorname{Hom}_B(\sX_B,\sS_B)^\circ$ is a disjoint union of schemes of finite type, the image of $\chi$ is contained in a closed subscheme of finite type of $\operatorname{Hom}_B(\sX_B,\sS_B)^\circ$. But then also $H$ is of finite type. From Corollary \ref{Cor:description:lc:degeneration} the morphism $H \to \cE_I$ is surjective, so also the composition $H \to \cE_I \to \cE_I^{sn}$ is surjective. But $H \to \cE_I^{sn}$ factors through $H \to \widetilde{\cE_I}$, and $f$ is an equivalence on points. Then
$H \to \widetilde{\cE_I}$ is surjective as well, and since $H$ is quasi-compact, also $\widetilde{\cE_I}$ is quasi-compact.

\underline{End of the proof:} Finally we have that $\widetilde{\cE_I}$ is of finite type. Then from \cite{LMB}*{Proposition 7.12, Remark 7.12.3}
and Corollary \ref{Cor:description:lc:degeneration}, the moduli space $\widetilde{\cE_I}$
is proper. So also $\widetilde{\cE_I} \to \cE_{I}^{sn}$ is proper, and from
Proposition \ref{proposition:iso:dm:stacks} the map $\widetilde{\cE_I}\to \cE_I^{sn}$ is an isomorphism.
\end{proof}

\subsection{Chambers with no pseudoelliptics} 
In this subsection we show that there are chambers such that, if $I$ belongs to such a chamber, the surface pairs parametrized by $\cE_I$ do not have any pseudoelliptic component (see Theorem \ref{Teo:intro:have:flat:fibration}). We start with the particular
case of a fixed tsm limit:
\begin{Prop}\label{Prop:no:psdo:fixed:tsm:limit}
 Let $(X',sS'+\vec{a}F') \to \spec(R)$ be a tsm limit. Then, given the vector $\vec{a}$, there is a positive $\widetilde{s}$ such that for every $s \le \widetilde{s}$,
 there are no pseudoelliptic components in the special fiber of the stable model of $(X',sS'+\vec{a}F')$.
\end{Prop}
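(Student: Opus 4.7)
The plan is to use the explicit description of stable reduction from Theorem \ref{Teo:descripton:bir:trans:to:get:lcdeg} together with a key intersection computation on pseudofibers. For each irreducible elliptic component $Y$ of $X'_p$ with section component $S_Y := S'|_Y$ and double locus $E := Y \cap \overline{(X'_p \smallsetminus Y)}$, Observation \ref{Oss:all:fib:irred:implies:pulback} yields $K_Y + \vec{a}F|_Y + E = f_Y^* D_Y$ for some $\mathbb{Q}$-divisor $D_Y$ on the base curve $T_Y$ of $f_Y : Y \to T_Y$. Set $d_Y := \deg D_Y = (K_Y + \vec{a}F|_Y + E).S_Y$ and $e_Y := S_Y^2 \le 0$; both are independent of $s$. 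A direct computation gives $(K_{X'} + sS' + \vec{a}F').S_Y = d_Y + s e_Y$ and $((K_{X'}+sS'+\vec{a}F')|_Y)^2 = 2s d_Y + s^2 e_Y$.

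First, if $d_Y > 0$, then for $s < d_Y/(1 + |e_Y|)$ both quantities are strictly positive, so stable reduction performs neither a flip nor a divisorial contraction involving $Y$, and $Y$ produces no pseudoelliptic. The essential case is when some component $Y$ satisfies $d_Y \le 0$. Suppose a flip of La Nave contracts $S_Y$ via $p : Y \to X_1'$, producing a pseudoelliptic $X_1'$. Writing $p^*(K_{X_1'} + \vec{a}F + E) = K_Y + \vec{a}F|_Y + E + c S_Y$ with $c$ determined by $(K_Y + \vec{a}F|_Y + E + c S_Y).S_Y = 0$ gives $c = -d_Y/e_Y$. For a general fiber $F_Y$ on $Y$, with corresponding pseudofiber $F^{\mathrm{ps}} := p_* F_Y$, the projection formula yields
\[
(K_{X_1'} + \vec{a}F + E).F^{\mathrm{ps}} = c = -d_Y/e_Y \le 0,
\]
with strict inequality whenever $d_Y < 0$ and $e_Y < 0$. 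Since the pseudofibers sweep out $X_1'$, this shows $K_{X_1'} + \vec{a}F + E$ is not big on $X_1'$; consequently, by Theorem \ref{Teo:descripton:bir:trans:to:get:lcdeg}, a subsequent step of stable reduction performs a divisorial contraction removing $X_1'$ entirely from the 3-fold.

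The main obstacle is to iterate this argument: a divisorial contraction of $X_1'$ modifies the adjacent component $X_2^+$ (the intermediate twisted component is freed from the double locus), potentially triggering new flips and further pseudoelliptics further down the chain. I would proceed by induction on the number of stable-reduction steps of Theorem \ref{Teo:descripton:bir:trans:to:get:lcdeg}, using Theorem \ref{Teo:stable:model:depends:only:on:num:data} to guarantee that all relevant intersection numbers at each intermediate stage depend only on the refined numerical data and hence can be analyzed uniformly in $s$. Because there are only finitely many walls by Theorem \ref{Teo:wall:and:chamb:dec:stable:models}, choosing $\widetilde{s}$ smaller than all of them ensures that every pseudoelliptic appearing at an intermediate step has pseudofibers of non-positive log-canonical intersection and is subsequently contracted, producing a stable model free of pseudoelliptic components.
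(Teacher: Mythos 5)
Your central computation is correct and coincides in substance with the paper's own argument: the dichotomy $d_Y>0$ versus $d_Y\le 0$, and the identity $(K_{X_1'}+\vec{a}F+E).F^{\mathrm{ps}}=-d_Y/e_Y$ for a pseudofiber, are exactly what the paper extracts by pairing the log canonical divisor of a would-be surviving pseudoelliptic with an irreducible pseudomultisection and invoking Observation \ref{Oss:all:fib:irred:implies:pulback}. The genuine gap is in the iteration, which you correctly identify as ``the main obstacle'' but do not close. Neither of the two results you invoke does the job. Theorem \ref{Teo:wall:and:chamb:dec:stable:models} gives finitely many walls, hence a chamber $(0,s_0)$ in the $s$-direction on which the stable model is constant; it says nothing about whether that chamber is pseudoelliptic-free, which is precisely what the proposition asserts, so the appeal is circular. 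Theorem \ref{Teo:stable:model:depends:only:on:num:data} says that the sequence of \emph{types} of birational transformations is determined by the refined numerical data; it does not show that the quantities $d_Y$, recomputed at each intermediate stage, stay in the favourable regime. And they genuinely change: after a flip and the subsequent contraction of the pseudoelliptic it creates, the neighbouring component acquires an intermediate fiber and its double locus is modified, so Observation \ref{Oss:all:fib:irred:implies:pulback} no longer applies verbatim to it and its $d_Y$ is not the one you computed on $X'_p$. Your first-stage analysis therefore does not propagate on its own.

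The paper closes this gap with a specific bookkeeping device that keeps every intermediate threefold in the ``all components elliptic with all fibers irreducible'' regime, so that the same computation applies at every stage. At each step one isolates a \emph{single} non-positive section component $C_1^{(i)}$ (by adding the auxiliary fibers $G^{(i)}$ of Theorem \ref{Teo:descripton:bir:trans:to:get:lcdeg} and then deleting only those meeting $C_1^{(i)}$), contracts only that component, and then either (a) the elliptic component carrying it contracts entirely, in which case $X^{(i+1)}$ again has all components elliptic with irreducible fibers, or (b) it would become a surviving pseudoelliptic, in which case---by your computation, equivalently the pseudomultisection argument---one has $(L^{(i)}-sS^{(i)}).C_1^{(i)}>0$, so lowering $s$ prevents the contraction of $C_1^{(i)}$ altogether and one restarts with the smaller $s$. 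Termination follows from the finiteness of the set of irreducible components. You need this (or some comparable device preventing intermediate fibers and modified double loci from ever appearing in the models you analyze) for your induction on the steps of stable reduction to go through.
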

\begin{proof}
 We show that we can choose $s$ small enough such that, if taking the stable model of $(X',sS'+\vec{a}F')$ we do a flip of
 La Nave, then the pseudoelliptic component generated by the flip gets contracted; and we perform no small contraction (see Theorem \ref{Teo:descripton:bir:trans:to:get:lcdeg}).  
 We proceed as in Subsection \ref{Subsection:alg:nature:stable:reduction}:
 let $(X^{(1)},sS^{(1)}+\vec{a}F^{(1)}):=(X',sS'+\vec{a}F')$, let $\eta$ (resp. $p$) be the generic (resp. closed) point
 of $\spec(R)$. Let $L^{(1)}$ be the lc divisor of
 $(X^{(1)},sS^{(1)}+\vec{a}F^{(1)})$, let $C^{(1)}_j$ be the irreducible components of $S^{(1)}_p$ and let $Y^{(1)}_j$ be the irreducible component
 of $X'$ that contains $C^{(1)}_j$. Observe that \emph{all the fibers of} $X^{(1)}$ \emph{are irreducible}. 
 
 We start by computing $(L^{(1)},C^{(1)}_j)$ for every $j$. If all these intersection numbers are positive, then
 $(X^{(1)},sS^{(1)}+\vec{a}F^{(1)})$ is a stable pair and the algorithm ends. Otherwise, say that
 $(L^{(1)},C^{(1)}_1)\le 0$. Then from Theorem \ref{Teo:descripton:bir:trans:to:get:lcdeg}
 we can add a $\mathbb{Q}$-Cartier $\mathbb{Q}$-divisor $G^{(1)}$, supported on some marked fibers, in order to make $(X^{(1)},sS^{(1)}+\vec{a}F^{(1)}+G^{(1)})$ a stable pair. We then replace the coefficients on the irreducible components of $G^{(1)}$ that intersect $C_1^{(1)}$ with 0, to get a new (unstable) pair $(X^{(1)},sS^{(1)}+\vec{a}F^{(1)}+\Gamma^{(1)})$. 
 So taking the stable model
 of $(X^{(1)},sS^{(1)}+\vec{a}F^{(1)}+\Gamma^{(1)})$, we need to contract $C^{(1)}$, and we will not contract any other $C^{(1)}_j$.
 Let $(Z,D)$ be such a stable model.
 From Theorem \ref{Teo:descripton:bir:trans:to:get:lcdeg}, either $Y^{(1)}$ will contract, or it becomes a pseudoelliptic surface.
 
 Assume $Y^{(1)}$ becomes a pseudoelliptic component $W$. Take $M \subseteq W$ an irreducible pseudomultisecion
 that does not meet the point to which $C^{(1)}_1$ contracts, and let $M' \subseteq Y^{(1)}$ be its proper transform.
 Since $(Z,D)$ is stable, we have $(K_Z+D).M>0$.
 Since $M$ is contained in the locus where $(Y^{(1)},sS^{(1)}+\vec{a}F^{(1)}+\Gamma^{(1)}) \dashrightarrow (Z,D)$ is an isomorphism,
 also $(L^{(1)}).M'>0$. 
 But all the fibers of $Y^{(1)}$ are irreducible,
 so from Observation \ref{Oss:all:fib:irred:implies:pulback}, there is a positive constant $c$ such that
 $L^{(1)}.M'=c(L^{(1)}-sS^{(1)}).C_1^{(1)}$. 
 Therefore we have that $L^{(1)}.C_1^{(1)}<0\text{ , but }(L^{(1)}-sS^{(1)}).C_1^{(1)}>0$. So there is a $s_0$ small enough such that
 for $t\le s_0$ we have $(L^{(1)}-(s-t)S^{(1)},C_1^{(1)})>0$. Namely, for any such $t$, we see that $C_1^{(1)}$ does not contract.
 Then we take
 $(X^{(2)},s_0S^{(2)}+\vec{a}F^{(2)})$, and we start this procedure again.
 
 We are left with the case in which $Y^{(1)}$ contracts. In that case, it either contracts with a divisorial contraction, or after a flip of La Nave.
 In either case, we define $(X^{(2)},sS^{(2)}+\vec{a}F^{(2)})$ to be the stable model of $(X^{(1)},sS^{(1)}+\vec{a}F^{(1)}+\Gamma^{(1)})$, without the
 markings on $\Gamma^{(1)}$. We see that $(X^{(2)},sS^{(2)}+\vec{a}F^{(2)})$ has all the components which are elliptic, with all the fibers irreducible. Then we start this procedure again, replacing $(X^{(1)},sS^{(1)}+\vec{a}F^{(1)})$ with $(X^{(2)},sS^{(2)}+\vec{a}F^{(2)})$.
 
 This procedure terminates in a finite number of steps, since there are finitely many irreducible components on $X^{(1)}$.
\end{proof}
\begin{Teo}\label{Teo:no:psudo}
 Let $(s,\vec{a},\beta)$ be a weight vector. Then we can choose a positive $\widetilde{s}$ such that for every $s \le \widetilde{s}$ and every point of $ \cE_{(s',\vec{a},\beta)}$,
the corresponding surface pair
has no pseudoelliptic components.
\end{Teo}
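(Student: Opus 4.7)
The plan is to reduce to Proposition \ref{Prop:no:psdo:fixed:tsm:limit}, which already handles a single tsm limit, by showing that only finitely many numerical data actually occur. The overall strategy mirrors the proof of Theorem \ref{Teo:wall:and:chamb:dec:stable:models}: one needs to check that the absence of pseudoelliptic components on the stable model of a tsm limit depends only on its numerical data, and then exploit finiteness of such data.

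First I would invoke Proposition \ref{Prop:stratification:exists} to obtain a finite-type scheme $\sZ$ with a surjection $\iota: \sZ \to \Kg$ whose connected components $\{\sZ_i\}_{i=1}^m$ have the property that the tsm limit $(\sY',s\sS'+\vec{a}\sF')_{\iota(p)}$ has constant numerical data as $p$ varies in each $\sZ_i$. Pick a closed point $q_i \in \sZ_i$ and consider the tsm limits over a DVR $\spec(R_i)$ with special fiber $(\sY',s\sS'+\vec{a}\sF')_{\iota(q_i)}$; apply Proposition \ref{Prop:no:psdo:fixed:tsm:limit} to each of these finitely many tsm limits to obtain positive thresholds $\widetilde{s}_i > 0$. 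Set $\widetilde{s} := \min_i \widetilde{s}_i > 0$.

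The heart of the argument is now to transfer this bound to \emph{every} point of $\cE_{(s',\vec{a},\beta)}$ for $s' \le \widetilde{s}$. Given such a point, choose a DVR $R$ mapping to $\cE_{(s',\vec{a},\beta)}$ whose generic point lands in the interior; the induced stable degeneration $(X,s'S+\vec{a}F) \to \spec(R)$ is the stable model of some tsm limit $(X',s'S'+\vec{a}F')$. By Proposition \ref{Prop:stratification:exists} and the choice of $\widetilde{s}$, there exists $i$ such that the numerical data of $(X',s'S'+\vec{a}F')$ equals that of the model associated with $\sZ_i$. By Theorem \ref{Teo:stable:model:depends:only:on:num:data}(1), stable reduction for $(X',s'S'+\vec{a}F')$ can be performed with the same sequence of birational transformations (flips of La Nave, divisorial contractions of elliptic/pseudoelliptic components, small contractions) in the same order as in the reference tsm limit over $\spec(R_i)$. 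In particular, whenever a flip of La Nave creates a pseudoelliptic component in the reference case, the subsequent step contracts it; the same happens for our $(X',s'S'+\vec{a}F')$. So no pseudoelliptic components survive in $X_p$.

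The main obstacle I anticipate is ensuring that Theorem \ref{Teo:stable:model:depends:only:on:num:data} is genuinely applicable with $s' \le \widetilde{s}$ rather than at the specific $s$ used to build the reference tsm limit: one has to check that the conditions used in Proposition \ref{Prop:no:psdo:fixed:tsm:limit} (namely, strict positivity of $(L^{(i)} - (s-t) S^{(i)}).C_j^{(i)}$ after each step, and contraction of each newly created pseudoelliptic component) persist under variation of $s'$ below the threshold and depend only on the intersection numbers that appear in the refined numerical data. Since the inequalities that define $\widetilde{s}_i$ are algebraic inequalities in the entries of the refined numerical data, and $\widetilde{s} = \min_i \widetilde{s}_i$ is uniform over all numerical data that occur, this monotonicity will follow from the same kind of linearity argument used in the proof of Proposition \ref{Prop:no:psdo:fixed:tsm:limit}. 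Once this is verified, the theorem follows.
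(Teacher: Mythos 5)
Your proposal is correct and follows essentially the same route as the paper: reduce to Proposition \ref{Prop:no:psdo:fixed:tsm:limit} via the finiteness of numerical data from Proposition \ref{Prop:stratification:exists}, take the minimum of the resulting thresholds, and transfer the conclusion to an arbitrary tsm limit using Theorem \ref{Teo:stable:model:depends:only:on:num:data}. The concern you raise in your last paragraph resolves exactly as you suspect, since the (refined) numerical data is defined for every rational weight vector and hence is independent of $s$, so the comparison theorem applies directly at the weight $s'\le\widetilde{s}$.
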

\begin{proof} We need to show that for \emph{every} tsm limit $(X',s'S'+\vec{a}F') \to \spec(R)$, we can choose $s'$ small enough such that to take the stable model of $(X',s'S'+\vec{a}F')$ we perform no small contraction, and if we need to perform $m$ flips of La Nave, we also need to contract $m$ pseudoelliptic components (see Theorem \ref{Teo:descripton:bir:trans:to:get:lcdeg}).
 From Proposition \ref{Prop:no:psdo:fixed:tsm:limit}, we can pick such an $s$ for a chosen tsm limit. Thus also for a finite set of tsm limits.
Proposition \ref{Prop:stratification:exists} and
Theorem \ref{Teo:stable:model:depends:only:on:num:data} prove the result.
\end{proof}
\begin{bibdiv}
\begin{biblist}

\bib{AB2}{article}{
  title={Moduli of fibered surface pairs from twisted stable maps},
  Author={Ascher, Kenneth},
  Author={Bejleri, Dori},
  journal={arXiv preprint arXiv:1612.00792},
  year={2016}
}

\bib{AB1}{article}{
  title={Log canonical models of elliptic surfaces},
  author={Ascher, Kenneth},
  AUTHOR={Bejleri, Dori},
  journal={Advances in Mathematics},
  volume={320},
  pages={210--243},
  year={2017},
  publisher={Elsevier}
}

\bib{AB3}{article}{
  title={Moduli of weighted stable elliptic surfaces \& invariance of log plurigenera},
  author={Ascher, Kenneth},
  Author={Bejleri, Dori},
  journal={arXiv preprint arXiv:1702.06107},
  year={2017}
}

\bib{arbarello-cornalba-griffiths}{book}{
    AUTHOR = {Arbarello, Enrico},
    AUTHOR={Cornalba, Maurizio},
    AUTHOR={Griffiths, Pillip
              A.},
     TITLE = {Geometry of algebraic curves. {V}olume {II}},
    SERIES = {Grundlehren der Mathematischen Wissenschaften [Fundamental
              Principles of Mathematical Sciences]},
    VOLUME = {268},
      NOTE = {With a contribution by Joseph Daniel Harris},
 PUBLISHER = {Springer, Heidelberg},
      YEAR = {2011},
     PAGES = {xxx+963},
      ISBN = {978-3-540-42688-2},
  }
\bib{Picard}{article}{
  title={The Picard group of the moduli of smooth complete intersections of two quadrics},
  author={Asgarli, Shamil},
  AUTHOR={Inchiostro, Giovanni},
  journal={arXiv preprint arXiv:1710.10113},
  year={2017}
}

\bib{Alexeevhyper}{book}{,
    AUTHOR = {Alexeev, Valery},
     TITLE = {Moduli of weighted hyperplane arrangements},
    SERIES = {Advanced Courses in Mathematics. CRM Barcelona},
      NOTE = {Edited by Gilberto Bini, Mart\'\i $\text{ }$ Lahoz, Emanuele Macr\`\i , Paolo
              Stellari},
 PUBLISHER = {Birkh\"auser/Springer, Basel},
      YEAR = {2015},
     PAGES = {vii+104},
      ISBN = {978-3-0348-0914-6; 978-3-0348-0915-3},
      }

\bib{AOV}{article}{,
    AUTHOR = {Abramovich, Dan},
    AUTHOR = {Olsson, Martin},
    AUTHOR= {Vistoli, Angelo},
     TITLE = {Twisted stable maps to tame {A}rtin stacks},
   JOURNAL = {J. Algebraic Geom.},
    VOLUME = {20},
      YEAR = {2011},
    NUMBER = {3},
     PAGES = {399--477},
      ISSN = {1056-3911},
}
\bib{Artinsing2}{article}{
  title={Some numerical criteria for contractability of curves on algebraic surfaces},
  author={Artin, Michael},
  journal={American Journal of Mathematics},
  volume={84},
  number={3},
  pages={485--496},
  year={1962},
  publisher={JSTOR}
}

\bib{Artinsing1}{article}{
  title={On isolated rational singularities of surfaces},
  author={Artin, Michael},
  journal={American Journal of Mathematics},
  volume={88},
  number={1},
  pages={129--136},
  year={1966},
  publisher={JSTOR}
}
\bib{AV1}{article}{
  title={Complete moduli for fibered surfaces},
  author={Abramovich, Dan},
  AUTHOR={Vistoli, Angelo},
  booktitle={Recent progress in intersection theory},
  pages={1--31},
  year={2000},
  publisher={Springer}
}
\bib{AV2}{article}{,
  title={Compactifying the space of stable maps},
  author={Abramovich, Dan},
  AUTHOR={Vistoli, Angelo},
  journal={Journal of the American Mathematical Society},
  volume={15},
  number={1},
  pages={27--75},
  year={2002}
}

\bib{brunyate2015modular}{article}{
  title={A modular compactification of the space of elliptic K3 surfaces},
  author={Brunyate, Adrian},
  note={Ph.D. Thesis},
  year={2015},
}
\bib{Elkik}{article}{
    AUTHOR = {Elkik, Ren\'ee},
     TITLE = {Singularit\'es rationnelles et d\'eformations},
   JOURNAL = {Invent. Math.},
    VOLUME = {47},
      YEAR = {1978},
    NUMBER = {2},
     PAGES = {139--147},
      ISSN = {0020-9910},
}

\bib{FGA}{book}{
    AUTHOR = {Fantechi, Barbara},
    AUTHOR={G\"ottsche, Lothar},
    AUTHOR={Illusie, Luc},
    AUTHOR={Kleiman, Steven L.},
    AUTHOR={Nitsure, Nitin},
    AUTHOR={Vistoli, Angelo},
     TITLE = {Fundamental algebraic geometry},
    SERIES = {Mathematical Surveys and Monographs},
    VOLUME = {123},
      NOTE = {Grothendieck's FGA explained},
 PUBLISHER = {American Mathematical Society, Providence, RI},
      YEAR = {2005},
     PAGES = {x+339},
      ISBN = {0-8218-3541-6},
}

\bib{Fuj-contraction}{article}{
    AUTHOR = {Fujino, Osamu},
     TITLE = {Fundamental theorems for the log minimal model program},
   JOURNAL = {Publ. Res. Inst. Math. Sci.},
    VOLUME = {47},
      YEAR = {2011},
    NUMBER = {3},
     PAGES = {727--789},
      ISSN = {0034-5318},
}
\bib{Fuj-vanishing}{article}{
    AUTHOR = {Fujino, Osamu},
     TITLE = {Fundamental theorems for semi log canonical pairs},
   JOURNAL = {Algebr. Geom.},
    VOLUME = {1},
      YEAR = {2014},
    NUMBER = {2},
     PAGES = {194--228},
      ISSN = {2214-2584},
}
\bib{Giansiracusa-Gillam}{article}{
    AUTHOR = {Giansiracusa, Noah},
    AUTHOR={Gillam, William Danny},
     TITLE = {On {K}apranov's description of {$\overline M_{0,n}$} as a
              {C}how quotient},
   JOURNAL = {Turkish J. Math.},
   VOLUME = {38},
      YEAR = {2014},
    NUMBER = {4},
     PAGES = {625--648},
      ISSN = {1300-0098},
}

\bib{EGAIV}{article}{
  title={{\'E}l{\'e}ments de g{\'e}om{\'e}trie alg{\'e}brique (r{\'e}dig{\'e}s avec la collaboration de {J}ean {D}ieudonn{\'e}): {IV}. {\'E}tude locale des sch{\'e}mas et des morphismes de sch{\'e}mas, troisi{\'e}me Partie},
  author={Grothendieck, Alexander},
  journal={Inst. Hautes {\'E}tudes Sci. Publ. Math},
  volume={28},
  year={1966}
}

\bib{Hassett}{article}{
    AUTHOR = {Hassett, Brendan},
     TITLE = {Moduli spaces of weighted pointed stable curves},
   JOURNAL = {Adv. Math.},
   VOLUME = {173},
      YEAR = {2003},
    NUMBER = {2},
     PAGES = {316--352},
      ISSN = {0001-8708},
}

\bib{Homstack}{article}{
  title={Coherent Tannaka duality and algebraicity of Hom-stacks},
  author={Hall, Jack},
  AUTHOR={Rydh, David},
  journal={arXiv preprint arXiv:1405.7680},
  year={2014}
}

\bib{KM}{book}{
    AUTHOR = {Koll\'ar, J\'anos},
    AUTHOR = {Mori, Shigefumi},
     TITLE = {Birational geometry of algebraic varieties},
    SERIES = {Cambridge Tracts in Mathematics},
    VOLUME = {134},
      NOTE = {With the collaboration of C. H. Clemens and A. Corti,
              Translated from the 1998 Japanese original},
 PUBLISHER = {Cambridge University Press, Cambridge},
      YEAR = {1998},
     PAGES = {viii+254},
      ISBN = {0-521-63277-3},
}

\bib{KMMcK}{article}{
    AUTHOR = {Keel, Sean},
    AUTHOR={Matsuki, Kenji},
    AUTHOR={McKernan, James},
     TITLE = {Log abundance theorem for threefolds},
   JOURNAL = {Duke Math. J.},
    VOLUME = {75},
      YEAR = {1994},
    NUMBER = {1},
     PAGES = {99--119},
      ISSN = {0012-7094},
}

\bib{Kollarresol}{book}{
    AUTHOR = {Koll\'ar, J\'anos},
     TITLE = {Lectures on resolution of singularities},
    SERIES = {Annals of Mathematics Studies},
    VOLUME = {166},
 PUBLISHER = {Princeton University Press, Princeton, NJ},
      YEAR = {2007},
     PAGES = {vi+208},
      ISBN = {978-0-691-12923-5; 0-691-12923-1},
}
\bib{Kollarequinorm}{article}{
    AUTHOR = {Koll\'ar, J\'anos},
     TITLE = {Simultaneous normalization and algebra husks},
   JOURNAL = {Asian J. Math.},
    VOLUME = {15},
      YEAR = {2011},
    NUMBER = {3},
     PAGES = {437--449},
      ISSN = {1093-6106},
}

\bib{Kollarsing}{book}{
    AUTHOR = {Koll\'ar, J\'anos},
     TITLE = {Singularities of the minimal model program},
    SERIES = {Cambridge Tracts in Mathematics},
    VOLUME = {200},
      NOTE = {With a collaboration of S\'andor Kov\'acs},
 PUBLISHER = {Cambridge University Press, Cambridge},
      YEAR = {2013},
     PAGES = {x+370},
      ISBN = {978-1-107-03534-8},
}

\bib{kollarbook}{book}{
  title={Families of varieties of general type},
  AUTHOR = {Koll\'ar, J\'anos},
  journal={Available on his webpage},
  year={2017}
}

\bib{kollar2018logs}{article}{
  title={Log-plurigenera in stable families},
  author={Koll\'ar, J\'anos},
  journal={arXiv preprint arXiv:1801.05414},
  year={2018}
}

\bib{kollar2018log}{article}{
  title={Log-plurigenera in stable families of surfaces},
  author={Koll\'ar, J\'anos},
  journal={arXiv preprint arXiv:1803.08487},
  year={2018}
}

\bib{KP}{article}{
    AUTHOR = {Kov\'acs, S\'andor},
    AUTHOR={Patakfalvi, Zsolt},
     TITLE = {Projectivity of the moduli space of stable log-varieties and
              subadditivity of log-{K}odaira dimension},
   JOURNAL = {J. Amer. Math. Soc.},
    VOLUME = {30},
      YEAR = {2017},
    NUMBER = {4},
     PAGES = {959--1021},
      ISSN = {0894-0347},
}

\bib{LMB}{book}{
    AUTHOR = {Laumon, G\'erard},
    AUTHOR={Moret-Bailly, Laurent},
     TITLE = {Champs alg\'ebriques},
    SERIES = {Ergebnisse der Mathematik und ihrer Grenzgebiete. 3. Folge. A
              Series of Modern Surveys in Mathematics [Results in
              Mathematics and Related Areas. 3rd Series. A Series of Modern
              Surveys in Mathematics]},
    VOLUME = {39},
 PUBLISHER = {Springer-Verlag, Berlin},
      YEAR = {2000},
     PAGES = {xii+208},
      ISBN = {3-540-65761-4},
}

\bib{LaNave}{article}{
  title={Explicit stable models of elliptic surfaces with sections},
  author={La Nave, Gabriele},
  journal={arXiv preprint math/0205035},
  year={2002}
}

\bib{miranda-moduli}{article}{,
    AUTHOR = {Miranda, Rick},
     TITLE = {The moduli of {W}eierstrass fibrations over {${\bf P}^{1}$}},
   JOURNAL = {Math. Ann.},
    VOLUME = {255},
      YEAR = {1981},
    NUMBER = {3},
     PAGES = {379--394},
      ISSN = {0025-5831},
}
\bib{Miranda}{book}{
    AUTHOR = {Miranda, Rick},
     TITLE = {The basic theory of elliptic surfaces},
    SERIES = {Dottorato di Ricerca in Matematica. [Doctorate in Mathematical
              Research]},
 PUBLISHER = {ETS Editrice, Pisa},
      YEAR = {1989},
     PAGES = {vi+108},
}

\bib{Olsson}{book}{
    AUTHOR = {Olsson, Martin},
     TITLE = {Algebraic spaces and stacks},
    SERIES = {American Mathematical Society Colloquium Publications},
    VOLUME = {62},
 PUBLISHER = {American Mathematical Society, Providence, RI},
      YEAR = {2016},
     PAGES = {xi+298},
      ISBN = {978-1-4704-2798-6},
}

\bib{moulielliptic}{article}{
    AUTHOR = {Seiler, Wolfgang K.},
     TITLE = {Global moduli for elliptic surfaces with a section},
   JOURNAL = {Compositio Math.},
   VOLUME = {62},
      YEAR = {1987},
    NUMBER = {2},
     PAGES = {169--185},
      ISSN = {0010-437X},
}
\bib{Schutt:Shioda}{article}{
  title={Elliptic surfaces},
  author={Sch{\"u}tt, Matthias},
  AUTHOR={Shioda, Tetsuji},
  journal={arXiv preprint arXiv:0907.0298},
  year={2009}
}

\end{biblist}
\end{bibdiv}
\end{document}